\newtheorem{theorem}{Theorem}[section]
\newtheorem{proposition}[theorem]{Proposition}
\newtheorem{corollary}[theorem]{Corollary}
\newtheorem{lemma}[theorem]{Lemma}
\newtheorem{definition}[theorem]{Definition}
\newtheorem{remark}[theorem]{Remark}
\numberwithin{equation}{section}
\title[Decay estimates for bi-Schr\"odinger operators  in dimension one ]
{Decay estimates for bi-Schr\"odinger operators  in dimension one }
\author{Avy Soffer,\ Zhao Wu \ and Xiaohua Yao  \textsuperscript{\dag } }
\address{Avy Soffer, Mathematics Department, Rutgers University, New Brunswick, NJ, 08903, USA}
\email{soffer@math.rutgers.edu}
\address{Zhao Wu, School of Mathematics and Statistics, Central China Normal University, Wuhan, 430079, P.R. China}
\email{wuzhao218@yahoo.com}
\address{Xiaohua Yao, Department of Mathematics and  Hubei Province Key Laboratory of Mathematical Physics, Central China Normal University, Wuhan, 430079, P.R. China}
\email{yaoxiaohua@mail.ccnu.edu.cn}
\thanks{\textsuperscript{\dag} Corresponding author}
\date{\today}
\keywords{Decay estimates, Bi-Schr\"odinger operators, Zero resonances, Resolvent expansions}
\begin{document}

\begin{abstract}\baselineskip=12pt
		This paper is devoted to study the time decay estimates for bi-Schr\"odinger operators  $H=\Delta^{2}+V(x)$ in dimension one with decaying potentials $V(x)$. We first deduce the asymptotic expansions of resolvent of $H$ at zero energy threshold without/with the presence of resonances, and then characterize these resonance spaces corresponding to different types of zero resonance in suitable weighted spaces $L_s^2({\mathbf{R}})$. Next we use them to establish the sharp $L^1-L^\infty$ decay estimates of Schr\"odinger groups $e^{-itH}$ generated by bi-Schr\"odinger operators also with zero resonances.  As a consequence, Strichartz estimates are obtained for the solution of fourth-order Schr\"odinger equations with potentials for initial data in $L^2({\mathbf{R}})$. In particular, it should be emphasized that the presence of zero resonances does not change the optimal time decay rate of  $e^{-itH}$ in dimension one, except at requiring faster decay rate of the potential.
\end{abstract}
		\maketitle

	\baselineskip=14pt
\section{Introduction}
\subsection{Backgrounds and problems}

The higher-order elliptic operators $P(D)+V$ have been extensively considered as general Hamiltonian operator by many people in different contexts. We just mention some references, not all. For instance, see Schechter \cite{Schechter} for spectral theory, Kuroda \cite{Kur}, Agmon \cite {Agmon}, H\"ormander \cite{H2} for scattering theory, Davies \cite{Davies}, Davies and Hinz \cite{DaHi}, Deng et al \cite{DDY} for the Gaussian estimates of semigroup,  and  as well \cite{Herbst-Skibsted-Adv-2015, Herbst-Skibsted-Adv-2017, BS,  Mourre, SYY} for other interesting topics.

In this paper, we will consider the time decay estimates for the bi-Schr\"odinger operator:
\begin{equation}\label{eq-schrodinger operator}
H=\Delta^{2}+V(x),\,\, H_{0}=\Delta^{2},
\end{equation} where $V(x)$ is a real-valued function satisfying $|V(x)|\lesssim (1+|x|)^{-\beta}$ for some $\beta>0$.  Such investigations  are motivated in large part by classical Schr\"odinger operator $-\Delta+V$ and their fundamental applications.   In the last thirty years, the dispersive estimates of Schr\"odinger operator have been one of the key topics, which were applied broadly to  nonlinear Schr\"odinger equations, see e.g. \cite{Bou,  Cazenave, TT2, JSS, KeelTao,NS11, Schlag, Schlag21, Simon-Review-1, Simon-Review-2} and  references therein.

For the free propagators $e^{-itH_0}$ and $H_{0}=\Delta^{2}$, Ben-Artzi, Koch and Saut in \cite{BKS} have proved the following pointwise estimates for the kernel of $e^{-itH_0}$ on $\mathbf{R}^n$:
\begin{equation}\label{eq-Ben-Artzi-1}
|\partial_x^{\alpha}I_0(x,t)|\lesssim t^{-\frac{n+|\alpha|}{4}}\Big(1+\frac{|x|}{t^{1/4}}\Big)^{-\frac{(n-|\alpha|)}{3}},\ \ x\in\mathbf{R}^n,
\end{equation}
where $I_0(x,t)$ be the kernel of $e^{-itH_0}$ and $\alpha\in\mathbf{N}^n, $ which immediately implies the following $L^1- L^\infty$ estimates:
\begin{equation}\label{eq-Ben-Artzi-2}
\big\|\partial^\alpha e^{-itH_0}\big\|_{L^1(\mathbf{R}^n)\rightarrow L^\infty(\mathbf{R}^n)} \lesssim |t|^{-\frac{n+|\alpha|}{4}}, \ \ |\alpha|\le n.
\end{equation}
Moreover, we notice that the regular term $\partial^\alpha$ of the inequality \eqref{eq-Ben-Artzi-2} can be replaced by fractional power  $H_0^{\frac{s}{4}}$ for any $0\le s\le n$ ( see e.g. \cite{CMY} ).

For  the fourth order Schr\"odinger  operators $H=\Delta^{2}+V(x)$,  such time decay estimates were much less understood than Schr\"odinger operators. Recently, Feng, Soffer and Yao \cite{FSY} deduced the asymptotic expansion of the resolvent $R_V(z)=(H-z)^{-1}$ around zero threshold,  assuming that zero is a regular point of $H$ for $n\ge 5$ and $n=3$, and showed that in the regular case, the Kato-Jensen time decay estimate of $e^{-itH}$ is $(1+|t|)^{-n/4}$ for $n\geq5$ and the $L^1-L^\infty$ decay estimates is $O(|t|^{-1/2})$ for $n=3$ (not optimal).

 Based on the higher energy estimates of resolvent  in \cite{FSY},  Erdog$\breve{a}$n, Green and Toprak in \cite{Erdogan-Green-Toprak} for $n=3$ and Green-Toprak in \cite{Green-Toprak} for $n=4$ further derived the asymptotic expansion of $R_V(z)$ near zero with the presence of resonance or eigenvalue, and obtained $L^1-L^\infty$ estimates for each kind of zero resonances. They proved the $L^1- L^\infty$ time decay estimate is $|t|^{-n/4}$ for $n=3,4$ if zero is a regular point, and zero energy resonance will change the time decay rate.

Moreover, we mention that  Kato-Jensen estimates and  asymptotic expansion of resolvent have been established by \cite{FSWY} for poly-Schr\"odinger operators $(-\Delta)^m+V$ with $n\ge 2m\ge 4$.  In  previous works,   Murata in \cite{MM}  have generalized Kato and Jensen's  work \cite{JK} to a certain class of $P(D)+V$ assume that $P(D)$ satisfies
	\begin{equation}\label{nondegenerate}
	\big(\nabla P\big)(\xi_0)=0, \ \ \ \  \det\big[\partial_{i}\partial_{j}P(\xi)\big]\Big|_{\xi_0}\neq0.
	\end{equation}	
However, the poly-harmonic operators $H_0=(-\Delta)^m$ do not satisfy the  nondegenerate condition \eqref{nondegenerate} at zero but the case $m=1$. Hence the results in \cite{FSWY} were not covered by \cite{MM}. In particular,
the degeneracy of $(-\Delta)^m$ at zero energy certainly leads to more complicated zero resonances classifications and asymptotic expansions of the resolvent for $(-\Delta)^m+V$ as $m\ge2$. This is actually  one of the main difficult points for the higher order estimates.

 In this paper, we are devoted to establishing the same $L^1-L^\infty$ estimates of $H=\Delta^2+V$ (with regularity terms)  as the free estimates \eqref{eq-Ben-Artzi-2} in dimension one. We showed that the presence of zero resonance  of $H$ does not affect the time decay rate of $e^{-itH} $  but requires higher decay rate of potential $V$.
 This seems to be distinct phenomena than  other situations ( see e.g.  \cite{FSWY,Erdogan-Green-Toprak,Green-Toprak,Schlag21}).

  As a consequence, Strichartz estimates were obtained for the fourth-order Schrodinger equations with potentials for initial data in $L^2({\mathbf{R}})$.  Furthermore, by using similar methods, Li, Soffer and Yao \cite{LSY} have derived the $L^1-L^\infty$ estimates for $H$ with zero resonances in two dimensional case. In particular, we remark that arguments used here can be applied to the higher-order elliptic operators $(-\Delta)^m+V$.

 For Schr\"odinger operator $-\Delta+V$, the $L^{1}- L^{\infty}$ decay estimates have been widely investigated by many people.  For $n\geq3$, Journ\'e, Soffer and Sogge in \cite{JSS} first  established
 the $L^{1}- L^{\infty}$ dispersive estimate in the regular case. For $n\le 3$, one can see Weder \cite{Weder1},  Rodnianski and Schlag \cite{RodSchl}, Schlag and Goldberg \cite{Goldberg-Schlag}, Schlag \cite{Schlag-CMP} and so on.
 Yajima in \cite{Yajima-JMSJ-95} has also established the $L^{1}- L^{\infty}$  estimates for $-\Delta+V$ by wave operator method.
For many further studies, one can refer to \cite{Weder, ES1, ES2,  Goldberg-Green-1, Goldberg-Green-2, Schlag, Schlag21} and therein references.
\subsection{Some notations}
In this subsection, we collect some notations used throughout the paper.

\begin{itemize}
  \item For $a\in\mathbf{R}$, we write $a\pm$ to denote $a\pm\epsilon$ for any small $\epsilon>0$. $[a]$ denotes the largest integer less than or equal to $a$.
  For $a,b\in\mathbf{R^+},$ we denote $A\lesssim B$ (or $B\gtrsim A$) which means that there exists some constant $c>0$ such that $A\le cB$ (or $B\ge  cA$). Moreover, if $A\lesssim B$ and $B\lesssim A,$ then we write $A\sim B.$
  \item For $s, s'\in\mathbf{R}$, $B(s, s')$ denote the space of the bounded operators from $L^{2}_{s}(\mathbf{R})$ to $L^{2}_{s'}(\mathbf{R})$,  where $L^{2}_{s}(\mathbf{R})$ is the weighted $L^2$ space:
	$$L^{2}_{s}(\mathbf{R})=\Big\{f\in L^2_{loc}(\mathbf{R}): (1+|\cdot|)^{s}f\in L^{2}(\mathbf{R})\Big\}.$$
\end{itemize}

\subsection{Main Results}
In the subsection, we  will state our main results. Recall that,  if there exists a nonzero solution $\phi(x)\in L^2(\mathbf R)$ satisfying the equation $H\phi(x)=0$ in distributional sense, then we say that zero is an eigenvalue of $H=\Delta^{2}+V(x).$  In general, there may exist a nonzero resonant solution $\phi(x)$ in $L^2_{-s}(\mathbf{R})$ with some $s>0$ such that $H\phi(x)=0$,  which usually results in the obstructions to dispersive estimates at low energy. In the present case, we first introduce the  zero resonance definitions of $\Delta^{2}+V(x)$ in dimension one.
For $\sigma\in\mathbf{R}$,  let $W_{\sigma}(\mathbf{R})$ denotes the intersection  space
	$ W_{\sigma}(\mathbf{R})= \bigcap\limits_{s>\sigma} L^{2}_{-s}(\mathbf{R}).$
  Note that, $W_{\sigma_1}(\mathbf{R})\supset W_{\sigma_2}(\mathbf{R})$ if $\sigma_1>\sigma_2$ and  $W_{0}(\mathbf{R})\supset L^{2}(\mathbf{R})$.
\begin{definition}\label{defi-resonance-1}
Let $H=\Delta^2+V$ on $L^{2}(\mathbf{R})$ and $|V(x)|\lesssim(1+|x|)^{-\beta}$ for some $\beta>0$. Then we say that

(i)~ zero is  a first kind resonance  of $H$
if there exists some nonzero $\phi(x)\in W_{\frac{3}{2}}(\mathbf{R})$ but no any nonzero $ \phi\in W_{\frac{1}{2}}(\mathbf{R}) $ such that $H\phi(x)=0$ in the distributional sense;

(ii)~zero is  a second kind resonance  of $H$ if there exists some nonzero  $\phi(x)\in W_{\frac{1}{2}}(\mathbf{R})$ but no any nonzero $ \phi\in  L^2(\mathbf{R}) $ such that $H\phi(x)=0$ in the distributional sense;

(iii)~zero is  an eigenvalue  of $H$ if there exists some nonzero $\phi(x)\in L^2(\mathbf{R}) $ such that $H\phi(x)=0$ in the distributional sense;

(iv)~ zero is a regular point of $H$ if zero is neither a resonance nor an eigenvalue of $H$.
\end{definition}

\begin{remark}\label{remark-eigenvalue} Some examples of zero resonance and zero eigenvalue:

(i)~(\textbf{Zero resonance}) Assume that $\phi\in C^\infty(\mathbf{R})$ is a positive function which is equal to $c|x|+d$ for $|x|>1.$ Then $H\phi(x)=0$ when $\displaystyle V(x)=-(\Delta^2\phi)/\phi(x).$ One can easily check that $V(x)\in C_0^\infty(\mathbf{R})$ and $\phi(x)\in W_{\frac{3}{2}}(\mathbf{R}).$  Hence by Definition \ref{defi-resonance-1}, we know that the zero is a resonance point of $H$ even with a compactly supported smooth potential.

(ii)~(\textbf{Zero eigenvalue}) Unlike zero resonance, the zero eigenvalue problem of $H$ is subtler.
For instance, let ~$\phi(x)=(1+x^2)^{-s}$ with $s>\frac{1}{4} $ and  $\displaystyle V(x)=-(\Delta^2\phi)/\phi(x).$  Then one has that $$V(x)=O((1+|x|)^{-4}), \ \ |x|\rightarrow \infty,$$ and  $\phi\in L^2(\mathbf{R})$ satisfying $H \phi=0. $  This means that zero is an eigenvalue of $H$ with such slowly decaying potential.

On the other hand, it was pointed out to us by Prof. Goldberg \cite{Goldberg} that if a potential $V(x)$ has faster decay,
then zero eigenvalue of the one-dimensional bi-Schr\"odinger operator may not occur, see e.g.\cite{DT} for Schr\"{o}dinger operator case. In fact, if the potential $V$ satisfies the decay condition of Theorem \ref{thm-main results} below,  then a proof on the absence of zero eigenvalue has been stated in Lemma \ref{lemma-S_3} of Section \ref{section-M}.

\end{remark}

Now we state the first main result under the decay conditions depending on the types of zero energy defined above. In this paper due to the faster decay rate of $V$ and the absence of zero eigenvalue, we do not necessarily discuss zero eigenvalue case in Theorem \ref{thm-main results} below. Even so, it should be noticed that compared with the Schr\"{o}dinger operator $-\Delta+V$ on $L^2(\mathbf{R})$ ( see e.g. Jensen and Nenciu \cite{JN, JN2} ),
the classification of the zero resonance for $H=\Delta^2+V$ in one dimensional case is more complicated due to the low dimensionality ( i.e. $n=1$ ) and  the degeneracy of $\Delta^2$ at zero energy ( i.e. zero is the degenerate critical point of the symbol $ \xi ^4$ ); it leads to the more singular terms in the zero asymptotic expansion of resolvent $R_V^\pm(\mu)$ ( see Theorem \ref{thm-M-inverse} below or Section \ref{section-M} ).

\begin{theorem}\footnote{After the submission of our paper, we learned that by a different method, Thomas Hill has proved that the estimates \eqref{eq-main result} and \eqref{eq-regularity estimate} hold, assuming that zero is a regular point for  $H=\Delta^2+V$ with a compactly supported potential $V(x)\in C_c^1(\mathbf{R}),$ see his dissertation \cite{Hill}.}\label{thm-main results}
Let $|V(x)|\lesssim(1+|x|)^{-\beta}$ with some $\beta>0$ depending on the following types:
\begin{equation}\label{eq-beta}
\beta>\begin{cases}
13, & \text{zero is the regular point},\\
17, & \text{zero is the first kind resonance},\\
25, & \text{zero is the second kind resonance}.
\end{cases}
\end{equation}
Assume that $H=\Delta^2+V$ has no positive embedded eigenvalues and $P_{ac}(H)$ denotes the projection onto the absolutely continuous spectrum space of $H$.
Then we have the following decay estimate:
\begin{equation}\label{eq-main result}
\big\|e^{-itH}P_{ac}(H)\big\|_{L^1(\mathbf{R})\rightarrow L^\infty(\mathbf{R})}\lesssim |t|^{-\frac{1}{4}}, \ \ t\neq0.
\end{equation}
Furthermore, for any $0\le \alpha<1,$  the following improved estimate holds:
\begin{equation}\label{eq-regularity estimate}
\big\|H^{\frac{\alpha}{4}} e^{-itH}P_{ac}(H)\big\|_{L^1(\mathbf{R})\rightarrow L^\infty(\mathbf{R})}\lesssim |t|^{-\frac{1+\alpha}{4}}, \ \ t\neq0.
\end{equation}
\end{theorem}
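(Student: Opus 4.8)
The plan is to start from the Stone formula, writing for suitable $f$
\begin{equation*}
e^{-itH}P_{ac}(H)f=\frac{1}{2\pi i}\int_0^\infty e^{-it\lambda}\big[R_V^+(\lambda)-R_V^-(\lambda)\big]f\,d\lambda ,
\end{equation*}
and then substituting $\lambda=\mu^4$ to exploit the structure of $H_0=\Delta^2$; after this change of variables the oscillatory factor becomes $e^{-it\mu^4}$ and a weight $\mu^3\,d\mu$ appears. One next inserts a smooth partition of unity $1=\chi(\mu)+(1-\chi(\mu))$ with $\chi$ supported in a small neighbourhood of $\mu=0$, splitting the propagator into a low-energy piece and a high-energy piece. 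The improved estimate \eqref{eq-regularity estimate} will be handled in exactly the same way after inserting the symbol $\mu^\alpha$ of $H^{\alpha/4}$ into the integrand, the extra factor $\mu^\alpha$ producing the additional decay $|t|^{-\alpha/4}$ in the oscillatory integral bounds.

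For the high-energy part I would use the resolvent identity to expand $R_V^\pm$ in a finite Born series plus a remainder, estimate each term by combining the free kernel bounds \eqref{eq-Ben-Artzi-1}--\eqref{eq-Ben-Artzi-2} with the limiting absorption principle and the decay of $V$, and control the resulting oscillatory integrals by repeated integration by parts / stationary phase in $\mu$ (the phase $\mu^4$ has a nondegenerate stationary point only at $\mu=0$, which is excluded here, so one gets rapid decay away from zero and the borderline $|t|^{-1/4}$ globally). The hypothesis that $H$ has no positive embedded eigenvalues supplies the uniform resolvent bounds needed on the intermediate energy region, while the absence of a zero eigenvalue under \eqref{eq-beta} is provided by Lemma \ref{lemma-S_3}.

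The heart of the matter is the low-energy part. Here one feeds the asymptotic expansions of $R_V^\pm(\mu^4)$ near $\mu=0$ --- in the regular case, and in the first- and second-kind resonance cases --- supplied by Theorem \ref{thm-M-inverse} and the analysis of Section \ref{section-M} into the Stone formula. Each term of the expansion has the form (a scalar function of $\mu$) times (a $\mu$-independent bounded or finite-rank operator built from $V$ and the projections onto the resonance subspaces), so one is reduced to bounding scalar oscillatory integrals of the type $\int_0^\infty e^{-it\mu^4}\chi(\mu)\,\mu^{k}\,a(\mu)\,d\mu$ together with their $x,y$-dependent coefficients, uniformly in $x,y$. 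The $|t|^{-1/4}$ bound for the leading (regular) term follows from a van der Corput / scaling estimate; the danger is that the resonance expansions contain terms that are more singular in $\mu$ --- this is exactly the new difficulty caused by the degeneracy of $\xi^4$ at zero and by $n=1$ --- so one must exhibit the cancellation between $R_V^+$ and $R_V^-$ and use the characterization of the resonant functions in the weighted spaces $L^2_s(\mathbf{R})$ to show the corresponding kernels are still $O(|t|^{-1/4})$ from $L^1$ to $L^\infty$. This is where the increased decay rates $\beta>13,17,25$ enter: they are precisely what make the coefficients of the singular terms lie in the appropriate weighted spaces and absorb the extra powers of $x$ generated when one integrates by parts in $\mu$. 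I expect this low-energy resonance analysis --- isolating the worst singular term, extracting the $+/-$ cancellation, and matching the weight bookkeeping against the decay of $V$ --- to be the main obstacle; the high-energy and the $H^{\alpha/4}$ regularity parts are comparatively routine once the free estimates and the limiting absorption bounds are available.
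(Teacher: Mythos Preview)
Your overall architecture --- Stone formula, the change of variables $\lambda=\mu^4$, low/high energy split, Born expansion plus limiting absorption for large $\mu$, and feeding the expansions of Theorem~\ref{thm-M-inverse} into the low-energy piece --- matches the paper. However, you have misidentified the mechanism that tames the singular low-energy terms, and this is not a cosmetic point.

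You write that one must ``exhibit the cancellation between $R_V^+$ and $R_V^-$.'' The paper does \emph{not} use any $\pm$ cancellation: each of the integrals $\int e^{-it\mu^4}\varphi_N(\mu)\mu^3 R_V^{\pm}(\mu^4)\,d\mu$ is bounded separately (see Propositions~\ref{prop-free estimates}--\ref{pro-low energy-second kind case}). The singular prefactors $\mu^{-1},\mu^{-2},\mu^{-3}$ in the expansion of $(M^\pm(\mu))^{-1}$ are instead compensated by the \emph{orthogonality relations of the projections} $Q,S_0,S_1,S_2$ against $v,xv,x^2v$ (Table~1). Concretely, when a term carries $S_0$ on one side, the identities $S_0v=S_0(xv)=0$ allow one to replace $R_0^\pm(\mu^4)(x,y)=\frac{1}{4\mu^3}F^\pm(\mu|x-y|)$ by its Taylor remainder in $y$ (Lemma~\ref{lemma-low-energy}), which produces an explicit factor $\mu^2 y^2$; with $S_2$ on one side one gains $\mu^3 y^3$. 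These gained powers of $\mu$ are exactly what cancel the singular denominators, and the accompanying factors $y^k$ are what force the decay requirements $\beta>13,17,25$ on $V$ (one needs $\langle y\rangle^k v(y)\in L^2$). Without this mechanism your proposal has no way to handle, e.g., the $\mu^{-3}S_2A_{-3,1}^2S_2$ term in the second-kind resonance case.

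A second technical point you omit: the paper does not use a single smooth cutoff $\chi$ but a full dyadic Littlewood--Paley decomposition $\sum_N\varphi_N(\mu)$. This is essential because $\partial_\mu R_0^\pm(\mu^4)(x,y)$ is \emph{not} uniformly bounded in $x,y$, so a direct integration by parts in $\mu$ fails. The dyadic rescaling $\mu=2^N s$ reduces each piece to an integral over $s\in[\tfrac14,1]$ to which the van der Corput lemma (packaged as Lemma~\ref{lemma-low-energy-2}) applies uniformly, yielding $2^{N}(1+|t|2^{4N})^{-1/2}$; summing in $N$ via Lemma~\ref{lemma-low sum} then gives $|t|^{-1/4}$ (and $|t|^{-(1+\alpha)/4}$ after inserting $\mu^\alpha$). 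Your sketch of ``van der Corput / scaling'' gestures at this, but the uniformity-in-$x,y$ issue is the actual reason the dyadic decomposition is needed and should be stated explicitly.
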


As we will see later, the  needed decay rates of the potential $V$ mainly depend on the lower energy expansions of resolvent $R_V(z)$ for different zero resonance types (see Theorem \ref{thm-M-inverse} below). According to Remark \ref{remark-M inverse-expansions} after Theorem \ref{thm-M-inverse}, we still can obtain the decay estimates \eqref{eq-main result} and \eqref{eq-regularity estimate} by using the weak expansions of $R_V(z)$ under the assumption of the following less decay rates:
\[\beta>\begin{cases}
7, & \text{zero is the regular point},\\
9, & \text{zero is the first kind resonance},\\
13, & \text{zero is the second kind resonance}.
\end{cases}\]
In the paper, we do not follow the way along with these interesting improvements. Instead, we focus on the simplicity of the proof based on the expansions of resolvent $R_V(z)$ in Theorem \ref{thm-M-inverse}.  Noting that the estimates \eqref{eq-main result} and \eqref{eq-regularity estimate} above are optimal in view of the free estimate \eqref{eq-Ben-Artzi-2} with $V=0$.

In particular, the optimal time decay rate ( e.g. $|t|^{-\frac{1}{4}}$ in the \eqref{eq-main result}) does not change as the zero resonance arises, but only requires the faster decay rate $\beta$ of potential $V$ as the compensations of zero singularities. Moreover, note that for Schr\"{o}dinger operator $-\Delta+V$, Egorova, Kopylova, Marchenko and Teschl in \cite{EKMT} have proved that the resonant case does not need additional decay of the potential, hence it is an interesting question whether we can show that the sharp decay estimates \eqref{eq-main result} still hold on the same decay condition on $V(x)$ for $H=\Delta^2+V$.

Moreover, in the above Theorem \ref{thm-main results}, we also assume that  $H=\Delta^2+V$ has no  positive eigenvalues embedded in the absolutely continuous spectrum, which has been the indispensable condition in dispersive estiamtes.
For Schr\"odinger operator $-\Delta+V$, Kato in \cite{K} has shown the absence of positive eigenvalues for $H=-\Delta+V$ with the decay potentials $V=o(|x|^{-1})$ as $|x|\rightarrow\infty$. For the bi-Schr\"odinger operator $H=\Delta^2+V$, the situation seems to be more complicated than the second order operator, since there exist examples even with compactly supported smooth potentials such that positive eigenvalues appear ( see e.g. \cite{FSWY} ).

  However, interestingly,
 a simple criterion has been proved in \cite{FSWY} that $H=\Delta^2+V$ has no any positive eigenvalues under the assumption that the potential $V$ is bounded and satisfies the repulsive condition (i.e. $(x\cdot\nabla)V\le 0$). In particular, such criterion also works for the general higher order elliptic operator $P(D)+V$.
 We also notice that for a general selfadjoint operator $\mathcal{H}$ on $L^{2}(\mathbf{R}^n)$, even if $\mathcal{H}$ has a simple embedded eigenvalue, Costin and Soffer in \cite{CoSo} have proved that $\mathcal{H}+\epsilon W$ can kick off the eigenvalue located in a small interval under certain small perturbation of potential.

%

As an interesting consequence  of $L^1-L^\infty$ estimates above,  by Keel-Tao \cite{KeelTao} method, we immediately obtain Strichartz estimate to the fourth-order Schr\"{o}dinger equation on $\mathbf{R}$:
\begin{equation}\label{eq-nonlinear equation}
\left\{ \begin{gathered}
   i\partial_t \phi = \big(\Delta^2+V\big)\phi+F(t,x),\ \ (t,x)\in\mathbf{R}^{1+1}, \hfill \\
   \phi(0,\cdot)=\phi_0(x).\hfill \\
\end{gathered}  \right.\ \ \
\end{equation}
Recall that, a pair $(q,r)$ is said to be $\sigma-$admissible if
\begin{equation}\label{admissblepair}
  \frac{1}{q}+\frac{\sigma}{r}=\frac{\sigma}{2}, \ \  2\leq q\leq\infty,\ \ (q,r,\sigma)\neq (2,\infty,1).
\end{equation}

\begin{corollary}\label{cor-Strichartz}
Let $V$ and $H=\Delta^2+V$ satisfy the same conditions as Theorem \ref{thm-main results} above.
 Then for any $\displaystyle\frac{1}{4}-$admissible pairs $(q,r)$ and $(\tilde{q},\tilde{r}),$  we have the following estimates:\\

(i)~Homogeneous Strichartz Estimate
\begin{equation}\label{eq-homogeneous strichartz}
\big\|e^{-itH}P_{ac}(H)\phi_0\big\|_{L_t^qL_x^r(\mathbf{R}\times\mathbf{R})}\lesssim \|\phi_0\|_{L^2(\mathbf{R})}.
\end{equation}

(ii)~Dual Homogeneous Strichartz Estimate
\begin{equation}\label{eq-dual homogeneous strichartz}
\Big\|\int_\mathbf{R}e^{-is H}P_{ac}(H)F(s,\cdot)ds\Big\|_{L_x^2(\mathbf{R})}\lesssim \|F\|_{L_t^{\tilde{q}'}L_x^{\tilde{r}'}(\mathbf{R}\times\mathbf{R})}.
\end{equation}

(iii)~The solution $\phi(t,x)$ of the equation \eqref{eq-nonlinear equation} satisfies
\begin{equation}\label{eq-solution strichartz}
\big\|P_{ac}(H)\phi\big\|_{L_t^qL_x^r(\mathbf{R}\times\mathbf{R})}\lesssim \|\phi_0\|_{L^2(\mathbf{R})}+\|F\|_{L_t^{\tilde{q}'}L_x^{\tilde{r}'}(\mathbf{R}\times\mathbf{R})}.
\end{equation}
\end{corollary}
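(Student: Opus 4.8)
The plan is to obtain Corollary \ref{cor-Strichartz} as a direct application of the abstract Strichartz estimates of Keel and Tao \cite{KeelTao}, using the dispersive bound \eqref{eq-main result} of Theorem \ref{thm-main results} as the only nontrivial input. Set $U(t)=e^{-itH}P_{ac}(H)$. Since $H=\Delta^{2}+V$ is self-adjoint on $L^{2}(\mathbf{R})$, the group $e^{-itH}$ is unitary, and because $P_{ac}(H)$ is an orthogonal projection commuting with $H$ (hence with $e^{-itH}$), one has the energy estimate $\|U(t)f\|_{L^{2}(\mathbf{R})}\le\|f\|_{L^{2}(\mathbf{R})}$ for all $t$, together with $U(t)U(s)^{*}=e^{-i(t-s)H}P_{ac}(H)$. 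Consequently Theorem \ref{thm-main results} yields the untruncated decay estimate
\[
\big\|U(t)U(s)^{*}g\big\|_{L^\infty(\mathbf{R})}=\big\|e^{-i(t-s)H}P_{ac}(H)g\big\|_{L^\infty(\mathbf{R})}\lesssim|t-s|^{-\frac14}\,\|g\|_{L^1(\mathbf{R})},\qquad t\neq s,
\]
which is precisely the hypothesis of the Keel--Tao theorem with decay parameter $\sigma=\tfrac14$.

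Given these two bounds, the abstract machinery of \cite{KeelTao} applies verbatim (note that for $\sigma=\tfrac14<1$ the excluded endpoint $(q,r,\sigma)=(2,\infty,1)$ in \eqref{admissblepair} plays no role, since $q=2$ is not $\tfrac14$-admissible in any case). First I would invoke the homogeneous estimate of \cite{KeelTao} to get \eqref{eq-homogeneous strichartz} for every $\tfrac14$-admissible pair $(q,r)$, and its dual ($TT^{*}$) formulation to get the ``dual homogeneous'' estimate \eqref{eq-dual homogeneous strichartz} for $(\tilde q,\tilde r)$. The same theorem provides the retarded inhomogeneous estimate
\[
\Big\|\int_{s<t}U(t)U(s)^{*}F(s,\cdot)\,ds\Big\|_{L_t^qL_x^r(\mathbf{R}\times\mathbf{R})}\lesssim\|F\|_{L_t^{\tilde q'}L_x^{\tilde r'}(\mathbf{R}\times\mathbf{R})},
\]
the truncation $s<t$ being handled by the Christ--Kiselev lemma, which is already incorporated in \cite{KeelTao} away from the double endpoint.

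For part (iii), I would write the Duhamel representation of the solution of \eqref{eq-nonlinear equation}, namely $\phi(t,\cdot)=e^{-itH}\phi_0-i\int_0^{t}e^{-i(t-s)H}F(s,\cdot)\,ds$, and apply $P_{ac}(H)$, which commutes with $e^{-itH}$, to get $P_{ac}(H)\phi(t,\cdot)=U(t)\phi_0-i\int_0^{t}U(t)U(s)^{*}F(s,\cdot)\,ds$. The first term is controlled in $L_t^qL_x^r$ by \eqref{eq-homogeneous strichartz} and the second by the retarded estimate above; adding the two bounds gives \eqref{eq-solution strichartz}. The only points needing care are purely formal: verifying the unitarity of $e^{-itH}$ and the commutation of $P_{ac}(H)$ so that $U(t)U(s)^{*}$ has the required convolution-in-time structure, and matching the decay rate $|t|^{-1/4}$ of \eqref{eq-main result} with the admissibility relation \eqref{admissblepair} for $\sigma=\tfrac14$. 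There is no genuine analytic obstacle here, since the substantive content is entirely contained in Theorem \ref{thm-main results}.
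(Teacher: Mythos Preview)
Your proposal is correct and follows exactly the approach indicated in the paper: the authors do not give a detailed proof of Corollary~\ref{cor-Strichartz} but simply state that it follows ``by Keel-Tao \cite{KeelTao} method'' from the $L^1$--$L^\infty$ decay estimate \eqref{eq-main result}. Your write-up supplies precisely the standard verification---unitarity of $e^{-itH}P_{ac}(H)$ on $L^2$, the dispersive bound with $\sigma=\tfrac14$, and the Duhamel formula for part (iii)---that the paper leaves implicit.
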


In order to derive the decay estimates \eqref{eq-main result}, it follows from Stone's formula  that
\begin{equation}\label{eq-Stone formula}
e^{-itH}P_{ac}(H)f=\frac{2}{\pi i}\int_0^\infty e^{-it\mu^4}\mu^3[R_V^+(\mu^4)-R_V^-(\mu^4)]f(x)d\mu.
\end{equation}
 Hence we need to study the detailed zero expansions of the resolvent $R_V(z)=(H-z)^{-1}$ by the perturbations of the free resolvent $R_0(z)$, for which the following identity holds:
\begin{equation}\label{eq-resolvent identity}
R_0(z):=\big(\Delta^2-z\big)^{-1}=\frac{1}{2z^{\frac{1}{2}}}\Big(R(-\Delta;z^{\frac{1}{2}})-R(-\Delta;-z^{\frac{1}{2}})\Big),\ \ \ z\in\mathbf{C}\setminus [0,\infty).
\end{equation}
Here the resolvent $R(-\Delta;z^{\frac{1}{2}}):=(-\Delta-z^{\frac{1}{2}})^{-1}.$
For $\lambda\in\mathbf{R}^{+},$ we define the limiting  resolvent operators by
\begin{equation}\label{eq-free resolvent-LAP}
R_0^\pm(\lambda):=R_0^\pm(\lambda\pm i0)=\lim_{\epsilon\rightarrow 0^+}(\Delta^2-(\lambda\pm i\epsilon))^{-1},
\end{equation}
\begin{equation}\label{eq-perturbation resolvent-LAP-1}
R_V^\pm(\lambda):=R_V^\pm(\lambda\pm i0)=\lim_{\epsilon\rightarrow 0^+}(H-(\lambda\pm i\epsilon))^{-1},
\end{equation}
By using the representation \eqref{eq-resolvent identity} for $R_0(z)$ with $z=\omega^4$ for $\omega$ in the first quadrant of the complex plane, and taking limits as $\omega\rightarrow\mu$ and $\omega\rightarrow i\mu$, we obtain
\begin{equation}\label{eq-free resolvent-LAP-2}
R_0^\pm(\mu^4)=\frac{1}{2\mu^2}\Big(R^\pm(-\Delta;\mu^2)-R(-\Delta;-\mu^2)\Big),\ \ \mu>0.
\end{equation}  	
It is well-known that by the limiting absorption principle ( see e.g. \cite{Agmon} ), $R^\pm(-\Delta;\mu^2)$ are well-defined as the bounded operators of $B(L_s^2, L^2_{-s})$ for any $s>1/2$, and therefore $R_0^\pm(\mu^4)$ are also well-defined between these weighted spaces. This limiting absorption property can be extended to $R_V^\pm(\mu^4)$ for $\mu>0$ for certain decaying and  bounded potentials ( see e.g. \cite{FSY} ).

The main challenge is to derive the asymptotic expansions of $R_V^\pm(\mu^4)$ as $\mu$ near zero. Recall that the Green's kernel of the free Laplace resolvent in dimension one( see \cite{ES1,JN} for example)
\begin{equation}\label{eq-resolvent of Laplacian}
R^\pm(-\Delta;\mu^2)(x,y)=\frac{\pm i}{2\mu}e^{\pm i\mu|x-y|}.
\end{equation}
Therefore, by \eqref{eq-free resolvent-LAP-2}, we have for $\mu>0$,
\begin{equation}\label{eq-free resolvent-LAP-3}
\begin{split}
R_0^\pm(\mu^4)(x,y)=&\frac{1}{4\mu^3}\Big(\pm ie^{\pm i\mu|x-y|}-e^{-\mu|x-y|}\Big).
\end{split}
\end{equation}
Denote $M^\pm(\mu)=U+vR_0^\pm(\mu^4)v,$ $v(x)=|V(x)|^{1/2}$ and $U=\text{sgn}\big(V(x)\big),$
by the following symmetric identity:
\begin{equation}\label{eq-symmetric identity}
R_V^\pm(\mu^4)=R_0^\pm(\mu^4)-R_0^\pm(\mu^4)v\Big(M^\pm(\mu)\Big)^{-1}vR_0^\pm(\mu^4).
\end{equation}
It suffices to establish the asymptotic expansions of $\Big(M^\pm(\mu)\Big)^{-1}$ with the presence of zero resonance. For this end, let's introduce some spaces  to derive the asymptotic expansions of $\Big(M^\pm(\mu)\Big)^{-1}$.

Let $T_0:= U+vG_0v$ with $G_0=\big(\Delta^2\big)^{-1}$ and  $P=\|V\|_{L^1(\mathbf{R})}^{-1}v\langle v,\cdot \rangle$ be the orthogonal projection onto the span of $v,$ i.e. $PL^2(\mathbf{R})=\text{span}\{v\}.$
\begin{definition}\label{defi-projection space}
Let $Q=I-P.$ Then  we define that

(i)~$S_0L^2(\mathbf{R})=\{f\in L^2~|~\langle  f,x^kv\rangle=0,\ k=0,1 \}.$

(ii)~$S_1L^2(\mathbf{R})=\{f\in L^2~|~\langle f,x^kv\rangle=0, \ k=0,1,\ \text{and}\ S_0T_0f=0 \}.$

(iii)~$S_2L^2(\mathbf{R})=\{f\in L^2~|~\langle f,x^kv\rangle=0, \ k=0,1,2,\ \text{and}\ QT_0f=0 \}.$

(iv)~$S_3L^2(\mathbf{R})=\{f\in L^2~|~\langle f,x^kv\rangle=0, \ k=0,1,2,3,\  \text{and}\ T_0f=0 \}.$
 \end{definition}

\begin{remark}\label{remark-equivalence}

(i) The projection spaces clearly satisfy the following equivalent relationships:
\begin{align*}
 f\in S_0L^2(\mathbf{R}) &\Longleftrightarrow f \in \text{span}\{v,xv\}^{\perp};\\
 f\in S_1L^2(\mathbf{R}) &\Longleftrightarrow f \in \text{span}\{v,xv\}^{\perp}\ \text{and}\ T_0f\in\text{span}\{v,xv\};\\
 f\in S_2L^2(\mathbf{R}) &\Longleftrightarrow f \in \text{span}\{v,xv,x^2v\}^{\perp}\ \text{and}\ T_0f\in\text{span}\{v\};\\
 f\in S_3L^2(\mathbf{R}) &\Longleftrightarrow f \in \text{span}\{v,xv,x^2v,x^3v\}^{\perp}\ \text{and}\ T_0f=0.
\end{align*}
Hence, $S_0L^2(\mathbf{R})\supseteq S_1L^2(\mathbf{R})\supseteq S_2L^2(\mathbf{R})\supseteq S_3L^2(\mathbf{R}).$

(ii) Denote by $S_j\ (j=0,1,2,3)$ the orthogonal projections onto the related to projection spaces $S_jL^2(\mathbf{R})$,  then their orthogonal relationships can be listed in the following {\bf{Table 1}}.
\begin{table}[h]\label{table-1}
\renewcommand\arraystretch{2}
\centering
\begin{tabular}{|p{4.5cm}<{\centering}|p{10cm}<{\centering}|}
  \specialrule{0.03em}{0pt}{0pt}
  {\bf Projections on $L^2(\mathbf{R})$}      & {\bf Orthogonal  Relationships}   \\
  \specialrule{0.03em}{0pt}{0pt}
  \makecell{$S_0$}      &  \makecell{$S_0(v(x))=S_0(xv(x))=0$} \\
  \specialrule{0.03em}{0pt}{0pt}
 \makecell{$S_1$}     &  \makecell{$S_1T_0S_0=S_0T_0S_1=0$ \\ $S_1(v(x))=S_1(xv(x))=0$   }\\
  \specialrule{0.03em}{0pt}{0pt}
  \makecell{$S_2$}      &  \makecell{$S_2T_0Q=QT_0S_2=0$ \\ $S_2(v(x))=S_2(xv(x))=S_2(x^2v(x))=0$    } \\
  \specialrule{0.03em}{0pt}{0pt}
 \makecell{$S_3$}     &  \makecell{$S_3T_0Q=QT_0S_3=0$\\
 $S_3T_0P=PT_0S_3=0$ \\ $S_3(v(x))=S_3(xv(x))=0$ \\ $S_3(x^2v(x))=S_3(x^3v(x))=0$}\\
  \specialrule{0.03em}{0pt}{0pt}
\end{tabular}
\vspace{0.2cm}
\caption{Orthogonal Relationships of Projections}
\end{table}

(iii) From Lemma \ref{lemma-S_3} later, we will further prove that $S_3L^2(\mathbf{R})=\{0\}$, which is important in the proof of Theorem \ref{thm-M-inverse} (iii), see Section \ref{section-M} below.
\end{remark}

These projection spaces in Definition \ref{defi-projection space} will  be used to characterize the zero energy resonance types of $H$ ( see  Theorem \ref{thm-resonance space} for the proof).
\begin{theorem}\label{thm-equivalence}
Assume that $H=\Delta^2+V$ with $|V(x)|\lesssim(1+|x|)^{-\beta}$ with some $\beta$ satisfying the condition \eqref{eq-beta}. Then the following statements hold:

(i)~zero is a regular point of $H$ if and only if $S_1L^2(\mathbf{R})=\{0\};$

(ii)~zero is a first kind resonance of $H$ if and only if $S_1L^2(\mathbf{R})\neq\{0\}$ and $S_2L^2(\mathbf{R})=\{0\};$

(iii)~zero is a second kind resonance of $H$ if and only if $S_2L^2(\mathbf{R})\neq\{0\}.$

\end{theorem}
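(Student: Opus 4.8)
## Proof strategy for Theorem \ref{thm-equivalence}

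The plan is to reduce the resonance classification of Definition \ref{defi-resonance-1} to the vanishing/non-vanishing of the projection spaces $S_jL^2(\mathbf{R})$ by passing through the invertibility theory of $M^\pm(\mu)$ near $\mu=0$ and the standard bijection between the generalized zero-energy kernel of $H$ and the kernel of $M^\pm(0)$. First I would recall the Feshbach–Schur (symmetric resolvent) machinery: by the identity \eqref{eq-symmetric identity}, the singular behaviour of $R_V^\pm(\mu^4)$ as $\mu\to 0$ is entirely controlled by $\big(M^\pm(\mu)\big)^{-1}$, and the order of the zero resonance is read off from how many successive Feshbach reductions (first by $Q=I-P$, then by $S_0$, $S_1$, $S_2$) are needed before one reaches an invertible operator. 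Concretely, expanding $R_0^\pm(\mu^4)(x,y)$ from \eqref{eq-free resolvent-LAP-3} in powers of $\mu$ produces $M^\pm(\mu)=T_0 + \mu\,(\cdots) + \mu^2(\cdots)+\dots$, where the coefficient operators involve the moments $\langle\,\cdot\,,x^k v\rangle$ (because $e^{\pm i\mu|x-y|}-e^{-\mu|x-y|}$ has a Taylor expansion whose low-order terms are polynomials in $x-y$, hence in the $x^k v$'s after pairing against $v$). This is exactly why the spaces $S_j$ are defined by killing moments $\langle f,x^kv\rangle$ together with conditions on $T_0 f$: $S_j$ is precisely the set of $f$ for which the first several terms in the $\mu$-expansion of $M^\pm(\mu)f$ vanish.

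The core of the argument is the chain of Feshbach inversions, which I would organize as a sequence of lemmas (these are in fact carried out in Section \ref{section-M}, so here I only need to invoke the resulting invertibility statements and extract the logical equivalences):
\begin{itemize}
\item Show $Q M^\pm(\mu) Q$ is invertible on $QL^2$ for small $\mu$; the Feshbach reduction then leaves an operator on $PL^2$ whose leading behaviour determines whether the first reduction already yields an invertible piece. When $S_1L^2=\{0\}$ this reduced operator is invertible (after extracting the explicit singular prefactor coming from the $\mu^{-3}$ scaling of $R_0^\pm$), so $R_V^\pm(\mu^4)$ has only the "regular" expansion; conversely, a nonzero element of $S_1L^2$ obstructs this invertibility and forces extra singular terms, i.e. a resonance.
\item Iterate: assuming $S_1L^2\neq\{0\}$, perform the next Feshbach step on $S_0 M^\pm(\mu) S_0$; invertibility of the $S_1$-compressed piece is equivalent to $S_2L^2=\{0\}$, which is the borderline between first-kind and second-kind resonance.
\item Match each level of this hierarchy to Definition \ref{defi-resonance-1} by identifying the kernel of the relevant compressed operator with a space of distributional solutions $H\phi=0$ lying in a prescribed weighted space $W_\sigma(\mathbf{R})$. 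This is the classical step: $f\in \ker\big(S_j M^\pm(0)S_j\big)$ corresponds to $\phi = -G_0 v f$ (or an analogous explicit formula using the expansion of $R_0$), and the growth rate of $\phi$ at infinity — whether $\phi\sim c|x|+d$, or $\phi\to$ const, or $\phi\in L^2$ — is governed exactly by which moments $\langle f,x^kv\rangle$ vanish. The membership $\phi\in W_{3/2}$ versus $W_{1/2}$ versus $L^2$ translates, via $|x|\notin W_{1/2}$ but $|x|\in W_{3/2}$ and $1\notin L^2$ but $1\in W_{1/2}$ in dimension one, into the moment conditions defining $S_0,S_1,S_2$. Invoking Theorem \ref{thm-resonance space} (referenced in the excerpt) supplies precisely this dictionary, so statements (i)–(iii) follow by reading off the three cases.
\end{itemize}

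For the precise bookkeeping I would also use Remark \ref{remark-equivalence}(iii), i.e. $S_3L^2(\mathbf{R})=\{0\}$ (proved via Lemma \ref{lemma-S_3} from the faster decay of $V$ and the absence of a zero eigenvalue), so that the hierarchy terminates and there is no fourth case to consider; this is what makes the trichotomy in the theorem exhaustive under the hypothesis \eqref{eq-beta}. Assembling the pieces: (i) "$S_1L^2=\{0\}\Leftrightarrow$ no nonzero $\phi\in W_{3/2}$ with $H\phi=0$ beyond what $S_0$ already excludes $\Leftrightarrow$ zero regular"; (ii) "$S_1L^2\neq\{0\}$ but $S_2L^2=\{0\}\Leftrightarrow$ there is $\phi\in W_{3/2}\setminus W_{1/2}\Leftrightarrow$ first kind"; (iii) "$S_2L^2\neq\{0\}\Leftrightarrow$ there is $\phi\in W_{1/2}$ (and since $S_3L^2=\{0\}$, no $L^2$ solution) $\Leftrightarrow$ second kind".

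The main obstacle is the bookkeeping in the Feshbach iteration: one must track the exact powers of $\mu$ and the explicit singular coefficients (the $\mu^{-3}$, $\mu^{-2}$, $\mu^{-1}$ prefactors coming from \eqref{eq-free resolvent-LAP-3}) through each compression, verify that at each stage the "diagonal block" one inverts is genuinely invertible on the relevant subspace with bounds uniform in $\mu$, and confirm that the orthogonality relations in Table 1 are exactly what is needed to make the compressed operators well-defined and to decouple the blocks. The analytic inputs (limiting absorption for $R_0^\pm$, the mapping properties of $vR_0^\pm(\mu^4)v$ between weighted spaces, and the requisite decay $\beta$ in \eqref{eq-beta} to justify the Taylor expansion of the kernel against $v$) are routine given the earlier sections; the delicate point is purely the algebra of the nested Feshbach formula and matching its degenerate cases to the weighted-space growth dichotomies that define the three resonance types.
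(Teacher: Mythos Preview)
Your proposal is correct in its conclusion but takes an unnecessarily circuitous route compared to the paper. You frame the Feshbach--Schur iteration for $M^\pm(\mu)$ as ``the core of the argument,'' but the paper does not use that machinery at all to prove Theorem~\ref{thm-equivalence}. The Feshbach reductions you describe are the engine behind Theorem~\ref{thm-M-inverse} (the asymptotic expansion of $(M^\pm(\mu))^{-1}$), which is logically independent of the resonance classification. The paper's actual proof of Theorem~\ref{thm-equivalence} is simply to establish Theorem~\ref{thm-resonance space} directly: for $f\in S_jL^2(\mathbf{R})$ one writes $\phi=-G_0vf+c_1x+c_2$ explicitly, uses the moment conditions $\langle f,x^kv\rangle=0$ to rewrite $G_0vf$ with a kernel of the form $K_j(x,y)=|x-y|^3-|x|^3+3|x|xy-\cdots$, and bounds this kernel pointwise to place $\phi$ in the correct $W_\sigma(\mathbf{R})$; the converse direction recovers the moment conditions by pairing $vf=-\Delta^2\phi$ against cutoff functions $\eta(\delta y)$ and letting $\delta\to 0$. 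Once Theorem~\ref{thm-resonance space} is in hand, the trichotomy (i)--(iii) follows immediately from Definition~\ref{defi-resonance-1} together with $S_3L^2(\mathbf{R})=\{0\}$.

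So your third bullet is the whole proof; the first two bullets (Feshbach inversion of $M^\pm(\mu)$ on $QL^2$, then on $S_0L^2$, etc.) are not needed here and would only duplicate material already proved elsewhere for a different purpose. A minor technical slip: your expansion $M^\pm(\mu)=T_0+\mu(\cdots)+\mu^2(\cdots)+\dots$ omits the genuinely singular terms $\frac{\widetilde{a}^\pm}{\mu^3}P+\frac{a_{-1}^\pm}{\mu}vG_{-1}v$ that drive the whole Feshbach hierarchy, so even as a sketch of Theorem~\ref{thm-M-inverse} it is incomplete. For Theorem~\ref{thm-equivalence} specifically, though, none of this matters---the argument is purely the direct kernel estimate on $G_0vf$ and the weighted-space bookkeeping you allude to in your third bullet.
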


By the characterizations of zero resonances in Theorem \ref{thm-equivalence} and using the orthogonal relationships in {\bf{Table 1}},  we can derive the asymptotic expansions of $\Big(M^{\pm}(\mu)\Big)^{-1}$, which are fundamental to study dispersive estimates and other problems related to $H$.

\begin{theorem}\label{thm-M-inverse}Assume that $|V(x)|\lesssim (1+|x|)^{-\beta}$ with some $\beta>0$.  Then for  $0<|\mu|\ll1$, we have the following expansions of $\Big(M^\pm(\mu)\Big)^{-1}$ in $L^2(\mathbf{R})$:

(i)~If zero is a regular point of  $H$ and $\beta>13,$ then
\begin{equation}\label{eq-M-inverse-Regular Case}
\begin{split}
\Big(M^\pm(\mu)\Big)^{-1}=&S_0A_{01}^0S_0+\mu QA_{11}^0Q+\mu^2(QA_{21}^0Q+S_0A_{22}^0+A_{23}^0S_0)+\Gamma_3^0(\mu),
\end{split}
\end{equation}
where $A_{kj}^0$ are $\mu$-independent bounded operators in $L^2(\mathbf{R})$  and  $\Gamma_3^0(\mu)$ is $\mu$-dependent operator which satisfies
\[\big\|\Gamma_3^0(\mu)\big\|_{L^2\rightarrow L^2}+\mu\big\|\partial_\mu\big(\Gamma_3^0(\mu)\big)\big\|_{L^2\rightarrow L^2}\lesssim \mu^3.\]

(ii)~If zero is a first kind resonance of $H$ and $\beta>17,$ then
\begin{equation}\label{eq-M-inverse-First Kind}
\begin{split}
\Big(M^\pm(\mu)\Big)^{-1}=&\frac{S_1A_{-1,1}^1S_1}{\mu}+S_0A_{01}^1Q+QA_{02}^1S_0+\mu \big(QA_{11}^1Q+S_0A_{12}^1+A_{13}^1S_0\big)\\
&+\mu^2(QA_{21}^1+A_{22}^1Q)+\Gamma_3^1(\mu),
\end{split}
\end{equation}
where $A_{kj}^1$ are $\mu$-independent bounded operators in $L^2(\mathbf{R})$  and  $\Gamma_3^1(\mu)$ is $\mu$-dependent operator which satisfies
\[\big\|\Gamma_3^1(\mu)\big\|_{L^2\rightarrow L^2}+\mu\big\|\partial_\mu\big(\Gamma_3^1(\mu)\big)\big\|_{L^2\rightarrow L^2}\lesssim \mu^3.\]

(iii)~If zero is a second kind  resonance of $H$ and  $\beta>25,$ then
\begin{equation}\label{eq-M-inverse-Second Kind}
\begin{split}
\Big(M^\pm(\mu)\Big)^{-1}=&\frac{S_2A_{-3,1}^2S_2}{\mu^3}+\frac{S_2A_{-2,1}^2S_0+S_0A_{-2,2}^2S_2}{\mu^{2}}+\frac{S_0A_{-1,1}^2S_0+QA_{-1,2}^2S_2+S_2A_{-1,3}^2Q}{\mu}\\
&\ \ +\big(QA_{01}^2S_0+S_0A_{02}^2Q+S_2A_{03}^2+A_{04}^2S_2\big)+\mu\big(QA_{11}^2Q+S_0A_{12}^2+A_{13}^2S_0\big)\\
&\ \ +\mu^2(QA_{21}^2+A_{21}^2Q)+\Gamma_3^2(\mu),
\end{split}
\end{equation}
where $A_{kj}^2$  are $\mu$-independent bounded operators in $L^2(\mathbf{R})$  and  $\Gamma_3^2(\mu)$ is  $\mu$-dependent operator which satisfies
\[\big\|\Gamma_3^2(\mu)\big\|_{L^2\rightarrow L^2}+\mu\big\|\partial_\mu\big(\Gamma_3^2(\mu)\big)\big\|_{L^2\rightarrow L^2}\lesssim \mu^3.\]
\end{theorem}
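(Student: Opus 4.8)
The strategy is the classical one for threshold expansions: first obtain an expansion of $M^\pm(\mu)=U+vR_0^\pm(\mu^4)v$ as $\mu\to 0$ from the explicit kernel \eqref{eq-free resolvent-LAP-3}, and then invert it by an iterated Feshbach/Schur‑complement reduction of Jensen--Nenciu type \cite{JN}, in which the projections $P,Q,S_0,S_1,S_2$ appear successively as the kernels of the leading symbols and $S_3L^2(\mathbf{R})=\{0\}$ forces the iteration to stop.

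\textbf{Step 1 (expansion of $M^\pm(\mu)$).} Taylor‑expanding $\pm i e^{\pm i\mu r}-e^{-\mu r}$ in $\mu$ one finds that its coefficients at orders $\mu^{1}$ and $\mu^{5}$ vanish identically; feeding this into \eqref{eq-free resolvent-LAP-3} gives, for $0<|\mu|\ll1$,
\[
M^\pm(\mu)=\frac{c^\pm}{\mu^{3}}P+\frac{1}{\mu}\Lambda^\pm+T_0+\mu\,\Lambda_1^\pm+\mu^{3}\Lambda_3^\pm+\cdots+\Gamma(\mu),
\]
where $c^\pm=\tfrac{(\pm i-1)\|V\|_{L^1}}{4}\neq0$, $T_0=U+vG_0v$ is the $\mu$‑independent term, $\Lambda^\pm$ has kernel $\propto|x-y|^{2}v(x)v(y)$ and hence rank at most $3$ with range in $\operatorname{span}\{v,xv,x^2v\}$, and the remaining coefficients are finite‑rank operators with range in $\operatorname{span}\{v,xv,\dots,x^{N}v\}$. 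The number of explicit terms we keep and the size $O(\mu^{3})$ of the remainder $\Gamma(\mu)$ (together with the bound $\mu\|\partial_\mu\Gamma(\mu)\|\lesssim\mu^{3}$) are dictated by the order claimed in the theorem; estimating the remainder operators in Hilbert--Schmidt norm requires $\langle x\rangle^{N}v\in L^2(\mathbf{R})$ up to an $N$ of order $6$ in the regular case, and larger in the resonance cases where negative powers of $\mu$ are multiplied back in, which is precisely what the hypotheses $\beta>13$ (resp. $17$, $25$) provide.

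\textbf{Step 2 (the iteration).} Factoring $M^\pm(\mu)=\tfrac{c^\pm}{\mu^{3}}\bigl(P+\tfrac{\mu^{2}}{c^\pm}\Lambda^\pm+\tfrac{\mu^{3}}{c^\pm}T_0+\cdots\bigr)$ and applying the Jensen--Nenciu inversion lemma with respect to the rank‑one projection $P$ (so the relevant kernel is $QL^2(\mathbf{R})$), the $\mu^{-3}$ singularity cancels because $QPQ=0$, and since the bracket has a gap at order $\mu^{1}$ a factor $\mu^{2}$ is extracted; the leading symbol of the resulting Schur complement on $QL^2(\mathbf{R})$ is the rank‑one operator $\propto (Qxv)\langle Qxv,\cdot\rangle$ (nonzero since $xv\notin\operatorname{span}\{v\}$), whose kernel is exactly $S_0L^2(\mathbf{R})$. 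A second Feshbach step with respect to $S_0$ extracts a factor $\mu$ and reduces matters to inverting an operator on $S_0L^2(\mathbf{R})$ with leading symbol $\propto S_0T_0S_0$; by Theorem \ref{thm-equivalence} this is invertible iff $S_1L^2(\mathbf{R})=\{0\}$, i.e. iff zero is regular, in which case the recursion stops. Unwinding the two Feshbach formulas, collecting powers of $\mu$ (the factors $\mu^{3}$, $\mu^{-2}$, $\mu^{-1}$ cancel in the leading term, which is therefore $\mu$‑independent), and simplifying the surviving operators by the orthogonality relations of Table 1 yields \eqref{eq-M-inverse-Regular Case}; the absence of a $\mu^{0}$ term of the form $PAP$ reflects that the $PL^2(\mathbf{R})$‑block of $(M^\pm(\mu))^{-1}$ is $O(\mu^{3})$ and is absorbed into $\Gamma_3^0(\mu)$. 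If $S_1L^2(\mathbf{R})\neq\{0\}$ one carries out a further Feshbach step with respect to $S_1$ (legitimate since $S_0T_0S_0+S_1$ is invertible on $S_0L^2(\mathbf{R})$); because $S_1T_0S_0=S_0T_0S_1=0$ and $S_1T_0S_1=0$ the leading symbol of the new Schur complement vanishes, a factor $\mu$ is pulled out, and the next leading symbol is (a multiple of) $S_1\bigl((S_0x^2v)\langle S_0x^2v,\cdot\rangle\bigr)S_1$ corrected by a term built from $QT_0$, whose kernel is exactly $S_2L^2(\mathbf{R})$. If $S_2L^2(\mathbf{R})=\{0\}$ the recursion stops and produces the simple pole in \eqref{eq-M-inverse-First Kind}; the block positions of the coefficients $A^{j}_{k\ell}$ and the error $\Gamma_3^{j}(\mu)$ are read off from which projections occur in the corresponding factors of the (finite) Feshbach chain.

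\textbf{Step 3 (the second‑kind case) and the main obstacle.} When $S_2L^2(\mathbf{R})\neq\{0\}$ one performs one more Feshbach reduction with respect to $S_2$; here the key point is that the $\mu^{1}$‑coefficient of the relevant Schur complement, restricted to $S_2L^2(\mathbf{R})$, vanishes — a propagation, through Table 1, of the vanishing $\mu^{5}$‑coefficient found in Step 1 — so that a factor $\mu^{2}$, rather than $\mu$, has to be extracted; this is exactly why the leading singularity in \eqref{eq-M-inverse-Second Kind} is of order $\mu^{-3}$ and not $\mu^{-2}$. The final leading symbol, an operator on $S_2L^2(\mathbf{R})$, is invertible because its kernel is $S_3L^2(\mathbf{R})=\{0\}$ by Lemma \ref{lemma-S_3}, so the chain terminates, and collecting powers of $\mu$ through the whole chain gives \eqref{eq-M-inverse-Second Kind}. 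The conceptual scheme above is routine; the real labour — and the place where almost all care is needed — is precisely this bookkeeping: verifying that the kernels of the successive leading operators are exactly $S_0L^2(\mathbf{R})$, $S_1L^2(\mathbf{R})$, $S_2L^2(\mathbf{R})$, determining at each stage how many powers of $\mu$ are extracted (this is where the accidental vanishing of the $\mu^{1}$‑ and $\mu^{5}$‑Taylor coefficients and the Table 1 relations are indispensable, and where the jump in pole order between the first‑ and second‑kind cases comes from), and propagating the $O(\mu^{3})$ bounds together with the $\mu\,\partial_\mu$ refinement through all the algebra, which is what pins down the decay thresholds in \eqref{eq-beta}.
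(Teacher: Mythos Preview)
Your proposal is correct and follows essentially the same approach as the paper: expand $M^\pm(\mu)$ from the explicit free resolvent kernel (the paper's Lemma~\ref{lemma-M expansion}), then run an iterated Jensen--Nenciu/Feshbach reduction (the paper's Lemma~\ref{Feshbach-formula}) through the chain $P\to Q\to S_0\to S_1\to S_2$, identifying the successive kernels via Lemma~\ref{lemma-projection space} and terminating by $S_3L^2(\mathbf{R})=\{0\}$ (Lemma~\ref{lemma-S_3}). The paper carries this out with fully explicit formulas for the intermediate operators $B_k^\pm,B_k^0,B_k^1,B_k^2$ and verifies the crucial cancellation $S_2B_1^2S_2=0$ directly from the orthogonality relations \eqref{eq-orthogonality-2}--\eqref{eq-orthogonality-5}, whereas you sketch the mechanism more conceptually; but the route is the same.
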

\begin{remark}\label{remark-M inverse-expansion}
\vskip0.3cm
If we assume the more decay rate of $V(x)$, then we will have more information about the error term $\Gamma_3^i(\mu)$ ( here the  index $i$ respects to the zero energy type). More specifically, one actually can obtain that
\[\Gamma_3^i(\mu)=\frac{\mu^3}{\tilde{a}^\pm}P+\mu^3(QA_{31}^i+A_{32}^iQ)+\Gamma_4^i(\mu),\]
where $A_{31}^i$ are $\mu$-independent bounded operators in $L^2(\mathbf{R})$  and  $\Gamma_4^i(\mu)$ are $\mu$-dependent operators which satisfy
\[\big\|\Gamma_4^i(\mu)\big\|_{L^2\rightarrow L^2}+\mu\big\|\partial_\mu\big(\Gamma_4^i(\mu)\big)\big\|_{L^2\rightarrow L^2}\lesssim \mu^4.\]
\end{remark}
\begin{remark}\label{remark-M inverse-expansions}
In addtion, assuming the less decay rate of $V(x)$, we can also obtain some weak expansions of $\big(M^\pm(\mu)\big)^{-1}$ by checking the proof process of Theorem \ref{thm-M-inverse} (see Section \ref{section-M}) along with Remark \ref{remark-M-expansion}. Specifically, in the \textbf{Regular Case}, it is enough for $\beta>7$ to obtain the following expansions:
\[\Big(M^\pm(\mu)\Big)^{-1}=S_0\Gamma_{01}^0(\mu)S_0+ Q\Gamma_{11}^0(\mu)Q+(Q\Gamma_{21}^0(\mu)Q+S_0\Gamma_{22}^0(\mu)+\Gamma_{23}^0(\mu)S_0)+\Gamma_{31}^0(\mu),\]
where $\Gamma_{kj}^0(\mu) (k=0,1,2,3)$ are $\mu$-dependent operators which satisfy
\[\big\|\Gamma_{kj}^0(\mu)\big\|_{L^2\rightarrow L^2}+\mu\big\|\partial_\mu\big(\Gamma_{kj}^0(\mu)\big)\big\|_{L^2\rightarrow L^2}\lesssim \mu^k.\]
Furthermore, similar weak expansions of $\big(M^\pm(\mu)\big)^{-1}$ can be established for the different resonance cases if $V$ satisfies the following decay rate:
\[\beta>\begin{cases}
9, & \text{zero is the first kind resonance},\\
13, & \text{zero is the second kind resonance}.
\end{cases}\]
According to the proof of Theorem \ref{thm-main results}, we remark that even if we use the above weak form expansion of $\big(M^\pm(\mu)\big)^{-1},$  the $L^1-L^\infty$ decay estimates \eqref{eq-main result} of $e^{-itH} $ also hold. But the expansions of  Theorem \ref{thm-M-inverse} will be helpful to other problems  such as the $L^p$ boundedness of the wave operators.
\end{remark}

From Theorem \ref{thm-M-inverse} above,  it is obvious that all types of zero energy resonances lead to the higher singularity than the regular case in the expansions of $\Big(M^\pm(\mu)\Big)^{-1}$. On the other hand, the zero energy resonances also offer more orthogonal relationships, which in turn provide the more cancellations to balance the singularities at the cost of the decay rate of potentials, as shown in the proof of Theorem \ref{thm-main results} below.

  We also remark that our analysis for the proof of Theorem \ref{thm-M-inverse} is longer and more complicated processes in some ways. First, some singular terms in the expansions of the free resolvent $R_0^\pm(\mu^4)$ will bring more complicated interacting terms in the expansions of $\Big(M^\pm(\mu)\Big)^{-1}$, see Section \ref{section-M}.
   Then these interacting terms will lead to the difficulties of the inversion processes, especially for the second kind resonance case. For instance, in the case of the second kind resonance , we need to analyze the kernel of the non-positive operator $T_2$ given by
\begin{align*}
T_2&=S_2vG_2vS_2-\frac{8\cdot(6!)}{\|V\|_{L^1}}S_2T_0^2S_2+\frac{5}{2}S_2vG_1vD_0vG_1vS_2\\
&+\frac{48\cdot(6!)}{\|V\|_{L^1}^2}\Big(S_2T_0vG_{-1}v-\frac{\|V\|_{L^1}}{6}S_2vG_1vD_0T_0\Big)D_2\Big(v G_{-1}v T_0S_2-\frac{\|V\|_{L^1}}{6}T_0D_0vG_1vS_2\Big),
\end{align*}
  which is actually a sum of the four complicated operators with changing signs. We will identify the $\ker T_2$ as the projection space $S_3L^2(\mathbf{R})$, see Lemma \ref{lemma-projection space} for more details.

Finally, let us comment about the proof of the decay estimates in Theorem \ref{thm-main results}.

Since the derivative of $\mu $ for the free resolvent kernel $R_0^\pm(\mu^4)(x,y)$ can not be dominated by $\mu^{\alpha}$ in $x,y$ uniformly, in this paper we have used Littlewood-Paley decompositions and oscillatory integrals theory to solve the uniformity problem, which seems to be general interesting technical point.
Using some similar methods but with important technical modifications, Li, Soffer and Yao \cite{LSY} have established the $L^1-L^\infty$ estimates for $e^{-itH}$ in dimension two.  We also think that these arguments used here could be extended to general higher-order elliptic operators $P(D)+V$.

The paper is organized as follows. In Section 2, we give the proof of Theorem \ref{thm-main results}.  In Section 3, we give the process how to obtain the asymptotic expansion of $\Big(M^\pm(\mu)\Big)^{-1}$ case by case. In Section 4, we characterize the resonance spaces for each kind of zero resonance.

\section{Proof of Theorem \ref{thm-main results}}
\label{section2}
In this section, we will give the proof of Theorem \ref{thm-main results}. Choose a fixed even function $\varphi\in C_c^\infty(\mathbf{R})$ such that
\begin{equation}\label{eq-varphi}
\varphi(s)=\begin{cases}
1,\ &\ \text{if}\ |s|\le\frac{1}{2},\\
0,\ &\ \text{if}\ |s|\ge 1.
\end{cases}
\end{equation}
For each $N\in\mathbf{Z},$ let $\varphi_N(s)=\varphi(2^{-N}s)-\varphi(2^{-N+1}s),$ then $\text{supp}\ \varphi_N\subset[2^{N-2},2^N]$ and $\varphi_N(s)=\varphi_0(2^{-N}s).$ Furthermore, for any $s\neq 0$ we have
$\displaystyle\sum_{N=-\infty}^{+\infty}\varphi_N(s)=1$.
Using functional calculus and Stone's formula, we have
\begin{align}\label{Stone formula}
e^{-itH}P_{ac}(H)f=&\frac{2}{\pi i}\int_0^\infty e^{-it\mu^4}\mu^3[R_V^+(\mu^4)-R_V^-(\mu^4)]fd\mu\\ \nonumber
=&\frac{2}{\pi i}\int_0^\infty e^{-it\mu^4}\sum_{N=-\infty}^{+\infty}\varphi_N(\mu)\mu^3[R_V^+(\mu^4)-R_V^-(\mu^4)]fd\mu\\ \nonumber
:=&\frac{2}{\pi i}\int_0^\infty e^{-it\mu^4}\chi(\mu)\mu^3[R_V^+(\mu^4)-R_V^-(\mu^4)]fd\mu\\ \nonumber
&\ +\frac{2}{\pi i}\int_0^\infty e^{-it\mu^4}\widetilde{\chi}(\mu)\mu^3[R_V^+(\mu^4)-R_V^-(\mu^4)]fd\mu,
\end{align}
where $\chi(\mu)=\sum\limits_{N=-\infty}^{N_0}\varphi_N(\mu)$ and $\widetilde{\chi}(\mu)=\sum\limits_{N=N_0+1}^{+\infty}\varphi_N(\mu)$ for some $N_0<0$ ( fixed later but depending on specific zero energy types).
Hence we reduce the proof of Theorem \ref{thm-main results} into the estimates of the following two parts: low and high energy parts.
\begin{theorem}\label{thm-low energy}
\textbf{(the low energy dispersive estimate)}\\
Let $|V(x)|\lesssim(1+|x|)^{-\beta}$ with some $\beta>0$ depending on the following zero energy types:
\[\beta>\begin{cases}
13, & \text{zero is the regular point},\\
17, & \text{zero is the first kind resonance},\\
25, & \text{zero is the second kind resonance}.
\end{cases}\] Assume that $H=\Delta^2+V$ has no positive embedded eigenvalue and $P_{ac}(H)$ denotes the projection onto the absolutely continuous spectrum space of $H,$
then  the following estimate holds:
\begin{equation}\label{eq-low result-1}
\|e^{-itH}P_{ac}(H)\chi(H)\|_{L^1(\mathbf{R})\rightarrow L^\infty(\mathbf{R})}\lesssim |t|^{-\frac{1}{4}}.
\end{equation}
Moreover, for any $0\le \alpha<1,$ we have the following estimate:
\begin{equation}\label{eq-regularity estimate-low}
\big\|H^{\frac{\alpha}{4}} e^{-itH}P_{ac}(H)\chi(H)\big\|_{L^1(\mathbf{R})\rightarrow L^\infty(\mathbf{R})}\lesssim |t|^{-\frac{1+\alpha}{4}}.
\end{equation}
\end{theorem}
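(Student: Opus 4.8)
The strategy is to feed the resolvent expansions of Theorem \ref{thm-M-inverse} into the symmetric identity \eqref{eq-symmetric identity} and then into the Stone formula \eqref{eq-Stone formula}, localized by $\chi(\mu)$. After substituting $R_V^\pm(\mu^4)=R_0^\pm(\mu^4)-R_0^\pm(\mu^4)v(M^\pm(\mu))^{-1}vR_0^\pm(\mu^4)$, the free part $\int_0^\infty e^{-it\mu^4}\chi(\mu)\mu^3\big(R_0^+(\mu^4)-R_0^-(\mu^4)\big)\,d\mu$ is handled directly from \eqref{eq-free resolvent-LAP-3}, where it reduces to an oscillatory integral whose kernel is bounded by $|t|^{-1/4}$ (and whose $H_0^{\alpha/4}$-version gains $|t|^{-\alpha/4}$ by an extra factor $\mu^\alpha$); this is essentially the $V=0$ estimate \eqref{eq-Ben-Artzi-2} with a low-frequency cutoff. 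The genuine work is the perturbative term. One writes it schematically as
\[
\int_0^\infty e^{-it\mu^4}\chi(\mu)\mu^3\,\big[R_0^+ v (M^+)^{-1} v R_0^+ - R_0^- v (M^-)^{-1} v R_0^-\big]\,d\mu,
\]
and inserts, case by case (regular, first-kind, second-kind), the expansion \eqref{eq-M-inverse-Regular Case}, \eqref{eq-M-inverse-First Kind} or \eqref{eq-M-inverse-Second Kind} for $(M^\pm(\mu))^{-1}$. Each resulting summand is a finite-rank-type operator sandwiched between $R_0^\pm(\mu^4)v$ and $vR_0^\pm(\mu^4)$, carrying a fixed power $\mu^k$ (for $k$ ranging from the most singular, $-3$ in the second-kind case, up to $2$) plus an error $\Gamma_3^i(\mu)$ with $\|\Gamma_3^i(\mu)\|+\mu\|\partial_\mu\Gamma_3^i(\mu)\|\lesssim \mu^3$.

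The key mechanism is a \emph{cancellation between the $+$ and $-$ branches}: for a term with projection $S$ of the form $\mu^k R_0^\pm(\mu^4)v\,S A S\,v R_0^\pm(\mu^4)$, one does not estimate the $+$ and $-$ pieces separately (they are too singular in $\mu$ near $0$); instead one exploits that $S$ kills $v,xv,x^2v,\dots$ up to the order recorded in Definition \ref{defi-projection space} and Table~1, so that the leading (most singular in $\mu$) parts of the Taylor expansions of $R_0^\pm(\mu^4)v$ — namely the $\mu^{-3},\mu^{-2},\dots$ coefficients coming from \eqref{eq-free resolvent-LAP-3}, which are polynomials in $|x-y|$ paired against $v$ — are annihilated. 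What survives, after also differencing in $\pm$, is an integrand that is, together with its $\mu$-derivative, controlled by an admissible power of $\mu$, so that the $\mu$-integral $\int_0^\infty e^{-it\mu^4}g(\mu)\,d\mu$ can be estimated by $|t|^{-1/4}$ via a van der Corput / stationary-phase argument (integration by parts once in $\mu$, using $\partial_\mu e^{-it\mu^4}=-4it\mu^3 e^{-it\mu^4}$, and the dyadic Littlewood–Paley pieces $\varphi_N$ to make the bounds uniform in $x,y$). This is exactly the balancing of ``higher singularity versus more orthogonality'' flagged in the introduction: each extra negative power of $\mu$ demanded by a deeper resonance is paid for by an extra vanishing moment of the potential, which is why $\beta$ must be taken larger ($13,17,25$) — one needs enough decay of $v$ to justify the required number of Taylor terms of $R_0^\pm(\mu^4)v$ in the weighted spaces $B(s,s')$, using that $\langle x\rangle^{j} v\in L^2$ for $j$ up to roughly $\beta/2$.

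I would carry out the steps in this order: (1) dispose of the free term and the $H_0^{\alpha/4}$-regularized version; (2) reduce the perturbative term to a finite sum of model integrals $I_N^{k,S}(t;x,y)=\int_0^\infty e^{-it\mu^4}\varphi_N(\mu)\mu^{3+k}\big[K^+_{k,S}(\mu;x,y)-K^-_{k,S}(\mu;x,y)\big]\,d\mu$ where $K^\pm$ collects $R_0^\pm v\,(\cdot)\,v R_0^\pm$; (3) prove the pointwise bound $\sup_{x,y}|K^\pm_{k,S}(\mu;x,y)|+\mu|\partial_\mu K^\pm_{k,S}|\lesssim \mu^{-k}$ (or the differenced version when $k<0$, using the moment conditions from Table~1 to cancel the offending low powers), which is where the weighted-space mapping properties of $R_0^\pm(\mu^4)v$ and the decay of $V$ enter; (4) apply the oscillatory-integral lemma: $|I_N^{k,S}|\lesssim \min(2^{4N}|t|^{-1},1)\cdot 2^{N}$ per dyadic block, sum over $N\le N_0$, obtaining $|t|^{-1/4}$, and similarly $|t|^{-(1+\alpha)/4}$ after inserting the extra $H^{\alpha/4}\sim\mu^\alpha$; (5) note that the error terms $\Gamma_3^i(\mu)$, being $O(\mu^3)$ with $O(\mu^2)$ derivative, are the tamest and handled by the same block estimate. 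The main obstacle, as the authors themselves emphasize, is step (3)–(4) combined: the $\mu$-derivative of $R_0^\pm(\mu^4)(x,y)$ from \eqref{eq-free resolvent-LAP-3} carries factors of $|x-y|$ that are \emph{not} uniformly bounded in $x,y$, so one cannot naively differentiate under the integral sign; the fix is precisely the Littlewood–Paley decomposition plus a careful oscillatory-integral estimate on each block $\varphi_N$ (so that on $\mathrm{supp}\,\varphi_N$ one has $\mu\sim 2^N$ and the dangerous factor $|x-y|$ is paired against either an oscillation $e^{\pm i\mu|x-y|}$ of the right frequency or a decaying Gaussian $e^{-\mu|x-y|}$), which is the technical heart of the argument and the reason the paper devotes Section 2 to it.
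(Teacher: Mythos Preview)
Your overall architecture matches the paper's: Stone formula, symmetric identity, insert the $(M^\pm)^{-1}$ expansions term by term, use the orthogonality relations of $Q,S_0,S_1,S_2$ to gain powers of $\mu$ from the free resolvent factors, and close with a dyadic van der Corput estimate summed via Lemma~\ref{lemma-low sum}. The free piece, the $\Gamma_3^i$ error, and the $H^{\alpha/4}$ upgrade are all handled exactly as you describe.

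There is, however, one misconception worth correcting. You write that ``one does not estimate the $+$ and $-$ pieces separately (they are too singular in $\mu$ near $0$)'' and that the key mechanism is ``cancellation between the $+$ and $-$ branches.'' The paper does \emph{not} use any $\pm$ differencing in the perturbative terms: each of Propositions~\ref{pro-low energy-regular case}--\ref{pro-low energy-second kind case} bounds the $+$ and $-$ contributions \emph{separately}. The entire cancellation comes from the moment conditions alone. Concretely, writing $R_0^\pm(\mu^4)(x,y)=\frac{1}{4\mu^3}F^\pm(\mu|x-y|)$ with $F^\pm(p)=\pm i e^{\pm ip}-e^{-p}$, one applies the Taylor-type identity of Lemma~\ref{lemma-low-energy} to $F^\pm(\mu|x-y_j|)$; the first several terms of that expansion are polynomials in $y_j$ times functions of $x$ only, and these are killed by $S_0(v)=S_0(xv)=0$ (or $S_2(v)=S_2(xv)=S_2(x^2v)=0$, etc.) when integrated against $v(y_j)S_j(\cdot)$. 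The surviving Taylor remainder carries the needed extra power $\mu^2$ (for $S_0$) or $\mu^3$ (for $S_2$) on each side, which already renders each $\pm$ integrand separately $O(\mu^{\ge 0})$ on dyadic blocks --- no $\pm$ subtraction is required. If you tried to rely on $\pm$ differencing you would find it contributes nothing useful here (the leading $\mu^{-3}$ singularity of $R_0^\pm$ does not improve under the difference), so drop that ingredient and your plan is exactly the paper's.
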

\begin{theorem}\label{thm-high energy}
\textbf{(the high energy dispersive estimate)}\\
Let $|V(x)|\lesssim(1+|x|)^{-2-}$. Assume that $H=\Delta^2+V$ has no positive embedded eigenvalue and $P_{ac}(H)$ denotes the projection onto the absolutely continuous spectrum space of $H$, then we have
\begin{equation}\label{eq-high result-1}
\|e^{-itH}P_{ac}(H)\widetilde{\chi}(H)\|_{L^1(\mathbf{R})\rightarrow L^\infty(\mathbf{R})}\lesssim |t|^{-\frac{1}{4}}.
\end{equation}
Moreover, for any $0<\alpha<1,$ we have the following estimate:
\begin{equation}\label{eq-regularity estimate-high}
\big\|H^{\frac{\alpha}{4}} e^{-itH}P_{ac}(H)\widetilde{\chi}(H)\big\|_{L^1(\mathbf{R})\rightarrow L^\infty(\mathbf{R})}\lesssim |t|^{-\frac{1+\alpha}{4}}.
\end{equation}
\end{theorem}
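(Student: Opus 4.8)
The plan is to combine Stone's formula \eqref{Stone formula}, restricted to the high-energy cutoff $\widetilde{\chi}$, with the symmetric resolvent identity \eqref{eq-symmetric identity}. Substituting $R_V^\pm(\mu^4)=R_0^\pm(\mu^4)-R_0^\pm(\mu^4)v\big(M^\pm(\mu)\big)^{-1}vR_0^\pm(\mu^4)$ decomposes $e^{-itH}P_{ac}(H)\widetilde{\chi}(H)$ into the free propagator $e^{-itH_0}\widetilde{\chi}(H_0)$ and the perturbed term
\[
\frac{2}{\pi i}\int_0^\infty e^{-it\mu^4}\,\widetilde{\chi}(\mu)\,\mu^3\,R_0^\pm(\mu^4)\,v\,\big(M^\pm(\mu)\big)^{-1}\,v\,R_0^\pm(\mu^4)\,d\mu
\]
(the $\pm$ denoting the difference of the two boundary values, as in \eqref{Stone formula}). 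The free propagator is controlled by the Ben--Artzi--Koch--Saut bounds \eqref{eq-Ben-Artzi-2} and their fractional versions; indeed, after the cancellation in \eqref{eq-free resolvent-LAP-3} its kernel reduces to the elementary oscillatory integral $\tfrac{i}{4}\sum_{\pm}\int e^{-it\mu^4\pm i\mu|x-y|}\widetilde{\chi}(\mu)\,d\mu$. For the perturbed term the key structural point is the same cancellation: each kernel $R_0^\pm(\mu^4)(x,z)=\frac{1}{4\mu^3}\big(\pm i\,e^{\pm i\mu|x-z|}-e^{-\mu|x-z|}\big)$ carries a factor $\mu^{-3}$, so the weight $\mu^3$ from Stone's formula is absorbed and the amplitude of the remaining $\mu$-integral decays like $\mu^{-3}$ as $\mu\to\infty$.

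The first task is to invert $M^\pm(\mu)$ for every $\mu>0$, with $\big(M^\pm(\mu)\big)^{-1}$ and its first $\mu$-derivative bounded uniformly on dyadic frequency blocks. I split $\widetilde{\chi}=\widetilde{\chi}_{\mathrm{m}}+\widetilde{\chi}_{\mathrm{h}}$ at a large fixed $\lambda_0$: on $\mu\ge\lambda_0$ the Hilbert--Schmidt bound $\|vR_0^\pm(\mu^4)v\|_{L^2\to L^2}\lesssim\mu^{-3}$ (valid once $V\in L^1$) makes $M^\pm(\mu)=U\big(I+UvR_0^\pm(\mu^4)v\big)$ invertible through a Neumann series whose terms, and their $\mu$-derivatives, are $O(\mu^{-3})$; on the compact range $\mu\in[2^{N_0},\lambda_0]$ the operator $vR_0^\pm(\mu^4)v$ is compact, so $M^\pm(\mu)$ is Fredholm, and its non-invertibility at a positive $\mu$ would produce a generalized eigenfunction $\psi$ of $H$ at energy $\mu^4$ obeying the outgoing/incoming radiation condition built into $R_0^\pm$; a Wronskian/flux identity for $\psi''''+V\psi=\mu^4\psi$ then forces $\psi$ to decay exponentially at $\pm\infty$, so that $\psi\in L^2(\mathbf{R})$ would be a positive embedded eigenvalue, which is excluded by hypothesis. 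The decay $|V(x)|\lesssim(1+|x|)^{-2-}$ is precisely what makes $vR_0^\pm(\mu^4)v$ and its first $\mu$-derivative act boundedly on the relevant spaces (cf.\ the limiting absorption principle for $R_V^\pm$ in \cite{FSY}), so $\big(M^\pm(\mu)\big)^{-1}$ is continuous and $C^1$ with uniformly bounded derivative on $[2^{N_0},\lambda_0]$.

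With these bounds the crux is to estimate the $\mu$-integral above as an operator $L^1(\mathbf{R})\to L^\infty(\mathbf{R})$, that is, uniformly in the output variables $x,y$. Writing its kernel as $\int e^{-it\mu^4}\widetilde{\chi}(\mu)\,\mu^3\big\langle vR_0^\mp(\mu^4)(\cdot,x),\big(M^\pm(\mu)\big)^{-1}vR_0^\pm(\mu^4)(\cdot,y)\big\rangle\,d\mu$, I split each free resolvent kernel into an oscillatory part $\mu^{-3}e^{\pm i\mu|\cdot|}$ and an evanescent part $\mu^{-3}e^{-\mu|\cdot|}$. The evanescent contributions are controlled by integration by parts in $\mu$, since $|x-z|\,e^{-\mu|x-z|}\lesssim\mu^{-1}$ on $\operatorname{supp}\widetilde{\chi}$ absorbs the factor $|x-z|$ produced by $\partial_\mu$. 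For the oscillatory contributions $\partial_\mu\big(e^{\pm i\mu|x-z|}\big)$ brings down $|x-z|$ with no decay, so I cut the spatial integrations at the kernel singularities $z=x$ and $z=y$; on each resulting half-line a clean factor $e^{\pm i\mu x}$ (resp.\ $e^{\pm i\mu y}$) depending only on an output variable can be pulled into the phase, leaving a residual amplitude built from powers $\mu^{-3}$, the characteristic functions of the half-lines, the operator $\big(M^\pm(\mu)\big)^{-1}$, and functions $e^{\pm i\mu z}v(z)$ restricted to $\{z>x\}$ or $\{z<x\}$, whose $\mu$-derivatives are governed by $\|z\,v\|_{L^2(\mathbf{R})}$ uniformly in $x,y$. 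The remaining scalar oscillatory integral $\int e^{-it\mu^4\pm i\mu x\pm i\mu y}\psi(\mu)\,d\mu$ has $\psi$ smooth with $\psi=O(\mu^{-3})$, and since $\big|\partial_\mu^2\big(-t\mu^4\pm\mu x\pm\mu y\big)\big|=12|t|\mu^2\gtrsim|t|\,2^{2N_0}$ uniformly on $\operatorname{supp}\widetilde{\chi}$, van der Corput's lemma yields a bound $\lesssim|t|^{-1/2}\big(\|\psi\|_{L^\infty}+\|\psi'\|_{L^1}\big)$ uniform in $x,y$, in particular $\lesssim|t|^{-1/4}$; for $|t|\lesssim1$ the crude estimate $\int_{2^{N_0}}^\infty\mu^{-3}\,d\mu<\infty$ already gives $L^1\to L^\infty$ boundedness. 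The higher Born terms on the high band are treated identically, their intermediate free resolvents being genuine Hilbert--Schmidt operators with bounded $\mu$-derivatives, and a Littlewood--Paley decomposition in $\mu$ built from \eqref{eq-varphi} is used to organize the tail $\mu\to\infty$, the split into stationary and non-stationary phase, and the summation of the Neumann series. Combining with the free propagator yields \eqref{eq-high result-1}; the regularity estimate \eqref{eq-regularity estimate-high} follows from the same argument with $\mu^\alpha$ inserted in the amplitude, which is harmless since $\mu\gtrsim2^{N_0}$ on $\operatorname{supp}\widetilde{\chi}$ (treating $|t|\lesssim1$ and $|t|\gtrsim1$ separately).

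The step I expect to be the main obstacle is precisely this uniformity in $x,y$: because $\partial_\mu R_0^\pm(\mu^4)(x,z)$ is not dominated by any power of $\mu$ uniformly in $x,z$, neither naive integration by parts nor a direct van der Corput estimate applies to the $\mu$-integral, and the remedy — the oscillatory/evanescent splitting of the free resolvent, the cutting of the spatial integrals at the kernel singularities so as to extract clean phases, and the attendant Littlewood--Paley bookkeeping — must all be carried out while $\big(M^\pm(\mu)\big)^{-1}$ is threaded through as an $L^2$ operator of which only operator-norm and $\mu$-derivative bounds are available. This, rather than any single estimate, is where the bulk of the work lies.
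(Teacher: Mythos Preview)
Your strategy is sound and would work, but it differs from the paper's route in one structural choice that has a quantitative consequence you should be aware of.

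\textbf{Comparison with the paper.} The paper does \emph{not} use the symmetric resolvent identity \eqref{eq-symmetric identity} at high energy. Instead it uses the iterated second resolvent identity
\[
R_V^\pm(\mu^4)=R_0^\pm(\mu^4)-R_0^\pm(\mu^4)VR_0^\pm(\mu^4)+R_0^\pm(\mu^4)VR_V^\pm(\mu^4)VR_0^\pm(\mu^4),
\]
so that the ``black box'' in the last term is $R_V^\pm(\mu^4)$ itself, for which the weighted limiting absorption bounds of \cite{FSY} (Lemma~\ref{lemma-FSY} here) supply both $\|R_V^\pm\|_{L^2_\sigma\to L^2_{-\sigma}}$ and $\|\partial_\mu R_V^\pm\|_{L^2_\sigma\to L^2_{-\sigma}}$ directly, with $\sigma>k+\tfrac12$. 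The phase extraction is also slightly different: rather than cutting the $z$-integral at $z=x$, the paper writes $e^{\pm i\mu|x-z|}=e^{\pm i\mu|x|}\cdot e^{\pm i\mu(|x-z|-|x|)}$ and uses $\big||x-z|-|x|\big|\le|z|$, which avoids introducing $x$-dependent characteristic functions. Both tricks lead to the same $\langle z\rangle$ bound on the $\mu$-derivative of the residual amplitude, and both feed into the same dyadic oscillatory-integral machine (Lemma~\ref{lemma-low-energy-2} and Lemma~\ref{lemma-low sum}).

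\textbf{The gap.} Your route through $(M^\pm(\mu))^{-1}:L^2\to L^2$ forces the weights $v=|V|^{1/2}$ to absorb the $\langle z\rangle$ growth from the $\mu$-derivative, so you need $\|zv\|_{L^2}<\infty$, i.e.\ $\int z^2|V(z)|\,dz<\infty$, which is $\beta>3$, not the stated $\beta>2$. (The same issue arises when the derivative falls on $(M^\pm)^{-1}$ itself: $v\,\partial_\mu R_0^\pm v$ is Hilbert--Schmidt only if $\beta>3$.) The paper's route avoids this because $R_V^\pm$ acts between \emph{weighted} $L^2$ spaces: the weight $V$ on each side need only satisfy $V\langle\cdot\rangle^{1+\sigma}\in L^2$ with $\sigma>\tfrac12$, which is exactly $\beta>2$. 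This does not affect the main Theorem~\ref{thm-main results} (where $\beta>13$ anyway), but as written your argument does not establish Theorem~\ref{thm-high energy} under the sharp hypothesis $\beta>2$. The cleanest fix is simply to switch to the paper's resolvent identity and quote Lemma~\ref{lemma-FSY}; alternatively you would have to show $(M^\pm(\mu))^{-1}$ is bounded on $L^2_s$ for some $s>0$ uniformly on $[2^{N_0},\infty)$, which is extra work.
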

The proofs of the low and high energy parts will be given in the following subsections. For the end, let us first prove the decay estimates of the free group $e^{-it\Delta^2}$ in dimension one by the resolvent method, which display some basic technics how to prove the potential case.
\subsection{The estimates for the free case}

First, we give a useful lemma to estimates a kind of oscillatory integral appeared repeatedly in this paper.
\begin{lemma}\label{lemma-low-energy-2}
Let $m\ge 1$, $\Psi(z)$ be a nonnegative real valued function of $\mathbf{R}^m $ and $\Phi(s;z)$ be a function on $\mathbf{R}\times\mathbf{R}^m$, which is smooth for the first variable $s$. Assume that for any $s\in\text{supp}\ \varphi_0(s)$ (here $\varphi_0(s)=\varphi(s)-\varphi(\frac{1}{2}s),$ see \eqref{eq-varphi}) and any $N\in\mathbf{Z},$  $\Phi(s;z)$ satisfies the following uniform bounded condition:
\[\big|\partial_{s}^k \Phi(2^Ns;z)\big|\lesssim 1,\ \ \ k=0,1,\]
then we have
\begin{equation}\label{eq-low energy-2}
\sup_{z\in\mathbf{R}^m}\Big|\int_0^\infty e^{-it2^{4N}s^4}\varphi_0(s)e^{\pm i2^Ns\Psi(z)}\Phi(2^Ns;z)ds\Big|\lesssim (1+|t|\cdot 2^{4N})^{-\frac{1}{2}}.
\end{equation}
\end{lemma}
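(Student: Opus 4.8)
The integral in \eqref{eq-low energy-2} is an oscillatory integral with phase $\phi(s) = -t2^{4N}s^4 \pm 2^N s\Psi(z)$ and amplitude $\varphi_0(s)\Phi(2^Ns;z)$, where $s$ ranges over the compact set $\operatorname{supp}\varphi_0 \subset [\tfrac14,1]$. I would first rescale to reduce to the model situation: after the substitution one sees that it suffices to bound
\[
I = \int_0^\infty e^{i\lambda(-s^4 \pm a s)}\varphi_0(s)\Phi(2^Ns;z)\,ds,
\]
where $\lambda = t2^{4N}$ (absorbing signs) and $a = 2^{-3N}\Psi(z)/t \cdot(\text{sign factor})$ — more precisely I would keep $\lambda = t 2^{4N}$ and treat $b = 2^N\Psi(z)$ as the linear coefficient, so the phase is $-\lambda s^4 \pm b s$. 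The claim is $|I|\lesssim (1+|\lambda|)^{-1/2}$ uniformly in $b\ge 0$ and in $z$ (the latter entering only through $\Psi(z)\ge 0$ and through $\Phi$, which is uniformly $C^1$-bounded by hypothesis).

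For $|\lambda|\lesssim 1$ the bound is trivial since the amplitude is $O(1)$ on a set of measure $O(1)$. For $|\lambda|\gg 1$ I would apply the standard van der Corput lemma. The key point is that the phase $\psi(s) = -s^4 \pm b s$ has second derivative $\psi''(s) = -12 s^2$, which on $\operatorname{supp}\varphi_0 \subset [\tfrac14,1]$ satisfies $|\psi''(s)| \gtrsim 1$ uniformly — crucially, this lower bound is \emph{independent of the linear coefficient $b$}, since $b$ only affects $\psi'$, not $\psi''$. Hence van der Corput's lemma (second-derivative version) gives
\[
\Big|\int e^{i\lambda\psi(s)}\varphi_0(s)\Phi(2^Ns;z)\,ds\Big|
\lesssim |\lambda|^{-1/2}\Big(\|\varphi_0\Phi(2^N\cdot;z)\|_{L^\infty} + \|\partial_s(\varphi_0\Phi(2^N\cdot;z))\|_{L^1}\Big).
\]
The hypothesis $|\partial_s^k\Phi(2^Ns;z)|\lesssim 1$ for $k=0,1$ uniformly in $N$ and $z$ — together with the fact that $\varphi_0 \in C_c^\infty$ is a fixed function — makes the bracketed quantity $O(1)$ uniformly in $N$ and $z$. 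Combining the two regimes yields $|I| \lesssim (1+|\lambda|)^{-1/2} = (1+|t|2^{4N})^{-1/2}$, which is exactly \eqref{eq-low energy-2}; taking $\sup_{z}$ is then automatic since every constant above was $z$-independent.

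The only subtlety — and the step I would state carefully rather than wave through — is the uniformity in the linear coefficient $b = 2^N\Psi(z)$, which a priori could be arbitrarily large. This is precisely why one uses the \emph{second}-derivative van der Corput estimate rather than stationary phase or the first-derivative (non-stationary) bound: a large linear term can push the critical point of $\psi$ outside $[\tfrac14,1]$ or create a degenerate critical point elsewhere, but it can never spoil the uniform lower bound $|\psi''| \gtrsim 1$ on the fixed compact support of $\varphi_0$. So the main obstacle is conceptual bookkeeping (making sure the van der Corput constant does not secretly depend on $b$) rather than any hard estimate; the computation itself is short. I would also remark that the same argument, with $|\psi''|\gtrsim 1$ replaced by the appropriate derivative bound, is what gets reused throughout the paper for the various resolvent-expansion terms, which is why it is isolated as a lemma here.
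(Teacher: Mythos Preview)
Your approach is correct and in fact cleaner than the paper's. The paper proves the lemma by a case analysis: first it disposes of the trivial bound $|K_N^\pm|\lesssim 1$; then, in the regime $2^N\Psi(z)\le 1$, it integrates by parts once to get $(|t|2^{4N})^{-1}$; in the regime $2^N\Psi(z)\ge 1$ it further splits according to the sign $\pm$ and, for the sign that could produce a critical point, according to whether that critical point actually lies in $[1/4,1]$ --- using integration by parts in the non-stationary sub-cases and van der Corput only in the genuinely stationary one. Your observation that the \emph{second} derivative of the full phase equals $-12\,t\,2^{4N}s^2$, hence has magnitude $\gtrsim |t|2^{4N}$ on $\operatorname{supp}\varphi_0\subset[\tfrac14,1]$ \emph{independently of the linear coefficient} $b=2^N\Psi(z)$, lets you invoke the second-derivative van der Corput bound once and for all, bypassing every case distinction. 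This is a genuine simplification; the only thing the paper's longer argument buys is a sharper $(|t|2^{4N})^{-1}$ bound in the small-$\Psi$ sub-case, which is not needed for the lemma as stated.

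One small notational slip: when you write ``the phase $\psi(s)=-s^4\pm b s$ has $\psi''(s)=-12s^2$'' you have silently divided by $\lambda=t2^{4N}$, so the normalized phase is really $-s^4\pm(b/\lambda)s$. This is harmless for the argument (the second derivative is the same either way), but worth writing precisely so the reader sees that the van der Corput constant is indeed independent of $b$.
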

\begin{proof}
Denote
\begin{align*}
K_N^\pm(t,z):=&\int_0^\infty e^{-it2^{4N}s^4}\varphi_0(s)e^{\pm i2^Ns\Psi(z)}\Phi(2^Ns;z)ds.
\end{align*}
Since for any $s\in\text{supp}\ \varphi_0(s)\subset\big[\frac{1}{4},1],$ $\big|\Phi(2^Ns;z)\big|\lesssim 1$, then we have
\begin{equation}\label{eq-K-2}
\big|K_N^\pm(t,z)\big|\lesssim \int_\frac{1}{4}^1 \big|\varphi_0(s)\big|ds \lesssim 1.
\end{equation}
\textbf{Case~1:} $2^N\Psi(z)\le 1.$  Using integrate by parts once, we have
\begin{equation*}
\begin{split}
K_N^\pm(t,z)=&\frac{e^{-it2^{4N}s^4}\big(s^{-3}\varphi_0(s)e^{\pm i2^Ns\Psi(z)}\Phi(2^Ns;z)\big)}{-4it\cdot2^{4N}}\bigg|_0^\infty\\
&\ \ \ +\frac{1}{4it\cdot2^{4N}}\int_0^\infty e^{-it2^{4N}s^4}\partial_{s}\big(s^{-3}\varphi_0(s)e^{\pm i2^Ns\Psi(z)}\Phi(2^Ns;z)\big)ds.
\end{split}
\end{equation*}
Noting that for any $s\in\text{supp}\ \varphi_0(s)$ and $k=0,1,$
\[\big|\partial_{s}^k\big(e^{\pm i2^Ns\Psi(z)}\big)\big|\lesssim 1,\ \ \ \big|\partial_{s}^k\big(\Phi(2^Ns;z\big)\big|\lesssim 1,\]
thus we can obtain that
\begin{equation}\label{eq-K-3}
\big|K_N^\pm(t,z)\big|\lesssim \big(|t|\cdot 2^{4N}\big)^{-1}.
\end{equation}
Combining \eqref{eq-K-2} with \eqref{eq-K-3}, we obtain that
\[\sup_{z\in\mathbf{R}^m}\big|K_N^\pm(t,z)\big|\lesssim (1+|t|\cdot 2^{4N})^{-\frac{1}{2}}.\]
\textbf{Case~2:} $2^N\Psi(z)\ge 1.$  Set $r=2^N\Psi(z).$ In this case, we rewrite $K_N^\pm(t,z)$ as follows:
\[K_N^\pm(t,z)=\int_0^\infty e^{-it2^{4N}s^4\pm isr}\varphi_0(s)\Phi(2^Ns;z)ds.\]
Assume that $t>0,$ we consider $K_N^-(t,z)$ and $K_N^+(z)$ case by case.

Firstly, we study $K_N^-(t,z)$. Let $u_-(s,r)=t\cdot 2^{4N}s^4+sr,$ then we have
\[K_N^-(t,z)=\int_0^\infty e^{-iu_-(s,r)}\varphi_0(s)\Phi(2^Ns;z)ds.\]
It is easy to know that
\[\big|\partial_su_-(s,r)\big|=\big|4t\cdot 2^{4N}s^3+r\big|\gtrsim(1+|t|\cdot 2^{4N}).\]
Thus $u_-(s,r)$ does not exist critical point, by \eqref{eq-K-2} and integrating by parts once, we have
\[\big|K_N^-(t,z)\big|\lesssim (1+|t|\cdot 2^{4N})^{-\frac{1}{2}}.\]

Next to consider $K_N^+(t,z)$. Let $u_+(s,r)=t\cdot 2^{4N}s^4-sr,$ then we have
\[K_N^+(t,z)=\int_0^\infty e^{-iu_+(s,r)}\varphi_0(s)\Phi(2^Ns;z)ds.\]
It is obvious that
\[\partial_su_+(s,r)=4t\cdot 2^{4N}s^3-r\]
has a critical point $s_0$ such $r=4t\cdot 2^{4N}s_0^3.$

If $r\ge 8t\cdot 2^{4N}$ or $r\le \frac{1}{32}t\cdot 2^{4N},$ then the such critical point $s_0$ does not exist in the range $[1/4,1]$, so by \eqref{eq-K-2} and using integration by parts once, we have
\[\big|K_N^+(t,z)\big|\lesssim (1+|t|\cdot 2^{4N})^{-\frac{1}{2}}.\]

If $\frac{1}{32}t\cdot 2^{4N}\le r\le 8t\cdot 2^{4N},$ then the critical point exists. But since
$\big|\partial_{s}^2u_+(s,r)\big|=\big|12t\cdot 2^{4N}s^2\big|\gtrsim|t|\cdot 2^{4N},$
thus by Van der Corput lemma ( see e.g. \cite{Stein}), we have
\[\big|K_N^+(t,z)\big|\lesssim (1+|t|\cdot 2^{4N})^{-\frac{1}{2}}.\]
Hence, by summing all cases above we have that
\[\sup_{z\in\mathbf{R}^m}\big|K_N^\pm(t,z)\big|\lesssim (1+|t|\cdot 2^{4N})^{-\frac{1}{2}},\]
which concludes the proof of this lemma.
\end{proof}

Using Lemma \ref{lemma-low-energy-2}, we can give the following decay estimates of $e^{-it\Delta^2}$ in one dimensional case by the free resolvent $R_0^\pm(\mu^4).$
Note  that we can write down
\begin{align}\label{free formula}
e^{-it\Delta^2}f(x)=\int_{\mathbf{R}}\Big(\frac{2}{\pi i}\int_0^\infty e^{-it\mu^4}\mu^3[R_0^+(\mu^4)-R_0^-(\mu^4)](x,y)d\mu\Big) f(y)dy,
\end{align}
where
\begin{equation*}\label{eq-free kernel-1}
\begin{split}
R_0^\pm(\mu^4)(x,y)=&\frac{1}{4\mu^3}\Big(\pm ie^{\pm i\mu|x-y|}-e^{-\mu|x-y|}\Big)
=\frac{e^{\pm i\mu|x-y|}}{4\mu^3}\big(\pm i-e^{-\mu|x-y|\mp i\mu|x-y|}\big).
\end{split}
\end{equation*}

\begin{proposition}\label{prop-free estimates}
For any $N\in\mathbf{Z},$ we have
\begin{equation}\label{eq-free estimates-1}
\sup\limits_{x,y\in\mathbf{R}}\Big|\int_0^\infty e^{-it\mu^4}\mu^3\varphi_N(\mu)R_0^\pm(\mu^4)(x,y)d\mu\Big|\lesssim 2^{N}(1+|t|\cdot 2^{4N})^{-\frac{1}{2}}.
\end{equation}
Furthermore, we have
\begin{equation}\label{eq-free estimates-2}
\sup\limits_{x,y\in\mathbf{R}}\Big|\int_0^\infty e^{-it\mu^4}\mu^3R_0^\pm(\mu^4)(x,y)d\mu\Big|\lesssim |t|^{-\frac{1}{4}}.
\end{equation}
As a consequence, it follows that
\begin{equation}\label{free estimates}
\big\|e^{-it\Delta^2}\big\|_{L^1(\mathbf{R})\rightarrow L^\infty(\mathbf{R})}\lesssim |t|^{-\frac{1}{4}}, \ \ t\neq 0.
\end{equation}
\end{proposition}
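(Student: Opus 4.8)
The plan is to dyadically decompose the $\mu$-integral, rescale each piece to the unit frequency scale, recognize the resulting oscillatory integral as an instance of Lemma \ref{lemma-low-energy-2}, and then sum the dyadic bounds. First I would record that in the expression for $R_0^\pm(\mu^4)(x,y)$ displayed just above the statement, the weight $\mu^3$ exactly cancels the $\mu^{-3}$ singularity, so that
\[
\mu^3 R_0^\pm(\mu^4)(x,y)=\frac14\, e^{\pm i\mu|x-y|}\Big(\pm i-e^{-\mu|x-y|(1\pm i)}\Big).
\]
Hence for a fixed $N\in\mathbf{Z}$, the change of variables $\mu=2^{N}s$ (under which $\varphi_N(\mu)=\varphi_0(s)$ and $d\mu=2^{N}ds$) turns the $N$-th piece into
\[
\frac{2^{N}}{4}\int_0^\infty e^{-it2^{4N}s^4}\varphi_0(s)\,e^{\pm i2^{N}s|x-y|}\,\Phi\big(2^{N}s;|x-y|\big)\,ds,
\qquad \Phi(w;z):=\pm i-e^{-wz(1\pm i)}.
\]

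Next I would verify that $\Psi(z)=z$ and this amplitude $\Phi$ satisfy the hypotheses of Lemma \ref{lemma-low-energy-2} with $m=1$: for $s\in\text{supp}\,\varphi_0\subset[\tfrac14,1]$ and every $N\in\mathbf{Z}$, the bound $|\Phi(2^{N}s;z)|\le 2$ is immediate since $|e^{-wz(1\pm i)}|=e^{-wz}\le 1$ (as $w,z\ge 0$), while $\partial_s\big[\Phi(2^{N}s;z)\big]=2^{N}z(1\pm i)e^{-2^{N}sz(1\pm i)}$ has modulus $\lesssim 2^{N}z\,e^{-2^{N}sz}\lesssim (2^{N-2}z)e^{-2^{N-2}z}\lesssim 1$, using $s\ge \tfrac14$ and the elementary bound $ue^{-u}\lesssim 1$ on $[0,\infty)$. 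Applying Lemma \ref{lemma-low-energy-2} and multiplying by the prefactor $2^{N}/4$ then yields \eqref{eq-free estimates-1}, uniformly in $x,y$; the cases $\pm$ correspond to the two terms $R_0^+-R_0^-$ coming from Stone's formula.

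To obtain \eqref{eq-free estimates-2} I would sum \eqref{eq-free estimates-1} over $N$, i.e. estimate $\sum_{N\in\mathbf Z}2^{N}(1+|t|2^{4N})^{-1/2}$ by splitting at the critical dyadic scale $2^{N}\sim|t|^{-1/4}$: for $2^{N}\lesssim|t|^{-1/4}$ one bounds the factor by $1$ and sums the geometric series in $2^{N}$, and for $2^{N}\gtrsim|t|^{-1/4}$ one bounds the factor by $|t|^{-1/2}2^{-2N}$ and sums the geometric series in $2^{-N}$; both contributions are $\lesssim|t|^{-1/4}$. (The identity $\sum_N\varphi_N\equiv 1$ for $\mu\ne 0$ lets one write $\int_0^\infty=\sum_N\int$ over dyadic shells, and the interchange is justified by the absolute convergence of the bound just summed.) Finally, \eqref{eq-free estimates-2} says precisely that the kernel of $e^{-it\Delta^2}$, namely $\frac{2}{\pi i}\int_0^\infty e^{-it\mu^4}\mu^3[R_0^+(\mu^4)-R_0^-(\mu^4)](x,y)\,d\mu$, is bounded by $C|t|^{-1/4}$ uniformly in $x,y$, whence \eqref{free estimates} follows from the trivial $L^1\to L^\infty$ estimate.

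The only genuine subtlety — and where I would be careful — is the uniform-in-$N$ and uniform-in-$z=|x-y|$ verification of the hypotheses of Lemma \ref{lemma-low-energy-2} for the amplitude $\Phi$; this is where the exponentially decaying factor $e^{-\mu|x-y|}$ arising from the "elliptic'' half of the resolvent identity \eqref{eq-resolvent identity} must be controlled, which is done via $ue^{-u}\lesssim 1$. Everything else (the rescaling, the invocation of the lemma, and the dyadic summation) is routine, and no stationary-phase analysis remains here since Lemma \ref{lemma-low-energy-2} has already absorbed it.
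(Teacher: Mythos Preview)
Your proposal is correct and follows essentially the same approach as the paper: rewrite $\mu^3R_0^\pm(\mu^4)(x,y)$ as $\tfrac14 e^{\pm i\mu|x-y|}$ times a bounded amplitude, rescale $\mu=2^Ns$, verify the $k=0,1$ bounds on the amplitude (the paper merely asserts these as ``obvious''; your explicit use of $ue^{-u}\lesssim 1$ is exactly the right justification), invoke Lemma~\ref{lemma-low-energy-2}, and then sum the dyadic pieces by splitting at the scale $|t|2^{4N}\sim 1$. The only cosmetic difference is notation for the amplitude; the logic is identical.
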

\begin{proof}
Since $\varphi_N(\mu)=\varphi_0(2^{-N}\mu),$ thus set
\begin{equation}\label{K_0}
\begin{split}
K_{0,N}^\pm(t;x,y):= \int_0^\infty e^{-it\mu^4}\mu^3\varphi_0(2^{-N}\mu)[R_0^\pm(\mu^4)](x,y)d\mu.
\end{split}
\end{equation}
Let $\mu=2^Ns,$ then we have
\begin{align*}
 K_{0,N}^\pm(t;x,y)
 =&2^{N}\int_0^\infty e^{-it2^{4N}s^4}\varphi_0(s)e^{\pm i2^Ns|x-y|}\big(\pm i-e^{-2^Ns|x-y|\mp i2^Ns|x-y|}\big)ds.
\end{align*}
Let $z=(x,y),$ and
\[\Psi(z)=\Psi(x,y)=|x-y|,\ \ \ \Phi^\pm(s;z)=\Phi^\pm(s;x,y)=\pm i-e^{-s|x-y|\mp is|x-y|}.\]
It is obvious that for any $s\in\text{supp}\ \varphi_0(s)$ and $k=0,1,$ ~$\big|\partial_{s}^k\big(\Phi^\pm(2^Ns;x,y)\big)\big|\lesssim 1,$~
then by Lemma \ref{lemma-low-energy-2}, we obtain that
\[\sup_{x,y\in\mathbf{R}}\big|K_{0,N}^\pm(t;x,y)\big|\lesssim 2^N(1+|t|\cdot 2^{4N})^{-\frac{1}{2}}.\]
Denote
\[K_0^\pm(t;x,y)=\int_0^\infty e^{-it\mu^4}\mu^3R_0^\pm(\mu^4)(x,y)d\mu.\]
Thus
\begin{align*}
K_0^\pm(t;x,y)=&\sum_{N=-\infty}^\infty\int_0^\infty e^{-it\mu^4}\mu^3\varphi_N(\mu)[R_0^\pm(\mu^4)](x,y)d\mu\\
=&\displaystyle\sum_{N=-\infty}^{+\infty}K_{0,N}^{\pm}(t;x,y).
\end{align*}
It is obvious that there exists $N'\in\mathbf{Z}$ such that $|t|\cdot 2^{4N'}\sim1,$ so we have
\begin{align*}
\big|K_0^\pm(t;x,y)\big|\lesssim & \sum_{N=-\infty}^{N'}\big|K_{0,N}^{\pm}(t;x,y)\big|+\sum_{N=N'+1}^{+\infty}\big|K_{0,N}^{\pm}(t;x,y)\big|\\
\lesssim& \sum_{N=-\infty}^{N'}2^N(1+|t|\cdot 2^{4N})^{-\frac{1}{2}}+\sum_{N=N'+1}^{+\infty}2^N(1+|t|\cdot 2^{4N})^{-\frac{1}{2}}\\
\lesssim& \sum_{N=-\infty}^{N'}2^N+|t|^{-\frac{1}{2}}\sum_{N=N'+1}^{+\infty}2^{-N}\\
\lesssim& |t|^{-\frac{1}{4}},
\end{align*}
which immediately gives the \eqref{eq-free estimates-2}.
\end{proof}
\subsection{Low Energy Dispersive Estimate with $\alpha=0$} In this subsection, we show the low energy dispersive estimate \eqref{eq-low result-1} case by case for zero is a regular point or zero energy resonance.

The following lemma play an important role to make use of cancellations from projections operator $S_j$  in the zero asymptotical expansions of resolvent $R_V(z)$ ( see Theorem \ref{thm-M-inverse} above ), and will be used repeatedly to obtain low energy dispersive estimates for all cases.

\begin{lemma}\label{lemma-low-energy}
Let $\mu>0$ and ${\rm sgn}(x)$ be the sign function of $x$ on $\mathbf{R}$. Then

(i)~If $F(p)\in C^1(\mathbf{R})$, then for any  $x, y\in\mathbf{R}$, we have
\begin{equation}\label{eq-low energy-1}
\begin{split}
F(\mu|x-y|)
=F(\mu|x|)-\mu y \int_0^1{\rm sgn}(x-\theta y)F'(\mu|x-\theta y|)d\theta.
\end{split}
\end{equation}

(ii)~If~$F(p)\in C^2(\mathbf{R})$ and $F'(0)=0$~, then for any  $x, y\in\mathbf{R}$, we have
\begin{equation}\label{eq-low energy-2}
\begin{split}
F(\mu|x-y|)
=F(\mu|x|)-\mu y\ {\rm sgn}(x)F'(\mu|x|)+\mu^2y^2\int_0^1(1-\theta)F''(\mu|x-\theta y|)d\theta.
\end{split}
\end{equation}

(iii)~If~$F(p)\in C^3(\mathbf{R})$ and $F'(0)=F''(0)=0,$ then for any  $x, y\in\mathbf{R}$, we have
\begin{equation}\label{eq-low energy-3}
\begin{split}
F(\mu|x-y|)
=&F(\mu|x|)-\mu y\ {\rm sgn}(x)F'(\mu|x|)+\frac{\mu^2 y^2}{2!}F''(\mu|x|)\\
&\ -\frac{\mu^3y^3}{2!}\int_0^1(1-\theta)^2\big({\rm sgn}(x-\theta y)\big)^3F^{(3)}(\mu|x-\theta y|)d\theta.
\end{split}
\end{equation}

\end{lemma}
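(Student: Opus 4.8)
The plan is to reduce all three identities to Taylor's theorem with integral remainder applied to the single scalar auxiliary function
\[
g(\theta):=F\big(\mu|x-\theta y|\big),\qquad \theta\in[0,1],
\]
which satisfies $g(0)=F(\mu|x|)$ and $g(1)=F(\mu|x-y|)$, so that each asserted formula is simply the expansion of $g(1)$ about $\theta=0$. Since $\theta\mapsto x-\theta y$ is affine and $\frac{d}{d\theta}|x-\theta y|=-y\,{\rm sgn}(x-\theta y)$ for every $\theta$ with $x-\theta y\neq 0$ (that is, for all $\theta\in[0,1]$ except at most the single value $\theta_0=x/y$, the case $y=0$ being trivial since then every correction term vanishes and both sides equal $F(\mu|x|)$), the chain rule gives, for such $\theta$,
\[
g'(\theta)=-\mu y\,{\rm sgn}(x-\theta y)\,F'(\mu|x-\theta y|).
\]
All the work is then in checking the regularity of $g$ to the order needed for the corresponding remainder formula.

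For part (i), I would note that $g$ is the composition of the $C^1$ map $p\mapsto F(\mu p)$ with the Lipschitz map $\theta\mapsto|x-\theta y|$, hence $g$ is Lipschitz on $[0,1]$ and in particular absolutely continuous; the fundamental theorem of calculus then yields $g(1)=g(0)+\int_0^1 g'(\theta)\,d\theta$, which is exactly the identity in (i). For part (ii), the key observation is that the hypothesis $F'(0)=0$ upgrades $p\mapsto F(\mu|p|)$ to a $C^2$ function on all of $\mathbf{R}$: away from the origin its second derivative equals $\mu^2F''(\mu|p|)$, while at $p=0$ a direct difference-quotient computation — using $F'(0)=0$, so that $F'(\mu p)/(\mu p)\to F''(0)$ — shows that the first derivative $p\mapsto\mu\,{\rm sgn}(p)F'(\mu|p|)$ is itself differentiable there with value $\mu^2F''(0)$. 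Composing with the affine map $\theta\mapsto x-\theta y$ gives $g\in C^2([0,1])$ with $g''(\theta)=\mu^2y^2F''(\mu|x-\theta y|)$, and the second-order Taylor formula $g(1)=g(0)+g'(0)+\int_0^1(1-\theta)g''(\theta)\,d\theta$, after substituting $g'(0)=-\mu y\,{\rm sgn}(x)F'(\mu|x|)$, is precisely the identity in (ii).

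For part (iii), under $F'(0)=0$ we again have $g\in C^2$ with $g''(\theta)=\mu^2y^2F''(\mu|x-\theta y|)$; moreover, since $F\in C^3$, the function $F''$ is $C^1$, so $g''$ is the composition of a $C^1$ map with a Lipschitz map and is therefore Lipschitz on $[0,1]$, hence absolutely continuous. Consequently $g'''$ exists a.e.\ with $g'''(\theta)=-\mu^3y^3\,{\rm sgn}(x-\theta y)\,F^{(3)}(\mu|x-\theta y|)$, and Taylor's theorem with integral remainder to third order,
\[
g(1)=g(0)+g'(0)+\tfrac{1}{2!}g''(0)+\tfrac{1}{2!}\int_0^1(1-\theta)^2\,g'''(\theta)\,d\theta,
\]
yields the identity in (iii) once one writes ${\rm sgn}(x-\theta y)=({\rm sgn}(x-\theta y))^3$ inside the remainder (the cube is the harmless form one gets if $g'$ is differentiated three times without invoking ${\rm sgn}^2=1$ along the way, and $g''(0)=\mu^2y^2F''(\mu|x|)$ supplies the quadratic term). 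It is worth remarking in passing that the extra hypothesis $F''(0)=0$ listed in (iii) is not actually needed for this argument; it is harmless and holds in all the applications.

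The one genuinely delicate point throughout is the behaviour of the composition $F(\mu|x-\theta y|)$ at the exceptional parameter $\theta_0=x/y$, where $|x-\theta y|$ has a corner: the successive vanishing conditions at the origin ($F'(0)=0$ for parts (ii)--(iii)) together with the assumed smoothness of $F$ are exactly what is needed to guarantee that $g$, $g'$, $g''$ have the continuity/absolute-continuity required by the relevant remainder formula across $\theta_0$. I expect this verification — rather than the (routine) algebra of plugging $g'(0)$, $g''(0)$ into the Taylor expansions — to be where care is required.
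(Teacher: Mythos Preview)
Your argument is correct. Both your proof and the paper's are Taylor expansions of the same auxiliary function, but the technical device for handling the corner of $|x-\theta y|$ at $\theta_0=x/y$ is genuinely different. The paper regularizes, setting $G_\varepsilon(y)=F\big(\mu\sqrt{\varepsilon^2+(x-y)^2}\big)$ so that $G_\varepsilon$ is as smooth as $F$, applies the ordinary Taylor formula with integral remainder to $G_\varepsilon$, and then passes to the limit $\varepsilon\to 0$ using dominated convergence. You instead verify directly that $g(\theta)=F(\mu|x-\theta y|)$ already has enough regularity: Lipschitz for (i), $C^2$ for (ii) once $F'(0)=0$, and $C^2$ with $g''$ Lipschitz (hence absolutely continuous) for (iii).

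Each route has its cost. Your direct approach requires the difference-quotient check at the corner, but once that is done the proof is shorter and, as you observe, shows that the hypothesis $F''(0)=0$ in (iii) is superfluous. The paper's $\varepsilon$-regularization avoids that pointwise check, but pays for it with the dominated-convergence step: the bound on $G_\varepsilon^{(3)}(\theta y)$ uniform in $\varepsilon$ actually \emph{uses} $F''(0)=0$ (the term $\displaystyle \frac{(x-\theta y)\varepsilon^2}{(\varepsilon^2+(x-\theta y)^2)^2}\,F''(\mu\sqrt{\varepsilon^2+(x-\theta y)^2})$ would otherwise blow up like $\varepsilon^{-1}$ near $|x-\theta y|\sim\varepsilon$). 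So your approach is both more elementary and slightly sharper; the paper's is perhaps more mechanical once one has the regularization idea.
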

\begin{proof}
Denote $$G_\varepsilon(y)=F\big(\mu\sqrt{\varepsilon^2+(x-y)^2}\big),\ \ \ \varepsilon\neq0. $$
If $F(p)\in C^{n+1}(\mathbf{R}),$ then $G_\varepsilon(y)\in C^{n+1}(\mathbf{R})$ for $\varepsilon\neq0.$ By Taylor expansions, we have
\begin{equation}\label{eq-taylor expansion}
G_\varepsilon(y)=\sum_{k=0}^n\frac{G_\varepsilon^{(k)}(0)}{k!}y^k+\frac{y^{n+1}}{n!}\int_0^1(1-\theta)^nG_\varepsilon^{(n+1)}(\theta y)d\theta.
\end{equation}

(i)~Since $F(p)\in C^1(\mathbf{R}),$ then $\displaystyle\lim_{\varepsilon\rightarrow0}G_\varepsilon(y)=\lim_{\varepsilon\rightarrow0}F(\mu\sqrt{\varepsilon^2+(x-y)^2})=F(\mu|x-y|).$ Note that for $\varepsilon\neq 0,$
\[G'_\varepsilon(y)=\frac{-\mu(x-y)}{\sqrt{(\varepsilon^2+(x-y)^2)}}F'\big(\mu\sqrt{\varepsilon^2+(x-y)^2}\big),\]
then
for each $0\le \theta\le 1$ and $x, y\in \mathbf{R}$,
\begin{align}\label{one-derivative}
\lim_{\varepsilon\rightarrow0}G'_\varepsilon(\theta y)=-\mu{\rm sgn}(x-\theta y)F'\big(\mu|x-\theta y|\big).
\end{align}
Since there exists a constant $C=C(\mu, x, y)>0$ such that $|G'_\varepsilon(\theta y)|\le C$ for all $0\le \theta\le 1$ and $0<\varepsilon\le 1$, then by taking $\varepsilon\downarrow0$ in the two sides of \eqref{eq-taylor expansion} with $n=0$ and Lebesgue's dominated convergence theorem,  it immediately follows that
\[F(\mu|x-y|)
=F(\mu|x|)-\mu y\int_0^1{\rm sgn}(x-\theta y)F'(\mu|x-\theta y|)d\theta.\]

(ii)~If $F(p)\in C^2(\mathbf{R}),$ then for $\varepsilon\neq 0,$
\begin{align*}
G''_\varepsilon(y)=&\mu^2\Big(\frac{x-y}{\sqrt{\varepsilon^2+(x-y)^2}}\Big)^2F''(\mu\sqrt{\varepsilon^2+(x-y)^2})\\
&+\mu\frac{\varepsilon^2}{\big(\sqrt{\varepsilon^2+(x-y)^2}\big)^2}\cdot\frac{F'(\mu\sqrt{\varepsilon^2+(x-y)^2})}{\sqrt{\varepsilon^2+(x-y)^2}}.
\end{align*}
Since $\displaystyle F'(0)=0,$ thus we can obtain that
\begin{align}\label{second-derivative}\lim_{\varepsilon\rightarrow0}G''_\varepsilon(y)=\mu^2 F''\big(\mu|x-y|\big), \ x,y\in\mathbf{R}.\end{align}
Then by using \eqref{eq-taylor expansion} with $n=1 $, \eqref{one-derivative}, \eqref{second-derivative} and  Lebesgue's dominated convergence theorem, we can conclude that
\begin{align*}
F(\mu|x-y|)
=&\sum_{k=0}^1\frac{\displaystyle\lim_{\varepsilon\rightarrow0}G_\varepsilon^{(k)}(0)}{k!}y^k+y^2\cdot\lim_{\varepsilon\rightarrow0}\int_0^1(1-\theta)G_\varepsilon^{''}(\theta y)d\theta.\\
=&F(\mu|x|)-\mu y{\rm sgn}(x)F'(\mu|x|)+\mu^2y^2\int_0^1(1-\theta)F''(\mu|x-\theta y|)d\theta.
\end{align*}

(iii)~If $F(p)\in C^3(\mathbf{R}),$ then for $\varepsilon\neq 0,$
\begin{align*}
G^{(3)}_\varepsilon(y)=&-\mu^3\Big(\frac{x-y}{\sqrt{\varepsilon^2+(x-y)^2}}\Big)^3F^{(3)}(\mu\sqrt{\varepsilon^2+(x-y)^2})\\
&-3\mu^2\frac{(x-y)\varepsilon^2}{\big(\sqrt{\varepsilon^2+(x-y)^2}\big)^3}\cdot\frac{F''(\mu\sqrt{\varepsilon^2+(x-y)^2})}{\sqrt{\varepsilon^2+(x-y)^2}}\\
&+3\mu\frac{(x-y)\varepsilon^2}{\big(\sqrt{\varepsilon^2+(x-y)^2}\big)^3}\cdot\frac{F'(\mu\sqrt{\varepsilon^2+(x-y)^2})}{\big(\sqrt{\varepsilon^2+(x-y)^2}\big)^2}.
\end{align*}
Then for each $x,y\in\mathbf{R},$ we have
\begin{align}\label{third-derivative}
\lim_{\varepsilon\rightarrow0}G^{(3)}_\varepsilon(y)
=-\mu^3({\rm sgn}(x-y))^3F^{(3)}\big(\mu|x-y|\big).
\end{align}
Since $F'(0)=F''(0)=0,$ so by using the expansion \eqref{eq-taylor expansion} with $n=2 $, the equalities \eqref{one-derivative}-\eqref{third-derivative} and Lebesgue's dominated convergence theorem, we can similarly obtain that
\begin{align*}
F(\mu|x-y|)
=&\sum_{k=0}^2\frac{\displaystyle\lim_{\varepsilon\rightarrow0}G_{\varepsilon}^{(k)}(0)}{k!}y^k+
\frac{y^{3}}{2!}\cdot\lim_{\varepsilon\rightarrow0}\int_0^1(1-\theta)^2G_{\varepsilon}^{(3)}(\theta y)d\theta\\
=&F(\mu|x|)-\mu y{\rm sgn}(x)F'(\mu|x|)+\frac{\mu^2 y^2}{2!}F''(\mu|x|)\\
&\ -\frac{\mu^3y^3}{2!}\int_0^1(1-\theta)^2\big({\rm sgn}(x-\theta y)\big)^3F^{(3)}(\mu|x-\theta y|)d\theta.
\end{align*}


\end{proof}

\begin{lemma}\label{lemma-low sum} For any $0\le\alpha<1,$  we have
\begin{equation}\label{eq-low sum}
\sum_{N=-\infty}^{+\infty} 2^{(1+\alpha)N}(1+|t|\cdot 2^{4N})^{-\frac{1}{2}}\lesssim |t|^{-\frac{1+\alpha}{4}}.
\end{equation}

\end{lemma}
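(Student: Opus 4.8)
The plan is to split the bi-infinite dyadic sum at the scale where the two factors of the summand balance. For a fixed $t\neq 0$, choose $N'\in\mathbf{Z}$ with $|t|\cdot 2^{4N'}\sim 1$, equivalently $2^{N'}\sim |t|^{-1/4}$; such an $N'$ always exists. Then write
\[
\sum_{N=-\infty}^{+\infty} 2^{(1+\alpha)N}\big(1+|t|\cdot 2^{4N}\big)^{-\frac{1}{2}}
= \sum_{N\le N'} 2^{(1+\alpha)N}\big(1+|t|\cdot 2^{4N}\big)^{-\frac{1}{2}}
+ \sum_{N> N'} 2^{(1+\alpha)N}\big(1+|t|\cdot 2^{4N}\big)^{-\frac{1}{2}},
\]
and estimate each of the two pieces by a convergent geometric series.

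For the low part $N\le N'$, I bound $(1+|t|\cdot 2^{4N})^{-1/2}\le 1$, so this sum is controlled by $\sum_{N\le N'} 2^{(1+\alpha)N}$. Since $1+\alpha>0$, this geometric series converges and is comparable to its largest term $2^{(1+\alpha)N'}\sim |t|^{-(1+\alpha)/4}$. For the high part $N> N'$, I use $(1+|t|\cdot 2^{4N})^{-1/2}\le (|t|\cdot 2^{4N})^{-1/2}=|t|^{-1/2}2^{-2N}$, which turns the summand into $|t|^{-1/2}2^{(\alpha-1)N}$. Here the hypothesis $\alpha<1$ enters: the exponent $\alpha-1$ is strictly negative, so $\sum_{N>N'} 2^{(\alpha-1)N}$ converges and is comparable to its largest term $2^{(\alpha-1)N'}$; hence the high part is $\lesssim |t|^{-1/2}2^{(\alpha-1)N'}\sim |t|^{-1/2}|t|^{-(\alpha-1)/4}=|t|^{-(1+\alpha)/4}$. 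Adding the two bounds yields \eqref{eq-low sum}.

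The only point needing care is the role of the restriction $0\le\alpha<1$: the inequality $1+\alpha>0$ guarantees convergence of the low part summed toward $N'$, while the strict inequality $\alpha<1$ is precisely what makes the high tail summable, since at $\alpha=1$ the tail $\sum_{N>N'}1$ diverges (logarithmic loss). Beyond this bookkeeping there is no genuine analytic obstacle; the lemma is an elementary device, to be invoked alongside Lemma \ref{lemma-low-energy-2} and in the same spirit as the concluding estimate in the proof of Proposition \ref{prop-free estimates}, in order to pass from the dyadic pieces $\varphi_N(\mu)$ to the full low-energy integral.
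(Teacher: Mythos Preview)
Your proof is correct and follows exactly the same approach as the paper: split the sum at the integer $N'$ where $|t|\cdot 2^{4N'}\sim 1$, bound the low part by the geometric series $\sum_{N\le N'}2^{(1+\alpha)N}$ and the high part by $|t|^{-1/2}\sum_{N>N'}2^{(\alpha-1)N}$, and observe that both are $\lesssim |t|^{-(1+\alpha)/4}$. Your additional remark on precisely where the constraint $0\le\alpha<1$ is used is a helpful clarification not spelled out in the paper.
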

\begin{proof} Since for any $t\neq 0,$ there exists a $N_0'\in \mathbf{Z}$ such that $|t|\cdot 2^{4N_0'}\sim 1,$ thus we have
\begin{align*}
\sum_{N=-\infty}^{+\infty} 2^{(1+\alpha)N}(1+|t|\cdot 2^{4N})^{-\frac{1}{2}}=&\sum_{N=-\infty}^{N_0'} 2^{(1+\alpha)N}(1+|t|\cdot 2^{4N})^{-\frac{1}{2}}+\sum_{N=N_0'+1}^{+\infty} 2^{(1+\alpha)N}(1+|t|\cdot 2^{4N})^{-\frac{1}{2}}\\
\lesssim & \sum_{N=-\infty}^{N_0'} 2^{(1+\alpha)N}+|t|^{-\frac{1}{2}}\sum_{N=N_0'+1}^{+\infty}2^{(\alpha-1)N}\\
\lesssim & |t|^{-\frac{1+\alpha}{4}}.
\end{align*}
\end{proof}
\subsubsection{\textbf{Regular Case}}

In order to establish the lower energy estimate \eqref {eq-low result-1} with $\alpha=0$ in regular case, recall that by Stone's formula ( also see the equality \eqref{Stone formula})\begin{align}\label{lower energy Stone formula}
e^{-itH}P_{ac}(H)\chi(H)f=&\frac{2}{\pi i}\int_0^\infty e^{-it\mu^4}\chi(\mu )\mu^3[R_V^+(\mu^4)-R_V^-(\mu^4)]fd\mu\\
=&\sum_{N=-\infty}^{N_0}\sum_{\pm}\frac{\pm1}{2\pi i}\int_0^\infty e^{-it\mu^4}\varphi_N(\mu)\mu^3R_V^{\pm}(\mu^4)fd\mu.
\end{align}
When zero is a regular point of the spectrum of $H$, by \eqref{eq-symmetric identity} and \eqref{eq-M-inverse-Regular Case}, we have
\begin{equation}\label{eq-Rv-expansion-regualr}
\begin{split}
R_V^\pm(\mu^4) =& R_0^\pm(\mu^4)-R_0^\pm(\mu^4)v\Big(S_0A_{01}^0S_0\Big)vR_0^\pm(\mu^4)-R_0^\pm(\mu^4)v\Big(\mu QA_{11}^0Q\Big)vR_0^\pm(\mu^4)\\
&\ \  -R_0^\pm(\mu^4)v\Big(\mu^2 QA_{21}^0Q\Big)vR_0^\pm(\mu^4)-R_0^\pm(\mu^4)v\Big(\mu^2 S_0A_{22}^0+\mu^2 A_{23}^0S_0\Big)vR_0^\pm(\mu^4)\\
&\ \  -R_0^\pm(\mu^4)v\Gamma_3^0(\mu)vR_0^\pm(\mu^4).
\end{split}
\end{equation}
If we substitute the \eqref{eq-Rv-expansion-regualr} into the \eqref{lower energy Stone formula}, and combine Proposition \ref{prop-free estimates} and Lemma \ref{lemma-low sum}, then it suffices to show the following Propositions \ref{pro-low energy-regular case},\ \ref{pro-Q-1},\ \ref{pro-Q-2} and \ref{pro-Gamma_3} corresponding to each different terms of the resolvent formula \eqref{eq-Rv-expansion-regualr}  above.

\begin{proposition}\label{pro-low energy-regular case}
For any $N\in\mathbf{Z}_{-}$ and $N\le N_0,$ we have
\begin{equation}\label{eq-low energy estimates-regular case-1}
\sup\limits_{x,y\in\mathbf{R}}\Big|\int_0^\infty e^{-it\mu^4}\mu^3\varphi_N(\mu)\Big[R_0^\pm(\mu^4)v\big(S_0A_{01}^0S_0\big)vR_0^\pm(\mu^4)\Big](x,y)d\mu\Big|\lesssim 2^{2N}(1+|t|\cdot 2^{4N})^{-\frac{1}{2}}.
\end{equation}
\end{proposition}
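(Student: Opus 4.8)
The plan is to convert the two projections $S_0$ flanking $A_{01}^0$ into a gain of $\mu^2$ (together with a harmless quadratic growth in the $v$-variables), and then to feed the resulting oscillatory $\mu$-integral into Lemma \ref{lemma-low-energy-2}. Write the kernel of the operator appearing in \eqref{eq-low energy estimates-regular case-1} as
\[
\mathcal K_\mu^\pm(x,y)=\int_{\mathbf R^2} R_0^\pm(\mu^4)(x,x_1)\,v(x_1)\,[S_0A_{01}^0S_0](x_1,x_2)\,v(x_2)\,R_0^\pm(\mu^4)(x_2,y)\,dx_1\,dx_2 .
\]
By Definition \ref{defi-projection space} and Remark \ref{remark-equivalence}, $S_0v=S_0(xv)=0$, so inside the $dx_1$–integration the first resolvent may be replaced by what remains after subtracting its part affine in $x_1$, and likewise the second resolvent inside the $dx_2$–integration. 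Recalling from \eqref{eq-free resolvent-LAP-3} that $R_0^\pm(\mu^4)(x,y)=\tfrac{1}{4\mu^3}F^\pm(\mu|x-y|)$ with $F^\pm(p)=\pm ie^{\pm ip}-e^{-p}$, one has $(F^\pm)'(p)=-e^{\pm ip}+e^{-p}$, hence $(F^\pm)'(0)=0$, so Lemma \ref{lemma-low-energy}(ii) applies with $F=F^\pm$, base point $x$ (resp. $y$) and expansion variable $x_1$ (resp. $x_2$). Since $\text{supp}\,\varphi_N\subset(0,2^{N_0}]$ with $N\le N_0<0$, we work with $\mu\ll1$, where the expansion \eqref{eq-M-inverse-Regular Case} is valid.

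Applying Lemma \ref{lemma-low-energy}(ii) and discarding the affine parts (annihilated under $S_0$ as above), $\mathcal K_\mu^\pm(x,y)$ equals
\[
\frac{1}{16\mu^{2}}\int_{\mathbf R^2} x_1^2\,v(x_1)\,[S_0A_{01}^0S_0](x_1,x_2)\,v(x_2)\,x_2^2\,\mathcal R_1^\pm(\mu;x,x_1)\,\mathcal R_2^\pm(\mu;y,x_2)\,dx_1dx_2,
\]
where $\mathcal R_1^\pm(\mu;x,x_1)=\int_0^1(1-\theta)(F^\pm)''(\mu|x-\theta x_1|)\,d\theta$ and $\mathcal R_2^\pm$ is analogous in $(y,x_2)$. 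The two $S_0$'s thus produce the gain $\mu^{-2}$ in place of the $\mu^{-6}$ one would otherwise face, at the cost of the factor $x_1^2x_2^2$, and $(F^\pm)''$ is bounded on $[0,\infty)$. Substituting into the left side of \eqref{eq-low energy estimates-regular case-1}, the $\mu^3$ weight leaves $\mu^{1}$, so it suffices to bound
\[
\iint |x_1|^2 v(x_1)\,\big|[S_0A_{01}^0S_0](x_1,x_2)\big|\, |x_2|^2 v(x_2)\Big(\sup_{0\le\theta,\theta'\le1}\Big|\int_0^\infty e^{-it\mu^4}\mu\,\varphi_N(\mu)(F^\pm)''(\mu|x-\theta x_1|)(F^\pm)''(\mu|y-\theta' x_2|)\,d\mu\Big|\Big)dx_1dx_2
\]
uniformly in $x,y$.

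For the inner $\mu$-integral, expand $(F^\pm)''(p)=\mp ie^{\pm ip}-e^{-p}$; the product of the two factors becomes a sum of four terms of the form $e^{\pm i\mu\Psi}\Phi^\pm(\mu)$ with $\Psi\in\{\,|x-\theta x_1|+|y-\theta' x_2|,\ |x-\theta x_1|,\ |y-\theta' x_2|,\ 0\,\}\ge0$ and $\Phi^\pm(\mu)$ a product of a unimodular constant and at most two factors $e^{-\mu|\cdot|}$. After the rescaling $\mu=2^Ns$ (using $\varphi_N(2^Ns)=\varphi_0(s)$) each term contributes
\[
2^{2N}\int_0^\infty e^{-it2^{4N}s^4}\,s\,\varphi_0(s)\,e^{\pm i2^Ns\Psi}\,\Phi^\pm(2^Ns)\,ds ,
\]
and since $p\mapsto pe^{-cp}$ is bounded for $c\ge\tfrac14$, both $s\,\Phi^\pm(2^Ns)$ and its $s$-derivative are $O(1)$ on $\text{supp}\,\varphi_0$, uniformly in $N$ and in all spatial parameters; hence by a trivial variant of Lemma \ref{lemma-low-energy-2} (whose proof uses only such bounds on the amplitude) each term is $\lesssim 2^{2N}(1+|t|\cdot2^{4N})^{-\frac12}$, uniformly in $x,y,x_1,x_2,\theta,\theta'$. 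Consequently the left side of \eqref{eq-low energy estimates-regular case-1} is $\lesssim 2^{2N}(1+|t|\cdot2^{4N})^{-\frac12}\iint |x_1|^2 v(x_1)\,\big|[S_0A_{01}^0S_0](x_1,x_2)\big|\,|x_2|^2 v(x_2)\,dx_1dx_2$, and this last double integral is finite by Cauchy--Schwarz: $\langle\cdot\rangle^2 v$ lies in a weighted space $L^2_s(\mathbf R)$ on which $A_{01}^0$, hence $S_0A_{01}^0S_0$, is bounded, which is exactly what $\beta>13$ guarantees via the construction of $A_{01}^0$ in Section \ref{section-M}.

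The single genuine obstacle is the uniformity in $x,y$ of the $\mu$-integral: $\partial_\mu R_0^\pm(\mu^4)(x,y)$ admits no bound by a power of $\mu$ uniform in $x,y$, so a direct integration by parts in $\mu$ is unavailable. This is precisely why the dyadic decomposition in $\mu$ and the oscillatory-integral estimate of Lemma \ref{lemma-low-energy-2} (a Van der Corput / stationary-phase argument in the rescaled variable $s$) are used, and why the oscillatory factors generated by $(F^\pm)''$ must be retained rather than estimated away. Once the $S_0$-cancellation is implemented, everything else is routine bookkeeping of weights, controlled by $\beta>13$.
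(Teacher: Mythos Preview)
Your proof is correct and follows essentially the same approach as the paper: exploit $S_0v=S_0(xv)=0$ via Lemma \ref{lemma-low-energy}(ii) to convert the $\mu^{-6}$ into $\mu^{-2}$ at the cost of the weights $x_1^2x_2^2$, then estimate the resulting oscillatory $\mu$-integral by Lemma \ref{lemma-low-energy-2} and finish with Cauchy--Schwarz using $x^2v\in L^2$. The only cosmetic difference is that the paper factors $(F^\pm)''(p)=e^{\pm ip}\mathcal F^\pm(p)$ with $\mathcal F^\pm(p)=\mp i-e^{-p\mp ip}$ and applies Lemma \ref{lemma-low-energy-2} once with $\Psi=|x-\theta_2y_2|+|y-\theta_1y_1|$, whereas you expand the product into four terms with different phases; both routes are equivalent.
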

\begin{proof}
Since $\varphi_N(\mu)=\varphi_0(2^{-N}\mu),$ thus
\begin{equation*}
\begin{split}
K_{1,N}^{0,\pm}(t;x,y):=&\int_0^\infty e^{-it\mu^4}\mu^3\varphi_N(\mu)\Big[R_0^\pm(\mu^4)v\big(S_0A_{01}^0S_0\big)vR_0^\pm(\mu^4)\Big](x,y)d\mu\\
=&\int_0^\infty e^{-it\mu^4}\mu^3\varphi_0(2^{-N}\mu)\Big[R_0^\pm(\mu^4)v\big(S_0A_{01}^0S_0\big)vR_0^\pm(\mu^4)\Big](x,y)d\mu.
\end{split}
\end{equation*}
Indeed, since
\[R_0^\pm(x,y)=\frac{\pm ie^{\pm i\mu|x-y|}-e^{-\mu|x-y|}}{4\mu^3}:=\frac{1}{4\mu^3}F^\pm(\mu|x-y|),\]
and $(F^\pm)'(0)=0$ , then by Lemma \eqref{lemma-low-energy}(ii) and using the fact that $S_0(v)=S_0(xv(x))=0,$ we have
\begin{equation*}
\begin{split}
\ \big[R_0^\pm(\mu^4)v&\big(S_0A_{01}^0S_0\big)vR_0^\pm(\mu^4)\big](x,y)\\
=&\frac{1}{16\mu^6}\int_{\mathbf{R}^2}F^\pm(\mu|x-y_2|)\big[vS_0A_{01}^0S_0v\big](y_2,y_1)F^\pm(\mu|y-y_1|)dy_1dy_2\\
=&\frac{1}{16\mu^2}\int_{\mathbf{R}^2}\Big(\int_0^1\int_0^1(1-\theta_2)(1-\theta_1)(F^\pm)''(\mu |x-\theta_2y_2|)\times\\
&\ \ \ \ \ \ \ \ \ \ \ \ \ \ \ \ \ \ \
(F^\pm)''(\mu |y-\theta_1y_1|)d\theta_1d\theta_2\Big)y_1^2y_2^2\big[vS_0A_{01}^0S_0v\big](y_2,y_1)dy_1dy_2.
\end{split}
\end{equation*}
Then we obtain that
\begin{equation}\label{eq-K_1^0-1}
\begin{split}
&\ K_{1,N}^{0,\pm}(t;x,y)\\
=&\frac{1}{16}\int_0^\infty e^{-it\mu^4}\mu\varphi_0(2^{-N}\mu)\Bigg[\int_{\mathbf{R}^2}\Big(\int_0^1\int_0^1(1-\theta_2)(1-\theta_1)(F^\pm)''(\mu |x-\theta_2y_2|)\\
&\ \ \ \ \ \ \ \ \ \ \ \ \ \ \ \ \ \ \ \ \ \ \ \ \ \ \
(F^\pm)''(\mu |y-\theta_1y_1|)d\theta_1d\theta_2\Big)y_1^2y_2^2\big[vS_0A_{01}^0S_0v\big](y_2,y_1)dy_1dy_2\Bigg]d\mu\\
=&\frac{1}{16}\int_{\mathbf{R}^2}\Bigg[\int_0^1\int_0^1\bigg(\int_0^\infty e^{-it\mu^4}\mu\varphi_0(2^{-N}\mu)
(F^\pm)''(\mu |x-\theta_2y_2|)(F^\pm)''(\mu |y-\theta_1y_1|)d\mu\bigg)\\
&\ \ \ \ \ \ \ \ \ \ \ \ \ \ \ \ \ \ \ \ \ \ \ \ \ \ \ \ \ \ \ \ \ \
(1-\theta_1)(1-\theta_2)d\theta_1d\theta_2\Bigg]y_1^2y_2^2\big[vS_0A_{01}^0S_0v\big](y_2,y_1)dy_1dy_2.\\
\end{split}
\end{equation}
Denote
\begin{align*}
&\ E_{1,N}^{0,\pm}(t;x,y,\theta_1,\theta_2,y_1,y_2)\\
=&\int_0^\infty e^{-it\mu^4}\mu\varphi_0(2^{-N}\mu)(F^\pm)''(\mu |x-\theta_2y_2|)
(F^\pm)''(\mu |y-\theta_1y_1|)d\mu.
\end{align*}
It is obvious that
\[\Big|K_{1,N}^{0,\pm}(t;x,y)\Big|\lesssim\int_{\mathbf{R}^2}\Big(\int_0^1\int_0^1 \big|E_{1,N}^{0,\pm}(t;x,y,\theta_1,\theta_2,y_1,y_2)\big|d\theta_1d\theta_2\Big)\big|[vS_0A_0^0S_0v](y_2,y_1)\big|y_1^2y_2^2dy_1dy_2.\]

Now we estimate $E_{1,N}^{0,\pm}(t;x,y,\theta_1,\theta_2,y_1,y_2).$ Since
\[(F^{\pm})''(\mu|x-\theta_2y_2|)=\mp ie^{\pm i\mu|x-\theta_2y_2|}-e^{-\mu|x-\theta_2y_2|}:=e^{\pm i\mu|x-\theta_2y_2|}\mathcal{F}^\pm(\mu|x-\theta_2y_2|),\]
\[(F^{\pm})''(\mu|y-\theta_1y_1|)=\mp ie^{\pm i\mu|y-\theta_1y_1|}-e^{-\mu|y-\theta_1y_1|}:=e^{\pm i\mu|y-\theta_1y_1|}\mathcal{F}^\pm(\mu|y-\theta_1y_1|),\]
and let $\mu=2^Ns,$ we can rewrite $E_{1,N}^{0,\pm}(t;x,y,\theta_1,\theta_2,y_1,y_2)$ as follows:
\begin{equation*}
\begin{split}
E_{1,N}^{0,\pm}(t;x,y,\theta_1,\theta_2,y_1,y_2)=&2^{2N}\int_0^\infty e^{-it2^{4N}s^4}s\varphi_0(s)e^{\pm i2^Ns|x-\theta_2y_2|}\mathcal{F}^\pm(2^Ns|x-\theta_2y_2|)\\
&\ \ \ \ \ \ \ \ \ \ \ \ \ \ \ \ \ \ \ \ \ \ \ \ \ \ \ \ \ \times e^{\pm i2^Ns|y-\theta_1y_1|}\mathcal{F}^\pm(2^Ns|y-\theta_1y_1|)ds.
\end{split}
\end{equation*}
Noting that for any $s\in\text{supp}\ \varphi_0(s)$ and $k=0,1,$ we have
\[\Big|\partial_{s}^k\mathcal{F}^\pm(2^Ns|x-\theta_2y_2|)\Big|\lesssim 1,\ \ \ \ \ \ \Big|\partial_{s}^k\mathcal{F}^\pm(2^Ns|y-\theta_1y_1|)\Big|\lesssim 1,\]
Using Lemma \ref{lemma-low-energy-2} with $z=(x,y,y_1,y_2,\theta_1,\theta_2)$ and
\[\Psi(z)=|x-\theta_2 y_2|+|y-\theta_1 y_1|,\ \ \ \ \Phi^\pm(2^Ns;z)=\mathcal{F}^\pm(2^Ns|x-\theta_2y_2|)\cdot\mathcal{F}^\pm(2^Ns|y-\theta_1y_1|),\]
we obtain that
\[\sup_{x,y\in\mathbf{R}}\big|E_{1,N}^{0,\pm}(t;x,y,\theta_1,\theta_2,y_1,y_2)\big|\lesssim 2^{2N}(1+|t|\cdot 2^{4N})^{-\frac{1}{2}}.\]
Noting that $v(x)\lesssim(1+|x|)^{-\frac{13}{2}-}$ in this case, thus we have
\begin{align*}
\big|K_{1,N}^{0,\pm}(t;x,y)\big|\lesssim & 2^{2N}(1+|t|\cdot 2^{4N})^{-\frac{1}{2}}\int_{\mathbf{R}^2}y_2^2|v(y_2)|\big|[S_0A_0^0S_0](y_2,y_1)\big||v(y_1)|y_1^2dy_1dy_2\\
\lesssim& 2^{2N}(1+|t|\cdot 2^{4N})^{-\frac{1}{2}}\|y_2^2v(y_2)\|_{L^2_{y_2}}\cdot\|S_0A_{01}^0S_0\|_{L^2\rightarrow L_2}\cdot\|y_1^2v(y_1)\|_{L^2_{y_1}}\\
\lesssim& 2^{2N}(1+|t|\cdot 2^{4N})^{-\frac{1}{2}}
\end{align*}
uniformly in $x,y.$
\end{proof}

\begin{proposition}\label{pro-Q-1}
For any $N\in\mathbf{Z}_-$ and $N\le N_0$, we have
\begin{equation}\label{eq-low Q-1-1}
\sup\limits_{x,y\in\mathbf{R}}\Big|\int_0^\infty e^{-it\mu^4}\mu^3\varphi_N(\mu)\Big[R_0^\pm(\mu^4)v\big(\mu QA_{11}^0Q\big)vR_0^\pm(\mu^4)\Big](x,y)d\mu\Big|\lesssim 2^{N}(1+|t|\cdot 2^{4N})^{-\frac{1}{2}}.
\end{equation}
\end{proposition}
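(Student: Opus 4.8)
The plan is to repeat, \emph{mutatis mutandis}, the argument of Proposition \ref{pro-low energy-regular case}. First I would set
\[
K_{2,N}^{0,\pm}(t;x,y):=\int_0^\infty e^{-it\mu^4}\mu^3\varphi_0(2^{-N}\mu)\Big[R_0^\pm(\mu^4)v\big(\mu QA_{11}^0Q\big)vR_0^\pm(\mu^4)\Big](x,y)\,d\mu,
\]
so that it suffices to show $\sup_{x,y\in\mathbf{R}}|K_{2,N}^{0,\pm}(t;x,y)|\lesssim 2^{N}(1+|t|\cdot2^{4N})^{-1/2}$. The one structural difference with the preceding proposition is that the projection $Q=I-P$ only annihilates $v$, not $xv$; hence on each side only the first-order Taylor expansion of Lemma \ref{lemma-low-energy}(i) is available, not the second-order one. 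Using $R_0^\pm(\mu^4)(x,y)=\tfrac1{4\mu^3}F^\pm(\mu|x-y|)$ and expanding $F^\pm(\mu|x-y_2|)$ and $F^\pm(\mu|y-y_1|)$ by that lemma, all the terms carrying the constants $F^\pm(\mu|x|)$ or $F^\pm(\mu|y|)$ drop out after integration: $\int v(y_2)[QA_{11}^0Q](y_2,y_1)\,dy_2=0$ because $Qv=0$, and $\int [QA_{11}^0Q](y_2,y_1)v(y_1)\,dy_1=(QA_{11}^0Qv)(y_2)=0$. Only the product of the two first-order remainders survives, and a direct computation gives
\[
\Big[R_0^\pm(\mu^4)v\big(\mu QA_{11}^0Q\big)vR_0^\pm(\mu^4)\Big](x,y)=\frac1{16\mu^{3}}\int_{\mathbf{R}^2}y_1y_2\,\mathcal G^\pm_\mu\,[vQA_{11}^0Qv](y_2,y_1)\,dy_1dy_2,
\]
where
\[
\mathcal G^\pm_\mu=\int_0^1\!\!\int_0^1{\rm sgn}(x-\theta_2y_2)\,{\rm sgn}(y-\theta_1y_1)\,(F^\pm)'(\mu|x-\theta_2y_2|)\,(F^\pm)'(\mu|y-\theta_1y_1|)\,d\theta_1d\theta_2 .
\]
Note how the extra $\mu$ sitting in $\mu QA_{11}^0Q$, together with the two $\mu$'s produced by the first-order remainders, combines with $(4\mu^3)^{-2}$ and the $\mu^3$ of Stone's formula to leave \emph{no} net power of $\mu$ in the resulting oscillatory integral; this is exactly what the extra $\mu$ buys back after the weaker (first-order) expansion.

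Next I would interchange the $\mu$-integral with the absolutely convergent $\theta_1,\theta_2,y_1,y_2$-integrals and pull out the $\mu$-independent sign factors, so that $K_{2,N}^{0,\pm}$ becomes a constant multiple of an integral over $\theta_1,\theta_2,y_1,y_2$ of
\[
E_{2,N}^{0,\pm}=\int_0^\infty e^{-it\mu^4}\varphi_0(2^{-N}\mu)\,(F^\pm)'(\mu|x-\theta_2y_2|)\,(F^\pm)'(\mu|y-\theta_1y_1|)\,d\mu .
\]
Scaling $\mu=2^Ns$ and writing $(F^\pm)'(p)=e^{\pm ip}\widetilde{\mathcal F}^\pm(p)$ with $\widetilde{\mathcal F}^\pm(p)=-1+e^{-(1\pm i)p}$, one checks, for $s\in{\rm supp}\,\varphi_0$ and all $N$, that $|\partial_s^k\widetilde{\mathcal F}^\pm(2^Ns\,r)|\lesssim1$ for $k=0,1$, using $|\widetilde{\mathcal F}^\pm|\le2$ and $\sup_{u\ge0}ue^{-u/4}<\infty$ to control the derivative $(1\pm i)2^Nr\,e^{-(1\pm i)2^Nsr}$. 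Hence Lemma \ref{lemma-low-energy-2} applies with $z=(x,y,y_1,y_2,\theta_1,\theta_2)$, $\Psi(z)=|x-\theta_2y_2|+|y-\theta_1y_1|$ and $\Phi^\pm(2^Ns;z)=\widetilde{\mathcal F}^\pm(2^Ns|x-\theta_2y_2|)\widetilde{\mathcal F}^\pm(2^Ns|y-\theta_1y_1|)$, which gives $\sup_{z}|E_{2,N}^{0,\pm}|\lesssim 2^{N}(1+|t|\cdot2^{4N})^{-1/2}$. Finally, bounding $|{\rm sgn}|\le1$, integrating in $\theta_1,\theta_2$ and applying Cauchy--Schwarz in $y_1,y_2$ together with the $L^2\to L^2$ boundedness of $QA_{11}^0Q$, one obtains
\[
\big|K_{2,N}^{0,\pm}(t;x,y)\big|\lesssim 2^{N}(1+|t|\cdot2^{4N})^{-1/2}\,\|yv\|_{L^2(\mathbf{R})}^2\,\|QA_{11}^0Q\|_{L^2\to L^2}\lesssim 2^{N}(1+|t|\cdot2^{4N})^{-1/2}
\]
uniformly in $x,y$, since $\|yv\|_{L^2}<\infty$ when $\beta>13$ (indeed $\beta>3$ suffices for this step).

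I expect the only genuinely delicate points to be: (a) the verification that \emph{every} ``constant'' contribution really drops out, which rests on $Qv=0$ used on both the $x$- and the $y$-side; and (b) the uniform bound $|\partial_s^k\Phi^\pm|\lesssim1$ required to invoke Lemma \ref{lemma-low-energy-2}. The remaining steps are a line-by-line repetition of the proof of Proposition \ref{pro-low energy-regular case}. One should also keep careful track of the $\mu$-power bookkeeping, since here the cancellation structure ($Q$ rather than $S_0$) and the compensating factor $\mu$ conspire to yield only the slightly weaker gain $2^{N}$ (instead of $2^{2N}$), which is nevertheless enough for the final summation via Lemma \ref{lemma-low sum}.
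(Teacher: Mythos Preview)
Your proposal is correct and follows essentially the same route as the paper's proof: first-order Taylor expansion via Lemma \ref{lemma-low-energy}(i) on each side using $Qv=0$, reduction to the oscillatory integral $E_{2,N}^{0,\pm}$, and an application of Lemma \ref{lemma-low-energy-2} after writing $(F^\pm)'(p)=e^{\pm ip}\bigl(-1+e^{-(1\pm i)p}\bigr)$, followed by Cauchy--Schwarz in $y_1,y_2$. Your presentation is in fact slightly more explicit than the paper's about the $\mu$-power bookkeeping and the verification of the uniform derivative bound on $\widetilde{\mathcal F}^\pm$.
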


\begin{proof}
Since $\varphi_N(\mu)=\varphi_0(2^{-N}\mu)$, thus
\begin{equation*}
\begin{split}
K_{2,N}^{0,\pm}(t;x,y):=&\int_0^\infty e^{-it\mu^4}\mu^3\varphi_0(2^{-N}\mu)\Big[R_0^\pm(\mu^4)v\big(\mu QA_{11}^0Q\big)vR_0^\pm(\mu^4)\Big](x,y)d\mu.
\end{split}
\end{equation*}
It suffices to prove that
\begin{equation*}
\sup_{x,y}\Big|K_{2,N}^{0,\pm}(t;x,y)\Big|\lesssim 2^{N}(1+|t|\cdot 2^{4N})^{-\frac{1}{2}}.
\end{equation*}
Indeed, since
\[R_0^\pm(x,y)=\frac{\pm ie^{\pm i\mu|x-y|}-e^{-\mu|x-y|}}{4\mu^3}:=\frac{1}{4\mu^3}F^\pm(\mu|x-y|),\]
by Lemma \eqref{lemma-low-energy}(i) and using the fact that $Q(v)=0,$ we have
\begin{equation*}
\begin{split}
&\ \Big[R_0^\pm(\mu^4)v\big(\mu QA_{11}^0Q\big)vR_0^\pm(\mu^4)\Big](x,y)\\
=&\frac{1}{16\mu^5}\int_{\mathbf{R}^2}F(\mu|x-y_2|)\big[vQA_{11}^0Qv\big](y_2,y_1)F(\mu|y-y_1|)dy_1dy_2\\
=&\frac{1}{16\mu^3}\int_{\mathbf{R}^2}\Big(\int_0^1\int_0^1(F^\pm)'(\mu |x-\theta_2y_2|){\rm sgn}(x-\theta_2y_2)\big[vQA_{11}^0Qv\big](y_2,y_1)\\
&\ \ \ \ \ \ \ \ \ \ \ \ \ \ \ \ \ \ \ \ \ \ \ \ \ \ \ \ \ \ \ \ \ \ \ \ \ \ \ \ \
(F^\pm)'(\mu |y-\theta_1y_1|){\rm sgn}(y-\theta_1y_1)d\theta_1d\theta_2\Big)y_1y_2dy_1dy_2.
\end{split}
\end{equation*}
Then we obtain that
\begin{equation}\label{eq-K_2^0-1}
\begin{split}
&\ K_{2,N}^{0,\pm}(t;x,y)\\
=&\frac{1}{16}\int_{\mathbf{R}^2}\Bigg[\int_0^1\int_0^1{\rm sgn}(y-\theta_1y_1){\rm sgn}(x-\theta_2y_2)\Big(\int_0^\infty e^{-it\mu^4}\varphi_0(2^{-N}\mu)(F^\pm)'(\mu |x-\theta_2y_2|)\\
&\ \ \ \ \ \ \ \ \ \ \ \ \ \ \ \ \ \ \ \ \ \ \ \ \ \ \ \ \
(F^\pm)'(\mu |y-\theta_1y_1|)d\mu\Big)d\theta_1d\theta_2\Bigg]\big[vQA_{11}^0Qv\big](y_2,y_1) y_1y_2dy_1dy_2.\\
\end{split}
\end{equation}
Denote
\begin{align*}
\ E_{2,N}^{0,\pm}(t;x,y,\theta_1,\theta_2,y_1,y_2)
=\int_0^\infty e^{-it\mu^4}\varphi_0(2^{-N}\mu)(F^\pm)'(\mu |x-\theta_2y_2|)
(F^\pm)'(\mu |y-\theta_1y_1|)d\mu.
\end{align*}
Then
\[\Big|K_{2,N}^{0,\pm}(t;x,y)\Big|\lesssim\int_{\mathbf{R}^2}\Big(\int_0^1\int_0^1\big|E_{2,N}^{0,\pm}(t;x,y,\theta_1,\theta_2,y_1,y_2)\big|d\theta_1d\theta_2 \Big)\big|[vQA_{11}^0Qv]\big|(y_2,y_1)\ |y_1y_2|dy_1dy_2.\]
Next to estimate $E_{2,N}^{0,\pm}(t;x,y,\theta_1,\theta_2,y_1,y_2).$ Since
\[(F^{\pm})'(\mu|x-\theta_2y_2|)=-e^{\pm i\mu|x-\theta_2y_2|}+e^{-\mu|x-\theta_2y_2|}:=e^{\pm i\mu|x-\theta_2y_2|}\mathcal{F}^\pm(\mu|x-\theta_2y_2|),\]
\[(F^{\pm})'(\mu|y-\theta_1y_1|)=-e^{\pm i\mu|y-\theta_1y_1|}+e^{-\mu|y-\theta_1y_1|}:=e^{\pm i\mu|y-\theta_1y_1|}\mathcal{F}^\pm(\mu|y-\theta_1y_1|),\]
and let $\mu=2^Ns,$ we can rewrite $E_{2,N}^{0,\pm}(t;x,y,\theta_1,\theta_2,y_1,y_2)$ as follows:
\begin{equation*}
\begin{split}
E_{2,N}^{0,\pm}(t;x,y,\theta_1,\theta_2,y_1,y_2)=&2^{N}\int_0^\infty e^{-it2^{4N}s^4}\varphi_0(s)e^{\pm i2^Ns|x-\theta_2y_2|}\mathcal{F}^\pm(2^Ns|x-\theta_2y_2|)\\
&\ \ \ \ \ \ \ \ \ \ \ \ \ \ \ \ \ \ \ \ \ \ \ \ \ \ \ \ \ \ \ \ \ \ \ \ \   e^{\pm i2^Ns|y-\theta_1y_1|}\mathcal{F}^\pm(2^Ns|y-\theta_1y_1|)ds.
\end{split}
\end{equation*}
Noting that for any $s\in\text{supp}\ \varphi_0(s)$ and $k=0,1,$ we have
\[\big|\partial_{s}^k\mathcal{F}^\pm(2^Ns|x-\theta_2y_2|)\big|\lesssim 1,\ \ \ \ \ \ \big|\partial_{s}^k\mathcal{F}^\pm(2^Ns|y-\theta_1y_1|)\big|\lesssim 1,\]
then using Lemma \ref{lemma-low-energy-2} with $z=(x,y,y_1,y_2,\theta_1,\theta_2)$ and
\[\Psi(z)=|x-\theta_2 y_2|+|y-\theta_1 y_1|,\ \ \ \ \Phi^\pm(2^Ns;z)=\mathcal{F}^\pm(2^Ns|x-\theta_2y_2|)\cdot\mathcal{F}^\pm(2^Ns|y-\theta_1y_1|),\]
we have
\[\sup_{x,y\in\mathbf{R}}\big|E_{2,N}^{0,\pm}(t;x,y,\theta_1,\theta_2,y_1,y_2)\big|\lesssim 2^{N}(1+|t|\cdot 2^{4N})^{-\frac{1}{2}}.\]
Furthermore, since $v(x)\lesssim(1+|x|)^{-\frac{13}{2}-},$ we have
\begin{align*}
\big|K_{2,N}^{0,\pm}(t;x,y)\big|\lesssim & 2^{N}(1+|t|\cdot 2^{4N})^{-\frac{1}{2}}\int_{\mathbf{R}^2}|y_2 v(y_2)|\big|[QA_{11}^0Q](y_2,y_1)\big||y_1v(y_1)|dy_1dy_2\\
\lesssim& 2^{N}(1+|t|\cdot 2^{4N})^{-\frac{1}{2}}\|y_2v(y_2)\|_{L^2_{y_2}}\cdot\|QA_{11}^0Q\|_{L^2\rightarrow L_2}\cdot\|y_1v(y_1)\|_{L^2_{y_1}}\\
\lesssim& 2^{N}(1+|t|\cdot 2^{4N})^{-\frac{1}{2}}
\end{align*}
uniformly in $x,y.$
\end{proof}

By using the method in the proof of Proposition \ref{pro-low energy-regular case} and Proposition \ref{pro-Q-1}, it is easy similarly to obtain the following estimates ( we omit the details of calculations here ).
\begin{proposition}\label{pro-Q-2}
For any $N\in\mathbf{Z}_-$ and $N\le N_0$, we have
\begin{equation*}\label{eq-Q-2-1}
\sup\limits_{x,y}\Big|\int_0^\infty e^{-it\mu^4}\mu^3\varphi_N(\mu)\Big[R_0^\pm(\mu^4)v\big(\mu^2QA_{21}^0Q\big)vR_0^\pm(\mu^4)\Big](x,y)d\mu\Big|\lesssim 2^{2N}(1+|t|\cdot 2^{4N})^{-\frac{1}{2}}.
\end{equation*}
\begin{equation*}\label{eq-Q-2-2}
\sup\limits_{x,y}\Big|\int_0^\infty e^{-it\mu^4}\mu^3\varphi_N(\mu)\Big[R_0^\pm(\mu^4)v\big(\mu^2S_0A_{22}^0\big)vR_0^\pm(\mu^4)\Big](x,y)d\mu\Big|\lesssim 2^{2N}(1+|t|\cdot 2^{4N})^{-\frac{1}{2}}.
\end{equation*}
\begin{equation*}\label{eq-Q-2-3}
\sup\limits_{x,y}\Big|\int_0^\infty e^{-it\mu^4}\mu^3\varphi_N(\mu)\Big[R_0^\pm(\mu^4)v\big(\mu^2A_{23}^0S_0\big)vR_0^\pm(\mu^4)\Big](x,y)d\mu\Big|\lesssim 2^{2N}(1+|t|\cdot 2^{4N})^{-\frac{1}{2}}.
\end{equation*}
\end{proposition}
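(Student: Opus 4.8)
The plan is to run the three estimates in complete parallel with the proofs of Propositions \ref{pro-low energy-regular case} and \ref{pro-Q-1}; the only genuinely new bookkeeping is the extra power $\mu^2$ carried by the operators $\mu^2QA_{21}^0Q$, $\mu^2S_0A_{22}^0$ and $\mu^2A_{23}^0S_0$, together with matching, on each side of the sandwiched operator, the available orthogonality ($Q(v)=0$ versus $S_0(v)=S_0(xv)=0$) with the correct order of Taylor expansion from Lemma \ref{lemma-low-energy}. First I would write out the kernel of $R_0^\pm(\mu^4)\,v\,A\,v\,R_0^\pm(\mu^4)$ using $R_0^\pm(\mu^4)(x,y)=\frac{1}{4\mu^3}F^\pm(\mu|x-y|)$ with $F^\pm(p)=\pm ie^{\pm ip}-e^{-p}$ (so $(F^\pm)'(0)=0$), which produces a double spatial integral against the kernel of $vAv$ with a prefactor $\frac{1}{16\mu^6}$; the operator weight $\mu^2$ upgrades this to $\frac{1}{16\mu^4}$.

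Next I would exploit the cancellations. For $\mu^2QA_{21}^0Q$, since $Qv=0$ the leading Taylor term of each free-resolvent factor drops out, so I apply part (i) of Lemma \ref{lemma-low-energy} to both factors, replacing $F^\pm(\mu|x-y_2|)$ by $-\mu y_2\int_0^1{\rm sgn}(x-\theta_2y_2)(F^\pm)'(\mu|x-\theta_2y_2|)\,d\theta_2$ and similarly in $y_1$; this extracts $\mu^2 y_1y_2$ and turns $\mu^{-4}$ into $\mu^{-2}$. For $\mu^2S_0A_{22}^0$ and $\mu^2A_{23}^0S_0$, on the side carrying $S_0$ both $S_0(v)=0$ and $S_0(xv)=0$ kill the two leading Taylor terms, so I use part (ii) of Lemma \ref{lemma-low-energy}, replacing that factor by $\mu^2 y_i^2\int_0^1(1-\theta_i)(F^\pm)''(\mu|\cdot-\theta_iy_i|)\,d\theta_i$ and leaving the other free-resolvent factor untouched; again $\mu^{-4}$ becomes $\mu^{-2}$. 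In all three cases, after multiplying by the Stone weight $\mu^3$, the $\mu$-integral takes the shape $\int_0^\infty e^{-it\mu^4}\mu\,\varphi_0(2^{-N}\mu)\,G^\pm(\mu;x,y,y_1,y_2,\theta_1,\theta_2)\,d\mu$, where $G^\pm$ is a product of two pieces, each of the form $e^{\pm i\mu p}\mathcal{F}^\pm(\mu p)$ with $\mathcal{F}^\pm$ arising from the $(F^\pm)'$-type or $F^\pm$-type factors.

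Then I would substitute $\mu=2^Ns$. The change of variables contributes $2^N$ and the explicit $\mu$ contributes another $2^N$, so the inner integral equals $2^{2N}$ times $\int_0^\infty e^{-it2^{4N}s^4}s\,\varphi_0(s)e^{\pm i2^Ns\Psi(z)}\Phi^\pm(2^Ns;z)\,ds$, with $z=(x,y,y_1,y_2,\theta_1,\theta_2)$, $\Psi(z)$ the sum of the two distance terms (of the form $|x-\theta_2y_2|+|y-\theta_1y_1|$, or with one $\theta_i$ absent in the $S_0$ cases), and $\Phi^\pm$ the product of the amplitude functions $\mathcal{F}^\pm$, absorbing the harmless factor $s$. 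The one point that must be checked is the uniform bound $|\partial_s^k\Phi^\pm(2^Ns;z)|\lesssim 1$ for $k=0,1$, uniformly in $z$ and $N$: this is where one uses that $\partial_s$ of an $e^{-2^Ns|x-y_2|}$-type factor brings down $2^N|x-y_2|$, which is controlled since $pe^{-cp}$ is bounded for $s$ in the support of $\varphi_0$. Granted this, Lemma \ref{lemma-low-energy-2} yields the bound $\lesssim(1+|t|\cdot2^{4N})^{-1/2}$ for that oscillatory integral, hence $\lesssim 2^{2N}(1+|t|\cdot2^{4N})^{-1/2}$ before the spatial integration.

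Finally I would carry out the integrations in $y_1,y_2,\theta_1,\theta_2$: the $\theta$-integrals are trivially bounded, and the $y_1,y_2$-integrals are handled by pulling out the operator norm of $A_{kj}^0$ on $L^2(\mathbf{R})$ together with the norms $\|y_i^{\,k}v(y_i)\|_{L^2}$ with $k\le 2$, which are finite because the hypothesis $\beta>13$ forces $v(x)=|V(x)|^{1/2}\lesssim(1+|x|)^{-13/2-}$. This gives the three claimed estimates, uniformly in $x,y$. The only real obstacle is organizational: correctly pairing the available orthogonality on each side with the right order of the Taylor expansion in Lemma \ref{lemma-low-energy}, and verifying the uniform amplitude bounds needed to invoke Lemma \ref{lemma-low-energy-2}; there is no new analytic difficulty beyond what already appears in the proofs of Propositions \ref{pro-low energy-regular case} and \ref{pro-Q-1}.
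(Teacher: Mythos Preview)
Your proposal is correct and is exactly the approach the paper has in mind: it explicitly omits the proof, stating that one obtains the three estimates by the method of Propositions \ref{pro-low energy-regular case} and \ref{pro-Q-1}, and your bookkeeping---using Lemma \ref{lemma-low-energy}(i) on both sides for $\mu^2QA_{21}^0Q$, and Lemma \ref{lemma-low-energy}(ii) on the $S_0$ side with the other side left untouched for $\mu^2S_0A_{22}^0$ and $\mu^2A_{23}^0S_0$, followed by Lemma \ref{lemma-low-energy-2}---is precisely the intended calculation. The only cosmetic point is that on the $S_0$ side the amplitude factor is of $(F^\pm)''$-type rather than $(F^\pm)'$- or $F^\pm$-type, but since $(F^\pm)''(p)=\mp ie^{\pm ip}-e^{-p}$ has the same structure $e^{\pm ip}\mathcal{F}^\pm(p)$, your uniform bounds go through unchanged.
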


Finally, for the last remained term $R_0^\pm(\mu^4)v\Gamma_3^0(\mu)vR_0^\pm(\mu^4) $ in \eqref{eq-Rv-expansion-regualr}, we have the following estimates.
\begin{proposition}\label{pro-Gamma_3}
For any $N\in\mathbf{Z}_-$ and $N\le N_0$, we have
\begin{equation}\label{eq-low Gamma_3}
\sup\limits_{x,y}\Big|\int_0^\infty e^{-it\mu^4}\mu^3\varphi_N(\mu)\Big[R_0^\pm(\mu^4)v\Gamma_3^0(\mu)vR_0^\pm(\mu^4)\Big](x,y)d\mu\Big|\lesssim 2^{N}(1+|t|\cdot 2^{4N})^{-\frac{1}{2}}.
\end{equation}
\end{proposition}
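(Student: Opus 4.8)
The plan is to follow the same scheme as in Propositions \ref{pro-low energy-regular case}--\ref{pro-Q-1}, but exploiting the quantitative bound on $\Gamma_3^0(\mu)$ rather than projection orthogonality. Write
\[
K_{3,N}^{0,\pm}(t;x,y):=\int_0^\infty e^{-it\mu^4}\mu^3\varphi_0(2^{-N}\mu)\Big[R_0^\pm(\mu^4)v\Gamma_3^0(\mu)vR_0^\pm(\mu^4)\Big](x,y)\,d\mu .
\]
Using $R_0^\pm(\mu^4)(x,y)=\tfrac{1}{4\mu^3}F^\pm(\mu|x-y|)$ with $F^\pm(p)=\pm i e^{\pm ip}-e^{-p}$, one inserts the kernel of $v\Gamma_3^0(\mu)v$ and integrates in the auxiliary variables $y_1,y_2$. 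Since $\Gamma_3^0(\mu)$ carries no useful vanishing moments here, no Taylor expansion \`a la Lemma \ref{lemma-low-energy} is available; instead one simply keeps the factor $\mu^3/(4\mu^3)^2=\mu^{-3}/16$ out front, so that after the substitution $\mu=2^N s$ the remaining $\mu$-integral becomes
\[
\int_0^\infty e^{-it 2^{4N}s^4}\,2^{-3N}s^{-3}\varphi_0(s)\,e^{\pm i 2^N s(|x-y_2|+|y-y_1|)}\,\widetilde{\Gamma}_3^0(2^Ns;y_2,y_1)\,ds ,
\]
where $\widetilde{\Gamma}_3^0$ denotes the kernel of $\Gamma_3^0$. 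One then applies Lemma \ref{lemma-low-energy-2} with $z=(x,y,y_1,y_2)$, $\Psi(z)=|x-y_2|+|y-y_1|$ and $\Phi^\pm(2^Ns;z)=2^{-3N}s^{-3}\widetilde{\Gamma}_3^0(2^Ns;y_2,y_1)$ (after checking the symbol bound below), which yields the factor $(1+|t|\cdot2^{4N})^{-1/2}$; the prefactor $2^{3N}$ coming from $d\mu=2^N ds$ and from scaling of $s^{-3}$ cancels against the $2^{-3N}$, leaving the net power $2^N$ demanded by \eqref{eq-low Gamma_3}, after an $L^2_{y_1}\times L^2_{y_2}$ pairing with $yv(y)$ exactly as in the earlier propositions (using $v(x)\lesssim(1+|x|)^{-13/2-}$).

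\emph{The main obstacle} is verifying the uniform hypothesis of Lemma \ref{lemma-low-energy-2}, namely that $\Phi^\pm(2^Ns;z)=2^{-3N}(2^Ns)^{-3}\cdot 2^{6N}\widetilde{\Gamma}_3^0(2^Ns;\cdot)$ (written so the $\mu^{-3}$ is absorbed) obeys $|\partial_s^k\Phi^\pm(2^Ns;z)|\lesssim 1$ for $k=0,1$, $s\in\mathrm{supp}\,\varphi_0$, uniformly in $N$. For $k=0$ this follows from the operator bound $\|\Gamma_3^0(\mu)\|_{L^2\to L^2}\lesssim\mu^3$: writing $\mu=2^Ns\sim 2^N$ on the support, the $\mu^3$ from $\widetilde\Gamma_3^0$ and the $\mu^{-3}$ from the two resolvents balance. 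For $k=1$ one differentiates, picking up either a factor $2^N\lesssim 1$ (harmless, since $N\le N_0<0$) times $\mu^{-3}\widetilde{\Gamma}_3^0$, or a term $\mu^{-3}\partial_\mu\widetilde{\Gamma}_3^0\cdot 2^N$, which is controlled by the companion bound $\mu\|\partial_\mu\Gamma_3^0(\mu)\|_{L^2\to L^2}\lesssim\mu^3$ (so $\|\partial_\mu\Gamma_3^0(\mu)\|\lesssim\mu^2$ and $2^N\mu^{-3}\cdot\mu^2=2^N\mu^{-1}\sim 1$). The remaining care is that the $s^{-3}$ singularity at $s=0$ is irrelevant since $\varphi_0$ is supported in $[1/4,1]$, so all negative powers of $s$ are bounded there together with their derivatives.

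Once the symbol bound is in hand, the rest is the routine estimate already carried out twice: Lemma \ref{lemma-low-energy-2} gives
\[
\sup_{x,y}\Big|\int_0^\infty e^{-it2^{4N}s^4}\varphi_0(s)\,e^{\pm i2^Ns\Psi(z)}\Phi^\pm(2^Ns;z)\,ds\Big|\lesssim (1+|t|\cdot 2^{4N})^{-\frac{1}{2}},
\]
and multiplying by the leftover $2^{N}$ from the measure/scaling and integrating the kernel of $v\Gamma_3^0v$ against $|y_iv(y_i)|$ (which are finite by the decay of $V$) produces \eqref{eq-low Gamma_3}. Summing over $N\le N_0$ via Lemma \ref{lemma-low sum} with $\alpha=0$ then gives the $|t|^{-1/4}$ contribution of this last term to \eqref{lower energy Stone formula}, completing the regular-case low-energy estimate.
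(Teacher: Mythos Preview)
There is a genuine gap in the symbol-bound step. Lemma \ref{lemma-low-energy-2} requires a \emph{pointwise} estimate $|\partial_s^k\Phi^\pm(2^Ns;z)|\lesssim 1$ uniformly in $z=(x,y,y_1,y_2)$, but Theorem \ref{thm-M-inverse} only gives you \emph{operator-norm} bounds $\|\Gamma_3^0(\mu)\|_{L^2\to L^2}\lesssim\mu^3$ and $\|\partial_\mu\Gamma_3^0(\mu)\|_{L^2\to L^2}\lesssim\mu^2$. These do not imply any pointwise control on the kernel $\widetilde{\Gamma}_3^0(\mu;y_2,y_1)$; a bounded operator on $L^2$ need not even possess a bounded integral kernel. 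Consequently your verification ``For $k=0$ this follows from the operator bound'' is unjustified, and the appeal to Lemma \ref{lemma-low-energy-2} with $\Psi(z)=|x-y_2|+|y-y_1|$ breaks down. Nor can you rescue it by integrating in $y_1,y_2$ first and then applying the lemma, because the phase $|x-y_2|+|y-y_1|$ depends on $y_1,y_2$ and cannot be pulled outside that integral.

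The paper circumvents this by a different phase factorization: write $e^{\pm i\mu|x-y_2|}=e^{\pm i\mu|x|}\,e^{\pm i\mu(|x-y_2|-|x|)}$ (similarly in $y$), so that the oscillatory phase fed into Lemma \ref{lemma-low-energy-2} is $\Psi(x,y)=|x|+|y|$, independent of $y_1,y_2$. The entire $y_1,y_2$-integral is then packaged as an $L^2$ pairing
\[
E(\mu;x,y)=\Big\langle v\Gamma_3^0(\mu)v\big(e^{\pm i\mu(|\ast-y|-|y|)}\mathcal{R}_0^\pm(\mu)(\ast,y)\big),\ e^{\mp i\mu(|x-\cdot|-|x|)}\mathcal{R}_0^\mp(\mu)(x,\cdot)\Big\rangle,
\]
which \emph{can} be bounded (together with its $s$-derivative) via the operator norm of $\Gamma_3^0$ and the $L^2$-norms $\|v\langle\cdot\rangle^k\|_{L^2}$, since $\big||x-\cdot|-|x|\big|\le|\cdot|$ keeps the derivatives of the residual phases under control by a weight $\langle\cdot\rangle$. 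The symbol $\Phi(2^Ns;x,y)=2^{-3N}E(2^Ns;x,y)$ then satisfies the hypotheses of Lemma \ref{lemma-low-energy-2}, yielding \eqref{eq-low Gamma_3}. Your power counting is correct; what is missing is precisely this reorganization that converts the operator bound into a legitimate pointwise symbol bound.
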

\begin{proof}
Indeed, it suffices to prove that for any $f,g\in L^1(\mathbf{R}),$
\[\Big|\int_0^\infty e^{-it\mu^4}\mu^3\varphi_N(\mu)\big\langle [v\Gamma_3^0(\mu)v]R_0^\pm(\mu^4)f, (R_0^\pm)^*(\mu^4)g\big\rangle d\mu\Big|\lesssim 2^N(1+|t|\cdot 2^{4N})^{-\frac{1}{2}}\|f\|_{L^1}\cdot \|g\|_{L^1}.\]
Equivalently, it is enough to  bound the following kernel uniformly by  $ O(2^N(1+|t|\cdot 2^{4N})^{-\frac{1}{2}})$ :
\begin{align*}
K_{4,N}^{0,\pm}(t;x,y)=&\int_0^\infty e^{-it\mu^4}\mu^3\varphi_N(\mu)\Big\langle [v\Gamma_3^0(\mu)v]\big(R_0^\pm(\mu^4)(\ast,y)\big)(\cdot),R_0^\mp(\mu^4) (x,\cdot)\Big\rangle d\mu
\end{align*}
Noting that
\begin{align*}
R_0^\pm(\mu^4)(x,y)=&\frac{1}{4\mu^3}\big(\pm ie^{\pm i\mu|x-y|}-e^{-\mu|x-y|}\big)
:=\frac{e^{\pm i\mu|x-y|}}{4\mu^3}\mathcal{R}_0^\pm(\mu)(x,y),
\end{align*}
thus, we have
\begin{equation*}
\begin{split}
&\ \ \Big\langle [v\Gamma_3^0(\mu)v]\big(R_0^\pm(\mu^4)(\ast,y)\big)(\cdot),R_0^\mp(\mu^4) (x,\cdot)\Big\rangle\\
=&\frac{1}{16\mu^6}\Big\langle [v\Gamma_3^0(\mu)v]\big(e^{\pm i\mu|\ast-y|}\mathcal{R}_0^\pm(\mu)(\ast,y)\big)(\cdot),\big(e^{\mp i\mu|x-\cdot|}\mathcal{R}_0^\mp(\mu)(x,\cdot)\big)\Big\rangle\\
=&\frac{1}{16\mu^6}e^{\pm i\mu|x|}e^{\pm i\mu|y|}\Big\langle[v\Gamma_3^0(\mu)v]\big(e^{\pm i\mu(|\ast-y|- |y|)}\mathcal{R}_0^\pm(\mu)(\ast,y)\big)(\cdot),\big(e^{\mp i\mu(|x-\cdot|-|x|)}\mathcal{R}_0^\mp(\mu)(x,\cdot)\big)\Big\rangle\\
:=&\frac{1}{16\mu^6}e^{\pm i\mu(|x|+|y|)}E_{3,N}^{0,\pm}(\mu;x,y).
\end{split}
\end{equation*}
Let $\mu=2^Ns$, we have
\begin{equation*}\label{eq-low K_4^0-1}
\begin{split}
K_{4,N}^{0,\pm}(t;x,y)
=&2^{-2N}\int_0^\infty e^{-it2^{4N}s^4}\varphi_0(s)s^{-3}e^{\pm i2^Ns(|x|+|y|)}E_{3,N}^{0,\pm}(2^Ns;x,y)ds.
\end{split}
\end{equation*}
Noting that£¬ for any $s\in\text{supp}\ \varphi_0(s)$ and any $N\in\mathbf{Z}_-$, we have
\[\Big|\partial_{s}^k\Big(e^{\pm i2^N s(\ast-y|- |y|)}\mathcal{R}_0^\pm(2^Ns)(\ast,y)\Big)\Big|\lesssim \langle \ast\rangle^{k},\ \ k=0,1,\]
\[\Big|\partial_{s}^k\Big(e^{\pm i2^N s(|x-\cdot|- |x|)}\mathcal{R}_0^\pm(2^Ns)(x,\cdot)\Big)\Big|\lesssim \langle \cdot\rangle^{k},\ \ k=0,1,\]
and since $\|\Gamma_3^0(\mu)\|_{L^2\rightarrow L^2}=O_1(\mu^3)$ for $\mu<<1$  from Theorem \ref{thm-M-inverse} ( e.g. choosing $\mu<2^{N_0}$ for some large $N_0\in \mathbf{Z_-}$ ), then we have
\[\|\partial_{s}^k(\Gamma_3^0(2^Ns))\|_{L^2\rightarrow L^2}\lesssim 2^{3N}s^{3-k},\ \ \ k=0,1. \]
Furthermore, since
\begin{align*}
&\ \partial_{s}E_{3,N}^{0,\pm}(2^Ns;x,y)\\
=&\Big\langle[v\partial_s\Gamma_3^0(2^Ns)v]\big(e^{\pm i2^Ns(|\ast-y|- |y|)}\mathcal{R}_0^\pm(2^Ns)(\ast,y)\big)(\cdot),\big(e^{\mp i2^Ns(|x-\cdot|-|x|)}\mathcal{R}_0^\mp(2^Ns)(x,\cdot)\big)\Big\rangle\\
&\ +\Big\langle[v\Gamma_3^0(2^Ns)v]\partial_s\big(e^{\pm i2^Ns(|\ast-y|- |y|)}\mathcal{R}_0^\pm(2^Ns)(\ast,y)\big)(\cdot),\big(e^{\mp i2^Ns(|x-\cdot|-|x|)}\mathcal{R}_0^\mp(2^Ns)(x,\cdot)\big)\Big\rangle\\
&\ +\Big\langle[v\Gamma_3^0(2^Ns)v]\big(e^{\pm i2^Ns(|\ast-y|- |y|)}\mathcal{R}_0^\pm(2^Ns)(\ast,y)\big)(\cdot),\partial_s\big(e^{\mp i2^Ns(|x-\cdot|-|x|)}\mathcal{R}_0^\mp(2^Ns)(x,\cdot)\big)\Big\rangle,
\end{align*}
and $v(x)\lesssim(1+|x|)^{-\frac{13}{2}-},$ then by H\"{o}lder inequality, for any $s\in\text{supp}\ \varphi_0(s),$ we have
\begin{equation*}
\big|\partial_{s}^kE_{3,N}^{0,\pm}(2^Ns;x,y)\big|\lesssim 2^{3N}s^{3-k},\ \ \ \ k=0,1.
\end{equation*}
Using again Lemma \ref{lemma-low-energy-2} with $z=(x,y)$, and
\[\Psi(z)=|x|+|y|,\ \ \ \Phi^\pm(2^Ns;z)=2^{-3N}E_{3,N}^{0,\pm}(2^{N}s;x,y),\]
we have
\[\sup_{x,y\in\mathbf{R}}\big|K_{4,N}^{0,\pm}(t;x,y)\big|\lesssim 2^N(1+|t|\cdot 2^{4N})^{-\frac{1}{2}}.\]
\end{proof}

\subsubsection{\textbf{First Kind of Resonance}}~~If there is first kind of resonance at zero, using \eqref{eq-symmetric identity} and \eqref{eq-M-inverse-First Kind},  we have
\begin{equation}\label{eq-Rv-expansion-first}
\begin{split}
R_V^\pm(\mu^4) =& R_0^\pm(\mu^4)-R_0^\pm(\mu^4)v\Big(\mu^{-1}S_1A_{-1,1}^1S_1\Big)vR_0^\pm(\mu^4)-R_0^\pm(\mu^4)v\Big(S_0A_{01}^1Q+QA_{02}^1S_0\Big)vR_0^\pm(\mu^4)\\
&\ \  -R_0^\pm(\mu^4)v\Big(\mu QA_{11}^1Q\Big)vR_0^\pm(\mu^4)-R_0^\pm(\mu^4)v\Big(\mu S_0A_{12}^1+\mu A_{13}^1S_0\Big)vR_0^\pm(\mu^4)\\
&\ \  -R_0^\pm(\mu^4)v\Big(\mu^2 QA_{21}^1+\mu^2A_{22}^1Q\Big)vR_0^\pm(\mu^4)-R_0^\pm(\mu^4)v\Gamma_3^1(\mu)vR_0^\pm(\mu^4).
\end{split}
\end{equation}
By checking the analysis of \textbf{Regular Case}, we only need to establish the following two propositions.
\begin{proposition}\label{pro-low energy-first kind case}
For any $N\in\mathbf{Z}_-$ and $N\le N_0,$ we have
\begin{equation}\label{eq-low energy-first kind case-1}
\sup\limits_{x,y}\Big|\int_0^\infty e^{-it\mu^4}\mu^3\varphi_N(\mu)\Big[R_0^\pm(\mu^4)v\big(\mu^{-1}S_1A_{-1}^1S_1\big)vR_0^\pm(\mu^4)\Big](x,y)d\mu\Big|\lesssim 2^N(1+|t|\cdot 2^{4N})^{-\frac{1}{2}}.
\end{equation}
\end{proposition}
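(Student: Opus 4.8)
The plan is to mimic the structure of the Regular Case proofs (Propositions~\ref{pro-low energy-regular case} and \ref{pro-Q-1}), since the only genuinely new feature here is the singular factor $\mu^{-1}$ multiplying $S_1 A_{-1,1}^1 S_1$. With $\varphi_N(\mu)=\varphi_0(2^{-N}\mu)$, set
\[
K_N^{1,\pm}(t;x,y):=\int_0^\infty e^{-it\mu^4}\mu^3\varphi_0(2^{-N}\mu)\Big[R_0^\pm(\mu^4)v\big(\mu^{-1}S_1A_{-1,1}^1S_1\big)vR_0^\pm(\mu^4)\Big](x,y)\,d\mu,
\]
and recall $R_0^\pm(\mu^4)(x,y)=\tfrac{1}{4\mu^3}F^\pm(\mu|x-y|)$ with $F^\pm(p)=\pm i e^{\pm ip}-e^{-p}$, so $(F^\pm)'(0)=0$. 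The kernel of $R_0^\pm v (\mu^{-1}S_1A_{-1,1}^1S_1) v R_0^\pm$ therefore carries a prefactor $\tfrac{1}{16\mu^6}\cdot\mu^{-1}$; combined with the outer $\mu^3$ the net power in front of the oscillatory integral is $\mu^{-4}$, which must be absorbed by four orders of cancellation. Two of these come from $S_1$ acting on the left (using $S_1 v = S_1 xv = 0$, as recorded in Table~1), and two from $S_1$ on the right, via Lemma~\ref{lemma-low-energy}(ii): writing $F^\pm(\mu|x-y_2|)=F^\pm(\mu|x|)-\mu y_2\,{\rm sgn}(x)(F^\pm)'(\mu|x|)+\mu^2 y_2^2\int_0^1(1-\theta_2)(F^\pm)''(\mu|x-\theta_2 y_2|)d\theta_2$ and pairing against $S_1 v$, only the $\mu^2 y_2^2(F^\pm)''$ remainder survives, and symmetrically in the $y$-variable. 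This exactly produces a factor $\mu^4$ that cancels the $\mu^{-4}$.

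Concretely, I would use the identity $S_1 = S_1 Q = Q S_1$ only where convenient, but the essential point is that applying Lemma~\ref{lemma-low-energy}(ii) twice converts the kernel into
\[
\frac{1}{16}\int_{\mathbf{R}^2}\!\Big(\int_0^1\!\!\int_0^1 (1-\theta_1)(1-\theta_2)(F^\pm)''(\mu|x-\theta_2 y_2|)(F^\pm)''(\mu|y-\theta_1 y_1|)\,d\theta_1 d\theta_2\Big) y_1^2 y_2^2\big[vS_1A_{-1,1}^1S_1 v\big](y_2,y_1)\,dy_1 dy_2,
\]
with the $\mu^{-4}$ having been entirely eaten. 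Then, after inserting the $\mu^3 e^{-it\mu^4}\varphi_0(2^{-N}\mu)$ and making the substitution $\mu=2^N s$ (noting $(F^\pm)''(\mu p)=e^{\pm i\mu p}\mathcal F^\pm(\mu p)$ with $|\partial_s^k \mathcal F^\pm(2^N s\,p)|\lesssim 1$ for $k=0,1$ on $\mathrm{supp}\,\varphi_0$ and $N\le N_0\le 0$), I would apply Lemma~\ref{lemma-low-energy-2} with $z=(x,y,y_1,y_2,\theta_1,\theta_2)$, $\Psi(z)=|x-\theta_2 y_2|+|y-\theta_1 y_1|$ and $\Phi^\pm(2^N s;z)=\mathcal F^\pm(2^N s|x-\theta_2 y_2|)\mathcal F^\pm(2^N s|y-\theta_1 y_1|)$. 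A careful accounting of the powers of $s$ left over after substitution shows the integrand carries a factor $2^{N}$ and a clean $s$ (or $s$-bounded) weight, giving
\[
\sup_{x,y}\big|E_N^{1,\pm}\big|\lesssim 2^{N}(1+|t|\cdot 2^{4N})^{-\frac12}.
\]
Finally, since $\beta>17$ in this case forces $v(x)\lesssim (1+|x|)^{-\frac{17}{2}-}$, the weight $y_i^2 v(y_i)$ lies in $L^2$, so $\|y_2^2 v\|_{L^2}\|S_1A_{-1,1}^1S_1\|_{L^2\to L^2}\|y_1^2 v\|_{L^2}<\infty$ and the $dy_1dy_2$ integration is harmless, yielding \eqref{eq-low energy-first kind case-1} uniformly in $x,y$.

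The main obstacle, and the only place needing real care, is the bookkeeping of the surviving power of $\mu$ (equivalently of $s$ after rescaling): one must verify that after extracting the two second-order Taylor remainders the prefactor is exactly $\mu^{-4}\cdot\mu^4 = \mu^0$ against the $\mu^2\cdot\mu^2$ produced by the remainders, so that the residual oscillatory integral has the same shape ($\int e^{-it2^{4N}s^4}\varphi_0(s)e^{\pm i2^N s\Psi}\Phi\,ds$, times $2^N$) as in Lemma~\ref{lemma-low-energy-2}, not a more singular one. If the algebra instead left a negative power of $s$ near $s=0$, the $\varphi_0$ cutoff (supported away from $0$) would still save it, but the factor $2^N$ out front — which is what ultimately gives summability in $N$ via Lemma~\ref{lemma-low sum} — comes precisely from one net power of $\mu=2^N s$, so this power count is exactly what must be pinned down. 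Once that is in hand, summing over $N\le N_0$ using Lemma~\ref{lemma-low sum} with $\alpha=0$ gives the $|t|^{-1/4}$ bound, and the argument for the remaining terms of \eqref{eq-Rv-expansion-first} with non-negative powers of $\mu$ is identical to (indeed strictly easier than) the Regular Case.
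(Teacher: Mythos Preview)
Your proposal is correct and follows exactly the paper's approach: the paper's proof simply notes that $S_1$ shares the orthogonality relations $S_1(v)=S_1(xv)=0$ with $S_0$, so the argument of Proposition~\ref{pro-low energy-regular case} applies verbatim, and you have faithfully unpacked that argument with the correct power count (the extra $\mu^{-1}$ reduces the $2^{2N}$ of Proposition~\ref{pro-low energy-regular case} to $2^{N}$ here).
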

\begin{proof}
Since $S_1$ has the same orthogonality relationships with $S_0,$ thus the proof is the same as the proof of Proposition \ref{pro-low energy-regular case}.
\end{proof}
According to the orthogonality of $S_1$ and $Q,$ using Lemma \ref{lemma-low-energy}, it is easy to prove the following estimates.
\begin{proposition}\label{pro-low energy-first kind case-2}
For any $N\in\mathbf{Z}_-$ and $N\le N_0$, we have
\begin{equation}\label{eq-low energy-first kind case-2}
\sup\limits_{x,y}\Big|\int_0^\infty e^{-it\mu^4}\mu^3\varphi_N(\mu)\Big[R_0^\pm(\mu^4)v\big(S_0A_{01}^1Q\big)vR_0^\pm(\mu^4)\Big](x,y)d\mu\Big|\lesssim 2^N(1+|t|\cdot 2^{4N})^{-\frac{1}{2}}.
\end{equation}
\begin{equation}\label{eq-low energy-first kind case-3}
\sup\limits_{x,y}\Big|\int_0^\infty e^{-it\mu^4}\mu^3\varphi_N(\mu)\Big[R_0^\pm(\mu^4)v\big(QA_{01}^1S_0\big)vR_0^\pm(\mu^4)\Big](x,y)d\mu\Big|\lesssim 2^N(1+|t|\cdot 2^{4N})^{-\frac{1}{2}}.
\end{equation}
\end{proposition}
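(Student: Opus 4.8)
The plan is to reduce both estimates to Lemma~\ref{lemma-low-energy-2}, following verbatim the scheme used for the \textbf{Regular Case} in Propositions~\ref{pro-low energy-regular case} and~\ref{pro-Q-1} and exploiting the orthogonality relations $S_0 v = S_0(xv)=0$ and $Qv=0$ recorded in Table~1. Consider first the kernel attached to $S_0 A_{01}^1 Q$. Write $R_0^\pm(\mu^4)(x,y)=\frac{1}{4\mu^3}F^\pm(\mu|x-y|)$ with $(F^\pm)'(0)=0$. On the left factor $F^\pm(\mu|x-y_2|)$ I would apply Lemma~\ref{lemma-low-energy}(ii): because $S_0$ annihilates both $v$ and $xv$, only the second order remainder $\mu^2 y_2^2\int_0^1(1-\theta_2)(F^\pm)''(\mu|x-\theta_2 y_2|)\,d\theta_2$ survives. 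On the right factor $F^\pm(\mu|y-y_1|)$ I would apply Lemma~\ref{lemma-low-energy}(i): since $Qv=0$, only the first order remainder $-\mu y_1\int_0^1 {\rm sgn}(y-\theta_1 y_1)(F^\pm)'(\mu|y-\theta_1 y_1|)\,d\theta_1$ survives. The resulting kernel therefore carries the prefactor $\frac{1}{16\mu^6}\cdot\mu^2\cdot\mu=\frac{1}{16\mu^3}$, so multiplication by the $\mu^3$ in Stone's formula leaves a $\mu$-weight-free oscillatory integral whose $\mu$-integrand is $\varphi_N(\mu)(F^\pm)''(\mu|x-\theta_2 y_2|)(F^\pm)'(\mu|y-\theta_1 y_1|)$.

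Next I would peel off the oscillation by writing $(F^\pm)''(\mu p)=e^{\pm i\mu p}\mathcal{F}^\pm(\mu p)$ and $(F^\pm)'(\mu q)=e^{\pm i\mu q}\widetilde{\mathcal{F}}^\pm(\mu q)$, where $\mathcal{F}^\pm(p)=\mp i-e^{-p\mp ip}$ and $\widetilde{\mathcal{F}}^\pm(p)=-1+e^{-p\mp ip}$ satisfy $|\partial_s^k\mathcal{F}^\pm(2^N s\rho)|\lesssim 1$ and $|\partial_s^k\widetilde{\mathcal{F}}^\pm(2^N s\rho)|\lesssim 1$ for $k=0,1$, uniformly in $N\in\mathbf{Z}_-$, in $s\in\text{supp}\ \varphi_0$, and in $\rho\ge 0$ (this is precisely the bound used throughout the Regular Case). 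After the substitution $\mu=2^N s$, which contributes one factor of $2^N$ from $d\mu$, Lemma~\ref{lemma-low-energy-2} applies with $z=(x,y,y_1,y_2,\theta_1,\theta_2)$, $\Psi(z)=|x-\theta_2 y_2|+|y-\theta_1 y_1|$ and $\Phi^\pm(2^N s;z)=\mathcal{F}^\pm(2^N s|x-\theta_2 y_2|)\widetilde{\mathcal{F}}^\pm(2^N s|y-\theta_1 y_1|)$, yielding the bound $2^N(1+|t|\,2^{4N})^{-1/2}$ for the inner oscillatory integral, uniformly in all parameters.

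It then remains to integrate out $\theta_1,\theta_2,y_1,y_2$. The $\theta$-integrals are trivially bounded; for the spatial integrals the Taylor remainders have produced the weights $y_2^2$ and $y_1$ against $v$, and since $\beta>17$ gives $v(x)\lesssim(1+|x|)^{-\frac{17}{2}-}$ one has $\|y_2^2 v(y_2)\|_{L^2_{y_2}}<\infty$ and $\|y_1 v(y_1)\|_{L^2_{y_1}}<\infty$; together with the boundedness of $A_{01}^1$ on $L^2(\mathbf{R})$ from Theorem~\ref{thm-M-inverse}(ii), Hölder's inequality gives $\sup_{x,y}|\cdots|\lesssim 2^N(1+|t|\,2^{4N})^{-1/2}$, which is \eqref{eq-low energy-first kind case-2}. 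The estimate \eqref{eq-low energy-first kind case-3} follows by the same argument with the two free resolvent factors interchanged, now using Lemma~\ref{lemma-low-energy}(i) on the left ($Q$-side) and Lemma~\ref{lemma-low-energy}(ii) on the right ($S_0$-side). Since the whole argument is structurally identical to the Regular Case propositions, no essentially new difficulty arises; the only point that needs care is the $\mu$-power bookkeeping — exactly one $S_0$ (order $\mu^2$) and one $Q$ (order $\mu$) are available, and the product $\mu^2\cdot\mu$ precisely cancels the $\mu^{-6}\cdot\mu^3=\mu^{-3}$ singularity, leaving the claimed $2^N$ decay rate and, after summation over $N\le N_0$ via Lemma~\ref{lemma-low sum}, the $|t|^{-1/4}$ bound.
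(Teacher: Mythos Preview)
Your proposal is correct and follows precisely the approach the paper intends: the paper does not spell out a proof here, stating only that ``according to the orthogonality of $S_1$ and $Q$, using Lemma~\ref{lemma-low-energy}, it is easy to prove the following estimates,'' and your argument is exactly the natural execution of this hint via the template of Propositions~\ref{pro-low energy-regular case} and~\ref{pro-Q-1}. Your $\mu$-power bookkeeping ($\mu^2$ from the $S_0$-side second-order remainder, $\mu$ from the $Q$-side first-order remainder, cancelling the residual $\mu^{-3}$) and the subsequent application of Lemma~\ref{lemma-low-energy-2} are both correct.
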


\subsubsection{\textbf{Second Kind of Resonance}}If there is second kind of resonance at zero,  using \eqref{eq-symmetric identity} and \eqref{eq-M-inverse-Second Kind}, we have
\begin{equation}\label{eq-Rv-expansion-second}
\begin{split}
& R_V^\pm(\mu^4)= \\
& R_0^\pm(\mu^4)-R_0^\pm(\mu^4)v\Big(\mu^{-3}S_2A_{-3,1}^2S_2\Big)vR_0^\pm(\mu^4)-R_0^\pm(\mu^4)v\mu^{-2}\Big( S_2A_{-2,1}^2S_0+S_0A_{-2,2}^2S_2\Big)vR_0^\pm(\mu^4)\\
&\ \ -R_0^\pm(\mu^4)v\Big(\mu^{-1}S_0A_{-1,1}^2S_0\Big)vR_0^\pm(\mu^4)-R_0^\pm(\mu^4)v\Big(\mu^{-1}S_2A_{-1,2}^2Q+\mu^{-1}QA_{-1,2}^2S_2\Big)vR_0^\pm(\mu^4)\\
&\ \ -R_0^\pm(\mu^4)v\Big(QA_{01}^2S_0+S_0A_{02}^2Q\Big)vR_0^\pm(\mu^4)-R_0^\pm(\mu^4)v\Big(S_2A_{03}^2+A_{04}^2S_2\Big)vR_0^\pm(\mu^4)\\
&\ \ -R_0^\pm(\mu^4)v(\mu QA_{11}^2Q)vR_0^\pm(\mu^4)-R_0^\pm(\mu^4)v(\mu S_0A_{12}^2+\mu A_{13}^2S_0)vR_0^\pm(\mu^4)\\
&\ \ -R_0^\pm(\mu^4)v(\mu^2 QA_{21}^2+\mu^2 A_{22}^2Q)vR_0^\pm(\mu^4)-R_0^\pm(\mu^4)v\Gamma_3^2(\mu)vR_0^\pm(\mu^4).
\end{split}
\end{equation}
By combing the analysis of \textbf{Regular Case} and \textbf{First Kind of Resonance}, it is suffices to establish the following estimates.
\begin{proposition}\label{pro-low energy-second kind case}
For any $N\in\mathbf{Z}_{-}$ and $N\le N_0,$ we have
\begin{equation}\label{eq-low energy-second kind case-1}
\sup\limits_{x,y}\Big|\int_0^\infty e^{-it\mu^4}\varphi_N(\mu)\mu^3\Big[R_0^\pm(\mu^4)v\big(\mu^{-3}S_2A_{-3,1}^2S_2\big)vR_0^\pm(\mu^4)\Big](x,y)d\mu\Big|\lesssim 2^N(1+|t|\cdot 2^{4N})^{-\frac{1}{2}},
\end{equation}
\begin{equation}\label{eq-low energy-second kind case-2}
\sup\limits_{x,y}\Big|\int_0^\infty e^{-it\mu^4}\varphi_N(\mu)\mu^3\Big[R_0^\pm(\mu^4)v\big(\mu^{-2}S_2A_{-2,1}^2S_0\big)vR_0^\pm(\mu^4)\Big](x,y)d\mu\Big|\lesssim 2^{N}(1+|t|\cdot 2^{4N})^{-\frac{1}{2}}.
\end{equation}
\begin{equation}\label{eq-low energy-second kind case-3}
\sup\limits_{x,y}\Big|\int_0^\infty e^{-it\mu^4}\varphi_N(\mu)\mu^3\Big[R_0^\pm(\mu^4)v\big(\mu^{-2}S_0A_{-2,2}^2S_2\big)vR_0^\pm(\mu^4)\Big](x,y)d\mu\Big|\lesssim 2^{N}(1+|t|\cdot 2^{4N})^{-\frac{1}{2}}.
\end{equation}
\end{proposition}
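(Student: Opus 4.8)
The plan is to treat the three displayed terms by the same scheme as in Propositions~\ref{pro-low energy-regular case} and~\ref{pro-low energy-first kind case}; the only genuinely new feature is that the sandwiched operator now carries the singular prefactor $\mu^{-3}$ or $\mu^{-2}$, so the factor $\tfrac1{16\mu^6}$ generated by the two free resolvents (via $R_0^\pm(\mu^4)(x,y)=\tfrac1{4\mu^3}F^\pm(\mu|x-y|)$ with $F^\pm(p)=\pm i e^{\pm ip}-e^{-p}$) must be cancelled completely before Lemma~\ref{lemma-low-energy-2} can be applied. Concretely, for \eqref{eq-low energy-second kind case-1} I would write the kernel of $R_0^\pm(\mu^4)v(\mu^{-3}S_2A_{-3,1}^2S_2)vR_0^\pm(\mu^4)$ as
\[
\frac1{16\mu^{9}}\int_{\mathbf{R}^2}F^\pm(\mu|x-y_2|)\,[vS_2A_{-3,1}^2S_2v](y_2,y_1)\,F^\pm(\mu|y-y_1|)\,dy_1\,dy_2
\]
and use the three vanishing moments $S_2v=S_2(xv)=S_2(x^2v)=0$ (on both sides, since $S_2$ flanks the operator) to pull a factor $\mu^3$ — and the weight $y_2^3$, respectively $y_1^3$ — out of each $F^\pm$-factor.

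Since $(F^\pm)'(0)=0$ but $(F^\pm)''(0)\neq0$, one cannot invoke Lemma~\ref{lemma-low-energy}(iii) directly; the extraction is done in two stages. First, Lemma~\ref{lemma-low-energy}(ii) gives
\[
F^\pm(\mu|x-y_2|)=F^\pm(\mu|x|)-\mu y_2\,{\rm sgn}(x)(F^\pm)'(\mu|x|)+\mu^2y_2^2\int_0^1(1-\theta_2)(F^\pm)''(\mu|x-\theta_2y_2|)\,d\theta_2,
\]
and the first two terms die under the pairing against $v$ and $S_2$ because $S_2v=S_2(xv)=0$. Next, Lemma~\ref{lemma-low-energy}(i) applied to the smooth bounded function $(F^\pm)''$ inside the remainder splits off its $y_2$-independent leading part $\tfrac12\mu^2y_2^2(F^\pm)''(\mu|x|)$, which is $y_2^2$ times a function of $x$ alone and is therefore annihilated by $S_2(x^2v)=0$; what survives is $\mu^3y_2^3$ times $\int_0^1\int_0^1(1-\theta_2)\theta_2\,{\rm sgn}(x-\phi_2\theta_2y_2)\,(F^\pm)'''(\mu|x-\phi_2\theta_2y_2|)\,d\phi_2\,d\theta_2$, with $(F^\pm)'''(p)=e^{\pm ip}+e^{-p}$ bounded together with its derivatives on $[0,\infty)$. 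Carrying out the same reduction in the $y$-variable, the kernel of the $\mu^{-3}$ term becomes $\tfrac1{16}\mu^{-3}$ times an absolutely convergent integral in $(y_1,y_2,\theta_1,\theta_2,\phi_1,\phi_2)$ of products of $(F^\pm)'''$-factors. Substituting into $\int_0^\infty e^{-it\mu^4}\mu^3\varphi_N(\mu)(\cdots)\,d\mu$, the $\mu^{-3}$ cancels Stone's $\mu^3$, and after the scaling $\mu=2^Ns$ the inner $\mu$-integral is exactly of the form treated in Lemma~\ref{lemma-low-energy-2}, with $\Psi(z)=|x-\phi_2\theta_2y_2|+|y-\phi_1\theta_1y_1|$ and $\Phi^\pm(2^Ns;z)=\mathcal{G}^\pm(2^Ns|x-\phi_2\theta_2y_2|)\,\mathcal{G}^\pm(2^Ns|y-\phi_1\theta_1y_1|)$ obtained by writing $(F^\pm)'''(p)=e^{\pm ip}\mathcal{G}^\pm(p)$ as in the proof of Proposition~\ref{prop-free estimates}; the required bound $|\partial_s^k\Phi^\pm(2^Ns;z)|\lesssim1$ for $k=0,1$ is immediate. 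This yields $O\big(2^N(1+|t|\cdot2^{4N})^{-1/2}\big)$ for the $\mu$-integral uniformly in all parameters, and then integrating in $(y_1,y_2)$ against $|y_2|^3|v(y_2)|\,|[S_2A_{-3,1}^2S_2](y_2,y_1)|\,|y_1|^3|v(y_1)|$ and using Cauchy--Schwarz with $\|\,|y|^3v\,\|_{L^2}<\infty$ (guaranteed by $\beta>25$) and the $L^2$-boundedness of $A_{-3,1}^2$ gives \eqref{eq-low energy-second kind case-1}. For \eqref{eq-low energy-second kind case-2} and \eqref{eq-low energy-second kind case-3} one proceeds verbatim, extracting $\mu^3y^3$ from the $F^\pm$ on the $S_2$-side and only $\mu^2y^2$ from the $F^\pm$ on the $S_0$-side (using $S_0v=S_0(xv)=0$ and Lemma~\ref{lemma-low-energy}(ii)); the total gain $\mu^{-2}\cdot\mu^{3}\cdot\mu^{2}/\mu^{6}=\mu^{-3}$ again exactly matches Stone's $\mu^3$, and the scaling produces the same $2^N(1+|t|\cdot2^{4N})^{-1/2}$.

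The main obstacle is the $\mu^{-3}S_2A_{-3,1}^2S_2$ term, where the bookkeeping is razor-sharp: the two free resolvents contribute $\mu^{-6}$, the resonance prefactor $\mu^{-3}$, and Stone's formula only $\mu^{+3}$, so one must recover the full $\mu^6$ from cancellations — i.e. \emph{exactly} three vanishing moments on each side — with no slack. This is what forces the two-stage extraction above (one cannot afford to discard the third moment, and Lemma~\ref{lemma-low-energy}(iii) is unavailable because $(F^\pm)''(0)\neq0$ breaks the dominated-convergence step in its proof), and it makes it essential to track precisely which intermediate term is killed by which moment condition and to keep the surviving amplitudes uniformly bounded in $(x,y,y_1,y_2,\theta,\phi)$ so that Lemma~\ref{lemma-low-energy-2} applies unchanged. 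Finally, the decay rate $\beta>25$ enters exactly here, to absorb the polynomial weights $y_1^3,y_2^3$ produced by the triple extraction; summing the resulting $N$-series over $N\le N_0$ via Lemma~\ref{lemma-low sum} then delivers the bound $|t|^{-1/4}$ for the contribution of these terms to \eqref{eq-low result-1}.
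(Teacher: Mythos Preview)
Your proposal is correct, but it takes a different technical route to the triple-moment extraction than the paper does. You observe that Lemma~\ref{lemma-low-energy}(iii) cannot be applied to the original $F^\pm(p)=\pm ie^{\pm ip}-e^{-p}$ because $(F^\pm)''(0)=-(1\pm i)\neq0$, and you work around this by a two-stage extraction: first Lemma~\ref{lemma-low-energy}(ii), then Lemma~\ref{lemma-low-energy}(i) applied to $(F^\pm)''$ inside the remainder. The paper instead exploits $S_2(x^2v)=0$ \emph{before} any Taylor expansion, by freely adding the quadratic $\tfrac{1\pm i}{2}\mu^2|x-y_2|^2$ to the kernel (its expansion in $y_2$ contains only powers $y_2^0,y_2^1,y_2^2$, all annihilated by $S_2$). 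The modified function $\widetilde F^\pm(p):=\pm ie^{\pm ip}-e^{-p}+\tfrac{1\pm i}{2}p^2$ then satisfies $(\widetilde F^\pm)'(0)=(\widetilde F^\pm)''(0)=0$, so Lemma~\ref{lemma-low-energy}(iii) applies in a single shot and produces the same third derivative $(\widetilde F^\pm)^{(3)}(p)=e^{\pm ip}+e^{-p}$ that you reach. The paper's trick is cleaner --- it keeps the integral remainder one-dimensional in each $\theta_j$ rather than introducing extra variables $\phi_j$ --- but your iterative method is equally valid and arguably more systematic, since it never relies on spotting the correct algebraic counterterm. After this point the two arguments merge: both feed the resulting amplitude into Lemma~\ref{lemma-low-energy-2} and close with the same $L^2$-pairing against $|y|^3v\in L^2$ under $\beta>25$.
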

\begin{proof}We only prove \eqref{eq-low energy-second kind case-1} here since the other two estimates can be proved by the similar way. Note that
\begin{equation*}
\begin{split}
K_{1,N}^{2,\pm}(t;x,y):=&\int_0^\infty e^{-it\mu^4}\varphi_N(\mu)\mu^3\Big[R_0^\pm(\mu^4)v\big(\mu^{-3}S_2A_{-3,1}^2S_2\big)vR_0^\pm(\mu^4)\Big](x,y)d\mu.
\end{split}
\end{equation*}
It suffices to prove that
\[\sup_{x,y}\Big|K_{1,N}^{2,\pm}(t;x,y)\Big|\lesssim 2^N(1+|t|\cdot 2^{4N})^{-\frac{1}{2}}.\]
Noting that
\[R_0^\pm(x,y)=\frac{\pm ie^{\pm i\mu|x-y|}-e^{-\mu|x-y|}}{4\mu^3},\]
by Lemma \ref{lemma-low-energy}(iii) and  using the fact that $S_2(v)=S_2(xv(x))=S_2(x^2v(x))=0,$ we have
\begin{equation*}
\begin{split}
&  \Big[R_0^\pm(\mu^4) v\big(\mu^{-3}S_2A_{-3,1}^2S_2\big)vR_0^\pm(\mu^4)\Big](x,y)\\
=&\frac{1}{16\mu^9}\int_{\mathbf{R}^2}\Big(\pm ie^{\pm i\mu|x-y_2|}-e^{-\mu|x-y_2|}+\frac{1\pm i}{2}\mu^2|x-y_2|^2\Big)[vS_2A_{-3,1}^2S_2v](y_2,y_1)\\
&\ \ \ \ \ \ \ \ \ \ \ \ \ \ \ \ \ \ \ \ \ \ \ \ \ \ \ \ \  \Big(\pm ie^{\pm i\mu|y-y_1|}-e^{-\mu|y-y_1|}+\frac{1\pm i}{2}\mu^2|y-y_1|^2\Big)dy_1dy_2\\
:=&\frac{1}{16\mu^9}\int_{\mathbf{R}^2}F^\pm(\mu|x-y_2|)[vS_2A_{-3,1}^2S_2v](y_2,y_1)F^\pm(\mu|y-y_1|)dy_1dy_2,
\end{split}
\end{equation*}
where $\displaystyle F^\pm(p)=\pm ie^{\pm i p}-e^{-p}+\frac{1\pm i}{2}p^2.$

It is obvious that $F^{\pm}(p)\in C^\infty(\mathbf{R})$ and $\big(F^{\pm}\big)^{(k)}(0)=0\ (k=1,2),$ then by \eqref{eq-low energy-3}
and using the fact that $S_2(v)=S_2(xv(x))=S_2(x^2v(x))=0,$ we obtain that
\begin{align*}
&  \Big[R_0^\pm(\mu^4) v\big(\mu^{-3,1}S_2A_{-3}^2S_2\big)vR_0^\pm(\mu^4)\Big](x,y)\\
=&\frac{1}{16\mu^9}\int_{\mathbf{R}^2}F^\pm(\mu|x-y_2|)[vS_2A_{-3,1}^2S_2v](y_2,y_1)F^\pm(\mu|y-y_1|)dy_1dy_2\\
=&\frac{1}{16(2!)^2\mu^3}\int_{\mathbf{R}^2}\Big(\int_0^1\int_0^1(1-\theta_1)^{2}(1-\theta_2)^{2}(F^\pm)^{(3)}(\mu|x-\theta_2y_2|)\big({\rm sgn}(x-\theta_2y_2)\big)^3
\\
&\ \ \ \ \ \ \ \ \ \ \ \ \ \ \
(F^\pm)^{(3)}(\mu|y-\theta_1y_1|)\big({\rm sgn}(y-\theta_1y_1)\big)^3d\theta_1d\theta_2\Big)y_1^3y_2^3[vS_2A_{-3,1}^2S_2v](y_2,y_1)dy_1dy_2.
\end{align*}
Then we obtain that
\begin{equation}\label{eq-K_1^2-1}
\begin{split}
&\ \ K_{1,N}^{2,\pm}(t;x,y)\\
=&\frac{1}{16(2!)^2}\int_{\mathbf{R}^2}\Bigg[\int_0^1\int_0^1(1-\theta_1)^{2}(1-\theta_2)^{2}\big({\rm sgn}(y-\theta_1y_1)\big)^3\big({\rm  sgn}(x-\theta_2y_2)\big)^3
\Big(\int_0^\infty e^{-it\mu^4}\varphi_0(2^{-N}\mu)\\
&\ \ \ \ \ \ \ \ \ \  \ \ \ \ \
(F^\pm)^{(3)}(\mu|x-\theta_2y_2|)(F^\pm)^{(3)}(\mu|y-\theta_1y_1|)d\mu\Big)d\theta_1d\theta_2\Bigg]y_1^3y_2^3[vS_2A_{-3,1}^2S_2v](y_2,y_1)dy_1dy_2.\\
\end{split}
\end{equation}
Denote
\begin{align*}
\ E_{1,N}^{2,\pm}(t;x,y,\theta_1,\theta_2,y_1,y_2)=\int_0^\infty e^{-it\mu^4}\varphi_0(2^{-N}\mu)
(F^\pm)^{(3)}(\mu|x-\theta_2y_2|)(F^\pm)^{(3)}(\mu|y-\theta_1y_1|)d\mu.
\end{align*}
Thus
\[\Big|K_{1,N}^{2,\pm}(t;x,y)\Big|\lesssim\int_{\mathbf{R}^2}\Big(\int_0^1\int_0^1
\Big|E_{1,N}^{2,\pm}(t;x,y,\theta_1,\theta_2,y_1,y_2)\Big|d\theta_1d\theta_2\Big)\big|[vS_2A_{-3,1}^2S_2v](y_2,y_1)\big||y_1^3y_2^3|dy_1dy_2.\]
Now we consider $E_{1,N}^{2,\pm}(t;x,y,\theta_1,\theta_2,y_1,y_2)$.  Since
\[(F^{\pm})^{(3)}(\mu|x-\theta_2y_2|)=e^{\pm i\mu|x-\theta_2 y_2|}+e^{-\mu|x-\theta_2 y_2|}:=e^{\pm i\mu|x-\theta_2 y_2|}\mathcal{F}^{\pm}(\mu|x-\theta_2 y_2|),\]
\[(F^{\pm})^{(3)}(\mu|y-\theta_1y_1|)=e^{\pm i\mu|y-\theta_1 y_1|}+e^{-\mu|y-\theta_1 y_1|}:=e^{\pm i\mu|y-\theta_1 y_1|}\mathcal{F}^{\pm}(\mu|y-\theta_1 y_1|),\]
let $\mu=2^Ns,$ we can rewrite $E_{1,N}^{2,\pm}(t;x,y,\theta_1,\theta_2,y_1,y_2)$ as follows:
\begin{equation*}
\begin{split}
E_{1,N}^{2,\pm}(t;x,y,\theta_1,\theta_2,y_1,y_2)=&2^N\int_0^\infty e^{-it2^{4N}s^4}\varphi_0(s)e^{\pm i2^Ns|x-\theta_2y_2|}\mathcal{F}^\pm(2^Ns|x-\theta_2y_2|)\\
&\ \ \ \ \ \ \ \ \ \ \ \ \ \ \ \ \ \ \ \ \ \ \ \ \ \ \ \ \ \ \ \ \  e^{\pm i2^Ns|y-\theta_1y_1|}\mathcal{F}^\pm(2^Ns|y-\theta_1y_1|)ds.
\end{split}
\end{equation*}
Noting that for any $s\in\text{supp}\ \varphi_0(s)$ and $k=0,1,$ we have
\[\Big|\partial_{s}^k\mathcal{F}^\pm(2^Ns|x-\theta_2y_2|)\Big|\lesssim 1,\ \ \ \ \ \ \Big|\partial_{s}^k\mathcal{F}^\pm(2^Ns|y-\theta_1y_1|)\Big|\lesssim 1,\]
then using Lemma \ref{lemma-low-energy-2} with $z=(x,y,y_1,y_2,\theta_1,\theta_2)$ and
\[\Psi(z)=|x-\theta_2 y_2|+|y-\theta_1 y_1|,\ \ \ \ \Phi^\pm(2^Ns;z)=\mathcal{F}^\pm(2^Ns|x-\theta_2y_2|)\cdot\mathcal{F}^\pm(2^Ns|y-\theta_1y_1|),\]
we obtain that
\[\sup_{x,y\in\mathbf{R}}\big|E_{1,N}^{2,\pm}(t;x,y,\theta_1,\theta_2,y_1,y_2)\big|\lesssim 2^{N}(1+|t|\cdot 2^{4N})^{-\frac{1}{2}}.\]
Then, since $v(x)\lesssim(1+|x|)^{-\frac{25}{2}-},$ we have
\begin{align*}
\big|K_{1,N}^{2,\pm}(t;x,y)\big|\lesssim & 2^{N}(1+|t|\cdot 2^{4N})^{-\frac{1}{2}}\int_{\mathbf{R}^2}|y_2^3 v(y_2)|\big|[S_2A_{-3,1}^2S_2](y_2,y_1)\big||y_1^3v(y_1)|dy_1dy_2\\
\lesssim& 2^{N}(1+|t|\cdot 2^{N})^{-\frac{1}{2}}\|y_2^3v(y_2)\|_{L^2_{y_2}}\|S_2\ {-3}^2S_2\|_{L^2\rightarrow L_2}\ \|y_1^3v(y_1)\|_{L^2_{y_1}}\\
\lesssim& 2^{N}(1+|t|\cdot 2^{N})^{-\frac{1}{2}}
\end{align*}
uniformly in $x,y.$
\end{proof}

\subsection{High Energy Dispersive Estimates with $\alpha=0$}
In this subsection, we will give the proof of the dispersive bound \eqref{eq-high result-1} with $\alpha=0$ for the large energy part. To complete the proof,  we use the formula
\begin{align}\label{higher energy Stone formula}
e^{-itH}P_{ac}(H)\widetilde{\chi}(H)f(x)=\sum^{+\infty}_{N=N_0+1}\sum_{\pm}\frac{\pm2}{\pi i}\int_\mathbf{R}\Big(\int_0^\infty e^{-it\mu^4}\varphi_N(\mu)\mu^3R_V^{\pm}(\mu^4)(x,y)d\mu\Big)f(y)dy,
\end{align}
and the following resolvent identity,
\begin{equation}\label{eq-resolvent identity-high}
R_V^\pm(\mu^4)=R_0^\pm(\mu^4)-R_0^\pm(\mu^4)VR_0^\pm(\mu^4)+R_0^\pm(\mu^4)VR_V^\pm(\mu^4)VR_0^\pm(\mu^4).
\end{equation}
%
By combing Proposition \ref{prop-free estimates} and Lemma \ref{lemma-low sum}, it suffices to establish the following bound is valid for the last two terms in \eqref{eq-resolvent identity-high}.
\begin{proposition}\label{prop-high energy perturbated-1}
Assume that $|V(x)|\lesssim(1+|x|)^{-1-},$ then for any $N>N_0,$ we have
\begin{equation}\label{eq-high energy perturbated-2}
\sup\limits_{x,y\in\mathbf{R}}\Big|\int_0^\infty e^{-it\mu^4}\mu^3\varphi_N(\mu)\Big[R_0^\pm(\mu^4)VR_0^\pm(\mu^4)\Big](x,y)d\mu\Big|\lesssim 2^{N}(1+|t|\cdot 2^{4N})^{-\frac{1}{2}}.
\end{equation}
\end{proposition}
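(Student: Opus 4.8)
The plan is to follow the template used for the free kernel in Proposition \ref{prop-free estimates} and for the error term in Proposition \ref{pro-Gamma_3}: expand the composed kernel over the intermediate integration variable, factor out the full oscillatory phase, rescale $\mu=2^Ns$, apply Lemma \ref{lemma-low-energy-2} for each fixed value of the remaining parameters, and close the estimate by an $L^1$ integration against $V$.

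First I would use \eqref{eq-free resolvent-LAP-3} and write, with $\mathcal{R}_0^\pm(\mu)(x,w)=\pm i-e^{-(1\pm i)\mu|x-w|}$ as in Proposition \ref{pro-Gamma_3},
\[
\big[R_0^\pm(\mu^4)VR_0^\pm(\mu^4)\big](x,y)=\frac{1}{16\mu^6}\int_{\mathbf{R}}e^{\pm i\mu(|x-w|+|w-y|)}\mathcal{R}_0^\pm(\mu)(x,w)V(w)\mathcal{R}_0^\pm(\mu)(w,y)\,dw.
\]
Multiplying by $\mu^3\varphi_N(\mu)$, integrating in $\mu$, substituting $\mu=2^Ns$ (so $\varphi_N(\mu)=\varphi_0(s)$) and using Fubini, the kernel in \eqref{eq-high energy perturbated-2} becomes
\[
\frac{2^{-2N}}{16}\int_{\mathbf{R}}V(w)\left(\int_0^\infty e^{-it2^{4N}s^4}\varphi_0(s)s^{-3}e^{\pm i2^Ns(|x-w|+|w-y|)}\mathcal{R}_0^\pm(2^Ns)(x,w)\mathcal{R}_0^\pm(2^Ns)(w,y)\,ds\right)dw.
\]

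Next, for fixed $(x,y,w)$ I would apply Lemma \ref{lemma-low-energy-2} with $z=(x,y,w)$, $\Psi(z)=|x-w|+|w-y|\ge 0$ and amplitude $\Phi^\pm(2^Ns;z)=s^{-3}\mathcal{R}_0^\pm(2^Ns)(x,w)\mathcal{R}_0^\pm(2^Ns)(w,y)$. The only thing to verify is the uniform bound $|\partial_s^k\Phi^\pm(2^Ns;z)|\lesssim1$ for $k=0,1$, $s\in\text{supp}\,\varphi_0$, uniformly in $N>N_0$ and in $x,y,w$: on $\text{supp}\,\varphi_0\subset[\tfrac14,1]$ both $s^{-3}$ and $\partial_s(s^{-3})$ are bounded, while $|\mathcal{R}_0^\pm(2^Ns)(x,w)|\le 2$ and $\partial_s\mathcal{R}_0^\pm(2^Ns)(x,w)=(1\pm i)2^N|x-w|e^{-(1\pm i)2^Ns|x-w|}$ has modulus at most $\sqrt2\,u\,e^{-u/4}\lesssim1$ with $u=2^N|x-w|$; the same holds for the $(w,y)$ factor, and the product rule gives the bound for $\Phi^\pm$. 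Hence Lemma \ref{lemma-low-energy-2} shows the inner $s$-integral is $\lesssim(1+|t|\cdot2^{4N})^{-1/2}$ uniformly in $(x,y,w)$.

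Finally, since $|V(w)|\lesssim(1+|w|)^{-1-}\in L^1(\mathbf{R})$, integrating this bound in $w$ gives
\[
\sup_{x,y\in\mathbf{R}}\Big|\int_0^\infty e^{-it\mu^4}\mu^3\varphi_N(\mu)\big[R_0^\pm(\mu^4)VR_0^\pm(\mu^4)\big](x,y)\,d\mu\Big|\lesssim 2^{-2N}(1+|t|\cdot2^{4N})^{-\frac12}\|V\|_{L^1(\mathbf{R})},
\]
and because $N>N_0$ with $N_0<0$ fixed we have $2^{-2N}=2^{-3N}\cdot2^{N}\le 2^{-3N_0}\cdot2^{N}$, so the right-hand side is $\lesssim 2^{N}(1+|t|\cdot2^{4N})^{-1/2}$, which is \eqref{eq-high energy perturbated-2}. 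The only mildly delicate point in the whole argument is the uniform-in-$N$ control of $\partial_s$ of the exponentially decaying amplitude factors $e^{-(1\pm i)2^Ns|x-w|}$; the rest is the same bookkeeping as in the free case, and only $L^1$-integrability of $V$ — not any decay on the amplitude — is needed, consistent with the hypothesis $|V(x)|\lesssim(1+|x|)^{-1-}$.
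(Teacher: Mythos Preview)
Your proof is correct and follows essentially the same approach as the paper's own argument: both expand the kernel as an integral over the intermediate variable, rescale $\mu=2^Ns$, apply Lemma~\ref{lemma-low-energy-2} pointwise in the intermediate variable, and finish by integrating $|V|\in L^1(\mathbf{R})$. You are in fact slightly more careful than the paper in two places---explicitly verifying the uniform $\partial_s$ bound on the amplitude factors $e^{-(1\pm i)2^Ns|x-w|}$ via $u e^{-u/4}\lesssim1$, and justifying $2^{-2N}\lesssim 2^N$ from $N>N_0$---but these are refinements of the same proof, not a different route.
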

\begin{proof}
Let
\begin{equation}\label{eq- high L_1}
\begin{split}
L_{1}^{\pm,N}(t;x,y):=&\int_0^\infty e^{-it\mu^4}\mu^3\varphi_N(\mu)\Big[R_0^\pm(\mu^4)VR_0^\pm(\mu^4)\Big](x,y)d\mu\\
\end{split}
\end{equation}
and
\begin{equation*}\label{eq-free kernel-1}
\begin{split}
R_0^\pm(\mu^4)(x,y)=&\frac{1}{4\mu^3}\Big(\pm ie^{\pm i\mu|x-y|}-e^{-\mu|x-y|}\Big)
:=\frac{e^{\pm i\mu|x-y|}}{4\mu^3}\mathcal{R}_0^\pm(\mu|x-y|).
\end{split}
\end{equation*}
Let $\mu=2^Ns$ we can rewrite $L_{1}^{\pm,N}(x,y)$ as follows:
\begin{equation*}\label{eq- high L_1-1}
\begin{split}
&\ L_1^{\pm,N}(t;x,y)\\
=&\frac{2^{-2N}}{16}\int_0^\infty e^{-it2^{4N}s^4}s^{-3}\varphi_0(s)\Big(\int_{\mathbf{R}}e^{\pm i2^Ns|x-y_1|}\mathcal{R}_0^\pm(2^Ns|x-y_1|)V(y_1)e^{\pm i2^Ns|y-y_1|}\mathcal{R}_0^\pm(2^Ns|y-y_1|)dy_1\Big)ds\\
=&\frac{2^{-2N}}{16}\int_{\mathbf{R}}\Big(\int_0^\infty e^{-it2^{4N}s^4}s^{-3}\varphi_0(s)e^{\pm i2^Ns|x-y_1|}e^{\pm i2^Ns|y-y_1|}\mathcal{R}_0^\pm(2^Ns|x-y_1|)V(y_1)\mathcal{R}_0^\pm(2^Ns|y-y_1|)ds\Big)dy_1.
\end{split}
\end{equation*}
Since for any $s\in\text{supp}\ \varphi_0(s)$ and $k=0,1,$ we have
\[\sup_{x,y\in\mathbf{R}}\Big|\partial_{s}^k\big(\mathcal{R}_0^\pm(2^Ns|x-y_1|)V(y_1)\mathcal{R}_0^\pm(2^Ns|y-y_1|)\big)\Big|\lesssim |V(y_1)|,\]
then using Lemma \ref{lemma-low-energy-2} with $z=(x,y,y_1)$ and
\[\Psi(z)=|x-y_1|+|y-y_1|,\ \ \ \ \Phi(2^Ns;z)=\mathcal{R}_0^\pm(2^Ns|x-y_1|)\cdot\mathcal{R}_0^\pm(2^Ns|y-y_1|),\]
 we can obtain that
\[\sup_{x,y\in\mathbf{R}}\big|L_{1}^{\pm,N}(t;x,y)\big|\lesssim 2^{-2N}(1+|t|\cdot 2^{4N})^{-\frac{1}{2}}\int_\mathbf{R}|V(y_1)|dy_1\lesssim 2^N(1+|t|\cdot 2^{4N})^{-\frac{1}{2}}.\]
\end{proof}

We next consider the last term $R_0^\pm(\mu^4)VR_V^\pm(\mu^4)VR_0^\pm(\mu^4)$ in \eqref{eq-resolvent identity-high}. To control this term, we utilize the following estimates.
\begin{lemma}\label{lemma-FSY}(\cite{FSY},\text{Theorem} 2.23) Let $k\ge 0$ and   $|V(x)|\lesssim \big(1+|x|\big)^{-k-1-}$ such that $H=(-\Delta)^2+V$ has no embedded positive eigenvalues.  Then for any $\sigma >k+\frac{1}{2},\  R^\pm_V(\lambda)\in \mathcal{B}\big(L_\sigma^2(\mathbf{R}),L_{-\sigma}^2(\mathbf{R})\big)$ are $C^k$-continuous for all $\lambda>0$.  Furthermore,
\[\Big\|\partial^k_\lambda R^\pm_V(\lambda)\Big\|_{L_\sigma^2(\mathbf{R})\rightarrow L_{-\sigma}^2(\mathbf{R})}=O\big( |\lambda|^{-\frac{3(k+1)}{4}}\big),\]
 as $\lambda\rightarrow +\infty.$ \end{lemma}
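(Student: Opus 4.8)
The plan is to establish the bound first for the free resolvent $R_0^\pm(\lambda)$ and then transfer it to $R_V^\pm(\lambda)$ by a perturbative bootstrap in the order $k$ of the derivative, based on the iterated resolvent identity \eqref{eq-resolvent identity-high}. For the free resolvent, via the partial--fraction identity \eqref{eq-resolvent identity}, written at $z=\lambda$ as $R_0^\pm(\lambda)=\tfrac{1}{2\sqrt\lambda}\big(R^\pm(-\Delta;\sqrt\lambda)-R(-\Delta;-\sqrt\lambda)\big)$, one reduces to the one--dimensional Laplacian. The piece $R(-\Delta;-\sqrt\lambda)$ has the smooth, exponentially decaying kernel $\tfrac{1}{2\lambda^{1/4}}e^{-\lambda^{1/4}|x-y|}$ from \eqref{eq-free resolvent-LAP-3}, and all of its $\lambda$--derivatives are harmless. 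For $R^\pm(-\Delta;\tau)$ one uses the classical one--dimensional limiting absorption estimates, read off directly from the explicit kernel \eqref{eq-resolvent of Laplacian}: differentiating $\tfrac{\pm i}{2\sqrt\tau}e^{\pm i\sqrt\tau\,|x-y|}$ in the spectral parameter $\tau$, bounding $|x-y|^{j}\le\langle x\rangle^{j}\langle y\rangle^{j}$, and a Hilbert--Schmidt computation that converges precisely when $\sigma-j>\tfrac12$, yield $\big\|\partial_\tau^{j}R^\pm(-\Delta;\tau)\big\|_{L^2_\sigma\to L^2_{-\sigma}}\lesssim\tau^{-(j+1)/2}$ as $\tau\to+\infty$, for $\sigma>j+\tfrac12$. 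Applying $\partial_\lambda^{k}$ through the chain rule in $\tau=\sqrt\lambda$ together with the Leibniz rule against the prefactor $\tfrac1{2\sqrt\lambda}$, and keeping the worst term (all derivatives landing on the resolvents), one obtains
\begin{equation*}
\big\|\partial_\lambda^{k}R_0^\pm(\lambda)\big\|_{L^2_\sigma\to L^2_{-\sigma}}\lesssim\lambda^{-\frac{3(k+1)}{4}},\qquad\sigma>k+\tfrac12,\quad\lambda\to+\infty;
\end{equation*}
the exponent $\tfrac34$ (rather than $\tfrac12$, as for $-\Delta$) reflecting that $|\nabla(\xi^{4})|\sim\lambda^{3/4}$ on the characteristic set $\{\,|\xi|=\lambda^{1/4}\,\}$.

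For the perturbed resolvent, fix $\sigma>k+\tfrac12$. For $\lambda$ large one first shows $R_V^\pm(\lambda)\in\mathcal B(L^2_\sigma,L^2_{-\sigma})$ with the base bound $O(\lambda^{-3/4})$: in \eqref{eq-resolvent identity-high} the term $R_0^\pm(\lambda)VR_0^\pm(\lambda)$ is controlled by the free estimate and the decay of $V$, and for the last term one runs a Neumann/Fredholm argument, using that $|V(x)|\lesssim\langle x\rangle^{-k-1-}$ supplies enough weight for $V$ to split across the free resolvents so that the relevant high--energy operator norms tend to $0$; together with the hypothesis of no embedded positive eigenvalues this also gives the limiting absorption principle for all $\lambda>0$. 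For the derivatives one inducts on $k$: differentiating \eqref{eq-resolvent identity-high} $k$ times by Leibniz, every term carrying at most $k-1$ derivatives on the interior $R_V^\pm$ is dominated by the inductive hypothesis, the free estimate, and the decay of $V$; the single remaining term $R_0^\pm(\lambda)V\big(\partial_\lambda^{k}R_V^\pm(\lambda)\big)VR_0^\pm(\lambda)$ is moved to the left--hand side and the resulting operator equation is solved by a Neumann series, using the high--energy smallness of the map $X\mapsto R_0^\pm(\lambda)VXVR_0^\pm(\lambda)$; this isolates $\partial_\lambda^{k}R_V^\pm(\lambda)$ and delivers the bound $O\big(\lambda^{-3(k+1)/4}\big)$ together with $C^{k}$--continuity.

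The delicate point — and the step I expect to be the main obstacle — is making this perturbative argument close exactly at the threshold $\sigma>k+\tfrac12$, given that the weight naturally available from $|V|^{1/2}$ is only $\langle x\rangle^{-(k+1)/2-}$. A crude Hilbert--Schmidt bound on the intermediate blocks $V\,\partial_\lambda^{a}R_0^\pm(\lambda)\,V$, in which the factor $|x-y|^{a}$ produced by differentiating the free resolvent is replaced by $\langle x\rangle^{a}\langle y\rangle^{a}$, fails to be integrable once $a$ is comparable to $k$; hence the Born series cannot be estimated by absolute values alone. One must instead keep the oscillation $e^{\pm i\lambda^{1/4}|x-y|}$ in the free kernel \eqref{eq-free resolvent-LAP-3}, splitting $\partial_\lambda^{a}R_0^\pm(\lambda)$ into an oscillatory and an exponentially decaying part and estimating each in $\mathcal B(L^2_\sigma,L^2_{-\sigma})$ with the sharp power of $\lambda$, and then route the compositions in the differentiated Born expansion through a carefully chosen chain of intermediate weighted spaces so that the slightly-stronger-than-$\langle x\rangle^{-k-1}$ decay of $V$ compensates the weight loss at each step. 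Carrying this bookkeeping uniformly as $\lambda\to+\infty$, and for every $k$ allowed by the decay of $V$, is the technical heart of the proof.
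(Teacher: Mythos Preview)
The paper does not prove this lemma; it is quoted verbatim as Theorem~2.23 of \cite{FSY} and used as a black box in the high--energy part of the argument. So there is no proof in the paper to compare against.

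Your outline is the standard route and is correct in substance: explicit kernel computation for $R_0^\pm$ via \eqref{eq-free resolvent-LAP-3}, then bootstrap to $R_V^\pm$ through the iterated resolvent identity, absorbing the top--order term by a Neumann series at high energy and invoking the absence of embedded eigenvalues for the limiting absorption principle at finite $\lambda>0$.

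Your ``delicate point'' is overstated, however. The crude bound you dismiss in fact suffices. Indeed $|x-y|^{a}\le 2^{a}\langle x\rangle^{a}\langle y\rangle^{a}$, so for a block of the form $V\,\partial_\lambda^{a}R_0^\pm\,V$ the weighted kernel is controlled by $\langle x\rangle^{a-(k+1)-}\langle y\rangle^{a-(k+1)-}$, which is Hilbert--Schmidt precisely when $a<k+\tfrac12+$, and this holds for every $a\le k$ that actually occurs. The same counting works for the mixed terms $(\partial^{a}R_0)V(\partial^{b}R_V)V(\partial^{c}R_0)$ in the differentiated identity: choosing intermediate weights $\tau_j$ just above $j+\tfrac12$ for the factor carrying $j$ derivatives, the constraint $\tau_i+\tau_j\le k+1+$ imposed by $V$ is always met because the derivative orders sum to at most $k$. (It is also enough to prove the estimate for $\sigma$ just above $k+\tfrac12$, since larger $\sigma$ follows by inclusion of weighted spaces.) The oscillation in the kernel is needed only to read off the correct power $\lambda^{-3(k+1)/4}$, not to make the spatial integrals converge; no delicate routing through intermediate spaces is required beyond this elementary bookkeeping.
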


\begin{proposition}\label{prop-high energy perturbated-3}
Assume that $|V(x)|\lesssim(1+|x|)^{-2-},$ then for any $N>N_0,$ we have
\begin{equation}\label{eq-high energy perturbated-3}
\sup\limits_{x,y}\Big|\int_0^\infty e^{-it\mu^4}\mu^3\varphi_N(\mu)\Big[R_0^\pm(\mu^4)VR_V^\pm(\mu^4)VR_0^\pm(\mu^4)\Big](x,y)d\mu\Big|\lesssim 2^{N}(1+|t|\cdot 2^{4N})^{-\frac{1}{2}}.
\end{equation}
\end{proposition}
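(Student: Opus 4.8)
The plan is to run the same scheme as in Proposition \ref{pro-Gamma_3} for the error term $R_0^\pm(\mu^4)v\Gamma_3^0(\mu)vR_0^\pm(\mu^4)$, but now with the sandwiched operator $v\Gamma_3^0(\mu)v$ replaced by $VR_V^\pm(\mu^4)V$ and with Lemma \ref{lemma-FSY} playing the role that $\|\Gamma_3^0(\mu)\|_{L^2\to L^2}=O(\mu^3)$ played there. First I would reduce to a pointwise bound on the kernel. Using $R_0^\pm(\mu^4)(x,y)=\frac{e^{\pm i\mu|x-y|}}{4\mu^3}\mathcal{R}_0^\pm(\mu|x-y|)$ with $\mathcal{R}_0^\pm(p)=\pm i-e^{-p\mp ip}$ (so $\mathcal{R}_0^\pm$ and $(\mathcal{R}_0^\pm)'$ are bounded uniformly) together with $\overline{R_0^\pm(\mu^4)}=R_0^\mp(\mu^4)$ and $\overline{\mathcal{R}_0^\mp}=\mathcal{R}_0^\pm$, the kernel of the operator in \eqref{eq-high energy perturbated-3} equals
\[\Big\langle [VR_V^\pm(\mu^4)V]\big(R_0^\pm(\mu^4)(\cdot,y)\big),\,R_0^\mp(\mu^4)(x,\cdot)\Big\rangle=\frac{e^{\pm i\mu(|x|+|y|)}}{16\mu^6}\,\mathcal{E}_N^\pm(\mu;x,y),\]
where $\mathcal{E}_N^\pm(\mu;x,y)$ is the same pairing after peeling off $e^{\pm i\mu|x|}$ and $e^{\pm i\mu|y|}$, i.e.\ with the modified free-resolvent amplitudes $e^{\pm i\mu(|\cdot-y|-|y|)}\mathcal{R}_0^\pm(\mu|\cdot-y|)$ and $e^{\pm i\mu(|x-\cdot|-|x|)}\mathcal{R}_0^\pm(\mu|x-\cdot|)$ in the two slots. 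Substituting $\mu=2^Ns$, the integral in \eqref{eq-high energy perturbated-3} becomes $\frac{2^{-2N}}{16}\int_0^\infty e^{-it2^{4N}s^4}\varphi_0(s)\,s^{-3}\,e^{\pm i2^Ns(|x|+|y|)}\mathcal{E}_N^\pm(2^Ns;x,y)\,ds$, so by Lemma \ref{lemma-low-energy-2} with $z=(x,y)$, $\Psi(z)=|x|+|y|$ and $\Phi(2^Ns;z)=\tfrac{1}{16}s^{-3}\mathcal{E}_N^\pm(2^Ns;x,y)$ — noting that $s^{-3}$ and its derivative are $O(1)$ on $\operatorname{supp}\varphi_0$ — the whole thing reduces to the uniform bounds $\sup_{x,y}|\partial_\mu^k\mathcal{E}_N^\pm(\mu;x,y)|\lesssim 2^{-3N}$ for $k=0,1$ and $\mu\in[2^{N-2},2^N]$. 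These force $|\partial_s^k\Phi(2^Ns;z)|\lesssim 2^{-2N}\lesssim 1$ for $N>N_0$ ($N_0$ fixed), and then Lemma \ref{lemma-low-energy-2} gives the kernel bound $\lesssim 2^{-2N}(1+|t|\cdot2^{4N})^{-\frac12}\lesssim 2^N(1+|t|\cdot2^{4N})^{-\frac12}$, as claimed.

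The heart of the matter is the bound on $\mathcal{E}_N^\pm$ and $\partial_\mu\mathcal{E}_N^\pm$. The two amplitude factors are bounded uniformly in all variables, while one $\mu$-derivative of either costs at most a single weight $\langle\cdot\rangle$ (from $\big||z-y|-|y|\big|\le|z|$ for the phase, and $|p(\mathcal{R}_0^\pm)'(p)|\lesssim1$ for the amplitude). Since $|V(x)|\lesssim(1+|x|)^{-2-}$ means $|V(x)|\lesssim(1+|x|)^{-2-\varepsilon}$ for some genuine $\varepsilon>0$, we have $V\in L^2_\sigma$ for every $\sigma<\tfrac32+\varepsilon$ and $\langle\cdot\rangle V\in L^2_{\sigma'}$ for every $\sigma'<\tfrac12+\varepsilon$. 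For $\mathcal{E}_N^\pm$ itself I would write it as $\langle R_V^\pm(\mu^4)(V\phi),V\psi\rangle$, pair in $L^2_{\sigma'}\times L^2_{-\sigma'}$ with $\sigma'\in(\tfrac12,\tfrac12+\varepsilon)$, and invoke the $k=0$ case of Lemma \ref{lemma-FSY} (valid since $|V|\lesssim\langle x\rangle^{-1-}$), which gives $\|R_V^\pm(\mu^4)\|_{L^2_{\sigma'}\to L^2_{-\sigma'}}\lesssim\mu^{-3}$ on the relevant range of $\mu$, hence $|\mathcal{E}_N^\pm|\lesssim2^{-3N}$. For $\partial_\mu\mathcal{E}_N^\pm$ I would distribute the derivative over the three factors: in the two terms hitting an amplitude I absorb the extra $\langle\cdot\rangle$ into $\langle\cdot\rangle V\in L^2_{\sigma'}$ and again use the $k=0$ bound $\|R_V^\pm(\mu^4)\|=O(\mu^{-3})$; in the remaining term $R_V^\pm(\mu^4)$ is differentiated, and I pair in $L^2_\sigma\times L^2_{-\sigma}$ with $\sigma\in(\tfrac32,\tfrac32+\varepsilon)$ and use the $k=1$ case of Lemma \ref{lemma-FSY} (valid since $|V|\lesssim\langle x\rangle^{-2-}$), noting $\|\partial_\mu R_V^\pm(\mu^4)\|_{L^2_\sigma\to L^2_{-\sigma}}=4\mu^3\,\|(\partial_\lambda R_V^\pm)(\mu^4)\|_{L^2_\sigma\to L^2_{-\sigma}}=O(\mu^3\cdot\mu^{-6})=O(\mu^{-3})$. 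Adding the three contributions gives $|\partial_\mu\mathcal{E}_N^\pm|\lesssim2^{-3N}$.

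The main obstacle is precisely the tension between these two weighted-space requirements: the $k=1$ resolvent bound of Lemma \ref{lemma-FSY} forces a weight $\sigma>\tfrac32$, whereas the amplitude factors — which, once differentiated, carry an extra power of $\langle\cdot\rangle$ — can only be paired against $V$ using weights strictly below $\tfrac32+\varepsilon$ (indeed below $\tfrac12+\varepsilon$ for the differentiated ones), the saving grace being the $\varepsilon>0$ hidden in the hypothesis $(1+|x|)^{-2-}$. The resolution outlined above is to use the higher-order resolvent bound ($k=1$) only in the one term where no amplitude factor is differentiated, and the zeroth-order bound ($k=0$) everywhere else; one must also remember to combine the large-$\lambda$ asymptotics in Lemma \ref{lemma-FSY} with the $C^k$-continuity of $R_V^\pm(\lambda)$ on $(0,\infty)$ to cover the (bounded) range $\lambda=\mu^4\gtrsim 2^{4N_0}$. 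Once \eqref{eq-high energy perturbated-3} is in hand, together with Proposition \ref{prop-high energy perturbated-1}, Proposition \ref{prop-free estimates} and Lemma \ref{lemma-low sum} it closes the proof of the high-energy estimate \eqref{eq-high result-1}, and I do not expect any further difficulty there.
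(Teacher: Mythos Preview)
Your argument is correct and follows essentially the same route as the paper's proof: factor out the free phases $e^{\pm i\mu(|x|+|y|)}$, rescale $\mu=2^Ns$, and reduce via Lemma~\ref{lemma-low-energy-2} to uniform bounds on $\mathcal{E}_N^\pm$ and its $\mu$-derivative, which are obtained by pairing in weighted $L^2$ and invoking Lemma~\ref{lemma-FSY}. Your treatment of the weighted-space bookkeeping is in fact clearer than the paper's: the paper writes a single displayed estimate with a somewhat ambiguously indexed sum, whereas you make explicit that the three product-rule terms require \emph{different} weights ($\sigma'\in(\tfrac12,\tfrac12+\varepsilon)$ with the $k=0$ resolvent bound when the derivative hits an amplitude, versus $\sigma\in(\tfrac32,\tfrac32+\varepsilon)$ with the $k=1$ bound when it hits $R_V^\pm$), and you correctly note that the $C^k$-continuity part of Lemma~\ref{lemma-FSY} is needed to cover the bounded range $\lambda\gtrsim 2^{4N_0}$.
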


\begin{proof}
For the \eqref{eq-high energy perturbated-3}, it suffices to prove that for any $f,g\in L^1(\mathbf{R}),$
\begin{equation}\label{eq-high L_2-1}
\Big|\int_0^\infty e^{-it\mu^4}\mu^3\varphi_0(2^{-N}\mu)\big\langle VR_V^\pm(\mu^4) VR_0^\pm(\mu^4)f,\big(R_0^\pm(\mu^4)\big)^* g\big\rangle d\mu\Big|\lesssim 2^{N}(1+|t|\cdot 2^{4N})^{-\frac{1}{2}}\|f\|_{L^1}\cdot\|g\|_{L^1}.
\end{equation}
Equivalently,  it is enough to  bound the following kernel uniformly by  $ O(2^N(1+|t|\cdot 2^{4N})^{-\frac{1}{2}})$ :
\[\widetilde{L}_{2}^{\pm,N}(t;x,y)=\int_0^\infty e^{-it\mu^4}\mu^3\varphi_0(2^{-N}\mu)\Big\langle VR_V^\pm(\mu^4) V\big(R_0^\pm(\mu^4)(\ast,y)\big)(\cdot),(R_0^\pm(\mu^4))^*(x,\cdot)\Big\rangle d\mu,\]
where
\begin{align*}
R_0^\pm(\mu^4)(x,y)=&\frac{1}{4\mu^3}\big(\pm ie^{\pm i\mu|x-y|}-e^{-\mu|x-y|}\big)
:=\frac{e^{\pm i\mu|x-y|}}{4\mu^3}\mathcal{R}_0^\pm(\mu)(x,y).
\end{align*}
Rewrite that
\begin{align*}
&\ \ \Big\langle VR_V^\pm(\mu^4) V\big(R_0^\pm(\mu^4)(\ast,y)\big)(\cdot),R_0^\mp(\mu^4)(x,\cdot)\Big\rangle\\
=&\frac{1}{16\mu^6}\Big\langle VR_V^\pm V\big(e^{\pm i\mu|\ast-y|}\mathcal{R}_0^\pm(\mu)(\ast,y)\big)(\cdot),\big(e^{\mp i\mu|x-\cdot|}\mathcal{R}_0^\mp(\mu)(x,\cdot)\big)\Big\rangle\\
=&\frac{1}{16\mu^6}e^{\pm i\mu|x|}e^{\pm i\mu|y|}\Big\langle VR_V^\pm V\big(e^{\pm i\mu(|\ast-y|-|y|)}\mathcal{R}_0^\pm(\mu)(\ast,y)\big)(\cdot),\big(e^{\mp i\mu(|x-\cdot|-|x|)}\mathcal{R}_0^\mp(\mu)(x,\cdot)\big)\Big\rangle\\
:=&\frac{1}{16\mu^6}e^{\pm i\mu(|x|+|y|)}E_{2,N}^{L,\pm}(\mu;x,y).
\end{align*}
Let $\mu=2^Ns,$ then we have
\begin{equation*}
\begin{split}
\widetilde{L}_{2}^{\pm,N}(t;x,y)=&\frac{2^{-2N}}{16}\int_0^\infty e^{-it2^{4N}s^4}e^{\pm i2^Ns|x|}e^{\pm i2^Ns|y|}s^{-3}\varphi_0(s)E_{2,N}^{L,\pm}(2^Ns;x,y)ds.
\end{split}
\end{equation*}
Noting that£¬ for any $s\in\text{supp}\ \varphi_0(s)$ and any $N>N_0$, we have
\[\Big|\partial_{s}^k\Big(e^{\pm i2^N s(\ast-y|- |y|)}\mathcal{R}_0^\pm(2^Ns)(\ast,y)\Big)\Big|\lesssim 2^{kN}\langle \ast\rangle^{k},\ \ k=0,1,\]
\[\Big|\partial_{s}^k\Big(e^{\pm i2^N s(|x-\cdot|- |x|)}\mathcal{R}_0^\pm(2^Ns)(x,\cdot)\Big)\Big|\lesssim 2^{kN}\langle \cdot\rangle^{k},\ \ k=0,1.\]
And by Lemma \ref{lemma-FSY}, for $\sigma>k+\frac{1}{2}$ with $k=0,1,$ we have
\[\big\|\partial_{s}^kR_V^\pm(2^{4N}s^4)\big\|_{L^2_{\sigma}\rightarrow L^2_{-\sigma}}\lesssim 2^{kN}(2^Ns)^{-3}. \]
Furthermore, since $|V(x)|\lesssim (1+|x|)^{-2-},$ then for $\sigma>k+\frac{1}{2}$ with $k=0,1,$ we can obtain that
\begin{align*}\big|\partial_{s}^kE_{2,N}^{L,\pm}(2^Ns;x,y)\big|\lesssim & \sum_{k=0}^1\|V(\cdot)\langle\cdot\rangle^{\sigma+1-k}\|_{L^2}^2\cdot
\big\|\partial_{s}^kR_V^\pm(2^{4N}s^4)\big\|_{L^2_{\sigma}\rightarrow L^2_{-\sigma}}\\
\lesssim&2^{kN}(2^Ns)^{-3}.
\end{align*}
Hence,  using Lemma \ref{lemma-low-energy-2} with $z=(x,y)$ and
\[\Psi(z)=|x|+|y|,\ \ \ \Phi(2^Ns;z)=E_{2,N}^{L,\pm}(2^Ns;x,y),\]
we have
\[\big|\widetilde{L}_{2}^{\pm,N}(t;x,y)\big|\lesssim 2^{(k-5)N}(1+|t|\cdot 2^{4N})^{-\frac{1}{2}}\lesssim 2^N(1+|t|\cdot 2^{4N})^{-\frac{1}{2}}\]
uniform in $x,y$ for any $N>N_0$.
\end{proof}

\subsection{Proofs of the decay estimates with regular term $H^{\frac{\alpha}{4}}$ for $0<\alpha<1$}
\vskip0.2cm
In the final subsection, we will give the proof of the estimates \eqref{eq-regularity estimate-low} and \eqref{eq-regularity estimate-high}. By using functional calculus and the Stone's formula, we have
\begin{equation}\label{eq-regularity stone}
\begin{split}
H^\frac{\alpha}{4} e^{-itH}P_{ac}(H)f(x)=&\frac{2}{\pi i}\int_0^\infty e^{-it\lambda}\lambda^\frac{\alpha}{4}[R_V^+(\lambda)-R_V^-(\lambda)]fd\lambda\\
=&\int_{\mathbf{R}}\Big(\frac{2}{\pi i}\int_0^\infty e^{-it\mu^4}\mu^{3+\alpha}[R_V^+(\mu^4)-R_V^-(\mu^4)](x,y)d\mu\Big) f(y)dy.
\end{split}
\end{equation}
Similar to the proof of Theorem \ref{thm-main results} for $\alpha=0$ , by using dyadic partition of unity, we can obtain for any $N\in\mathbf{Z}$ and $\alpha\ge0$,
\[\sup_{x,y\in\mathbf{R}}\Big|\int_0^\infty e^{-it\mu^4}\mu^{3+\alpha}\varphi_N(\mu)[R_V^+(\mu^4)-R_V^-(\mu^4)](x,y)d\mu\Big|\lesssim 2^{(\alpha+1)N}(1+|t|\cdot 2^{4N})^{-\frac{1}{2}}.\]
Then, by Lemma \ref{lemma-low sum}, for any $0< \alpha<1,$ we have
\begin{align*}
&\ \sup_{x,y\in\mathbf{R}}\Big|\int_0^\infty e^{-it\mu^4}\mu^{3+\alpha}[R_V^+(\mu^4)-R_V^-(\mu^4)](x,y)d\mu\Big|\\
\lesssim
&\sum_{N=-\infty}^{+\infty}\sup_{x,y\in\mathbf{R}}\Big|\int_0^\infty e^{-it\mu^4}\varphi_N(\mu)\mu^{3+\alpha}[R_V^+(\mu^4)-R_V^-(\mu^4)](x,y)d\mu\Big|\lesssim |t|^{-\frac{1+\alpha}{4}}.
\end{align*}
Hence, for any $0<\alpha<1,$ we have
\[\big\|H^{\frac{\alpha}{4}}e^{-itH}P_{ac}(H)\big\|_{L^1(\mathbf{R})\rightarrow L^\infty(\mathbf{R})}\lesssim |t|^{-\frac{1+\alpha}{4}},\ t\neq0.\]

\section{Proof of Theorem \ref{thm-M-inverse}}\label{section-M}
In this section, we show the processes  how to derive the asymptotic expansions of $\Big(M^\pm(\mu)\Big)^{-1}.$
Recall that the integral kernel of $R_0^\pm(\mu^4)$ are given by
\[R_0^\pm(\mu^4)(x,y)=\frac{1}{4\mu^3}\Big(\pm ie^{\pm i\mu|x-y|}-e^{-\mu|x-y|}\Big).\]
When $\mu|x-y|<1,$ we have the following representation for the $R_0^\pm(\mu^4)(x,y):$
\begin{equation}\label{eq-expansion for free resolvent-1}
\begin{split}
	  R_0^\pm(\mu^4)(x,y)=& \frac{a^\pm}{\mu^3}I(x,y)+\frac{a_{-1}^\pm}{\mu}G_{-1}(x,y) + G_0(x,y)
                 + a_1^\pm\mu G_1(x,y)+a_3^\pm\mu^3 G_3(x,y)\\
              &+\mu^4G_4(x,y)+\sum_{k=5}^Na_{k}^\pm\mu^{k}G_{k}(x,y)+O(\mu^{N+1}|x-y|^{N+4}).
		\end{split}
\end{equation}
Here $G_k(x,y)$ are integral kernels which defined as follows:
\begin{equation}\label{eq-kernel}
\begin{split}
G_{-1}(x,y)=&|x-y|^2,\ \ G_0(x,y)=\frac{|x-y|^3}{12},\ \ G_1(x,y)=|x-y|^4,\\
G_3(x,y)=&|x-y|^6,\ \ G_4(x,y)=\frac{|x-y|^7}{2\times(7!)},\ \ G_k(x,y)=|x-y|^{3+k}\ (k\ge 5),
\end{split}
\end{equation}
and the coefficients $\displaystyle a^\pm=\frac{-1\pm i}{4}$,$\displaystyle a_k^\pm=\frac{(\pm i)^{k+4}+(-1)^{k+4}} {4\times(k+3)!}.$ Notice that, $\displaystyle G_0(x)=\frac{|x|^3}{12}$ is the fundamental solution of $\Delta^2$ in $\mathbf{R}$. In fact,
when $\mu|x-y|\gtrsim1,$ the expansions remains valid. It is obvious that the first two terms in \eqref{eq-expansion for free resolvent-1} are singular, which is different from at most one singular term on $\mu$ for $n\ge3$, see e.g. \cite{Erdogan-Green-Toprak,FWY,Green-Toprak}.

For convenience, we introduce two notations here. For operators $\varepsilon(\mu)$ with some parameter $\mu$, we write $\varepsilon(\mu)=O_1(\mu^{-\alpha}g(x,y)))$ if it's kernel $\varepsilon(\mu)(x,y)$ has the property
  \begin{equation}\label{eq-notation-1}
  \mu^\alpha|\varepsilon(\mu)(x,y)|+\mu^{\alpha+1}|\partial_\mu\varepsilon(\mu)(x,y)|\lesssim g(x,y).
  \end{equation}
And we denote $\Gamma_k(\mu)$ as a $\mu$ dependent operator which satisfies
  \begin{equation}\label{eq-notation-2}
  \big\||\Gamma_k(\mu)|\big\|_{L^2\rightarrow L^2}+\mu\big\|\partial_\mu|\Gamma_k(\mu)|\big\|_{L^2\rightarrow L^2}\lesssim \mu^k,\ \ \mu>0,
  \end{equation}
Noting that the operator $\Gamma_k(\mu)$ will vary from line to line.

Let $T_0:= U+vG_0v$ and $P=\|V\|_{L^1(\mathbf{R})}^{-1}v\langle v,\cdot \rangle$ denote the orthogonal projection onto the span of $v.$ Since $M^\pm(\mu)=U+vR_0^\pm(\mu^4)v$ ( see Theorem \ref{thm-M-inverse} ), then we obtain the following expansions of $M^\pm(\mu).$
\begin{lemma}\label{lemma-M expansion}
Let $ \widetilde{a}^\pm= a^\pm\|V\|_{L^1(\mathbf{R})}$. Assume that $|V(x)|\lesssim(1+|x|)^{-\beta}$ with some $\beta>0$, the following expansions for $M^\pm(\mu)$ in $L^2(\mathbf{R})$ hold:

 (i)~If $\beta>13,$ then  we have
\begin{equation}\label{eq-M-expansions-1}
   \begin{split}
M^\pm(\mu) = & \frac{\widetilde{a}^\pm}{\mu^3}P+\frac{a_{-1}^\pm}{\mu}vG_{-1}v+T_0+a_1^\pm \mu vG_1v+\Gamma_{3}(\mu).
   \end{split}
\end{equation}

(ii)~If $\beta>17,$ then we have
\begin{equation}\label{eq-M-expansions-1-2}
   \begin{split}
M^\pm(\mu) = & \frac{\widetilde{a}^\pm}{\mu^3}P+\frac{a_{-1}^\pm}{\mu}vG_{-1}v+T_0+a_1^\pm \mu vG_1v+a_3^\pm \mu^3vG_3v+\mu^4vG_4v+\Gamma_{5}(\mu).
   \end{split}
\end{equation}

(iii)~If $\beta>25,$  we have
\begin{equation}\label{eq-M-expansions-1-3}
   \begin{split}
M^\pm(\mu) = & \frac{\widetilde{a}^\pm}{\mu^3}P+\frac{a_{-1}^\pm}{\mu}vG_{-1}v+T_0+a_1^\pm \mu vG_1v+a_3^\pm \mu^3vG_3v +\mu^4vG_4v\\
& +\sum_{k=5}^{8}a_{k}^\pm\mu^k vG_kv+\Gamma_{9}(\mu).
   \end{split}
\end{equation}
\end{lemma}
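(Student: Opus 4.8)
The plan is to substitute the free-resolvent kernel expansion \eqref{eq-expansion for free resolvent-1} into $M^\pm(\mu)=U+vR_0^\pm(\mu^4)v$ and to read off the three formulas by collecting powers of $\mu$, with every tail absorbed into an operator of class $\Gamma_k(\mu)$ in the sense of \eqref{eq-notation-2}. Two algebraic identities do the bookkeeping: first, the leading kernel $I(x,y)\equiv1$ gives $v\,I\,v=v\langle v,\cdot\rangle=\|V\|_{L^1}P$, hence $\frac{a^\pm}{\mu^3}vIv=\frac{\widetilde a^\pm}{\mu^3}P$ with $\widetilde a^\pm=a^\pm\|V\|_{L^1}$; second, $U+vG_0v=T_0$ by the definition of $T_0$. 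Every other displayed summand is literally $c\,\mu^k vG_kv$ for a $\mu$-independent constant $c$, and one keeps all of these up to and including order $\mu^1$ in (i), order $\mu^4$ in (ii), and order $\mu^8$ in (iii). Because $a_k^\pm=0$ whenever $(\pm i)^k=(-1)^{k+1}$, the powers $\mu^{-2},\mu^2,\mu^6,\dots$ do not occur at all, which is why (i) passes directly from the $\mu^1$ term into the remainder and why the sum $\sum_{k=5}^8 a_k^\pm\mu^k vG_kv$ in (iii) has no $k=6$ contribution.

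Next I would verify that every operator obtained is of the claimed type. Each $vG_kv$ has kernel bounded by $|v(x)|\,\langle x\rangle^{k+3}\langle y\rangle^{k+3}\,|v(y)|$ via $|x-y|^{k+3}\lesssim\langle x\rangle^{k+3}\langle y\rangle^{k+3}$, hence is Hilbert--Schmidt on $L^2$ once $\beta>2(k+3)+1$, with $\|vG_kv\|_{L^2\to L^2}\lesssim\|v\langle\cdot\rangle^{k+3}\|_{L^2}^2$; the largest $k+3$ appearing explicitly is $4$ in (i), $7$ in (ii) and $11$ in (iii), all allowed under $\beta>13,17,25$, so the binding constraint comes from the remainder. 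For the latter I set $\widetilde R^\pm(\mu)(x,y):=R_0^\pm(\mu^4)(x,y)-[\,\text{kept terms}\,]$, write $R_0^\pm(\mu^4)(x,y)=\tfrac1{4\mu^3}\Phi^\pm(\mu|x-y|)$ with $\Phi^\pm(p)=\pm ie^{\pm ip}-e^{-p}$ (bounded on $[0,\infty)$ together with all derivatives), and split on whether $\mu|x-y|\le1$ or $\mu|x-y|\ge1$. On $\{\mu|x-y|\le1\}$ the kept terms are precisely $\tfrac1{4\mu^3}$ times a Taylor polynomial of $\Phi^\pm$, so a Taylor remainder estimate gives $|\widetilde R^\pm(\mu)(x,y)|\lesssim\mu^m|x-y|^{m+3}$ with $m=3,5,9$ respectively --- in (i) the gain from $m=2$ to $m=3$ is exactly the vanishing of the $\mu^2$ coefficient. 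On $\{\mu|x-y|\ge1\}$ the crude bound $|R_0^\pm(\mu^4)(x,y)|\lesssim\mu^{-3}$ together with $\mu^j|x-y|^{j+3}\lesssim\mu^m|x-y|^{m+3}$ for each kept index $j\le m$ (valid there since $(\mu|x-y|)^\ell\ge1$ for the nonnegative exponents $\ell$ that arise) makes the same estimate hold on all of $\mathbf{R}^2$. Differentiating, with $\partial_\mu R_0^\pm(\mu^4)(x,y)=\tfrac1{4\mu^4}\Psi^\pm(\mu|x-y|)$, $\Psi^\pm(p)=p(\Phi^\pm)'(p)-3\Phi^\pm(p)$, and the crude bound $|\partial_\mu R_0^\pm(\mu^4)(x,y)|\lesssim\mu^{-4}+\mu^{-3}|x-y|$, the same splitting yields $|\partial_\mu\widetilde R^\pm(\mu)(x,y)|\lesssim\mu^{m-1}|x-y|^{m+3}$. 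Hence $v\widetilde R^\pm(\mu)v$ and $\mu\,\partial_\mu\bigl(v\widetilde R^\pm(\mu)v\bigr)$ have Hilbert--Schmidt norm $\lesssim\mu^m\|v\langle\cdot\rangle^{m+3}\|_{L^2}^2$, finite exactly when $\beta>2(m+3)+1$, i.e.\ $\beta>13,17,25$; this exhibits $v\widetilde R^\pm(\mu)v$ as a $\Gamma_m(\mu)$ and closes each of (i)--(iii).

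The main obstacle --- essentially the only non-routine point --- is the uniform-in-$(x,y)$ control of $\widetilde R^\pm(\mu)$ over all of $\mathbf{R}^2$: the Taylor-remainder estimate is available only where $\mu|x-y|$ stays bounded, so it must be glued to the crude resolvent bound in the far region, and there one has to check that \emph{every} subtracted term, including the singular ones $\tfrac{\widetilde a^\pm}{\mu^3}P$ and $\tfrac{a_{-1}^\pm}{\mu}vG_{-1}v$, is dominated by the single power $\mu^m|x-y|^{m+3}$. Identifying the correct $m$ in each case --- and in particular recognizing that the absence of the $\mu^2$ term is what makes the case-(i) remainder genuinely $O(\mu^3)$ rather than $O(\mu^2)$ --- is precisely what pins down the decay thresholds $\beta>13,\,17,\,25$; the rest reduces to estimating Hilbert--Schmidt norms of kernels of the shape $v(x)|x-y|^a v(y)$.
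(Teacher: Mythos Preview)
Your proposal is correct and follows essentially the same approach as the paper: substitute the Taylor expansion of $R_0^\pm(\mu^4)(x,y)$, identify $vIv=\|V\|_{L^1}P$ and $U+vG_0v=T_0$, split the remainder estimate on $\{\mu|x-y|\le1\}$ versus $\{\mu|x-y|\ge1\}$, and conclude via Hilbert--Schmidt bounds. The paper only writes out case~(i) and declares the others similar; you have spelled out all three cases more carefully, including the derivative estimate and the observation that $a_2^\pm=0$ is what upgrades the case-(i) remainder from $O(\mu^2)$ to $O(\mu^3)$, but the underlying argument is the same.
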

\begin{proof}
We only give a proof of \eqref{eq-M-expansions-1}, the other cases are similar. If $\mu|x-y|<1,$ using \eqref{eq-expansion for free resolvent-1}, we have
\begin{equation}\label{eq-expansion for free resolvent-2}
R_0^\pm(\mu^4)(x,y)=\frac{a^\pm}{\mu^3}+\frac{a_{-1}^\pm}{\mu}G_{-1}+G_0+a_1^\pm\mu G_1+O_1(\mu^{3} |x-y|^{6}).
\end{equation}
If $\mu|x-y|>1,$ using \eqref{eq-free resolvent-LAP-3}, we have
\begin{equation}\label{eq-expansion for free resolvent-3}
\begin{split}
&\ R_0^\pm(\mu^4)(x,y)\\
=&\frac{a^\pm}{\mu^3}+\frac{a_{-1}^\pm}{\mu}G_{-1}+G_0+a_1^\pm\mu G_1+\Big(\frac{\pm ie^{\pm i\mu|x-y|}-e^{-\mu|x-y|}}{4\mu^3}-\frac{a^\pm}{\mu^3}-\frac{a_{-1}^\pm}{\mu}G_{-1}-G_0-a_1^\pm\mu G_1\Big)\\
=&\frac{a^\pm}{\mu^3}+\frac{a_{-1}^\pm}{\mu}G_{-1}+G_0+a_1^\pm\mu G_1+O_1(\mu^{3} |x-y|^{6}).
\end{split}
\end{equation}
Using \eqref{eq-expansion for free resolvent-2} and \eqref{eq-expansion for free resolvent-3} in \eqref{eq-M-expansions-1}, we have
\[\Big|\Big(M^\pm(\mu)-\frac{\widetilde{a}^\pm}{\mu^3}P-\frac{a_{-1}^\pm}{\mu}vG_{-1}v-T_0-a_1^\pm\mu vG_1v\Big)\Big|\lesssim v(x)v(y)|x-y|^{6}\mu^{3},\]
and
\[\Big|\partial_\mu\Big(M^\pm(\mu)-\frac{\widetilde{a}^\pm}{\mu^3}P-\frac{a_{-1}^\pm}{\mu}vG_{-1}v-T_0-a_1^\pm\mu vG_1v\Big)\Big|\lesssim v(x)v(y)|x-y|^{6}\mu^{2}.\]
Since the error term is a Hilbert-Schmidt operator if $v(x)\lesssim \big(1+|x|\big)^{-\frac{13}{2}-},$ this implies \eqref{eq-M-expansions-1} .
\end{proof}

\begin{remark}\label{remark-M-expansion}
 If we just want to get a weak expansions of $\big(M^\pm(\mu)\big)^{-1}$ (see Remark \ref{remark-M inverse-expansions} of Introduction), then we only need the following expansions of $M^\pm(\mu)$ in the following \textbf{Table 2}.
\begin{table}[h]\label{table-M-expansions}
\renewcommand\arraystretch{2}
\centering
\begin{tabular}{|p{2cm}<{\centering}|p{14cm}<{\centering}|}
  \specialrule{0.03em}{0pt}{0pt}
  \makecell{$\beta>7+2l$}      &  \makecell{$\displaystyle M^\pm(\mu) =\frac{\widetilde{a}^\pm}{\mu^3}P+\frac{a_{-1}^\pm}{\mu}vG_{-1}v+T_0+\Gamma_l(\mu),\ l\in[0,1]$} \\
  \specialrule{0.03em}{0pt}{0pt}
 \makecell{$\beta>9+2l$}     &  \makecell{$\displaystyle M^\pm(\mu) =\frac{\widetilde{a}^\pm}{\mu^3}P+\frac{a_{-1}^\pm}{\mu}vG_{-1}v+T_0+a_1^\pm \mu vG_1v+\Gamma_{1+l}(\mu),\ l\in[0,2]$   }\\
  \specialrule{0.03em}{0pt}{0pt}
  \makecell{$\beta>13+2l$}      &  \makecell{$\displaystyle M^\pm(\mu) =\frac{\widetilde{a}^\pm}{\mu^3}P+\frac{a_{-1}^\pm}{\mu}vG_{-1}v+T_0+a_1^\pm \mu vG_1v+a_3^\pm \mu^3 vG_3v+\Gamma_{3+l}(\mu),\ l\in[0,1]$      } \\
  \specialrule{0.03em}{0pt}{0pt}
\end{tabular}
\vspace{0.2cm}
\caption{$M^\pm(\mu)$ expansions}
\end{table}
\end{remark}

Now, we give the following lemma  used repeatedly to determine the inverse of $\Big(M^\pm(\mu)\Big)^{-1}$.
\begin{lemma}\label{Feshbach-formula}(\cite[Lemma 2.1]{JN})
Let $A$ be a closed operator and $S$ be a projection. Suppose $A+S$ has a bounded inverse. Then $A$ has
a bounded inverse if and only if
\begin{equation}		
B:= S-S(A+S)^{-1}S
\end{equation}
has a bounded inverse in $S\mathcal{H}$, and in this case
\begin{equation}		
A^{-1}= (A+S)^{-1} + (A+S)^{-1}S B^{-1} S(A+S)^{-1}.
\end{equation}	
\end{lemma}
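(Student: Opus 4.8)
The plan is to treat the statement as a purely algebraic Feshbach (Schur‑complement) identity: nothing is needed beyond the boundedness of $(A+S)^{-1}$ and the closedness of $A$. Throughout I would set $C:=A+S$, so that $C^{-1}$ is bounded, $C$ is closed with $D(C)=D(A)$, and $A=C-S$ on $D(A)$. Regarding $B$ and $B^{-1}$ (when it exists) as bounded operators \emph{on} $S\mathcal{H}$ — equivalently, as bounded operators on $\mathcal{H}$ that vanish on $(I-S)\mathcal{H}$, so that $SB^{-1}=B^{-1}=B^{-1}S$ and $BB^{-1}=B^{-1}B=S$ — the single structural relation used over and over is
\[
SC^{-1}S=S-B,
\]
which is just the definition of $B$.

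The first step is to record the two resolvent‑type identities
\[
A^{-1}=C^{-1}+C^{-1}SA^{-1}=C^{-1}+A^{-1}SC^{-1},
\]
valid whenever $A$ is boundedly invertible; both are immediate from $C-A=S$ by multiplying on the appropriate side by $C^{-1}$ and $A^{-1}$. For the ``if'' direction I would then assume $B$ invertible on $S\mathcal{H}$, define
\[
R:=C^{-1}+C^{-1}SB^{-1}SC^{-1},
\]
which is bounded and maps $\mathcal{H}$ into $D(C)=D(A)$, and verify $AR=I$ on $\mathcal{H}$ and $RA=I$ on $D(A)$ by direct expansion of $A=C-S$: the four resulting terms are collapsed using $SC^{-1}S=S-B$ together with $SB^{-1}=B^{-1}$ and $BB^{-1}=B^{-1}B=S$, and everything then cancels in pairs. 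Closedness of $A$ gives $A^{-1}=R$, which is exactly the asserted formula.

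For the ``only if'' direction I would assume $A$ invertible, put $W:=SA^{-1}S$ (a bounded operator on $S\mathcal{H}$), and sandwich the two resolvent identities above between $S$'s. Using $SC^{-1}S=S-B$ this produces $W=(S-B)+(S-B)W$ and $W=(S-B)+W(S-B)$, hence $B(S+W)=S$ and $(S+W)B=S$ on $S\mathcal{H}$; thus $B$ is invertible there with $B^{-1}=S+SA^{-1}S$. Substituting this back into $R$ recovers $A^{-1}=(A+S)^{-1}+(A+S)^{-1}SB^{-1}S(A+S)^{-1}$, which both closes the equivalence and reproves the formula.

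I do not expect a genuine obstacle: this is the classical Feshbach reduction, and the only delicate points are bookkeeping ones — consistently distinguishing an operator on $\mathcal{H}$ from its compression to $S\mathcal{H}$, so that a symbol like $B^{-1}$ always occurs flanked by the right projections, and, since $A$ is merely closed rather than bounded, checking separately that $R$ maps into $D(A)$ and that $AR=I$ on $\mathcal{H}$ while $RA=I$ on $D(A)$ before concluding $R=A^{-1}$. The mild strengthening over the textbook Schur‑complement statement — assuming $A+S$ invertible on all of $\mathcal{H}$ rather than $(I-S)A(I-S)$ invertible on $(I-S)\mathcal{H}$ — is precisely what keeps the bookkeeping clean, and it costs nothing in the argument.
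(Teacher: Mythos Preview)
Your proof is correct and follows the standard Feshbach/Schur-complement argument. Note, however, that the paper does not supply its own proof of this lemma: it is quoted directly from Jensen--Nenciu \cite[Lemma~2.1]{JN} and used as a black box throughout Section~\ref{section-M}. Your write-up is essentially the argument one finds in that reference (define $R=(A+S)^{-1}+(A+S)^{-1}SB^{-1}S(A+S)^{-1}$ and verify $AR=RA=I$ using $S(A+S)^{-1}S=S-B$; for the converse, show $B^{-1}=S+SA^{-1}S$ by sandwiching the resolvent identity), so there is nothing to compare.
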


Recall from  \eqref{eq-M-expansions-1}, let $ \displaystyle M^\pm(\mu)= \frac{\widetilde{a}^\pm}{\mu^3} \widetilde{M^\pm}(\mu)$, we have
\begin{equation}\label{eq-M-expansions-2}
   \begin{split}
\widetilde{M^\pm}(\mu) = & P+\frac{a_{-1}^\pm}{\widetilde{a}^\pm}\mu^2 vG_{-1}v+ \frac{\mu^3}{\widetilde{a}^\pm}T_0+
 \frac{a_1^\pm}{\widetilde{a}^\pm}\mu^4 v G_1 v
   +\frac{ a_3^\pm}{\widetilde{a}^\pm}\mu^6 v G_3v+\frac{\mu^7}{\widetilde{a}^\pm} v G_4v\\
   &+\sum_{k=5}^{10}\frac{a_{k}^\pm}{\widetilde{a}^\pm} \mu^{k+3}vG_kv+\Gamma_{14}(\mu).
   \end{split}
\end{equation}
Thus it suffices to derive the asymptotic expansion of $\Big(\widetilde{M^\pm}(\mu)\Big)^{-1}$ as $\mu\rightarrow 0.$~Denote $Q=I-P,$
it is obvious that
\begin{equation}\label{eq-M-expansions-2}
   \begin{split}
\widetilde{M^\pm}(\mu)+Q= &I+\frac{a_{-1}^\pm}{\widetilde{a}^\pm}\mu^2 vG_{-1}v+ \frac{\mu^3}{\widetilde{a}^\pm}T_0+
 \frac{a_1^\pm}{\widetilde{a}^\pm}\mu^4 v G_1 v
   +\frac{ a_3^\pm}{\widetilde{a}^\pm}\mu^6 v G_3v+\frac{\mu^7}{\widetilde{a}^\pm} v G_4v\\
   &+\sum_{k=5}^{10}\frac{a_{k}^\pm}{\widetilde{a}^\pm} \mu^{k+3}vG_kv+\Gamma_{14}(\mu).
   \end{split}
\end{equation}
Since~$vG_{-1}v$~is a bounded operator from $L^2(\mathbf{R})$ to $L^2(\mathbf{R}),$ then using Neumann series, we can obtain that
\begin{equation}\label{eq-M-expansions-3}
   \begin{split}
\Big(\widetilde{M^\pm}(\mu)+Q\Big)^{-1} =& I-\sum_{k=1}^{12}\mu^{k+1}B_{k}^\pm+\Gamma_{14}(\mu),
   \end{split}
\end{equation}
where ~$B_k^\pm\ (1\le k\le12)$~are bounded operators in $L^2(\mathbf{R})$ and it is easy to know that
\begin{equation*}\label{defi-B}
\begin{split}
B_1^\pm=& \frac{a_{-1}^\pm}{\widetilde{a}^\pm} vG_{-1}v,\ \ \ \ \ \  B_2^\pm= \frac{1}{\widetilde{a}^\pm}T_0,\ \ \ \ \ \ B_3^\pm= \frac{a_1^\pm}{\widetilde{a}^\pm}vG_1v-\frac{(a_{-1}^\pm)^2}{(\widetilde{a}^\pm)^2}(vG_{-1}v)^2,\\
B_4^\pm=& -\frac{a_{-1}^\pm}{(\widetilde{a}^\pm)^2}\big( vG_{-1}vT_0 + T_0vG_{-1}v \big),\\
B_5^\pm=& \frac{a_3^\pm}{\widetilde{a}^\pm}vG_3v + \frac{(a_{-1}^\pm)^3}{(\widetilde{a}^\pm)^3}(vG_{-1}v)^3-\frac{1}{(\widetilde{a}^\pm)^2}T_0^2
- \frac{a_{-1}^\pm a_1^\pm}{(\widetilde{a}^\pm)^2}\big( vG_{-1}v \cdot vG_1v + vG_1v\cdot vG_{-1}v\big),\\
B_6^\pm=& \frac{1}{\widetilde{a}^\pm}vG_4v+ \frac{(a^\pm_{-1})^2}{(\widetilde{a}^\pm)^3}\Big((vG_{-1}v)^2T_0 +
vG_{-1}v T_0 vG_{-1}v+T_0(vG_{-1}v)^2  \Big)- \frac{a_1^\pm}{(\widetilde{a}^\pm)^2}\big( vG_1v T_0 +T_0 vG_1v\big).
\end{split}
\end{equation*}

For the convenience, we denote some inverse operators as follows:
\begin{equation}\label{eq-inverse operator}
\begin{split}
&D_0=(QvG_{-1}vQ+S_0)^{-1}\ \ \text{on}\ \ QL^2(\mathbf{R});\ \ D_1=(S_0T_0S_0+S_1)^{-1}\ \ \text{on}\ \ S_0L^2(\mathbf{R});\\
&D_2=(T_1+S_2)^{-1}\ \ \text{on}\ \ S_1L^2(\mathbf{R}); \ \ \ \ \ \ \ \ \ \ \  D_3=(T_2+S_3)^{-1}\ \ \text{on}\ \  S_2L^2(\mathbf{R}).
\end{split}
\end{equation}
Next, we list the orthogonality relations of various operators and projections which we need in the proof of Theorem \ref{thm-M-inverse}.
\begin{align}\label{eq-orthogonality-1}
& QD_0=D_0Q=D_0;\\ \label{eq-orthogonality-2}
& S_iD_j=D_jS_i=D_j,\  i<j;\ \ S_iD_j=D_jS_i=S_i,\ i\ge j;\ \ D_iD_j=D_jD_i=D_j, i<j;\\ \label{eq-orthogonality-3}
& QB_1^\pm S_0=S_0B_1^\pm Q=S_2B_1^\pm=B_1^\pm S_2=S_3B_1^\pm=B_1^\pm S_3=0;\\ \label{eq-orthogonality-4}
& S_0B_2^\pm S_1=S_1 B_2^\pm S_0=QB_2^\pm S_2=S_2 B_2^\pm Q=S_3B_2^\pm=B_2^\pm S_3=0;\\ \label{eq-orthogonality-5}
& S_0B_3^\pm S_2=S_2 B_3^\pm S_0=QB_3^\pm S_3=S_3 B_3^\pm Q=0;\\ \label{eq-orthogonality-6}
& S_3B_4^\pm=B_4^\pm S_3=S_2B_5^\pm S_3=S_3 B_5^\pm S_2=0.
\end{align}

Now, we turn to the proof of Theorem \ref{thm-M-inverse}. The following lemma which will be used in the proof is an important classification of the projection spaces $S_jL^2(\mathbf{R})\ (j=0,1,2,3)$.
\begin{lemma} \label{lemma-projection space} We have

(1)~$f\in S_0L^2(\mathbf{R})$  if and only if $f\in \ker QvG_{-1}vQ$ on $QL^2(\mathbf{R});$

(2)~$f\in S_1L^2(\mathbf{R})$ if and only if $f\in \ker S_0T_0S_0$ on $S_0L^2(\mathbf{R});$

(3)~$f\in S_2L^2(\mathbf{R})$ if and only if $f\in \ker T_1$ on $S_1L^2(\mathbf{R})$ with
\begin{equation}\label{eq-T_1}
T_1:=S_1vG_1vS_1+\frac{6}{\|V\|_{L^1}}S_1 vG_{-1}v P vG_{-1}vS_1
 -384S_1T_0D_0T_0S_1.
\end{equation}

(4)~$f\in S_3L^2(\mathbf{R})$ if and only if $f\in \ker T_2$ on $S_2L^2(\mathbf{R})$ with
\begin{equation}\label{eq-T_2}
\begin{split}
T_2:=&S_2vG_3vS_2-\frac{8\cdot(6!)}{\|V\|_{L^1}}S_2T_0^2S_2+\frac{5}{2}S_2vG_1vD_0vG_1vS_2
+\frac{48\cdot(6!)}{\|V\|_{L^1}^2}\Big(S_2T_0vG_{-1}v\\
&-\frac{\|V\|_{L^1}}{6}S_2vG_1vD_0T_0\Big)D_2\Big(v G_{-1}v T_0S_2-\frac{\|V\|_{L^1}}{6}T_0D_0vG_1vS_2\Big).
\end{split}
\end{equation}
\end{lemma}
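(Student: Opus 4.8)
The statement is the final Lemma~\ref{lemma-projection space}, which identifies the four nested projection spaces $S_jL^2(\mathbf{R})$ with kernels of successively ``condensed'' operators obtained by applying the Feshbach (Schur complement) reduction of Lemma~\ref{Feshbach-formula} repeatedly to the expansion \eqref{eq-M-expansions-2} of $\widetilde{M^\pm}(\mu)$. The overall strategy is to peel off one projection at a time, moving from the full space $L^2$ down to $QL^2$, then $S_0L^2$, then $S_1L^2$, then $S_2L^2$, each time computing the leading term of the relevant Schur complement and reading off its kernel.

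Let me think about the structure. Parts (1) and (2) are essentially definitional once one recalls Definition~\ref{defi-projection space} and Remark~\ref{remark-equivalence}: $S_0L^2 = \operatorname{span}\{v,xv\}^\perp$, and on $QL^2$ the operator $QvG_{-1}vQ$ has kernel exactly $\{f\in QL^2 : QvG_{-1}vQf=0\}$; since $vG_{-1}v$ acts through $\langle v,\cdot\rangle$, $\langle xv,\cdot\rangle$-type pairings (because $G_{-1}(x,y)=|x-y|^2 = x^2-2xy+y^2$, so $vG_{-1}v$ maps into the span of $v$, $xv$, $x^2v$ with coefficients that are the first three moments of the input against $v$), one checks directly that $QvG_{-1}vQf=0$ forces $f\perp\{v,xv\}$ beyond the already-imposed $f\perp v$. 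So part (1) amounts to verifying that the quadratic form $\langle QvG_{-1}vQf,f\rangle$ vanishes iff $\langle xv,f\rangle=0$ (using $\langle x^2v\pm\ldots\rangle$ positivity after the $Q$-projection kills the $v$ direction). Part (2) is then immediate from the definition $S_1L^2 = \{f\in S_0L^2 : S_0T_0f=0\}$ and the fact that on $S_0L^2$ one has $S_0T_0S_0 f = S_0T_0 f$.

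The substantive parts are (3) and (4). First I would write $D_0=(QvG_{-1}vQ+S_0)^{-1}$ on $QL^2$, invertible by Lemma~\ref{Feshbach-formula} applied at the first level (with $A = \mu^{-2}$-rescaled leading block and $S=S_0$), and expand $D_0 = -(\widetilde a^\pm/a_{-1}^\pm)^{?}\cdots$; more precisely, apply Lemma~\ref{Feshbach-formula} to $\widetilde{M^\pm}(\mu)+Q$ with the projection $Q$ (so $A=\widetilde{M^\pm}(\mu)$), getting that $\widetilde{M^\pm}(\mu)^{-1}$ exists iff the Schur complement $B^\pm(\mu) := Q - Q(\widetilde{M^\pm}(\mu)+Q)^{-1}Q$ is invertible on $QL^2$; using \eqref{eq-M-expansions-3} one finds $B^\pm(\mu) = \mu^2 \frac{a_{-1}^\pm}{\widetilde a^\pm} QvG_{-1}vQ + \mu^3(\cdots) + \cdots$. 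Factoring out $\mu^2$ and applying Lemma~\ref{Feshbach-formula} again with the projection $S_0$ (legitimate since $QvG_{-1}vQ+S_0=D_0^{-1}$ is invertible), the next Schur complement has leading term proportional to $\mu^3 S_0 T_0 S_0$ plus corrections from $D_0$ sandwiched between the $\mu^2$ and higher terms. One then factors $\mu^3$, applies Lemma~\ref{Feshbach-formula} with $S_1$, and the resulting Schur complement's leading coefficient—after carefully tracking which powers of $\mu$ combine—is exactly (a nonzero multiple of) $T_1$ in \eqref{eq-T_1}: the three summands $S_1vG_1vS_1$, $\frac{6}{\|V\|_{L^1}}S_1vG_{-1}vPvG_{-1}vS_1$, and $-384\,S_1T_0D_0T_0S_1$ come respectively from the $B_3^\pm$-term ($\frac{a_1^\pm}{\widetilde a^\pm}vG_1v$), the $P$-channel of the $vG_{-1}v$ iterates, and the $T_0D_0T_0$ cross term with the combinatorial constant $384 = 8\cdot 48 = \ldots$ arising from the ratios $a_1^\pm/\widetilde a^\pm$, $a_{-1}^\pm/\widetilde a^\pm$. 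The kernel of this operator on $S_1L^2$ is then $S_2L^2$ by Definition~\ref{defi-projection space}(iii) (one must also reconcile the extra moment condition $\langle x^2v,f\rangle=0$, which is automatic: $f\in\ker T_1$ together with $f\in S_1L^2$ forces it via the $S_1vG_{-1}vPvG_{-1}vS_1$ term, since $PvG_{-1}vS_1 f$ involves $\langle x^2 v, f\rangle$). Part (4) is the same procedure one level deeper: invert $D_2=(T_1+S_2)^{-1}$ on $S_1L^2$, run Lemma~\ref{Feshbach-formula} a fourth time with $S_2$, and identify the leading Schur-complement coefficient with $T_2$ in \eqref{eq-T_2}; its kernel on $S_2L^2$ is $S_3L^2$.

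The main obstacle—and where essentially all the work lies—is the bookkeeping in part (4): tracking exactly which products of the $B_k^\pm$ operators (and of $D_0$, $D_2$) survive each Feshbach reduction and with which scalar coefficients, so that the four-term operator $T_2$ emerges with the precise constants $\frac{8\cdot(6!)}{\|V\|_{L^1}}$, $\frac52$, $\frac{48\cdot(6!)}{\|V\|_{L^1}^2}$ and $\frac{\|V\|_{L^1}}{6}$. This requires: (a) knowing that the $\mu$-independent scalar ratios $a_k^\pm/\widetilde a^\pm$ actually have $\pm$-independent values at the relevant orders (a small miracle that makes $T_1,T_2$ sign-definite-free and $\pm$-independent), which follows by direct computation from $a^\pm=\frac{-1\pm i}{4}$, $a_k^\pm=\frac{(\pm i)^{k+4}+(-1)^{k+4}}{4(k+3)!}$; (b) the orthogonality relations \eqref{eq-orthogonality-1}--\eqref{eq-orthogonality-6}, which kill the vast majority of cross terms (e.g. $S_2B_1^\pm = B_1^\pm S_2 = 0$, $S_0B_2^\pm S_1 = 0$) and are what make the final expression finite; and (c) carefully re-expanding the nested inverses $(\,\cdot\,+S_j)^{-1}$ via Neumann series to sufficient order, discarding $\Gamma_k(\mu)$ error terms by the decay hypothesis $\beta>25$. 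I would organize the computation as a sequence of four displayed Schur-complement formulas, at each stage recording only the leading term and citing the orthogonality table to discard the rest; the verification that the discarded terms are genuinely $O(\mu^{\text{(next power)}})$ operators in $B(L^2)$ is routine given the $\Gamma_k$ calculus of \eqref{eq-notation-2}. The final identification ``$\ker(\text{leading coefficient}) = S_{j+1}L^2$'' in each case uses Definition~\ref{defi-projection space} plus the observation that the additional moment conditions defining $S_{j+1}L^2$ are forced by membership in the kernel, exactly as in the Schrödinger-operator arguments of \cite{JN,JN2}.
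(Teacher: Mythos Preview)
Your proposal conflates two distinct tasks. The derivation of $T_1$ and $T_2$ as leading Schur-complement coefficients in the Feshbach cascade is carried out in the proof of Theorem~\ref{thm-M-inverse} (see \eqref{eq-defi-T1} and \eqref{eq-B_2^2-2}); it is \emph{not} the content of Lemma~\ref{lemma-projection space}. The lemma takes the operators $T_1,T_2$ as given in its statement and asks you to prove that their kernels on $S_1L^2$ and $S_2L^2$ coincide with $S_2L^2$ and $S_3L^2$. Your plan spends almost all its effort on the derivation and then waves at the kernel identification as ``the observation that the additional moment conditions \ldots\ are forced by membership in the kernel.''

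That last step is where the entire content lies, and your proposal does not address it. For part~(3) the paper computes the quadratic form $\langle T_1 f, f\rangle$ explicitly and shows it equals $12\,|\langle f, x^2v\rangle|^2 + 192\,|\langle T_0f, v_1\rangle|^2/\langle v_1,v_1\rangle^2$, a sum of two nonnegative terms, so it vanishes precisely when both quantities do. This requires the auxiliary identity $\langle D_0 v_1, v_1\rangle = -\tfrac12$ (with $v_1 = Q(xv)$) and the fact that $(Q-S_0)L^2 = \operatorname{span}\{v_1\}$, so that $S_1 T_0 D_0 T_0 S_1$ factors through a rank-one operator. Part~(4) is considerably more delicate: $\langle T_2 f, f\rangle$ is \emph{not} a sum of squares but a quadratic form $a|X|^2 + b|Y|^2 + c\,\mathrm{Re}(X\overline Y)$ in the two scalars $X=\langle f, x^3v\rangle$ and $Y=\langle T_0f, v\rangle$, with $a,b<0$. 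The paper then proves the discriminant inequality $c^2 - 4ab \le 0$ by establishing the nontrivial bounds $\langle D_2 S_1(x^2v), S_1(x^2v)\rangle \le \tfrac{1}{12}$ and $\langle D_2 S_1(T_0v_1), S_1(T_0v_1)\rangle \le \langle v_1,v_1\rangle^2/192$ (extracted from the structure $D_2=(T_1+S_2)^{-1}$ and the already-computed form of $T_1$) together with Cauchy--Schwarz for the cross term. None of this is visible in your plan, and the analogy with \cite{JN,JN2} is misleading: in the Schr\"odinger case the corresponding operators are manifestly sign-definite, whereas here $T_2$ is a sum of four terms of varying sign and the semidefiniteness argument is the real work.
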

\begin{proof}
(1) Let $f\in QL^2(\mathbf{R})$ and $f\in\ker QvG_{-1}vQ,$ that is, $ QvG_{-1}vQ f=0$. Then we have
$$0=\langle QvG_{-1}vQ f, f\rangle= \langle vG_{-1}vQ f, Qf\rangle = \langle vG_{-1}vf, f\rangle.$$
Notice that
$$ G_{-1}vf(x)= \int_{\mathbf{R}} |x-y|^2 v(y)f(y)dy ,$$
and $Q(v)=0,$ i.e. $\displaystyle\langle f,v\rangle=\int_\mathbf{R}v(x)f(x)dx=0.$  So we have
\begin{equation*}
	\begin{split}
0=\langle v G_{-1}vf, f\rangle= &\int_{\mathbf{R}}(vG_{-1}vf)(x) \overline{f(x)}\, dx\\
=& \int_{\mathbf{R}}\overline{v(x)f(x)}\Big(\int_{\mathbf{R}}|x-y|^2v(y)f(y)\,dy\Big)\,dx\\
=&\int_{\mathbf{R}}\overline{v(x)f(x)}\Big(\int_{\mathbf{R}}(x^2-2xy+y^2)v(y)f(y)\,dy\Big)\,dx\\
=&-2\Big|\int_\mathbf{R}xv(x)f(x)dx\Big|^2=-2\big|\langle f,xv\rangle\big|^2,
\end{split}
\end{equation*}
which implies that $\langle f,xv\rangle=0 $. Hence $f\in S_0L^2(\mathbf{R}).$

On the other hand, if $f\in S_0L^2(\mathbf{R}),$ that is,
\[\langle f,v\rangle=\int_\mathbf{R} v(x)f(x)dx=0,\ \ \ \langle f,xv\rangle=\int_\mathbf{R}xv(x)f(x)dx=0.\]
Since $S_0L^2(\mathbf{R})\subseteq QL^2(\mathbf{R}),$ thus we have
\[QvG_{-1}vQ f=Qv(x)\int_{\mathbf{R}}(x^2-2xy+y^2)v(y)f(y)dy=\langle f,x^2v\rangle Q(v(x))=0,\]
which means that $f\in\ker QvG_{-1}vQ.$

(2)~It is obvious by the definition of $S_1,$ see {\bf{Table 1}}.

(3)~First, we prove the \textbf{Claim:} Denote $\displaystyle v_1:=Q(xv(x))=xv(x)-\frac{\langle v,xv\rangle}{\|V\|_{L^1}} v(x),$ then we have
\begin{equation}\label{eq-D_0}
\langle D_0v_1,v_1\rangle=-\frac{1}{2}.
\end{equation}
In fact, using $Q(v(x))=0,S_0(v(x))=S_0(xv(x))=0$ and $\langle v ,v_1\rangle=0,$ we have
\begin{align*}
(QvG_{-1}vQ+S_0)v_1=&QvG_{-1}vQv_1\\
=&Qv(x)\int_{\mathbf{R}}(x^2-2xy+y^2)v(y)v_1(y)dy\\
=&-2Q\big(xv(x)\big)\int_\mathbf{R}yv(y)v_1(y)dy\\
=&-2v_1\langle v_1,v_1\rangle.
\end{align*}
Hence, we obtain that
\begin{equation}\label{eq-D_0-1}
D_0v_1=(QvG_{-1}vQ+S_0)^{-1}v_1=\displaystyle-\frac{v_1}{2\langle v_1,v_1\rangle},
\end{equation}
furthermore, we have $\langle D_0v_1,v_1\rangle=\displaystyle-\frac{1}{2}.$

Let $f\in S_1L^2(\mathbf{R}) $ and $f\in \ker T_1$.
Recall \eqref{eq-T_1},  we have
\begin{equation}\label{eq-kernel T_1}
\begin{split}
0=\langle T_1f,f\rangle=& \big\langle S_1vG_1vS_1f,f\big\rangle+\frac{6}{\|V\|_{L^1}}\big\langle S_1vG_{-1}vPvG_{-1}vS_1f,f
\big\rangle\\
&-384\big\langle S_1T_0D_0T_0S_1f,f\big\rangle.
\end{split}
\end{equation}
Firstly, for the first part of \eqref{eq-kernel T_1}, using $S_1(v(x))=S_1(xv(x))=0,$ we have
\begin{align*}S_1vG_1vS_1f=&S_1v(x)\int_{\mathbf{R}}\big(x^4-4x^3y-
      4xy^3 +6x^2y^2+y^4\big)v(y)f(y)dy\\
      =& 6\langle f,x^2v\rangle S_1(x^2v).
\end{align*}
Thus, we have
\begin{align}\label{eq-kernel T_1-1}
\big\langle S_1vG_1vS_1 f, f\big\rangle
=6\big|\langle f,x^2v\rangle\big|^2.
\end{align}
Next, for the second part of \eqref{eq-kernel T_1}, since $Pf=\displaystyle\frac{\langle f,v\rangle}{\|V\|_{L^1}}v$ and $S_1(v(x))=S_1(xv(x))=0,$   we have
\[PvG_{-1}vS_1f=Pv(x)\int_{\mathbf{R}^2}|x-y|^2v(y)f(y)dy=v(x)\int_{\mathbf{R}}x^2v(x)f(x)dx=\langle f,x^2v\rangle v(x),\]
and
\[S_1vG_{-1}v P vG_{-1}vS_1 f=\langle f,x^2v\rangle\|V\|_{L^1}S_1(x^2v(x)).\]
Thus
\begin{align}\label{eq-kernel T_1-2}
\big\langle S_1 vG_{-1}v P vG_{-1}vS_1 f, f\big\rangle
=&\|V\|_{L^1}\cdot\big|\langle f,x^2v\rangle\big|^2.
\end{align}
Finally,  for the third part of \eqref{eq-kernel T_1}, using $QD_0=D_0Q=D_0$ and by the definition of $v_1,$ we know that
~$v_1\in (Q-S_0)L^2(\mathbf{R}).$ Indeed, we have $(Q-S_0)L^2(\mathbf{R})=\text{span}\{v_1\},$ then we can obtain that
\[(Q-S_0)T_0f=\frac{\langle T_0f,v_1\rangle}{\langle v_1,v_1\rangle }.\]
Thus, using \eqref{eq-D_0-1} and $D_0P=PD_0=0,S_1T_0S_0=S_0T_0S_1=0$, for all $f\in S_1L^2(\mathbf{R}),$ we have
\begin{align*}
S_1T_0D_0T_0S_1f=&S_1T_0[P+(Q-S_0)+S_0]D_0[P+(Q-S_0)+S_0]T_0f\\
=&S_1T_0(Q-S_0)D_0(Q-S_0)T_0f\\
=&-\frac{\langle T_0f,v_1\rangle}{2\langle v_1,v_1\rangle ^2}S_1(T_0v_1).
\end{align*}
Thus,
\begin{equation}\label{eq-kernel T_1-3}
\langle S_1T_0D_0T_0S_1f, f\rangle= -\frac{\big|\langle T_0f,v_1\rangle\big|^2}{2\langle v_1,v_1\rangle ^2}.
\end{equation}
Hence, substituting \eqref{eq-kernel T_1-1}, \eqref{eq-kernel T_1-2} and \eqref{eq-kernel T_1-3} into \eqref{eq-kernel T_1}, we have
\[0=\big\langle T_1 f,f\big\rangle=12\big|\langle f,x^2v\rangle\big|^2+192\frac{\big|\langle T_0f,v_1\rangle\big|^2}{\langle v_1,v_1\rangle ^2},\]
which implies that
\[\langle f,x^2v\rangle=0,\ \ \ \langle T_0f,v_1\rangle=0.\]
Furthermore, since $f\in S_1L^2(\mathbf{R}),$ thus $S_0T_0f=0,$ combined with $\langle T_0f,v_1\rangle=0,$ we have $QT_0f=0.$  Hence we have $f\in S_2L^2(\mathbf{R}).$

On the other hand, if $f\in S_2L^2(\mathbf{R}),$ then it is easy to know that
\[T_1f=12\langle f,x^2v\rangle S_1(x^2v)+192\frac{\langle T_0f,v_1\rangle}{\langle v_1,v_1\rangle ^2}S_1(T_0v_1)=0,\]
which means that $f\in\ker T_1.$

(4)~Let $f\in S_2L^2(\mathbf{R})$ and $f\in\ker T_2.$ It follows that from \eqref{eq-T_2}
\begin{equation}\label{eq-kernel T_2}
\begin{split}
0=\langle T_2f,f\rangle=&\big\langle S_2vG_3vS_2f,f\big\rangle-\frac{8\cdot(6!)}{\|V\|_{L^1}}\big\langle S_2T_0^2S_2f,f\big\rangle+\frac{5}{2}\big\langle S_2vG_1vD_0vG_1vS_2f,f\big\rangle\\
+\frac{48\cdot(6!)}{\|V\|_{L^1}^2}&\Big\langle\Big(S_2T_0vG_{-1}v-\frac{\|V\|_{L^1}}{6}S_2vG_1vD_0T_0\Big)D_2\Big(v G_{-1}v T_0S_2-\frac{\|V\|_{L^1}}{6}T_0D_0vG_1vS_2\Big)f,f\Big\rangle.
\end{split}
\end{equation}
For the first part of \eqref{eq-kernel T_2}, using $S_2(v(x))=S_2(xv(x))=S_2(x^2v(x))=0$ and $QT_0f=0,$ we have
\[
S_2vG_3vS_2f=S_2v(x)\int_\mathbf{R}\big|x-y\big|^6v(y)f(y)dy=-20\langle f,x^3v \rangle S_2(x^3v(x)),
\]
thus
\begin{equation}\label{eq-kernel T_2-1}
\langle S_2vG_3vf,f\rangle=-20\big|\langle f,x^3v \rangle\big|^2.
\end{equation}
Secondly, for the second part of \eqref{eq-kernel T_2}, using $QT_0f=0,$ we have
\[S_2T_0^2S_2f=S_2T_0PT_0S_2f=\frac{\langle T_0f,v\rangle}{\|V\|_{L^1}}S_2(T_0v),\]
so
\begin{equation}\label{eq-kernel T_2-2}
\langle S_2T_0^2S_2f,f\rangle=\frac{\langle T_0f,v\rangle^2}{\|V\|_{L^1}}.
\end{equation}
Then, for the third part of \eqref{eq-kernel T_2}, using \eqref{eq-D_0-1} and
\[QD_0Q=D_0,\ \ Q(xv(x))=v_1,\ \ S_2(v(x))=S_2(xv(x))=S_2(x^2v(x))=0,\]
we have
\[S_2vG_1vD_0vG_1vS_2f=S_2vG_1vQD_0QvG_1vf=-8\langle f,x^3v\rangle S_2(x^3v),\]
and
\begin{equation}\label{eq-kernel T_2-3}
\langle S_2vG_1vD_0vG_1vS_2f ,f\rangle =-8\big|\langle f,x^3v\rangle\big|^2.
\end{equation}
Finally, for the last part of \eqref{eq-kernel T_2}, using \eqref{eq-D_0-1} and
\[S_1D_2S_1=D_2,\ \ S_1(v)=S_1(xv)=0,\ \ S_2(v(x))=S_2(xv(x))=S_2(x^2v(x))=0,\]
we have
\[
S_2T_0vG_{-1}vD_2vG_{-1}vT_0S_2f=\langle T_0f,v\rangle\cdot \langle D_2S_1 (x^2v),S_1(x^2v)\rangle S_2(T_0v),
\]
\[
S_2T_0vG_{-1}vD_2T_0D_0vG_1vS_2f=2\frac{\langle f,x^3v\rangle}{\langle v_1,v_1\rangle}\cdot \langle D_2S_1 (T_0v_1),S_1(x^2v)\rangle S_2(T_0v),
\]
\[
S_2vG_1vD_0T_0D_2vG_{-1}vT_0S_2f=2\frac{\langle T_0f,v\rangle}{\langle v_1,v_1\rangle}\cdot\langle D_2S_1 (T_0v_1),S_1(x^2v)\rangle S_2(x^3v),
\]
\[
S_2vG_1vD_0T_0D_2T_0D_0vG_1vS_2f=4\frac{\langle f,x^3v\rangle}{\langle v_1,v_1\rangle^2}\cdot\langle D_2S_1 (T_0v_1),S_1 (T_0v_1)\rangle S_2(x^3v).
\]
Denote $X=\langle f, x^3v\rangle, Y=\langle T_0f,v\rangle,$ we obtain that
\begin{equation}\label{eq-kernel T_2-4}
\begin{split}
&\ \Big\langle \Big(S_2T_0vG_{-1}v-\frac{\|V\|_{L^1}}{6}S_2vG_1vD_0T_0\Big)D_2\Big(v G_{-1}v T_0S_2-\frac{\|V\|_{L^1}}{6}T_0D_0vG_1vS_2\Big)f,f\Big\rangle\\
=&\frac{\|V\|_{L^1}^2\cdot\langle D_2S_1 (T_0v_1),S_1 (T_0v_1)\rangle}{9\langle v_1,v_1\rangle^2}\cdot\big|X\big|^2+\langle D_2S_1 (x^2v),S_1(x^2v)\rangle\cdot\big|Y\big|^2\\
&\ -\frac{2\|V\|_{L^1}\cdot\langle D_2S_1 (T_0v_1),S_1(x^2v)\rangle}{3\langle v_1,v_1\rangle}\cdot{\rm Re}(X\overline{Y}).
\end{split}
\end{equation}
Then, substituting \eqref{eq-kernel T_2-1},\eqref{eq-kernel T_2-2},\eqref{eq-kernel T_2-3} and \eqref{eq-kernel T_2-4} into \eqref{eq-kernel T_2}, we have
\begin{align*}
0=\langle T_2f,f\rangle&=\Big(\frac{32\cdot(5!)\cdot\langle D_2S_1 (T_0v_1),S_1 (T_0v_1)\rangle}{\langle v_1,v_1\rangle^2}-40\Big)\cdot\big|X\big|^2\\
&+\frac{8\cdot(6!)}{\|V\|^2_{L^1}}\Big(6\langle D_2S_1 (x^2v),S_1(x^2v)\rangle-1\Big)\cdot\big|Y\big|^2
 -\frac{32\cdot(6!)\langle D_2S_1 (T_0v_1),S_1(x^2v)\rangle}{\langle v_1,v_1\rangle\cdot\|V\|_{L^1}}\cdot{\rm Re}(X\overline{Y})\\
&:= a\big|X\big|^2+b\big|Y\big|^2+c~{\rm Re}(X\overline{Y}).
\end{align*}
Let $X=X_1+iX_2$ and $Y=Y_1+i Y_2$ with $X_1,X_2,Y_1,Y_2$ are real. Then we have
\begin{align*}
0=\langle T_2f,f\rangle=&a(X_1^2+X_2^2)+b(Y_1^2+Y_2^2)+c(X_1Y_1+X_2Y_2)=\sum_{k=1}^2\Big(aX_k^2+bY_k^2+cX_kY_k\Big).
\end{align*}
Hence, without loss of generality, we only need to consider the case that $X$ and $Y$ are real, that is,
\[0=\langle T_2f,f\rangle= aX^2+bY^2+cXY.\]
Let $g_1=S_1(x^2v), h_1=D_2g_1\in S_1L^2(\mathbf{R}),g_2=S_1(T_0v_1),h_2=D_2g_2,$ since $D_2=(T_1+S_2)^{-1},$ using $S_1(v(x))=S_1(xv(x))=0,$ we have
\[g_1=(T_1+S_2)h_1=12\langle g_1,h_1\rangle g_1+192\frac{\langle g_2, h_1\rangle}{\langle v_1,v_1\rangle ^2}g_2+S_2h_1, \]
and
\[g_2=12\langle g_1,h_2\rangle g_1+192\frac{\langle g_2, h_2\rangle}{\langle v_1,v_1\rangle ^2}g_2+S_2h_2.\]
Then we have
\begin{align*}
\langle D_2S_1 (x^2v),S_1(x^2v)\rangle=&\langle g_1,h_1\rangle
=12\langle g_1,h_1\rangle^2+192\frac{\langle g_2, h_1\rangle^2}{\langle v_1,v_1\rangle ^2}+\langle S_2h_1,h_1\rangle,
\end{align*}
since $\langle S_2h_1,h_1\rangle\ge 0$ and $\displaystyle\frac{\langle g_2, h_1\rangle^2}{\langle v_1,v_1\rangle ^2}\ge0,$ we have
\[
\langle g_1,h_1\rangle
\ge12\langle g_1,h_1\rangle^2,
\]
thus, we obtain that $\displaystyle\langle g_1,h_1\rangle\le \frac{1}{12}.$ Furthermore, if $\langle g_2, h_1\rangle\neq0,$ then $\displaystyle\langle g_1,h_1\rangle< \frac{1}{12}.$

Similarly, we have
\begin{align*}
\langle D_2S_1 (T_0v_1),S_1 (T_0v_1)\rangle=&\langle g_2,h_2\rangle
=12\langle g_1,h_2\rangle^2+192\frac{\langle g_2, h_2\rangle^2}{\langle v_1,v_1\rangle ^2}+\langle S_2h_2,h_2\rangle,
\end{align*}
thus we also have
\[\langle g_2,h_2\rangle\le \frac{\langle v_1,v_1\rangle ^2}{192},\]
and if $\langle g_1,h_2\rangle\neq0,$ we have $\displaystyle\langle g_2,h_2\rangle< \frac{\langle v_1,v_1\rangle ^2}{192}.$\\
Hence, we obtain that
\begin{align*}
a=&\Big(\frac{32\cdot(5!)\cdot\langle D_2S_1 (T_0v_1),S_1 (T_0v_1)\rangle}{\langle v_1,v_1\rangle^2}-40\Big)\le \frac{32\cdot(5!)}{192}-40=-20<0,\\
b=&\Big(6\langle D_2S_1 (x^2v),S_1(x^2v)\rangle-1\Big)\le \frac{6}{12}-1=-\frac{1}{2}<0.
\end{align*}
Furthermore, by Cauchy-Schwarz inequality, we obtain that
\[\langle g_1,h_2\rangle^2=\langle g_1, D_2g_2\rangle^2\le\langle g_1, D_2g_1\rangle \langle g_2, D_2g_2\rangle=\langle g_1, h_1\rangle\langle g_2, h_2\rangle.\]
Thus
\begin{align*}
c^2-4ab=&\ \frac{32^2\cdot(6!)^2\langle g_1,h_2\rangle^2}{\langle v_1,v_1\rangle^2\cdot\|V\|_{L^1}^2}-\Big(\frac{32\cdot(5!)\cdot\langle g_2,h_2\rangle}{\langle v_1,v_1\rangle^2}-40\Big)\cdot\frac{32\cdot(6!)}{\|V\|^2_{L^1}}\Big(6\langle g_1,h_1\rangle-1\Big)\\
\le &\ \frac{32\cdot(6!)}{\|V\|^2_{L^1}}\Big(\frac{32\cdot(5!)\cdot\langle g_2,h_2\rangle}{\langle v_1,v_1\rangle^2}+240\langle g_1,h_1\rangle-40\Big)\\
\le &\ \frac{32\cdot(6!)}{\|V\|^2_{L^1}}\Big(\frac{32\cdot(5!)}{192}+\frac{240}{12}-40\Big)=0.
\end{align*}
Since ``=" holds if and only if $\langle g_1,h_2\rangle=\langle g_2,h_1\rangle=0,$ and in this case, we have
\[0=\langle T_2f,f\rangle\le aX^2+b Y^2\le 0,\]
thus it is necessary that
\[X=\langle f,x^3v\rangle=0,\ \ \  Y=\langle T_0f,v\rangle=0.\]
Then $f\in S_3L^2(\mathbf{R}).$ On the other hand, it is obvious that if $f\in S_3L^2(\mathbf{R}),$ we can obtain that $T_2f=0,$ which means that $f\in \ker T_2.$
\end{proof}

\begin{lemma}\label{lemma-S_3}
$S_3L^2(\mathbf{R})=\{0\}.$
\end{lemma}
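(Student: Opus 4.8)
The plan is to show that any $f\in S_3L^2(\mathbf{R})$ must vanish by identifying such an $f$ with a zero-energy eigenfunction of $H=\Delta^2+V$ and then invoking the absence of zero eigenvalues under the hypothesized decay rate of $V$. First I would unpack the definition: by Remark \ref{remark-equivalence}(i), $f\in S_3L^2(\mathbf{R})$ means $f\in\{v,xv,x^2v,x^3v\}^\perp$ and $T_0f=0$, i.e. $Uf+vG_0vf=0$, where $G_0$ has the kernel $\tfrac1{12}|x-y|^3$. Setting $\phi=-G_0vf$, the relation $T_0f=0$ rewrites as $f=-Uvf\cdot$(something); more precisely, multiplying $Uf=-vG_0vf$ by $v$ and using $U=\mathrm{sgn}(V)$, $v=|V|^{1/2}$ so that $vU v=V$, one gets $Vv^{-1}\cdot$, so I would instead argue directly: $\phi:=-G_0vf$ satisfies $\Delta^2\phi = v f$ in the distributional sense (since $G_0$ is the fundamental solution of $\Delta^2$), while $T_0f=0$ gives $Uf=-vG_0vf=v\phi$, hence $vf=vUf\cdot$... carefully, $v\cdot Uf = v^2\phi = |V|\phi$, and $vf$ — applying $v$ to $Uf=v\phi$ and noting $vU=|V|^{1/2}\mathrm{sgn}(V)$, multiply the identity $Uf = v\phi$ by $v$: $vUf = v^2\phi = |V|\phi$; separately $V\phi = \mathrm{sgn}(V)|V|\phi = U v^2\phi = U v (vUf) $. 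The cleanest route: from $Uf=v\phi$ we get $f=Uv\phi$ (since $U^2=1$), so $vf=vUv\phi=V\phi$, and therefore $\Delta^2\phi = vf = V\phi$, i.e. $(\Delta^2+V)\phi=0$ with $\phi=-G_0vf$.

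Next I would control the decay and integrability of $\phi$. Because $f\in L^2$ with the weight $v(x)\lesssim(1+|x|)^{-\beta/2}$ and $\beta$ large (satisfying \eqref{eq-beta}, in particular $\beta>25$ in the second-kind case where $S_3$ is relevant), $vf\in L^1_s$ for a generous range of $s$. The four orthogonality conditions $\langle f,x^kv\rangle=0$ for $k=0,1,2,3$ are exactly the moment conditions that kill the leading polynomial growth of the convolution $G_0vf(x)=\tfrac1{12}\int|x-y|^3v(y)f(y)\,dy$: expanding $|x-y|^3$ for $|x|$ large — or more robustly, using a Taylor-with-remainder device as in Lemma \ref{lemma-low-energy} — the terms of order $|x|^3,|x|^2,|x|^1,|x|^0$ all carry a vanishing moment, so $\phi(x)=O(|x|^{3-4})=O(|x|^{-1})$ as $|x|\to\infty$ (with enough decay of $v$ to make the remainder integral converge). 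In particular $\phi\in L^2(\mathbf{R})$, since $|x|^{-1}$ is square-integrable at infinity in dimension one and $\phi$ is locally bounded. Thus $\phi$ is a genuine zero-energy eigenfunction: $\phi\in L^2$, $H\phi=0$.

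Finally, I would invoke the absence of zero eigenvalues. Under the decay hypothesis of Theorem \ref{thm-main results} (which is in force here), $H=\Delta^2+V$ has no zero eigenvalue — as noted in Remark \ref{remark-eigenvalue}(ii), a proof of this is to be given in this very section (indeed this Lemma \ref{lemma-S_3} is the vehicle for it). So the argument cannot be circular: the honest statement is that one proves $\phi\equiv 0$ directly. Given $\phi=-G_0vf\equiv 0$, the identity $Uf=v\phi=0$ forces $f=0$ on $\{V\neq0\}$; but $f\in L^2$ is supported where $v\neq0$ is not automatic — however $vf$ appears and $\Delta^2\phi=vf$; since $\phi\equiv0$ we get $vf\equiv0$, hence $f\equiv 0$ on $\mathrm{supp}\,V$, and then $Uf=v\phi=0$ everywhere combined with... actually $f=Uv\phi=0$ outright. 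The main obstacle is therefore the self-contained proof that $\phi\equiv0$: one shows $\phi$ decays like $|x|^{-1}$ with derivatives decaying faster, feeds this into the equation $\Delta^2\phi=-V\phi$, bootstraps to get $\phi=O(|x|^{-\beta+\cdots})$ rapid decay, and then runs a Fourier/unique-continuation or a direct weighted-energy (Kato-type, using the repulsivity-free argument available in one dimension, cf.\ \cite{DT}) argument to conclude $\phi=0$. I expect this decay-bootstrap-plus-rigidity step to be the crux; the algebraic identification $T_0f=0 \Leftrightarrow H(G_0vf)=0$ and the moment-cancellation bookkeeping are routine by comparison.
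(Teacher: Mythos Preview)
Your two-step plan---identify $S_3L^2(\mathbf{R})$ with the space of $f=Uv\phi$ where $\phi\in L^2(\mathbf{R})$ solves $H\phi=0$, then show no such nonzero $\phi$ exists---is exactly the paper's. Your Step~I is essentially correct; the moment-cancellation argument for $\phi=-G_0vf\in L^2$ is the same idea the paper carries out (there via an explicit five-piece splitting of the remainder kernel). One slip: from $\phi=-G_0vf$ you get $\Delta^2\phi=-vf=-V\phi$, not $+vf$, so the conclusion $H\phi=0$ is right but your intermediate line has the wrong sign.

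The genuine gap is Step~II. You leave this as ``bootstrap to rapid decay, then unique continuation or weighted-energy,'' which is vague and heavier than what is actually needed. The paper's argument is elementary and you should know it: the four vanishing moments $\int y^k vf\,dy=0$ ($k=0,1,2,3$) turn the convolution into a one-sided Volterra integral,
\[
\phi(x)=-\tfrac{1}{6}\int_x^\infty (y-x)^3\,V(y)\,\phi(y)\,dy,
\]
since $\int_{-\infty}^x (x-y)^3 vf\,dy=\int_{\mathbf{R}}(x-y)^3 vf\,dy-\int_x^\infty(x-y)^3 vf\,dy=0+\int_x^\infty(y-x)^3 vf\,dy$. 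Now split at $x_0>x$, use $(y-x)^3\lesssim\langle x\rangle^3\langle y\rangle^3$, set $u(x)=\langle x\rangle^{-3}|\phi(x)|$, $B(y)=\langle y\rangle^6|V(y)|\in L^1(\mathbf{R})$, and
\[
A(x_0)=\Big(\sup_{y>x_0}|\phi(y)|\Big)\int_{x_0}^\infty\langle y\rangle^3|V(y)|\,dy.
\]
Gronwall's inequality gives $u(x)\lesssim A(x_0)\exp\big(C\int_x^{x_0}B\big)$; since $\phi(x)\to 0$ as $x\to+\infty$ (from Step~I) and $\langle y\rangle^3|V|\in L^1$, one has $A(x_0)\to 0$, while $\int B$ stays bounded. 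Sending $x_0\to\infty$ forces $u\equiv 0$, hence $\phi\equiv 0$ and then $f=Uv\phi=0$. No rigidity, no unique continuation---just the one-dimensional Volterra structure plus Gronwall. This is the missing crux in your proposal.
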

\begin{proof}
The proof can be divided into the following two steps.

\textbf{Step~\rm{I}}:
 We first show that $f(x)\in S_3L^2(\mathbf{R})$ if and only if $f(x)=Uv\phi$  with some $\phi\in L^2(\mathbf{R})$ such that $H\phi=0$ in the distributional sense. Furthermore,
$\phi= -G_0vf.$

Since $ S_3 \leq S_2 $, by Theorem \ref{thm-resonance space}(ii) below, we have known that  if $f\in S_2L^2(\mathbf{R})$, then $f=Uv\phi$ with $ \phi \in W_{\frac{1}{2}}(\mathbf{R})$  such  that $H\phi =0$ in the distributional sense and
$$ \phi = -G_0v f + \frac{ \langle v , T_0f \rangle}{\|V\|_{L^1}}.$$
Thus, we just need to  prove that $\displaystyle\frac{ \langle T_0f,v \rangle}{\|V\|_{L^1(\mathbf{R})}}=0$ and $\phi \in L^2(\mathbf{R}) $. Indeed, if $f\in S_3L^2(\mathbf{R})$, then
\[\int_{\mathbf{R}}v(y)f(y)dy=\int_{\mathbf{R}}yv(y)f(y)dy=\int_{\mathbf{R}}y^2v(y)f(y)dy=\int_{\mathbf{R}}y^3v(y)f(y)dy=QT_0f=PT_0f=0.\]
Thus, we have $ \displaystyle\frac{ \langle v,T_0f\rangle}{\|V\|_{L^1}}=0$.

Next we show that $ \phi \in L^2(\mathbf{R})$. Since $ \phi = -G_0vf $ , and using
\[S_3(v)=S_3(xv)=S_3(x^2v)=S_3(x^3v)=0,\] we have
\begin{equation*}
\begin{split}
\phi (x)
=& -\frac{1}{12}\int_{\mathbf{R}}\big( |x-y|^3-|x|^3 + 3|x|x\cdot y - 3|x|y^2 +
\frac{x}{|x|} y^3\big)v(y)f(y)dy\\
:=& - \frac{1}{12}\int_{\mathbf{R}}K_3(x,y)v(y)f(y)dy,
\end{split}
\end{equation*}
where $$K_3(x,y)= |x-y|^3-|x|^3 + 3|x|x\cdot y - 3|x|y^2 + \displaystyle\frac{x}{|x|} y^3. $$

Indeed, we can divided $K_3(x,y)$ into five parts as follows
\begin{equation*}
\begin{split}
K_3(x,y)=& \Big[y^2\big(  |x-y|-|x| \big)+ \frac{x }{|x|}y^3\Big]
            + \Big[\frac{x }{|x|}y^3 - \frac{2xy^3}{|x-y|+|x|}\Big]\\
            &+\frac{5}{2} \Big[  \frac{|x|y^2(|x|-|x-y|)}{|x-y|+|x|}- \frac{1}{2}\frac{x}{|x|}y^3 \Big]
            + \Big[ \frac{x^2y^2(|x-y|-|x|)}{(|x-y|+|x|)^2} +\frac{1}{4}\frac{x}{|x|}y^3  \Big]\\
            &+\frac{|x|y^4}{2(|x-y|+|x|)^2}\\
       :=& K_{31}(x,y)+ K_{32}(x,y)+K_{33}(x,y)+K_{34}(x,y)+K_{35}(x,y).
\end{split}
\end{equation*}
For the first part $K_{31}(x,y),$ we have
\begin{equation*}
\begin{split}
|K_{31}(x,y)|=& \Big|\frac{ |x|y^2(y^2-2x\cdot y )}{ (|x-y|+|x|)|x|}
               + \frac{ xy^3(|x-y|+|x| )}{ (|x-y|+|x|)|x|} \Big|  \\
        \lesssim & \frac{y^4}{|x-y|+|x|} + \Big|\frac{ xy^3(|x-y|-|x| )}{ (|x-y|+|x|)|x|} \Big|  \\
 \lesssim & \frac{y^4}{|x-y|+|x|} .
\end{split}
\end{equation*}
Moreover,
\begin{equation*}
	|K_{31}(x,y)| \lesssim  \frac{y^4}{|x-y|+|x|} \lesssim
  \begin{cases}
	|y|^3,\,\,\,  & 0\leq |x| \leq 1;\\
	\displaystyle\frac{y^4}{|x|},\,\,\,  & |x| > 1.
   \end{cases}
\end{equation*}
Thus
$$ \Big|\int_{\mathbf{R}} K_{31}(x,y)v(y)f(y)dy \Big|
 \lesssim  \int_{\mathbf{R}} \frac{y^4}{|x-y|+|x|}|v(y)f(y)|dy \in L^2(\mathbf{R}). $$
For the second part $K_{32}(x,y),$ we have
\begin{equation*}
\begin{split}
|K_{32}(x,y)| =& \Big|\frac{ x|x|y^3 + xy^3|x-y|- 2|x|xy^3}{|x|(|x-y|+|x)|}\Big|\\
              =&\Big|\frac{ xy^3( |x-y|-|x|)}{|x|(|x-y|+|x)|}\Big|
              \lesssim \frac{y^4}{|x-y|+|x|}.
\end{split}
\end{equation*}
Thus
$$ \Big|\int_{\mathbf{R}} K_{32}(x,y)v(y)f(y)dy \Big|
 \lesssim  \int_{\mathbf{R}} \frac{y^4}{|x-y|+|x|} |v(y)f(y)|dy \in L^2(\mathbf{R}). $$
For the third part $K_{33}(x,y),$ we have
\begin{equation*}
\begin{split}
|K_{33}(x,y)| =& \frac{5}{2}\Big|\frac{ x^2y^2(2x\cdot y -y^2)}{|x|(|x-y|+|x|)^2}
                - \frac{1}{2}\frac{ xy^3(2|x-y|\cdot |x| +2x^2-2x\cdot y+y^2 )}{|x|(|x-y|+|x|)^2} \Big|\\
   \lesssim &  \frac{|x||y|^5}{|x|(|x-y|+|x|)^2} + \Big|  \frac{ xy^3(|x-y|-|x|)}{(|x-y|+|x|)^2}  \Big| \\
                \lesssim &   \frac{|x|y^4}{(|x-y|+|x|)^2} .
\end{split}
\end{equation*}
Noting that
\begin{equation}\label{eq-K_33}
	|K_{33}(x,y)| \lesssim  \frac{|x|y^4}{(|x-y|+|x|)^2} \lesssim
  \begin{cases}
	|y|^3,\,\,\,  & 0\leq |x| \leq 1;\\
	\displaystyle\frac{y^4}{|x|},\,\,\,  & |x| > 1.
   \end{cases}
\end{equation}
Thus
$$ \Big|\int_{\mathbf{R}} K_{33}(x,y)v(y)f(y)dy \Big|
 \lesssim  \int_{\mathbf{R}} \frac{|x|y^4}{(|x-y|+|x|)^2}|v(y)f(y)|dy \in L^2(\mathbf{R}). $$
Then, for the fourth part $K_{34}(x,y),$ we have
\begin{equation*}
\begin{split}
|K_{34}(x,y)| =& \Big|\frac{ |x|x^2y^2(y^2-2x\cdot y )}{|x|(|x-y|+|x|)^3}
                + \frac{ xy^3(|x-y|+|x|)^3}{4|x|(|x-y|+|x|)^3}   \Big|\\
  \lesssim &  \frac{ x^2y^4}{(|x-y|+|x|)^3} +  \frac{|x|\cdot |y|^5 |x-y|}{(|x-y|+|x|)^3\cdot |x|}\\
               &+  \frac{|x|\cdot |y|^5}{(|x-y|+|x|)^3}
               + \Big| \frac{x^3y^3(|x-y|-|x|)}{(|x-y|+|x|)^3|x|}\Big| \\
   \lesssim & \frac{x^2y^4}{(|x-y|+|x|)^3}+ \frac{|y|^5}{(|x-y|+|x|)^2}
    + \frac{|x|\cdot |y|^5}{(|x-y|+|x|)^3},
\end{split}
\end{equation*}
and it is obvious that
\begin{equation*}
	\frac{x^2y^4}{(|x-y|+|x|)^3}+ \frac{|y|^5}{(|x-y|+|x|)^2}
    + \frac{|x|\cdot |y|^5}{(|x-y|+|x|)^3}\lesssim
  \begin{cases}
	|y|^3,\,\,\,  & 0\leq |x| \leq 1;\\
	\displaystyle\frac{y^4}{|x|},\,\,\,  & |x| > 1.
   \end{cases}
\end{equation*}
Thus again
\begin{equation*}
\begin{split}
 &\Big|\int_{\mathbf{R}} K_{34}(x,y)v(y)f(y)dy \Big|\\
 &\lesssim  \int_{\mathbf{R}}\Big[ \frac{x^2y^4}{(|x-y|+|x|)^3}
  + \frac{|y|^5}{(|x-y|+|x|)^2}+ \frac{|x|\cdot |y|^5}{(|x-y|+|x|)^3}\Big] |v(y)f(y)|dy
  \in L^2(\mathbf{R}).
 \end{split}
\end{equation*}
Finally, for the fifth part $K_{35}(x,y),$ using \eqref{eq-K_33}, it is easy to know that
$$ \Big|\int_{\mathbf{R}} K_{35}(x,y)v(y)f(y)dy \Big|
 =\int_{\mathbf{R}} \frac{|x|y^4}{2(|x-y|+|x|)^2}|v(y)f(y)|dy \in L^2(\mathbf{R}). $$
Thus, we obtain that  $\phi= -G_0vf \in L^2(\mathbf{R})$,  and
$\int_{\mathbf{R}} K_3(x,y)v(y)f(y)dy \in L^2(\mathbf{R}) $.

On the other hand, if $\phi\in L^2(\mathbf{R})$ such that $H\phi=0.$ Denote $f=Uv\phi,$ using the similar method of \eqref{eq-orthogonality proof} and \eqref{eq-S_0}, we have
\begin{equation*}
\begin{split}
\int_{\mathbf{R}}v(y)f(y)dy&=0,\
    \int_{\mathbf{R}}yv(y)f(y)dy=0,\
     \int_{\mathbf{R}}y^2v(y)f(y)dy=0,\\
   &\int_{\mathbf{R}}y^3v(y)f(y)dy=0, S_0T_0S_0f=0,
   \end{split}
 \end{equation*}
then we can obtain that $f\in S_1L^2(\mathbf{R})$ and $G_0vf\in  L^2(\mathbf{R}).$ Moreover, by Theorem \ref{thm-resonance space}, we know that $\phi=-G_0vf+c_1x+c_2$ since $L^2(\mathbf{R})\subset W_{\frac{3}{2}}(\mathbf{R}).$ Thus we need that $c_1x+c_2=\phi+G_0vf\in W_{\frac{1}{2}}(\mathbf{R})$
this necessitates that $c_1=c_2=0,$ which implies that $T_0f=0$ and $\phi=-G_0vf$. Hence, we have $f\in S_3L^2(\mathbf{R})$.

\footnote{The proof was suggested to us by M. Goldberg \cite{Goldberg}}\textbf{Step~\rm{II}}: Now we turn to prove that zero is not an eigenvalue of $H$ for $V(x)$ satisfies the condition in Theorem \ref{thm-main results}.

Let $\phi(x)\in L^2(\mathbf{R})$ such that $H\phi(x)=0.$ Denote $f=Uv\phi,$ by the \textbf{Step~\rm{I}}, then it follows that $f\in S_3L^2(\mathbf{R})$ and
\begin{align*}
L^2(\mathbf{R})\ni\phi(x)=-G_0vf=&-\frac{1}{12}\int_{\mathbf{R}}|y-x|^3v(y)f(y)dy\\
=&-\frac{1}{12}\Big(\int_x^\infty(y-x)^3v(y)f(y)dy+\int_{-\infty}^x(x-y)^3v(y)f(y)dy\Big).
\end{align*}
Then using the following orthogonality relationships from $S_3L^2(\mathbf{R})$:
$$ \int_{\mathbf{R}}v(y)f(y)dy=0,\
    \int_{\mathbf{R}}yv(y)f(y)dy=0,\
     \int_{\mathbf{R}}y^2v(y)f(y)dy=0,
   \int_{\mathbf{R}}y^3v(y)f(y)dy=0,$$
and  $v(y)f(y)=V(y)\phi(y),$ we have
$$\phi(x)=-\frac{1}{6}\int_x^\infty(y-x)^3v(y)f(y)dy=-\frac{1}{6}\int_x^\infty(y-x)^3V(y)\phi(y)dy.$$
 Based on the decay condition of Theorem \ref{thm-main results}, that is, $|V(x)|\lesssim\langle x\rangle^{-\beta}$ for $\beta>25$, and $\langle x \rangle=(1+|x|^2)^{1/2}$, then  $\displaystyle\lim_{x\rightarrow+\infty}|\phi(x)|=0$ and for any $x_0>x,$
\begin{align*}
|\phi(x)|\le {1\over 6}\int_x^{x_0}(y-x)^3|V(y)|\cdot|\phi(y)|dy+\big(\sup_{y\in({x_0},+\infty)}|\phi(y)|\big)\int_{x_0}^{+\infty}{1\over 6}(y-x)^3|V(y)|dy.
\end{align*}
Hence, we have
\begin{align*}
\langle x\rangle^{-3}|\phi(x)|\lesssim \int_x^{x_0}\langle y\rangle^{6}|V(y)|\cdot \langle y\rangle^{-3}|\phi(y)|dy+\Big(\sup_{y\in({x_0},+\infty)}|\phi(y)|\Big)\int_{x_0}^{+\infty}\langle y\rangle^{3}|V(y)|dy.
\end{align*}
Let $$\displaystyle u(x)=\langle x\rangle^{-3}|\phi(x)|,\ \ B(y)=\langle y\rangle^{6}|V(y)|$$ and $$A(x_0)=\displaystyle\big(\sup_{y\in({x_0},+\infty)}|\phi(y)|\big)\int_{x_0}^{+\infty}\langle y\rangle^{3}|V(y)|dy.$$ Then we have
\[u(x)\lesssim \int_x^{x_0}B(y)u(y)dy+A(x_0).\]
Hence by Gronwall's inequality ( see e.g. \cite[Theorem 1.10]{TT2} ), we conclude that
\[u(x)\lesssim A(x_0)e^{C\int_x^{x_0} B(y)dy}.\]
Note that $\langle y\rangle^{3}|V(y)|\in L^1(\mathbf{R})$,  hence combining with $\displaystyle\lim_{y\rightarrow+\infty}\phi(y)=0,$ we can obtain that~$\displaystyle\lim_{{x_0}\rightarrow+\infty}A({x_0})=0.$~ Moreover,  $\int_x^{x_0} B(y)dy$ can be bounded by a fixed constant. Hence for any $x\in\mathbf{R},$
\[u(x)\lesssim \lim_{x_0\rightarrow+\infty}A(x_0)e^{C\int_x^{x_0} B(y)dy}=0,\]
which implies $\phi(x)\equiv 0.$  That is,  zero can not be an eigenvalue of $H$.

Combined \textbf{Step~\rm{I}} with \textbf{Step~\rm{II}}, we can conclude that $S_3L^2(\mathbf{R})=\{0\}.$
\end{proof}

\begin{remark}\label{remark-eigenvalue-2}
Since the proof of \textbf{Step~\rm{II}} depends on the orthogonality relationships of $S_3L^2(\mathbf{R})$, so we need the decay rate $\beta$ of $V$ satisfies \eqref{eq-beta}.  However, we remark that the required decay rate $\beta$ in the proof above may not be optimal to ensure the absence of zero eigenvalue of $H$. In fact, in view of Remark \ref{remark-eigenvalue}, it is reasonable to guess that $\beta>4$ might be the best one.
\end{remark}

 In the following, we come to show the proof of Theorem \ref{thm-M-inverse} case by case. For the convenience, we will use the notation  $A_{kj}^i$ ($i$ means the kind of zero resonance, $k$ respects to the power of $\mu$) as some bounded operators in $L^2(\mathbf{R})$, and these operators may vary from line to line.

\noindent\textbf{Proof of Theorem \ref{thm-M-inverse}:} We prove the assertion for $+$ sign. By  Lemma \ref{Feshbach-formula}, we know that $\widetilde{M}^+(\mu)$ is invertible on $L^2(\mathbf{R})$ if and
only if
\begin{equation}\label{eq-M_1-1}
M_1^+(\mu):= Q-Q\Big( \widetilde{M^+}(\mu)+Q\Big)^{-1}Q
\end{equation}
is invertible on $QL^2(\mathbf{R})$. Moreover, using \eqref{eq-M-expansions-3} and by the definition of $B_1^+,$ we get
\begin{equation}\label{eq-M_1-2}
   \begin{split}
M_1^+(\mu)
         =&\sum_{k=1}^{12}\mu^{k+1}QB_k^+Q+Q\Gamma_{14}(\mu)Q\\
         =&\frac{a_{-1}^+}{\widetilde{a}^+} \mu^2 \Big(QvG_{-1}vQ + \frac{\widetilde{a}^+}{a_1^+}\sum_{k=2}^{12}\mu^{k-1}QB_k^+Q+Q\Gamma_{12}(\mu)Q\Big) \\
         :=&\frac{a_{-1}^+}{\widetilde{a}^+}\mu^2 \widetilde{M_1}^+(\mu).
   \end{split}
\end{equation}
Thus, in order to compute the expansions of $\Big(M_1^+(\mu)\Big)^{-1}$, we need to obtain the expansions of
$\Big(\widetilde{M_1^+}(\mu)\Big)^{-1}$ as $\mu \rightarrow 0$ on $QL^2(\mathbf{R})$.

 By Lemma \ref{lemma-projection space}(1), we know that $QvG_{-1}vQ$ is not invertible on $QL^2(\mathbf{R})$. Since $S_0$ be the Riesz projection onto
the kernel of $QvG_{-1}vQ$, then $QvG_{-1}vQ +S_0$ is invertible on $QL^2(\mathbf{R})$.  By using \eqref{eq-M_1-2}, we have
\begin{equation}\label{eq-M_1-3}
   \begin{split}
\widetilde{M_1^+}(\mu)+S_0
         =& \big(QvG_{-1}vQ+ S_0\big)+\frac{\widetilde{a}^+}{a_1^+}\sum_{k=2}^{12}\mu^{k-1}QB_k^+Q+Q\Gamma_{12}(\mu)Q.
   \end{split}
\end{equation}
Then, using the Neumann series, we obtain that
\begin{equation}\label{eq-inverse-M_1-1}
   \begin{split}
\Big(\widetilde{M_1^+}(\mu)+S_0\Big)^{-1}
         =&D_0-\sum_{k=1}^{11}\mu^kB_k^0+D_0\Gamma_{12}(\mu)D_0,
   \end{split}
\end{equation}
where $B_k^0\ (1\le k\le11)$ are bounded operators in $QL^2(\mathbf{R})$ and we have
\begin{equation*}\label{eq-defi-B^0}
\begin{split}
B_1^0=&\frac{\widetilde{a}^+}{a_{-1}^+}D_0 B_2^+ D_0,\ \ \ \ \ \ B_2^0= \frac{\widetilde{a}^+}{a_{-1}^+}D_0 B_3^+D_0 -\frac{(\widetilde{a}^+)^2}{(a_{-1}^+)^2}D_0(B_2^+D_0)^2,\\
B_3^0=& \frac{\widetilde{a}^+}{a_{-1}^+}D_0B_4^+D_0 + \frac{(\widetilde{a}^+)^3}{(a_{-1}^+)^3}D_0(B_2^+D_0)^3-
\frac{(\widetilde{a}^+)^2}{(a_{-1}^+)^2}\big(D_0B_2^+D_0B_3^+D_0+ D_0B_3^+D_0B_2^+D_0  \big),\\
B_4^0=&\frac{\widetilde{a}^+}{a_{-1}^+}D_0B_5^+D_0-\frac{(\widetilde{a}^+)^4}{(a_{-1}^+)^4}D_0(B_2^+D_0)^4
 +\frac{(\widetilde{a}^+)^3}{(a^+_{-1})^3}\Big(D_0(B_2^+D_0)^2B_3^+D_0+ D_0B_2^+D_0B_5^+D_0B_2^+D_0\\
 &+D_0B_3^+D_0(B_2^+D_2)^2 \Big)
     -\frac{(\widetilde{a}^+)^2}{(a_{-1}^+)^2}\Big(D_0B_2^+D_0B_4^+D_0+D_0B_4^+D_0B_2^+D_0+ D_0(B_3^+D_0)^2  \Big),\\
\end{split}
\end{equation*}
\begin{equation*}
\begin{split}
B_5^{0}=& \frac{\widetilde{a}^+}{a_{-1}^+}D_0B_6^+D_0+\frac{(\widetilde{a}^+)^5}{(a_{-1}^+)^5}D_0(B_2^+D_0)^5\\
&-\frac{(\widetilde{a}^+)^4}{(a_{-1}^+)^4}\Big(D_0(B_2^+D_0)^3B_3^+D_0
         + D_0(B_2^+D_0)^2B_3^+D_0B_2^+D_0+D_0B_2^+D_0B_3^+D_0(B_2^+D_0)^2\\
&+D_0B_3^+D_0(B_2^+D_0)^3  \Big)
+\frac{(\widetilde{a}^+)^3}{(a_{-1}^+)^3} \Big(D_0B_2^+D_0(B_3^+D_0)^2 + D_0B_3^+D_0B_2^+D_0B_3^+D_0 + D_0(B_3^+D_0)^2B_2^+D_0\\
&+D_0(B_2^+D_0)^2B_4^+D_0+ D_0B_2^+D_0B_4^+D_0B_2^+D_0+ D_0B_4^+D_0(B_2^+D_0)^2  \Big)\\
&-\frac{(\widetilde{a}^+)^2}{(a_{-1}^+)^2}\Big(D_0B_3^+D_0B_4^+D_0 +D_0B_4^+D_0B_3^+D_0+ D_0B_2^+D_0B_5^+D_0+ D_0B_5^+D_0B_2^+D_0   \Big).
\end{split}
\end{equation*}
Using Lemma \ref{Feshbach-formula} again, we know that $\widetilde{M_1^+}(\mu)$ invertible on $QL^2(\mathbf{R})$
if and only if
\begin{equation}\label{eq-M_2-1}
M_2^+(\mu):= S_0 -S_0\big( \widetilde{M_1^+}(\mu)+S_0 \big)^{-1}S_0
\end{equation}
 invertible on $S_0L^2(\mathbf{R})$. Notice that $S_0 D_0 = D_0S_0 =S_0$, and using \eqref{eq-inverse-M_1-1}, we obtain that
\begin{equation}\label{eq-M_2-2}
   \begin{split}
M_2^+(\mu)=&\mu S_0B_1^0S_0 +\sum_{k=2}^{11}\mu^kB_k^0+S_0\Gamma_{12}(\mu)S_0\\
   =& \frac{\mu}{a_{-1}^+}\Big( S_0T_0S_0 + a_{-1}^+\sum_{k=2}^{11}\mu^{k-1}B_k^0+S_0\Gamma_{11}(\mu)S_0\Big)\\
   :=&\frac{\mu}{a_{-1}^+}\widetilde{M_2^+}(\mu).
	\end{split}
\end{equation}

(1)~\textbf{Regular Case.}~In this case, we using $\eqref{eq-M-expansions-1}$ which need $|V(x)|\lesssim (1+|x|)^{-13-},$ see Lemma \ref{lemma-M expansion}. Then by the same process, we can rewrite $M_2^+(\mu)$ as follows:
\begin{equation*}\label{eq-M_2-2-2}
   \begin{split}
M_2^+(\mu)=&\mu S_0B_1^0S_0 +\mu^2B_2^0+\mu^3B_3^0+S_0\Gamma_{4}(\mu)S_0\\
   =& \frac{\mu}{a_{-1}^+}\Big( S_0T_0S_0 + a_{-1}^+\mu B_2^0+a_{-1}^+\mu^2B_3^0+S_0\Gamma_{3}(\mu)S_0\Big)\\
   :=&\frac{\mu}{a_{-1}^+}\widetilde{M_2^+}(\mu).
	\end{split}
\end{equation*}
If zero is a regular point of the spectrum of $H,$ then $S_0T_0S_0$ is invertible on $S_0L^2(\mathbf{R}).$ Denote $\widetilde{D_0}=\big(S_0T_0S_0\big)^{-1},$ by \eqref{eq-M_2-2}, using Neumann series and $S_0\widetilde{D}_0=\widetilde{D}_0S_0=\widetilde{D}_0,$ we obtain that
\begin{equation*}\label{eq-inverse-M_2-1}
\begin{split}
\Big(\widetilde{M_2^+}(\mu)\Big)^{-1}=&\widetilde{D_0}-a_{-1}^+\mu \widetilde{D}_0B_2^0\widetilde{D}_0
+\mu^2\big((a_{-1}^+)^2\widetilde{D}_0(B_2^0\widetilde{D}_0)^2-a_{-1}^+\widetilde{D}_0B_3^0\widetilde{D}_0\big)+\Gamma_{3}(\mu).
\end{split}
\end{equation*}
Then, using \eqref{eq-M_2-2}, we have
\begin{equation*}\label{eq-inverse-M_2-2}
\begin{split}
\Big(M_2^+(\mu)\Big)^{-1}
=&\frac{a_{-1}^+}{\mu}\widetilde{D_0}-(a_{-1}^+)^2 \widetilde{D}_0B_2^0\widetilde{D}_0
+\mu\big((a_{-1}^+)^3\widetilde{D}_0(B_2^0\widetilde{D}_0)^2-(a_{-1}^+)^2\widetilde{D}_0B_3^0\widetilde{D}_0\big)+\Gamma_{2}(\mu).
\end{split}
\end{equation*}
By Lemma \ref{Feshbach-formula} with $A=\widetilde{M_1^+}(\mu)$, $S=S_0$ and $B=M_2^+(\mu)$, using the orthogonality properties ( see \eqref{eq-orthogonality-1}, \eqref{eq-orthogonality-2}), we obtain that
\begin{equation*}\label{eq-inverse-M_1-2}
\begin{split}
\Big(\widetilde{M_1^+}(\mu)\Big)^{-1}
=\ &\Big(\widetilde{M_1^+}(\mu)+S_0\Big)^{-1}+\Big(\widetilde{M_1^+}(\mu)+S_0\Big)^{-1}S_0\Big(M_2^+(\mu)\Big)^{-1}S_0\Big(\widetilde{M_1^+}(\mu)+S_0\Big)^{-1}\\
=\ &\frac{S_0A_{-1,1}^0S_0}{\mu}+QA_{01}^0Q+\mu QA_{11}^0Q+\Gamma_{2}(\mu).
\end{split}
\end{equation*}
Then, using \eqref{eq-M_1-2}, we have
\begin{equation*}\label{eq-inverse-M_1-3}
\begin{split}
\Big(M_1^+(\mu)\Big)^{-1}
=&\frac{S_0A_{-3,1}^0S_0}{\mu^3}+\frac{QA_{-2,1}^0Q}{\mu^2}+\frac{QA_{-1,1}^0Q}{\mu}+\Gamma_{0}(\mu).
\end{split}
\end{equation*}
Then, we use Lemma \ref{Feshbach-formula} with $A=\widetilde{M^+}(\mu),S=Q$ and $B=M_1^+(\mu)$, and using the orthogonality properties ( see \eqref{eq-orthogonality-1}, \eqref{eq-orthogonality-2}), we have
\begin{equation*}\label{eq-inverse-M-1}
\begin{split}
\Big(\widetilde{M^+}(\mu)\Big)^{-1}=\ & \Big(\widetilde{M^+}(\mu)+Q\Big)^{-1} +\Big(\widetilde{M^+}(\mu)+Q\Big)^{-1} Q\Big(M_1^+(\mu)\Big)^{-1}Q\Big(\widetilde{M^+}(\mu)+Q\Big)^{-1} \\
=& \frac{S_0A_{-3,1}^0S_0}{\mu^3}+\frac{QA_{-2,1}^0Q}{\mu^2}+\frac{QA_{-1,1}^0Q+S_0{A_{-1,2}^0}+{A_{-1,3}^0}S_0}{\mu}+\Gamma_{0}(\mu).
\end{split}
\end{equation*}
Finally, since~$\displaystyle M^+(\mu)= \frac{\widetilde{a}^+}{\mu^3} \widetilde{M^+}(\mu),$ then we can say that
\begin{equation}\label{eq-inverse-M-Regular Case}
\begin{split}
\Big(M^+(\mu)\Big)^{-1}=&S_0A_{01}^0S_0+\mu QA_{11}^0Q+\mu^2(QA_{21}^0Q+S_0A_{22}^0+A_{23}^0S_0)+\Gamma_3^0(\mu).
\end{split}
\end{equation}

(2)~\textbf{First Kind of Resonance.}~In this case, we using $\eqref{eq-M-expansions-1-2}$ with $|V(x)|\lesssim(1+|x|)^{-17-},$ see Lemma \ref{lemma-M expansion}. Then by the same process, we can rewrite $M_2^+(\mu)$ as follows:
\begin{equation*}\label{eq-M_2-2-3}
   \begin{split}
M_2^+(\mu)=&\mu S_0B_1^0S_0 +\sum_{k=2}^5\mu^kB_k^0+S_0\Gamma_{6}(\mu)S_0\\
   =& \frac{\mu}{a_{-1}^+}\Big( S_0T_0S_0 + a_{-1}^+\sum_{k=2}^5\mu^{k-1}B_k^0+S_0\Gamma_{5}(\mu)S_0\Big)\\
   :=&\frac{\mu}{a_{-1}^+}\widetilde{M_2^+}(\mu).
	\end{split}
\end{equation*}
 For the first kind of resonance, we aim to derive the expansions of $\Big(\widetilde{M_2^+}(\mu)\Big)^{-1}.$ By the definition of the first kind of resonance, we know that the operator $S_0T_0S_0$ is not invertible on $S_0L^2(\mathbf{R}).$ Since $S_1$ is the Riesz projection onto the kernel of $S_0T_0S_0$ on $S_0L^2(\mathbf{R}),$ thus $S_0T_0S_0+S_1$ is invertible on $S_0L^2(\mathbf{R}).$ By Neumann series, we have
\begin{equation}\label{eq-inverse-M_2-3}
\begin{split}
\Big(\widetilde{M_2^+}(\mu)+S_1\Big)^{-1}=&D_1-\sum_{k=1}^4\mu^kB_k^1+\Gamma_5(\mu),
\end{split}
\end{equation}
where~$B_k^1\ (1\le k\le 4)$~are bounded operators in $S_0L^2(\mathbf{R})$ with the following definition
\begin{equation}\label{eq-defi-B^1}
\begin{split}
B_1^1=& a_{-1}^+D_1B_2^0D_1,\ \ \ \ \ \ B_2^1= a_{-1}^+D_1B_3^0D_1 -(a_{-1}^+)^2 D_1(B_2^0D_1)^2,\\
B_3^1=&a_{-1}^+D_1B_4^0D_1+ (a_{-1}^+)^3D_1(B_1^0D_1)^3-
 (a_{-1}^+)^2\big( D_1B_2^0D_1B_3^0D_1+ D_1B_3^0D_1B_2^0D_1  \big) ,\\
B_4^1=&(a_{-1}^+)D_1B_5^0D_1 - (a_{-1}^+)^4D_1(B_2^0D_1)^4\\
  &+(a_{-1}^+)^3\Big( D_1(B_2^0D_1)^2B_3^0D_1 + D_1B_2^0D_1B_3^0D_1B_2^0D_1 + D_1B_3^0D_1(B_2^0D_1)^2 \Big)\\
  &+(a_{-1}^+)^2\Big(D_1B_2^0D_1B_4^0D_1 + D_1B_4^0D_1B_2^0D_1+ D_1(B_3^0D_1)^2 \Big).
\end{split}
\end{equation}

According to Lemma \ref{Feshbach-formula}, we know that $ \widetilde{M_2^+}(\mu)$ is invertible on
 $S_0L^2(\mathbf{R})$ if and only if
 \begin{equation}\label{eq-M_3-1}
 M_3^+(\mu):= S_1 -S_1 \big(\widetilde{M_2^+}(\mu)+S_1 \big)^{-1}S_1
 \end{equation}
 is invertible on $S_1L^2(\mathbf{R})$. Moreover, using \eqref{eq-inverse-M_2-3} and \eqref{eq-orthogonality-2}, we obtain
\begin{equation}\label{eq-M_3-2}
\begin{split}
M_3^+(\mu)=& \mu S_1B_1^1S_1 +\mu^2S_1B_2^1S_1+ \mu^3S_1B_3^1S_1 +\mu^4S_1B_4^1S_1+
S_1\Gamma_5(\mu)S_1.\\
\end{split}
\end{equation}
Now, we shall compute the $S_1B_1^1S_1 $ in $M_3^+(\mu)$. Noting that $ S_1D_1= D_1S_1=S_1$ , we have
$S_1B_1^1S_1= a_{-1}^+S_1B_2^0S_1$.  Since
\begin{equation*}
   \begin{split}
B_2^0 &= \frac{\widetilde{a}^+}{a_{-1}^+}D_0B_3^+D_0- \frac{(\widetilde{a}^+)^2}{(a_{-1}^+)^2}D_0(B_2^+D_0)^2\\
     &=\frac{a_1^+}{a_{-1}^+}D_0vG_1vD_0-\frac{a_{-1}^+}{\widetilde{a}^+}D_0(vG_{-1}v)^2D_0
       - \frac{1}{(a_{-1}^+)^2}D_0T_0D_0T_0D_0
   \end{split}
\end{equation*}
 and $ S_1D_0= D_0S_1=S_1$,  we have
 \begin{equation}
   \begin{split}
S_1B^1_1S_1= a_{-1}^+\Big(\frac{a_1^+}{a_{-1}^+}S_1vG_1vS_1-\frac{a_{-1}^+}{\widetilde{a}^+} S_1(vG_{-1}v)^2S_1
 -\frac{1}{(a_{-1}^+)^2}S_1T_0D_0T_0S_1 \Big).
   \end{split}
\end{equation}
Notice that $(vG_{-1}v)^2= vG_{-1}v vG_{-1}v= vG_{-1}v P vG_{-1}v $, we have
\begin{equation}\label{eq-M_3-3}
S_1B_1^1S_1= a_1^+T_1,
\end{equation}
 where
\begin{equation}\label{eq-defi-T1}
\begin{split}
 T_1 = &S_1vG_1vS_1- \frac{(a_{-1}^+)^2}{\widetilde{a}^+a_1^+}S_1 vG_{-1}v P vG_{-1}v   S_1
 -\frac{1}{a_1^+a_{-1}^+}S_1T_0D_0T_0S_1\\
 = &S_1vG_1vS_1+\frac{6}{\|V\|_{L^1}}S_1 vG_{-1}v P vG_{-1}v   S_1
 -384S_1T_0D_0T_0S_1.
\end{split}
\end{equation}
Recall that \eqref{eq-M_3-2} and \eqref{eq-M_3-3}, we can rewrite $M_3^+(\mu)$ as follows:
\begin{equation*}\label{eq-M_3-4}
\begin{split}
M_3^+(\mu)=& a_1^+\mu T_1 +\mu^2S_1B_2^1S_1 +\mu^3S_1B_3^1S_1 +\mu^4S_1B_4^1S_1 +
 S_1\Gamma_5(\mu)S_1\\
 =&a_1^+\mu\Big(T_1+\frac{1}{a_1^+}(\mu S_1B_2^1S_1 +\mu^2S_1B_3^1S_1 +\mu^3S_1B_4^1S_1)+S_1\Gamma_4(\mu)S_1\Big)\\
 :=&a_1^+\mu \widetilde{M_3^+}(\mu).
\end{split}
\end{equation*}
Since $T_1$ is invertible on $S_1L^2(\mathbf{R}),$ we have $S_1T_1^{-1}=T_{1}^{-1}S_1=T_1^{-1},$ then using Neumann series, we have the following asymptotic expansions of $\Big(\widetilde{M_3^+}(\mu)\Big)^{-1}$ as $\mu\rightarrow 0:$
\begin{equation*}\label{eq-inverse-M_3-1}
\begin{split}
\Big(\widetilde{M_3^+}(\mu)\Big)^{-1}
=\ & S_1A_{01}^1S_1+\mu S_1A_{11}^1S_1+\mu^2S_1A_{21}^1S_1+\mu^3S_1A_{31}^1S_1+\Gamma_4(\mu).
\end{split}
\end{equation*}
Then, we have
\begin{equation*}\label{eq-inverse-M_3-2}
\begin{split}
\Big(M_3^+(\mu)\Big)^{-1}=&\frac{S_1A_{-1,1}^1S_1}{\mu}+S_1A_{01}^1S_1+\mu S_1A_{11}^1S_1+\mu^2S_1A_{21}^1S_1+\Gamma_3(\mu).
\end{split}
\end{equation*}
Next, by Lemma \ref{Feshbach-formula} and using \eqref{eq-orthogonality-2}, we have
\begin{equation*}\label{eq-inverse-M_2-4}
\begin{split}
\Big(\widetilde{M_2^+}(\mu)\Big)^{-1}
=&\Big(\widetilde{M_2^+}(\mu)+S_1\Big)^{-1}+\Big(\widetilde{M_2^+}(\mu)+S_1\Big)^{-1}S_1\Big(M_3^+(\mu)\Big)^{-1}S_1\Big(\widetilde{M_2^+}(\mu)+S_1\Big)^{-1}\\
=&\frac{S_1A_{-1,1}^1S_1}{\mu}+S_0A_{01}^1S_0+\mu S_0A_{11}^1S_0+\mu^2S_0A_{21}^1S_0+\Gamma_3(\mu).
\end{split}
\end{equation*}
Furthermore, we have
\begin{equation*}\label{eq-inverse-M_2-5}
\Big(M_2^+(\mu)\Big)^{-1}=\frac{S_1A_{-2,1}^1S_1}{\mu^2}+\frac{S_0A_{-1,1}^1S_0}{\mu}+ S_0A_{01}^1S_0+\mu S_0A_{11}^1S_0+\Gamma_2(\mu).
\end{equation*}
Then using the orthogonality properties (see \eqref{eq-orthogonality-1},\eqref{eq-orthogonality-1}), and  following the similar process of \textbf{Regular Case}, we obtain that
\begin{equation}\label{eq-inverse-M-First Kind}
\begin{split}
\Big(M^+(\mu)\Big)^{-1}=&\frac{S_1A_{-1,1}^1S_1}{\mu}+S_0A_{01}^1Q+QA_{02}^1S_0+\mu \big(QA_{11}^1Q+S_0A_{12}^1+A_{13}^1S_0\big)\\
&+\mu^2(QA_{21}^1+A_{22}^1Q)+\Gamma_3^1(\mu).
\end{split}
\end{equation}

(3)~\textbf{Second Kind of Resonance.}~In this case, we using $\eqref{eq-M-expansions-1-3}$ with $|V(x)|\lesssim(1+|x|)^{-25-},$ see Lemma \ref{lemma-M expansion}. Then by the same process, we can rewrite $M_2^+(\mu)$ as follows:
\begin{equation*}\label{eq-M_2-2-3}
   \begin{split}
M_2^+(\mu)=&\mu S_0B_1^0S_0 +\sum_{k=2}^8\mu^kB_k^0+S_0\Gamma_{10}(\mu)S_0\\
   =& \frac{\mu}{a_{-1}^+}\Big( S_0T_0S_0 + a_{-1}^+\sum_{k=2}^5\mu^{k-1}B_k^0+S_0\Gamma_{9}(\mu)S_0\Big)\\
   :=&\frac{\mu}{a_{-1}^+}\widetilde{M_2^+}(\mu).
	\end{split}
\end{equation*}
Furthermore, we obtain a more detail expansion of $M_3^+(\mu):$
\begin{equation*}\label{eq-M_3-4-2}
\begin{split}
M_3^+(\mu)=& a_1^+\mu T_1 +\sum_{k=2}^8\mu^kS_1B_k^1S_1+S_1\Gamma_9(\mu)S_1\\
 =&a_1^+\mu\Big(T_1+\frac{1}{a_1^+}(\mu S_1B_2^1S_1 +\sum_{k=2}^8\mu^{k-1}S_1B_k^1S_1+S_1\Gamma_8(\mu)S_1\Big)\\
 :=&a_1^+\mu \widetilde{M_3^+}(\mu),
\end{split}
\end{equation*}
where $B_2^1,B_3^1,B_4^1$ are given in \eqref{eq-defi-B^1} and $B_k^1 (5\le k\le8)$ are also bounded operators in $L^2(\mathbf{R}).$

For the second kind of resonance, we aim to derive the expansions of $\Big(\widetilde{M_3^+}(\mu)\Big)^{-1}.$ By the definition of the second kind of resonance, we know that the operator $T_1$ is not invertible on $S_1L^2(\mathbf{R}).$ Since $S_2$ is the Riesz projection onto the kernel of $T_1$ on $S_1L^2(\mathbf{R}),$ thus $T_1+S_2$ is invertible on $S_1L^2(\mathbf{R}).$ Using Neumann series, we have
\begin{equation}\label{eq-inverse-M_3-3}
\begin{split}
\Big(\widetilde{M_3^+}(\mu)+S_2\Big)^{-1}=&D_2- \sum_{k=1}^7\mu^kB_k^2
  +\Gamma_8(\mu),
\end{split}
\end{equation}
where~$B_k^2(1\le k\le 7)$~are bounded  operators in $S_1L^2(\mathbf{R}),$ and $B_1^2,B_2^2,B_3^2$ are given as follows:
\begin{equation}\label{eq-defi-B^2}
\begin{split}
B_1^2=&\frac{1}{a_1^+}D_2B_2^1D_2,\ \ \ \ \ \ B_2^2= \frac{1}{a_1^+} D_2B_3^1D_2 -\frac{1}{(a_1^+)^2}D_2B_2^1D_2B_2^1D_2,\\
B_3^2=& \frac{1}{a_1^+} D_2B_4^1D_2 +\frac{1}{(a_1^+)^3}D_2B_2^1D_2B_2^1D_2-
 \frac{1}{(a_1^+)^2}\big( D_2B_2^1D_2B_3^1D_2+ D_2B_3^1D_2B_2^1D_2 \big).
\end{split}
\end{equation}
By Lemma \ref{Feshbach-formula}, we know that $ \widetilde{M_3^+}(\mu)$ is invertible on
 $S_1L^2(\mathbf{R})$ if and only if
 \begin{equation*}\label{eq-M_4-1}
 M_4^+(\mu):= S_2 -S_2 \big(\widetilde{M_3^+}(\mu)+S_2 \big)^{-1}S_2
 \end{equation*}
 is invertible on $S_2L^2(\mathbf{R})$. Moreover, using \eqref{eq-inverse-M_3-3} and noting that
 $S_2D_2=D_2S_2=S_2$, we obtain
\begin{equation}\label{eq-M_4-2}
\begin{split}
M_4^+(\mu)=&\sum_{k=1}^7\mu^kS_2B_k^2S_2+\Gamma_8(\mu).
\end{split}
\end{equation}
Indeed, using the orthogonal properties of $S_2$ (see \eqref{eq-orthogonality-2}-\eqref{eq-orthogonality-5}), we obtain that
~$S_2B_1^2 S_2=0$ and
\begin{equation}\label{eq-B_2^2-2}
\begin{split}
S_2B_2^2S_2=\frac{a_3^+}{a_1^+}&\Big(S_2vG_3vS_2-\frac{1}{a_2^+\widetilde{a}^+}S_2T_0^2S_2-\frac{(a_1^+)^2}{a_{-1}^+a_3^+}S_2vG_1vD_0vG_1vS_2\\
    & \ \ \ \ \ \ \ \ \ \ \ \ \ \ \ \ \ \  - \frac{(a_{-1}^+)^2}{a_1^+a_3^+(\widetilde{a}^+)^2}  S_2T_0vG_{-1}v D_2 v G_{-1}v T_0S_2\Big)\\
    =\frac{a_3^+}{a_1^+}&\Big(S_2vG_3vS_2-\frac{8\cdot(6!)}{\|V\|_{L^1}}S_2T_0^2S_2+\frac{5}{2}S_2vG_1vD_0vG_1vS_2\\
    & +\frac{48\cdot(6!)}{\|V\|_{L^1}^2}\Big(S_2T_0vG_{-1}v-\frac{\|V\|_{L^1}}{6}S_2vG_1vD_0T_0\Big)D_2\Big(v G_{-1}v T_0S_2-\frac{\|V\|_{L^1}}{6}T_0D_0vG_1vS_2\Big)\\
    :=\frac{a_3^+}{a_1^+}& T_2.
\end{split}
\end{equation}
Furthermore, we can rewrite \eqref{eq-M_4-2} as follows:
\begin{equation*}\label{eq-M_4-3}
\begin{split}
M_4^+(\mu)=\ &\frac{a_3^+}{a_1}\mu^2 T_2+\sum_{k=3}^7\mu^k S_2B_k^2S_2+S_2\Gamma_8(\mu)S_2\\
=\ &\frac{a_3^+}{a_1^+}\mu^2\Big(T_2+\frac{a_1^+}{a_2^+}\sum_{k=3}^7\mu^{k-2} S_2B_k^2S_2+S_2\Gamma_6(\mu)S_2\Big)\\
:=\ &\frac{a_3^+}{a_1^+}\mu^2\widetilde{M_4^+}(\mu).
\end{split}
\end{equation*}
According  to  Lemma \ref{lemma-projection space} and Lemma \ref{lemma-S_3}, we know that $\ker T_2=\{0\},$ which means that $T_2$ is invertible on $S_2L^2(\mathbf{R}),$ by Neumann series and  using $T_2^{-1}=S_2T_2^{-1}=T_2^{-1}S_2$, we have the following asymptotic expansions of $\Big(\widetilde{M_4^+}(\mu)\Big)^{-1}$ as $\mu\rightarrow 0:$
\begin{equation*}\label{eq-inverse-M_4-1}
\begin{split}
\Big(\widetilde{M_4^+}(\mu)\Big)^{-1}=&S_2A_{01}^2S_2+\sum_{k=1}^5\mu^kS_2A_{k1}^2S_2+\Gamma_6(\mu).
\end{split}
\end{equation*}
Thus we have
\begin{equation*}\label{eq-inverse-M_4-2}
\Big(M_4^+(\mu)\Big)^{-1}=\frac{S_2A_{-2,1}^2S_2}{\mu^2}+\sum_{k=-1}^3\mu^kS_2A_{k1}^2S_2+\Gamma_4(\mu).
\end{equation*}
Then by Lemma \ref{Feshbach-formula} and using the orthogonal relationships \eqref{eq-orthogonality-2},
we have
\begin{equation*}\label{eq-inverse-M_3-4}
\begin{split}
\Big(\widetilde{M_3^+}(\mu)\Big)^{-1}
=&\Big(\widetilde{M_3^+}(\mu)+S_2\Big)^{-1}+\Big(\widetilde{M_3^+}(\mu)+S_2\Big)^{-1}S_2\Big(M_4^+(\mu)\Big)^{-1}S_2\Big(\widetilde{M_3^+}(\mu)+S_2\Big)^{-1}\\
=&\frac{S_2A_{-2,1}^2S_2}{\mu^2}+\frac{S_2{A}_{-1,1}^2S_1+S_1{A}_{-1,2}^2S_2}{\mu}+\sum_{k=0}^3\mu^kS_1A_{k1}^2S_1+\Gamma_4(\mu).
\end{split}
\end{equation*}
Next, we can obtain that
\begin{equation*}\label{eq-inverse-M_3-5}
\begin{split}
\Big(M_3^+(\mu)\Big)^{-1}
=&\frac{S_2A_{-3,1}^2S_2}{\mu^3}+\frac{S_2{A}_{-2,1}^2S_1+S_1{A}_{-2,2}^2S_2}{\mu^2}+\sum_{k=-1}^2\mu^kS_1A_{k1}^2S_1+\Gamma_3(\mu).
\end{split}
\end{equation*}
Using the orthogonal relationships \eqref{eq-orthogonality-2} and Lemma \ref{Feshbach-formula}, we have
\begin{equation*}\label{eq-inverse-M_2-6}
\begin{split}
\Big(\widetilde{M_2^+}(\mu)\Big)^{-1}
=&\Big(\widetilde{M_2^+}(\mu)+S_1\Big)^{-1}+\Big(\widetilde{M_2^+}(\mu)+S_1\Big)^{-1}S_1\Big(M_3^+(\mu)\Big)^{-1}S_1\Big(\widetilde{M_2^+}(\mu)+S_1\Big)^{-1}\\
=&\frac{S_2A_{-3,1}^2S_2}{\mu^3}+\frac{S_2{A}_{-2,1}^2S_0+S_0{A}_{-2,2}^2S_2}{\mu^2}+\sum_{k=-1}^2\mu^kS_0A_{k1}^2S_0+\Gamma_3(\mu).
\end{split}
\end{equation*}
Furthermore, since $\displaystyle M_2^+(\mu)=\frac{\mu}{a_{-1}^+}\widetilde{M_2^+}(\mu)$, then
\begin{equation*}\label{eq-inverse-M_2-7}
\begin{split}
\Big({M_2^+}(\mu)\Big)^{-1}
=&\frac{S_2A_{-4,1}^2S_2}{\mu^4}+\frac{S_2{A}_{-3,1}^2S_0+S_0{A}_{-3,2}^2S_2}{\mu^3}+\frac{S_0A_{-2,1}^2S_0}{\mu^2}+\frac{S_0A_{-1,1}^2S_0}{\mu}\\
&\ \ +\mu S_0A_{01}^2S_0+\Gamma_2(\mu).
\end{split}
\end{equation*}
Next, using Lemma \ref{Feshbach-formula} and the orthogonal properties of $S_2$ (see \eqref{eq-orthogonality-1}-\eqref{eq-orthogonality-5}), we have
\begin{equation*}\label{eq-inverse-M_1-4}
\begin{split}
\Big(\widetilde{M_1^+}(\mu)\Big)^{-1}
=&\Big(\widetilde{M_1^+}(\mu)+S_0\Big)^{-1}+\Big(\widetilde{M_1^+}(\mu)+S_0\Big)^{-1}S_0\Big(M_2^+(\mu)\Big)^{-1}S_0\Big(\widetilde{M_1^+}(\mu)+S_0\Big)^{-1}\\
=&\frac{S_2A_{-4,1}^2S_2}{\mu^4}+\frac{S_2{A}_{-3,1}^2S_0+S_0{A}_{-3,2}^2S_2}{\mu^3}+\frac{S_0A_{-2,1}^2S_0+QA_{-2,2}^2S_2+S_2A_{-2,3}^2Q}{\mu^2}\\
&\ \ +\frac{QA_{-1,1}^2S_0+S_0A_{-1,2}^2Q}{\mu}+QA_{01}^2Q+ \mu QA_{11}^2Q+\Gamma_2(\mu).
\end{split}
\end{equation*}
Since $\displaystyle M_1^+(\mu)=\frac{a_{-1}^+}{\widetilde{a}^+}\widetilde{M}_1^+(\mu),$ then
\begin{equation*}\label{eq-inverse-M_1-5}
\begin{split}
\Big({M_1^+}(\mu)\Big)^{-1}
=&\frac{S_2A_{-6,1}^2S_2}{\mu^6}+\frac{S_2{A}_{-5,1}^2S_0+S_0{A}_{-5,2}^2S_2}{\mu^5}+\frac{S_0A_{-4,1}^2S_0+QA_{-4,2}^2S_2+S_2A_{-4,3}^2Q}{\mu^4}\\
&\ \ +\frac{QA_{-3,1}^2S_0+S_0A_{-3,2}^2Q}{\mu^3}+\frac{QA_{-2,1}^2Q}{\mu^2}+\frac{QA_{11}^2Q}{\mu}+\Gamma_1(\mu).
\end{split}
\end{equation*}
Finally, using the orthogonality relationships (see \eqref{eq-orthogonality-1}-\eqref{eq-orthogonality-5}), we have
\begin{equation*}\label{eq-inverse-M}
\begin{split}
\Big(\widetilde{M^+}(\mu)\Big)^{-1}
=&\Big(\widetilde{M^+}(\mu)+Q\Big)^{-1} +\Big(\widetilde{M^+}(\mu)+Q\Big)^{-1} Q\Big(M_1^+(\mu)\Big)^{-1}Q\Big(\widetilde{M^+}(\mu)+Q\Big)^{-1} \\
=& \frac{S_2A_{-6,1}^2S_2}{\mu^6}+\frac{S_2{A}_{-5,1}^2S_0+S_0{A}_{-5,2}^2S_2}{\mu^5}+\frac{S_0A_{-4,1}^2S_0+QA_{-4,2}^2S_2+S_2A_{-4,3}^2Q}{\mu^4}\\
&+\frac{QA_{-3,1}^2S_0+S_0A_{-3,2}^2Q+S_2A_{-3,3}^2+A_{-3,4}^2S_2}{\mu^3}+\frac{QA_{-2,1}^2Q+S_0A_{-2,2}^2+A_{-2,3}^2S_0}{\mu^2}\\
&+\frac{QA_{-1,1}^2+A_{-1,2}^2}{\mu}+\Gamma_0(\mu).
\end{split}
\end{equation*}
Furthermore, we obtain the following asymptotic expansions of $\Big(M^+(\mu)\Big)^{-1}$ for zero is the second resonance of $H$,
\begin{equation}\label{eq-inverse-M-Second Kind}
\begin{split}
\Big(M^+(\mu)\Big)^{-1}=&\frac{S_2A_{-3,1}^2S_2}{\mu^3}+\frac{S_2A_{-2,1}^2S_0+S_0A_{-2,2}^2S_2}{\mu^{2}}+\frac{S_0A_{-1,1}^2S_0+QA_{-1,2}^2S_2+S_2A_{-1,3}^2Q}{\mu}\\
&\ \ +\big(QA_{01}^2S_0+S_0A_{02}^2Q+S_2A_{03}^2+A_{04}^2S_2\big)+\mu\big(QA_{11}^2Q+S_0A_{12}^2+A_{13}S_0\big)\\
&\ \ +\mu^2(QA_{21}^2+A_{21}^2Q)+\Gamma_3^2(\mu).
\end{split}
\end{equation}

\bigskip

\section{Proof of Theorem \ref{thm-equivalence}}\label{resonance space}
In this part, we aim to give the proof of Theorem \ref{thm-equivalence}. In fact, it suffices to establish the following characterizations of resonance spaces.
\begin{theorem}\label{thm-resonance space}
Assume that $|V(x)|\lesssim (1+|x|)^{-\beta}$ with some $\beta>17$. Then the following statements hold:

(i)~$f(x)\in S_1L^2(\mathbf{R})$ if and only if $f(x)=Uv\phi$  with $\phi\in W_{\frac{3}{2}}(\mathbf{R})$ such that $H\phi=0$ in the distributional sense. Furthermore,
$$\phi= -G_0vf +c_1x+c_2,$$
where $$ c_1= \frac{ \langle v_1,T_0f \rangle}{\|v_1\|_{L^2(\mathbf{R})}}  \ \ \
 \hbox{and}\ \ \ c_2= \frac{ \langle v,T_0f \rangle}{\|V\|_{L^1(\mathbf{R})}}-
\frac{ \langle xv, v \rangle}{\|V\|_{L^1(\mathbf{R})}}c_1 $$
with
 $ \displaystyle v_1= xv- \frac{ \langle xv, v \rangle}{\|V\|_{L^1(\mathbf{R})}}v \in(Q-S_0)L^2(\mathbf{R}).$

(ii)~$f(x)\in S_2L^2(\mathbf{R})$ if and only if $f(x)=Uv\phi$  with $\phi\in W_{\frac{1}{2}}(\mathbf{R})$ such that $H\phi=0$ in the distributional sense. Furthermore,
$$\phi= -G_0vf +\frac{ \langle v,T_0f \rangle}{\|V\|_{L^1(\mathbf{R})}}.$$

\end{theorem}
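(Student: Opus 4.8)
The plan is to prove both statements by converting the distributional equation $H\phi=0$ into a finite-rank (Fredholm-type) identity for $T_0=U+vG_0v$, in the spirit of Jensen--Nenciu and the higher-order adaptations in \cite{Erdogan-Green-Toprak,Green-Toprak}. I will prove part (i) in full and then extract part (ii) as a refinement of the same computation. Throughout, the bridge between the two pictures is the elementary observation that, since $U^2=1$ on $\{v\neq0\}$ and $V=Uv^2$, the relation $f=Uv\phi$ is equivalent to $Uf=v\phi$, and hence to $T_0f=v\phi+vG_0vf-v\phi=\dots$, more precisely: given $\phi=-G_0vf+p$ with $\Delta^2p=0$, one has $f=Uv\phi\iff T_0f=vp$, and then automatically $\Delta^2\phi=-vf$ and $V\phi=Uv^2\phi=Uv\cdot Uf=vf$, so $H\phi=0$.

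For the ``only if'' direction of (i): given $f\in S_1L^2(\mathbf{R})$, set $\phi:=-G_0vf+c_1x+c_2$ with $c_1,c_2$ as in the statement. Since polynomials of degree $\le1$ lie in $\ker\Delta^2$, by the remark above it suffices to verify $T_0f=c_1(xv)+c_2v$. By Remark \ref{remark-equivalence}, $f\in S_1L^2$ already forces $T_0f\in\mathrm{span}\{v,xv\}$; pairing this relation against $v$ and against $v_1=Q(xv)$ (using $(Q-S_0)L^2(\mathbf{R})=\mathrm{span}\{v_1\}$, $\langle v,v_1\rangle=0$ and $\langle xv,v_1\rangle=\|v_1\|^2$) pins $c_1,c_2$ down to exactly the stated expressions. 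Membership $\phi\in W_{3/2}(\mathbf{R})$ then comes from the Taylor expansion (with integral remainder) $|x-y|^3=|x|^3-3x|x|y+3|x|y^2-3y^3\int_0^1(1-\theta)^2\mathrm{sgn}(x-\theta y)\,d\theta$: integrating against $\tfrac1{12}v(y)f(y)$ and using $\langle f,v\rangle=\langle f,xv\rangle=0$ annihilates the $|x|^3$ and $x|x|$ terms, leaving $G_0vf(x)=\tfrac14|x|\langle f,x^2v\rangle+O(1)$, so $\phi$ grows at most linearly and lies in $L^2_{-s}(\mathbf{R})$ for every $s>3/2$.

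For the ``if'' direction of (i): from $H\phi=0$ and $\phi\in W_{3/2}(\mathbf{R})$ one gets $\Delta^2\phi=-V\phi=-vf$ in $\mathcal S'(\mathbf{R})$ (note $f=Uv\phi\in L^2$), so $\phi+G_0vf$ is killed by $\Delta^2$ and hence equals a polynomial $p$ with $\deg p\le3$. Substituting the asymptotic expansion of $G_0vf$ from the previous step, carried one order further so that it also exhibits the bounded contribution $-\tfrac1{12}\mathrm{sgn}(x)\langle f,x^3v\rangle$, into $\phi=p-G_0vf$ and matching the forbidden powers $x^3,x^2,|x|^3,x|x|$ separately on $(0,\infty)$ and $(-\infty,0)$ forces $\langle f,v\rangle=\langle f,xv\rangle=0$ and $\deg p\le1$; thus $f\in S_0L^2(\mathbf{R})$, and rewriting $f=Uv\phi$ as $T_0f=vp\in\mathrm{span}\{v,xv\}$ gives $S_0T_0f=0$, i.e. $f\in S_1L^2(\mathbf{R})$. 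The constants are recovered exactly as before by pairing $T_0f=c_1(xv)+c_2v$ with $v_1$ and $v$.

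Part (ii) is then obtained by bootstrapping, since $W_{1/2}(\mathbf{R})\subset W_{3/2}(\mathbf{R})$. If $\phi\in W_{1/2}(\mathbf{R})$ solves $H\phi=0$, part (i) already gives $\phi=-G_0vf+c_1x+c_2$ with $f\in S_1L^2$; now the extra facts $x\notin W_{1/2}(\mathbf{R})$ and $|x|\notin W_{1/2}(\mathbf{R})$ applied to $\phi=c_1x+c_2-\tfrac14|x|\langle f,x^2v\rangle+O(1)$ force $c_1=0$ and $\langle f,x^2v\rangle=0$, whence $T_0f=c_2v$, so $QT_0f=0$ and $f\in S_2L^2(\mathbf{R})$ with $c_2=\langle v,T_0f\rangle/\|V\|_{L^1(\mathbf{R})}$. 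Conversely, if $f\in S_2L^2(\mathbf{R})$, then (i) gives $\phi=-G_0vf+c_1x+c_2$; the conditions $QT_0f=0$ (equivalently $\langle T_0f,v_1\rangle=0$, forcing $c_1=0$) and $\langle f,x^2v\rangle=0$ make every growing term in the expansion of $G_0vf$ vanish, so $\phi=-G_0vf+c_2$ is bounded, hence in $W_{1/2}(\mathbf{R})$. I expect the main obstacle to be the careful bookkeeping of the asymptotic expansion of $G_0vf(x)=\tfrac1{12}\int_{\mathbf{R}}|x-y|^3v(y)f(y)\,dy$ as $|x|\to\infty$ --- isolating the genuine polynomial pieces from the $\mathrm{sgn}(x)|x|^k$ pieces and checking that all remainder integrals converge and decay, which is precisely where the decay hypothesis on $V$ (guaranteeing $\langle y\rangle^{3}v\in L^2(\mathbf{R})$, and more) is used.
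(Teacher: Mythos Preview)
Your argument is correct, and the overall architecture---the bridge $f=Uv\phi\Longleftrightarrow T_0f=vp$ when $\phi=-G_0vf+p$ with $\Delta^2p=0$---matches the paper's logic exactly. Where you diverge is in the two technical sub-steps, and in both places your route is cleaner than the paper's.

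For the growth estimate on $G_0vf$, the paper writes $K_1(x,y)=|x-y|^3-|x|^3+3|x|xy$ (respectively $K_2(x,y)=K_1(x,y)-3|x|y^2$ in part~(ii)) and then splits it by hand into three (respectively four) algebraic pieces, bounding each separately by $|x|y^2$ or $|y|^3$. You instead apply the Taylor remainder identity (Lemma~\ref{lemma-low-energy}(iii) with $F(p)=p^3$), which gives the exact formula in one line and makes the $O(|x|)$ and $O(1)$ contributions transparent; this is more systematic and reuses machinery the paper already has.

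For the converse direction, the paper establishes $\langle f,v\rangle=\langle f,xv\rangle=0$ by pairing $vf=-\Delta^2\phi$ against a cutoff $\eta(\delta y)$ (respectively $y\,\eta(\delta y)$), integrating by parts four times, and sending $\delta\to0$; only afterwards does it invoke Liouville for $\Delta^2$ to identify $\phi+G_0vf$ as an affine function. You reverse the order: first Liouville gives $\phi+G_0vf=p$ with $\deg p\le3$, then matching the separate asymptotics on $(0,\infty)$ and $(-\infty,0)$ forces both the higher coefficients of $p$ and the first two moments of $vf$ to vanish simultaneously. This avoids the cutoff computation entirely. The one place to be careful in writing this up is that your phrase ``carried one order further'' does not mean a fourth-order Taylor expansion of $|x-y|^3$ (which would be degenerate); rather, you are extracting the limit $\mathrm{sgn}(x-\theta y)\to\mathrm{sgn}(x)$ inside the order-three remainder integral as $|x|\to\infty$, which is justified by dominated convergence since $\langle y\rangle^3 vf\in L^1(\mathbf{R})$.
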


\begin{proof}(i)~ Assume that $f\in S_1L^2(\mathbf{R})$. One has
$$ 0= S_0T_0S_0f=S_0(U+vG_0v)f. $$
Note that
$$ 0= S_0(U+vG_0v)f= (I-\widetilde{P})(U+vG_0v) f= Uf+vG_0vf- \widetilde{P}T_0f,  $$
where $ \widetilde{P}$ is the projection onto the span$\{v , xv \}= (S_0L^2(\mathbf{R}))^\perp$.
In fact, we know that
$v$ and $\displaystyle v_1=xv- \frac{\langle xv , v \rangle}{\| V\|_{L^1(\mathbf{R})}}v$ are the orthogonal basis of
$ (S_0L^2(\mathbf{R}))^\perp$ . Hence, we have
\begin{equation*}
\begin{split}
\widetilde{P}T_0f=& \frac{\langle v_1,T_0f \rangle}{\| v_1\|_{L^2(\mathbf{R})}}v_1 +
                       \frac{\langle v, T_0f \rangle}{\| V\|_{L^1(\mathbf{R})}}v \\
 =& xv \frac{\langle v_1,T_0f\rangle}{\| v_1\|_{L^2(\mathbf{R})}} +
     + v\Big(\frac{\langle v, T_0f\rangle}{\| V\|_{L^1(\mathbf{R})}}
     - \frac{\langle xv , v \rangle}{\| V\|_{L^1(\mathbf{R})}} \cdot
           \frac{\langle v_1,T_0f\rangle}{\| v_1\|_{L^2(\mathbf{R})}}  \Big)\\
 :=& c_1xv+c_2v.
\end{split}
\end{equation*}
Furthermore, we have
$$Uf= -vG_0vf + \widetilde{P}T_0f = -vG_0vf +c_1xv+ c_2v.$$
Since $U^2=1,$ then
$$f= U^2f= Uv(-G_0vf)+U(c_1xv+c_2v)= Uv(-G_0vf +c_1x+c_2):= Uv\phi,$$
and $vf= vUv\phi=V\phi$. By $G_0=\big(\Delta^2\big)^{-1}$ and noting that $\Delta^2(c_1x+c_2)=0$,  we have
$$H\phi=\big(\Delta^2+V\big)\phi = \Delta^2\phi+ V\phi = -vf +V\phi=0.$$
Next, we show that $\phi \in W_{\frac{3}{2}}(\mathbf{R})$.
It is obvious that $ c_1x+c_2 \in W_{\frac{3}{2}}(\mathbf{R})$, so it suffices to show that
  $$G_0vf=  \frac{1}{12}\int_{\mathbf{R}}|x-y|^3v(y)f(y)dy \in W_{\frac{3}{2}}(\mathbf{R}).$$
Since $S_1 \leq S_0,$ and we know that if $f\in S_0L^2(\mathbf{R})$, then
$$ \int_{\mathbf{R}} v(y)f(y)dy =0,\ \ \hbox{and} \ \ \int_{\mathbf{R}} yv(y)f(y)dy =0.$$
Hence
$$ G_0vf= \frac{1}{12}\int_{\mathbf{R}}\big( |x-y|^3-|x|^3 +3|x| x\cdot y \big) v(y)f(y)dy
:= \frac{1}{12}\int_{\mathbf{R}}K_1(x, y) v(y)f(y)dy$$
with $ K_1(x,y)= |x-y|^3-|x|^3 +3|x| x\cdot  y $.

Indeed, we can divided $K_1(x,y)$ into three parts as follows:
\begin{equation*}
\begin{split}
K_1(x,y)=& |x-y|^3-|x|^3 +3|x| x\cdot  y\\
=& \Big(x^2(|x-y| -|x|)  +|x|x\cdot y   \Big)  + y^2( |x-y|-|x| )\\
 &\ \  +
     \Big( y^2|x|-2x \cdot y ( |x-y|-|x| )  \Big)\\
:=&K_{11}(x,y)+ K_{12}(x,y)+ K_{13}(x,y).
\end{split}
\end{equation*}
For the first part $K_{11}(x,y),$ we have
\begin{equation*}
\begin{split}
K_{11}(x,y)&=\left|\frac{x^2(y^2-2xy)}{|x-y|+|x|}  + \frac{x^3y +xy|x|\cdot |x-y|}{|x-y|+|x|} \right|\\
   &\leq \frac{x^2y^2}{|x-y|+|x|} + \left|\frac{|x|xy( |x-y|-|x|)}{|x-y|+|x|}  \right|\\
 & \lesssim |x|y^2.
\end{split}
\end{equation*}
Thus, we have
$$ \Big|\int_{\mathbf{R}} K_{11}(x,y)v(y)f(y)dy \Big| \in W_{\frac{3}{2}}(\mathbf{R}). $$
Then, for  the second part $K_{22}(x,y),$ we have
$$|K_{12}(x,y)| = \Big| y^2 (|x-y|-|x| )\Big|\lesssim |y|^3. $$
It is obvious that
$$\Big|\int_{\mathbf{R}}K_{12}(x,y) v(y)f(y)dy \Big| \lesssim \int_{\mathbf{R}}|y|^3 v(y)f(y)dy
 \in  W_{\frac{3}{2}}(\mathbf{R}).$$
Finally, for the last part $K_{13}(x,y ),$ we have
$$| K_{13}(x,y)| = \Big|~~ y^2|x|-2xy(|x-y|-|x|)  \Big| \leq |x|y^2 + 2|x|y^2 \lesssim |x|y^2.$$
It is easy to know that
$$\Big|\int_{\mathbf{R}}K_{13}(x,y) v(y)f(y)dy \Big| \lesssim |x|\int_{\mathbf{R}}|y|^2 v(y)f(y)dy
 \in   W_{\frac{3}{2}}(\mathbf{R}).$$
Hence, we obtain that
\begin{equation}\label{eq-G_0-1}
\begin{split}
|G_0vf|=&\Big| \frac{1}{12} \int_{\mathbf{R}}K_{1}(x,y) v(y)f(y)dy   \Big|\\
    \lesssim & \Big|  \int_{\mathbf{R}}K_{11}(x,y) v(y)f(y)dy \Big|
      + \Big|  \int_{\mathbf{R}}K_{12}(x,y) v(y)f(y)dy   \Big|\\
    &+ \Big|  \int_{\mathbf{R}}K_{13}(x,y) v(y)f(y)dy   \Big|,
\end{split}
\end{equation}
which implies that $G_0vf\in W_{\frac{3}{2}}(\mathbf{R}).$

On the other hand, let $\phi\in W_{\frac{3}{2}}(\mathbf{R})$ such that $H\phi=0.$ We first show that $f=Uv\phi\in S_0L^2(\mathbf{R}),$ that is,
\[ \int_{\mathbf{R}}v(y)f(y)dy=0 \ \  \hbox{and} \ \    \int_{\mathbf{R}}yv(y)f(y)dy=0.\]
Noting that~$vf=V\phi=-\Delta^2\phi$.
Let $\eta(x)\in C_0^\infty(\mathbf{R})$ such that $\eta(x)=1$ for $|x|\le 1$ and $\eta(x)=0$ for $|x|>2.$ Then for any $\delta>0,$ we have
\begin{equation}\label{eq-orthogonality proof}
\begin{split}
\Big|\int_{\mathbf{R}}v(y)f(y)\eta(\delta y)dy\Big|=&\Big|\int_{\mathbf{R}}\big(\Delta^2\phi(y)\big)\eta(\delta y)dy\Big|
=\Big|\int_{\mathbf{R}}\phi(y)\delta^4\eta^{(4)}(\delta y)dy\Big|\\
\lesssim& \delta^4 \big\|\langle y\rangle^{-\frac{3}{2}-}\phi(y)\big\|_{L^2(\mathbf{R})}\cdot\|\langle y\rangle^{\frac{3}{2}+}\eta^{(4)}(\delta y)\|_{L^2(\mathbf{R})}\\
\lesssim& \delta^{\frac{5}{2}-} \big\|\langle y\rangle^{-\frac{3}{2}-}\phi(y)\big\|_{L^2(\mathbf{R})}\cdot\|\eta^{(4)}( y)\|_{L^2(\mathbf{R})}.
\end{split}
\end{equation}
Taking the limit $\delta\rightarrow0$ and using the dominated convergence theorem we obtain $\displaystyle \int_{\mathbf{R}}v(y)f(y)dy=0.$ Similarly, we have $\displaystyle \int_{\mathbf{R}}v(y)f(y)dy=0.$ Thus, $f=Uv\phi\in S_0L^2(\mathbf{R}).$

Now we turn to  prove that $\phi=-G_0vf+c_1x+c_2.$ Let $\widetilde{\phi}=\phi+G_0vf,$ then by assumption and \eqref{eq-G_0-1}, $\widetilde{\phi}\in  W_{\frac{3}{2}}(\mathbf{R})$ and $\Delta^2\widetilde{\phi}=0,$ thus $\widetilde{\phi}=\widetilde{c}_1x+\widetilde{c}_2.$ This implies that $\phi=-G_0vf+\widetilde{c}_1x+\widetilde{c}_2.$

Since
\begin{align*}
0=H\phi=\big[\Delta^2+V\big]\phi=&-\big[\Delta^2+V\big](G_0vf)+\big[\Delta^2+V\big](\widetilde{c}_1x+\widetilde{c}_2)\\
=&-Uv\big(U+vG_0v\big)f+U\big(\widetilde{c}_1xv^2+\widetilde{c}_2v^2\big)\\
=&U\big(-vT_0f+\widetilde{c}_1xv^2+\widetilde{c}_2v^2\big),
\end{align*}
thus we have
\begin{align}\label{eq-c-1}
\widetilde{c}_1xv^2+\widetilde{c}_2v^2=vT_0f\ \ \Longrightarrow&\ \ \widetilde{c}_1\langle xv,v\rangle+\widetilde{c}_2\langle v,v\rangle=\langle v,T_0f\rangle,\\ \label{eq-c-2}
\widetilde{c}_1x^2v^2+\widetilde{c}_2xv^2=xvT_0f\ \ \Longrightarrow&\ \ \widetilde{c}_1\langle xv,xv\rangle+\widetilde{c}_2\langle xv,v\rangle=\langle xv,T_0f\rangle.
\end{align}
By \eqref{eq-c-1}, we have
\begin{equation}\label{eq-c-3}
\widetilde{c_2}=\frac{\langle v,T_0f}{\|V\|_{L^1(\mathbf{R})}}-\widetilde{c}_1\frac{\langle xv,v\rangle}{\|V\|_{L^1(\mathbf{R})}}.
\end{equation}
Then, substituting \eqref{eq-c-3}, $\displaystyle xv=v_1+\frac{ \langle xv, v \rangle}{\|V\|_{L^1(\mathbf{R})}}v$ and $\langle v_1,v\rangle=0$ into \eqref{eq-c-2}, we have
\[ \widetilde{c}_1= \frac{ \langle v_1,T_0f \rangle}{\|v_1\|_{L^2(\mathbf{R})}}  \ \ \
 \hbox{and}\ \ \ \widetilde{c}_2= \frac{ \langle v,T_0f \rangle}{\|V\|_{L^1(\mathbf{R})}}-
\frac{ \langle xv, v \rangle}{\|V\|_{L^1(\mathbf{R})}}\widetilde{c}_1, \]
which means that $ \widetilde{c}_1=c_1, \widetilde{c}_2=c_2.$ Furthermore,  using $f=Uv\phi$ and $H\phi=0$, we have
\begin{equation}\label{eq-S_0}
\begin{split}
 S_0T_0S_0f & = S_0T_0(Uv\phi)=S_0(U+vG_0v)(Uv\phi)= S_0(v\phi+vG_0vUv\phi)\\
&=S_0vG_0\big(\Delta^2 +V\big)\phi= S_0vG_0H\phi = 0.
\end{split}
\end{equation}
Hence, we obtain that $f\in S_1L^2(\mathbf{R})$. The proof of Theorem \ref{thm-resonance space} (i) is completed.

(ii)~ Since $ S_2\leq S_1 $, by Theorem \ref{thm-resonance space}(i), we know that $f\in S_1L^2(\mathbf{R})$ if and
only if $f=Uv\phi$ with $ \phi \in W_{\frac{3}{2}}(\mathbf{R})$  such  that $H\phi =0$ in the distributional sense and $$ \phi = -G_0v f +c_1x +c_2.$$
Thus, we just need to  prove that $c_1=0$ and $\phi  \in W_{\frac{1}{2}}(\mathbf{R}) $. Indeed,
by the proof of Theorem \ref{thm-resonance space}(i), we know that
$$v_1= xv- \frac{ \langle xv, v \rangle}{\| V \|_{L^1(\mathbf{R})}} v \in QL^2(\mathbf{R}).$$
By Table \ref{table-1},  if $ f\in S_2L^2(\mathbf{R})$, then $QT_0f = 0,$  thus
\[c_1=\langle v_1, T_0f \rangle=0.\]

Next we show that $ \phi \in W_{\frac{1}{2}}(\mathbf{R})$. Since
$ \phi = -G_0vf + \displaystyle\frac{ \langle v, T_0f \rangle}{\| V \|_{L^1(\mathbf{R})}},$
thus it suffices to show that
$$ G_0vf= \frac{1}{12}\int_{\mathbf{R}} |x-y|^3 v(y)f(y)dy  \in W_{\frac{1}{2}}(\mathbf{R}).$$
We obtain that if  $f \in S_2L^2(\mathbf{R})$, then
$$ \int_{\mathbf{R}}v(y)f(y)dy= \int_{\mathbf{R}} y v(y)f(y)dy=
 \int_{\mathbf{R}}y^2v(y)f(y)dy=0.  $$
Thus,
\begin{align*}
G_0vf= &\frac{1}{12}\int_{\mathbf{R}}\big( |x-y|^3-|x|^3 + 3|x|x\cdot y - 3|x|y^2 \big)v(y)f(y)dy\\
:= & \frac{1}{12}\int_{\mathbf{R}}K_2(x,y)v(y)f(y)dy
\end{align*}
with $K_2(x,y)= |x-y|^3-|x|^3 + 3|x|x\cdot y - 3|x|y^2 $.

Indeed, we can divided $K_2(x,y)$ into four parts as follows
\begin{equation*}
\begin{split}
K_2(x,y)=& y^2\big( |x-y|-|x| \big)- 2\big[ xy( |x-y|-|x|)+ |x|y^2   \big]
             + \Big[ \frac{x^2y^2}{|x-y|+|x|} -\frac{1}{2}|x|y^2  \Big]\\
             &+\Big[ \frac{|x|x\cdot y (|x-y|-|x|)}{|x-y|+|x|} +\frac{1}{2}|x|y^2  \Big]\\
             :=& K_{21}(x,y)+ K_{22}(x,y)+K_{23}(x,y)+K_{24}(x,y).
\end{split}
\end{equation*}
For the first part $K_{21}(x,y),$ we have
$$ |K_{21}(x,y)|= \big|y^2(|x-y|-|x|) \big| \lesssim |y|^3.$$
Thus
$$ \Big|\int_{\mathbf{R}} K_{21}(x,y)v(y)f(y)dy \Big|
 \lesssim  \int_{\mathbf{R}} |y|^3 |v(y)f(y)|dy \in W_{\frac{1}{2}}(\mathbf{R}). $$
For the second part $K_{22}(x,y),$ we have
\begin{equation*}
\begin{split}
|K_{22}(x,y)| =& 2\Big|\frac{ x\cdot y( y^2 -2x\cdot y)}{|x-y|+|x|}
                + \frac{ |x|y^2( |x-y| +|x|)}{|x-y|+|x|}   \Big|\\
   \lesssim &  \frac{ |x|\cdot |y|^3}{|x-y|+|x|}
                + \Big| \frac{ |x|y^2( |x-y|-|x|)}{|x-y|+|x|}\Big| \\
                \lesssim & |y|^3.
\end{split}
\end{equation*}
Thus
$$ \Big|\int_{\mathbf{R}} K_{22}(x,y)v(y)f(y)dy \Big|
 \lesssim  \int_{\mathbf{R}} |y|^3 |v(y)f(y)|dy \in W_{\frac{1}{2}}(\mathbf{R}). $$
Then, for the third part $K_{23}(x,y)$, we have
\begin{equation*}
\begin{split}
|K_{23}(x,y)| =& \Big|\frac{ 2x^2y^2}{2(|x-y|+|x|)}
                - \frac{ |x|y^2(|x-y|+|x|)}{2(|x-y|+|x|)}   \Big|\\
   = &  \Big|\frac{ |x|y^2 \cdot |(|x-y|-|x|)|}{2(|x-y|+|x|)}\Big|\\
                \lesssim & |y|^3.
\end{split}
\end{equation*}
Thus
$$ \Big|\int_{\mathbf{R}} K_{23}(x,y)v(y)f(y)dy \Big|
 \lesssim  \int_{\mathbf{R}} |y|^3 |v(y)f(y)|dy \in W_{\frac{1}{2}}(\mathbf{R}). $$
Finally, for the fourth part $K_{24}(x,y)$, we have
\begin{equation*}
\begin{split}
|K_{24}(x,y)| =& \Big|\frac{ |x|x\cdot y (y^2-2x\cdot y)}{(|x-y|+|x|)^2}
                + \frac{ |x|y^2(|x-y|+|x|)^2}{2(|x-y|+|x|)^2}   \Big|\\
   \leq &  \frac{ |x|y^4}{2(|x-y|+|x|)^2} +  \Big| \frac{ x^2y^2 (|x-y| -|x|)}{(|x-y|+|x|)^2}\Big|\\
                \lesssim & |y|^3.
\end{split}
\end{equation*}
Thus
$$ \Big|\int_{\mathbf{R}} K_{24}(x,y)v(y)f(y)dy \Big|
 \lesssim  \int_{\mathbf{R}} |y|^3 \cdot |v(y)f(y)|dy
 \in W_{\frac{1}{2}}(\mathbf{R}). $$
Hence, we obtain that  $G_0vf \in W_{\frac{1}{2}}(\mathbf{R})$. Furthermore, we have
$ \phi \in W_{\frac{1}{2}}(\mathbf{R})$.

On the other hand, let $\phi(x)\in W_{\frac{1}{2}}(\mathbf{R})$ such that $H\phi(x)=0.$ Denote $f=Uv\phi,$ with the similar method of \eqref{eq-orthogonality proof} and \eqref{eq-S_0}, we have
\[\int_{\mathbf{R}}v(y)f(y)dy=0,\
    \int_{\mathbf{R}}yv(y)f(y)dy=0,\
     \int_{\mathbf{R}}y^2v(y)f(y)dy=0, S_0T_0S_0f=0,\]
then we can obtain that $f\in S_1L^2(\mathbf{R})$ and $G_0vf\in W_{\frac{1}{2}}(\mathbf{R}).$ Moreover, by Theorem \ref{thm-resonance space}(i), we know that $\phi=-G_0vf+c_1x+c_2$ since $W_{\frac{1}{2}}(\mathbf{R})\subset W_{\frac{3}{2}}(\mathbf{R}).$ Thus we need that $c_1x=\phi+G_0vf-c_2\in W_{\frac{1}{2}}(\mathbf{R})$
this necessitates that $c_1=0,$ which implies that $T_0f\in \text{span}\{v\}$ and $\phi=-G_0vf+\displaystyle\frac{ \langle v,T_0f \rangle}{\|V\|_{L^1(\mathbf{R})}}$. Hence, we have $f\in S_2L^2(\mathbf{R})$. The proof of Theorem \ref{thm-resonance space}(ii) is completed.
\end{proof}

{\bf Acknowledgements:}  A. Soffer is partially supported by the
Simon's Foundation (No.395767). Z. Wu and X. H. Yao are partially supported by  NSFC (No.11771165 and 12171182). The authors would like to express their sincere gratitude to the reviewing referee for many constructive comments which helped us greatly improve the previous
version. We also would like to thank Professor P. Li for his useful discussions, and thank Professor M. Goldberg for his helpful comments and insights for the previous version.




\end{document}